\newcommand{\hackcenter}[1]{
 \xy (0,0)*{#1}; \endxy}
\tikzset{->-/.style={decoration={
  markings,
  mark=at position #1 with {\arrow{>}}},postaction={decorate}}}
\tikzset{middlearrow/.style={
        decoration={markings,
            mark= at position 0.5 with {\arrow{#1}} ,
        },
        postaction={decorate}
    }
}
\newcommand{\onen}{{\mathbf 1}_{n}}
\newcommand{\onenn}[1]{{\mathbf 1}_{#1}}
\theoremstyle{plain}
\newtheorem{theorem}{Theorem}
\newtheorem{proposition}[theorem]{Proposition}
\newtheorem{lemma}[theorem]{Lemma}
\newtheorem{exercise}[theorem]{Exercise}
\newtheorem{problem}[theorem]{Problem}
\theoremstyle{definition}
\newtheorem{conjecture}[theorem]{Conjecture}
\theoremstyle{remark}
\newtheorem{remark}[theorem]{Remark}
\theoremstyle{definition}
\newtheorem{thm}{Theorem}[section]
\newtheorem{cor}[thm]{Corollary}
\newtheorem{lem}[thm]{Lemma}
\newtheorem{prop}[thm]{Proposition}
\newtheorem{defn}[thm]{Definition}
\newtheorem{example}{Example}
\newcommand{\U}{\dot{{\bf U}}}
\newcommand{\Ucat}{\cal{U}}
\newcommand{\UcatD}{\dot{\cal{U}}}
\newcommand{\B}{\dot{\mathbb{B}}}
\newcommand{\UA}{{_{\cal{A}}\dot{{\bf U}}}}
\newcommand{\sym}{{\rm Sym}}
\newcommand{\Mat}{{\rm Mat}}
\newcommand{\maps}{\colon}
\newcommand{\xsum}[2]{
  \xy
  (0,.4)*{\sum};
  (0,3.7)*{\scs #2};
  (0,-2.9)*{\scs #1};
  \endxy
}
\newcommand{\refequal}[1]{\xy {\ar@{=}^{#1}
(-1,0)*{};(1,0)*{}};
\endxy}
\newcommand{\cat}[1]{\ensuremath{\mbox{\bfseries {\upshape {#1}}}}}
\newcommand{\To}{\Rightarrow}
\newcommand{\Hom}{{\rm Hom}}
\newcommand{\HOM}{{\rm HOM}}
\renewcommand{\to}{\rightarrow}
\def\lra{{\longrightarrow}}
\def\NH{{\mathrm{NH}}}
\def\Id{\mathrm{Id}}
\def\mf{\mathfrak}
\def\shuffle{\,\raise 1pt\hbox{$\scriptscriptstyle\cup{\mskip
               -4mu}\cup$}\,}
\numberwithin{equation}{section}
\def\emph#1{{\sl #1\/}}
\let\hat=\widehat
\let\tilde=\widetilde
\let\theta=\vartheta
\let\epsilon=\varepsilon
\def\C{{\mathbbm C}}
\def\N{{\mathbbm N}}
\def\Z{{\mathbbm Z}}
\def\Q{{\mathbbm Q}}
\def\cal#1{\mathcal{#1}}%
\def\1{\mathbbm{1}}%
\def\tr{\mathrm{tr}}%
\def\nn{\notag}
\def\la{\langle}
\def\ra{\rangle}
\newcommand{\ccbub}[1]{
\xybox{%
 (-6,0)*{};
  (6,0)*{};
  (-4,0)*{}="t1";
  (4,0)*{}="t2";
  "t2";"t1" **\crv{(4,6) & (-4,6)};
   ?(1)*\dir{>};
  "t2";"t1" **\crv{(4,-6) & (-4,-6)};
   ?(.3)*\dir{}+(0,0)*{\bullet}+(0,-3)*{\scs {#1}};
}}
\newcommand{\cbub}[1]{
\xybox{%
 (-6,0)*{};
  (6,0)*{};
  (-4,0)*{}="t1";
  (4,0)*{}="t2";
  "t2";"t1" **\crv{(4,6) & (-4,6)};
    ?(.95)*\dir{<};
  "t2";"t1" **\crv{(4,-6) & (-4,-6)};
   ?(.3)*\dir{}+(0,0)*{\bullet}+(0,-3)*{\scs {#1}};
}}
\newcommand{\bbe}[1]{\xybox{%
  (-2,0)*{};
  (2,0)*{};
  (0,0);(0,-18) **\dir{-}; ?(.5)*\dir{<}+(2.3,0)*{\scriptstyle{#1}};
}}
\newcommand{\bbf}[1]{\xybox{%
  (-2,0)*{};
  (2,0)*{};
  (0,0);(0,-18) **\dir{-}; ?(.5)*\dir{>}+(2.3,0)*{\scriptstyle{#1}};
}}
\newcommand{\bbpef}{\xybox{%
  (-6,0)*{};
  (6,0)*{};
  (-4,0)*{}="t1";
  (4,0)*{}="t2";
  "t1";"t2" **\crv{(-4,-6) & (4,-6)}; ?(.15)*\dir{>} ?(.9)*\dir{>};
}}
\newcommand{\bbpfe}{\xybox{%
  (-6,0)*{};
  (6,0)*{};
  (-4,0)*{}="t1";
  (4,0)*{}="t2";
  "t2";"t1" **\crv{(4,-6) & (-4,-6)}; ?(.15)*\dir{>} ?(.9)*\dir{>};
}}
\newcommand{\bbcfe}[1]{\xybox{%
  (-6,0)*{};
  (6,0)*{};
  (-4,0)*{}="t1";
  (4,0)*{}="t2";
  "t1";"t2" **\crv{(-4,6) & (4,6)}; ?(.15)*\dir{>} ?(.9)*\dir{>}
  ?(.5)*\dir{}+(0,2)*{\scriptstyle{#1}};
}}
\newcommand{\bbcef}[1]{\xybox{%
  (-6,0)*{};
  (6,0)*{};
  (-4,0)*{}="t1";
  (4,0)*{}="t2";
  "t2";"t1" **\crv{(4,6) & (-4,6)}; ?(.15)*\dir{>}
  ?(.9)*\dir{>} ?(.5)*\dir{}+(0,2)*{\scriptstyle{#1}};
}}
\newcommand{\lowrru}[1]{\xybox{%
  (-8,0)*{};
  (8,0)*{};
  (-6,-18)*{};(6,-9)*{} **\crv{(-6,-13) & (6,-15)} ?(1)*\dir{>};
  (6,-9)*{};(6,0)*{}  **\dir{-} ?(.3)*\dir{ }+(2,0)*{\scs {\bf j}};
}}
\newcommand{\lowllu}[1]{\xybox{%
  (-8,0)*{};
  (8,0)*{};
  (6,-18)*{};(-6,-9)*{} **\crv{(6,-13) & (-6,-15)} ?(1)*\dir{>};
  (-6,-9)*{};(-6,0)*{}  **\dir{-} ?(.3)*\dir{ }+(-2,0)*{\scs {\bf j}};
}}
\newcommand{\bbdl}[1]{\xybox{%
  (2,0);(0,-8) **\crv{(2,-2)&(0,-6)}; ?(.5)*\dir{>}
}}
\newcommand{\bbdlu}[1]{\xybox{%
  (2,0);(0,-8) **\crv{(2,-2)&(0,-6)}; ?(.5)*\dir{<}
}}
\newcommand{\bbdr}[1]{\xybox{%
  (-2,0);(0,-8) **\crv{(-2,-2)&(0,-6)}; ?(.5)*\dir{>}
}}
\newcommand{\bbdru}[1]{\xybox{%
  (-2,0);(0,-8) **\crv{(-2,-2)&(0,-6)}; ?(.5)*\dir{<}
}}
\newcommand{\lccbub}[1]{
\xybox{%
 (-12,0)*{};
  (12,0)*{};
  (-12,0)*{}="t1";
  (12,0)*{}="t2";
  "t2";"t1" **\crv{(12,14) & (-12,14)}; ?(0)*\dir{>} ?(1)*\dir{>};
  "t2";"t1" **\crv{(12,-14) & (-12,-14)}; ?(.3)*\dir{}+(2,-1)*{\scs #1};
}}
\newcommand{\lcbub}[1]{
\xybox{%
 (-12,0)*{};
  (12,0)*{};
  (-12,0)*{}="t1";
  (12,0)*{}="t2";
  "t2";"t1" **\crv{(12,14) & (-12,14)}; ?(0)*\dir{<} ?(1)*\dir{<};
  "t2";"t1" **\crv{(12,-14) & (-12,-14)}; ?(.3)*\dir{}+(2,-1)*{\scs #1};
}}
\newcommand{\scap}{
\xybox{%
(-6,0)*{};
  (6,0)*{};
 (-4,0)*{};(4,0)*{}; **\crv{(4,5) & (-4,5)};
 }}
\newcommand{\lcap}{
\xybox{%
 (-12,2)*{};(12,2)*{}; **\crv{(12,14) & (-12,14)};
 }}
\newcommand{\xlcap}{
\xybox{%
 (-20,0)*{};(20,0)*{}; **\crv{(20,22) & (-20,22)};
 }}
\newcommand{\scupfe}{\xybox{%
 (-4,0)*{};(4,0)*{}; **\crv{(4,-5) & (-4,-5)} ?(0)*\dir{<} ?(.95)*\dir{<};
 }}
\newcommand{\lcupfe}{\xybox{%
 (-12,0)*{};(12,0)*{}; **\crv{(12,-14) & (-12,-14)} ?(0)*\dir{<} ?(.95)*\dir{<};
 }}
\newcommand{\scupef}{\xybox{%
 (-4,0)*{};(4,0)*{}; **\crv{(4,-5) & (-4,-5)} ?(0)*\dir{>} ?(1)*\dir{>};
 }}
\newcommand{\lcupef}{\xybox{%
 (12,0)*{};(-12,0)*{} **\crv{(12,-12) & (-12,-12)} ?(0)*\dir{>} ?(1)*\dir{>};
 }}
\newcommand{\xlcupef}{\xybox{%
 (-20,0)*{};(20,0)*{}; **\crv{(20,-22) & (-20,-22)} ?(0)*\dir{>} ?(1)*\dir{>};
 }}
\newcommand{\ecross}{\xybox{%
(-6,0)*{};
  (6,0)*{};
 (-4,-4)*{};(4,4)*{} **\crv{(-4,-1) & (4,1)}?(1)*\dir{>};
 (4,-4)*{};(-4,4)*{} **\crv{(4,-1) & (-4,1)}?(1)*\dir{>};
 }}
\newcommand{\fcross}{\xybox{%
 (-4,-4)*{};(4,4)*{} **\crv{(-4,-1) & (4,1)}?(1)*\dir{<};
 (4,-4)*{};(-4,4)*{} **\crv{(4,-1) & (-4,1)}?(1)*\dir{<};
 }}
\newcommand{\fecross}{\xybox{%
 (-4,-4)*{};(4,4)*{} **\crv{(-4,-1) & (4,1)}?(1)*\dir{>};
 (4,-4)*{};(-4,4)*{} **\crv{(4,-1) & (-4,1)}?(1)*\dir{<};
 }}
\newcommand{\efcross}{\xybox{%
 (-4,-4)*{};(4,4)*{} **\crv{(-4,-1) & (4,1)}?(1)*\dir{<};
 (4,-4)*{};(-4,4)*{} **\crv{(4,-1) & (-4,1)}?(1)*\dir{>};
 }}
\newcommand{\seline}{\xybox{%
 (0,-4)*{};(0,4)*{} **\dir{-} ?(1)*\dir{>};
}}
\newcommand{\sfline}{\xybox{%
 (0,-4)*{};(0,4)*{} **\dir{-} ?(1)*\dir{<};
}}
\newcommand{\meline}{\xybox{%
 (0,-8)*{};(0,8)*{} **\dir{-} ?(1)*\dir{>};
}}
\newcommand{\mfline}{\xybox{%
 (0,-8)*{};(0,8)*{} **\dir{-} ?(1)*\dir{<};
}}
\newcommand{\lfline}{\xybox{%
 (0,-12)*{};(0,12)*{} **\dir{-} ?(1)*\dir{<};
}}
\def\cal#1{\mathcal{#1}}
\def \k {\mathbbm{k}}
\def \Z {\mathbbm{Z}}
\def \N {\mathbbm{N}}
\def \Q {\mathbbm{Q}}
\def \E {\mathcal{E}}
\def \U {\mathcal{U}}
\def \B {\mathcal{B}}
\def \C {\mathcal{C}}
\def \Tr{\operatorname{Tr}}
\def \Span{\operatorname{Span}}
\def \Ob{\operatorname{Ob}}
\def \HH{\operatorname{HH}}
\def \Id {{\rm Id}}
\def\k{\mathbbm{k}}
\newcommand{\xto}[1]{{\overset{#1}{\longrightarrow}}}
\newcommand\nc{\newcommand}
\nc\rnc{\renewcommand}
\nc\Kar{\operatorname{Kar}}
\nc\End{\operatorname{End}}
\nc\modQ {{\mathbb Q}}
\nc\modZ {{\mathbb Z}}
\nc\simeqto{\overset{\simeq}{\longrightarrow }}
\nc\modC {{\mathcal C}}
\nc\modD {{\mathcal D}}
\nc\K{\mathcal {K}}
\nc\CC{\mathbf{C}}
\newcommand{\scs}{\scriptstyle}
\nc\calU{\mathcal{U}}
\nc\cU{\calU}
\nc\col{\colon\thinspace}
\nc\calA{\mathcal{A}}
\nc\Ab{\mathbf{Ab}}
\nc\Ko{K_0}
\nc\TrhorCC{\Tr^{\mathrm{hor}}(\CC)}
\nc\AdCat{\mathbf{AdCat}}
\nc\TrCC{\Tr(\CC)}
\nc\Udot{\dot{\mathcal{U}}}
\nc\diag{\mathrm{d}}
\nc\modU {\mathcal{U}}
\nc\bfU{\mathbf{U}}
\nc\dU{\dot{\mathbf U}}
\nc\dUZ{{_\modZ\dot{\mathbf U}}}
\nc\UZ{{_\modZ \mathbf U} }
\nc\fsl{\mathfrak{sl}}
\nc\Uaa{{\bf U} (\mathfrak{sl}_2\otimes \Q[t,t^{-1}])}
\nc\UZslt{{_\modZ\mathbf{U}} (\mathfrak{sl}_2\otimes \Q[t])}
\nc\UdZslt{{_\modZ\dot{\mathbf{U}}} (\mathfrak{sl}_2\otimes \Q[t])}
\nc\LL{L^+\fsl_2}
\nc\UL{\mathbf U(\LL)}
\nc\UZL{\UZ(\LL)}
\nc\dUZL{\dUZ(\LL)}
\nc\dUL{\dU(\LL)}
\nc{\im}{\rm im}
\nc\Kom{\rm Kom}
\nc\GL{\rm{GL}}
\nc\g{\mathfrak{g}}
\nc\tG{\tilde{G}} \nc\tE{\tilde{E}}
\nc\Vect{\rm Vect}
\nc{\Gras}{{\rm {Gr}}}
\nc\FMod{\rm FMod}
\nc\yto[1]{\underset{#1}{\to}}
\nc\Ear{\yto{E}}
\newcommand{\scup}{\xybox{%
 (-4,0)*{};(4,0)*{}; **\crv{(4,-5) & (-4,-5)} ;
 }}
 \newcommand{\sline}{\xybox{%
 (0,-4)*{};(0,4)*{} **\dir{-};
}}
\newcommand{\naecross}{\xybox{%
(-6,0)*{};
  (6,0)*{};
 (-4,-4)*{};(4,4)*{} **\crv{(-4,-1) & (4,1)};
 (4,-4)*{};(-4,4)*{} **\crv{(4,-1) & (-4,1)};
 }}
\newcommand{\lcup}{\xybox{%
 (-12,0)*{};(12,0)*{}; **\crv{(12,-14) & (-12,-14)} ;
 }}
 \newcommand{\mline}{\xybox{%
 (0,-8)*{};(0,8)*{} **\dir{-} ;
}}
\newcommand\sE{{\cal{E}}}
\newcommand\sF{{\cal{F}}}
\newcommand{\onelp}{{\mathbf 1}_{\lambda'}}
\def\l{\lambda}
\newcommand{\iccbub}[2]{
\xybox{%
 (-6,0)*{};
  (6,0)*{};
  (-4,0)*{}="t1";
  (4,0)*{}="t2";
  "t2";"t1" **\crv{(4,6) & (-4,6)}; ?(.7)*\dir{}+(-2,0)*{\scs #2}
  ?(.05)*\dir{>} ?(1)*\dir{>};
  "t2";"t1" **\crv{(4,-6) & (-4,-6)};
   ?(.3)*\dir{}+(0,0)*{\bullet}+(0,-3)*{\scs {#1}};
}}
\newcommand{\icbub}[2]{
\xybox{%
 (-6,0)*{};
  (6,0)*{};
  (-4,0)*{}="t1";
  (4,0)*{}="t2";
  "t2";"t1" **\crv{(4,6) & (-4,6)};?(.7)*\dir{}+(-2,0)*{\scs #2};
   ?(0)*\dir{<} ?(.95)*\dir{<};
  "t2";"t1" **\crv{(4,-6) & (-4,-6)};
   ?(.3)*\dir{}+(0,0)*{\bullet}+(0,-3)*{\scs {#1}};
}}
\begin{document}


\date{\today}

\title{Trace as an alternative decategorification functor}

\author{Anna Beliakova}
\address{Universit\"at Z\"urich, Winterthurerstr. 190
CH-8057 Z\"urich, Switzerland}
\email{anna@math.uzh.ch}

\author{Zaur Guliyev}
\address{Universit\"at Z\"urich, Winterthurerstr. 190
CH-8057 Z\"urich, Switzerland}
\email{zaur.guliyev@math.uzh.ch}

\author{Kazuo Habiro}
\address{Research Institute for Mathematical Sciences, Kyoto University, Kyoto, 606-8502, Japan}
\email{habiro@kurims.kyoto-u.ac.jp}

\author{Aaron D.~Lauda}
\address{University of Southern California, Los Angeles, CA 90089, USA}
\email{lauda@usc.edu}

\begin{abstract}
Categorification is a process of lifting structures to a higher
categorical level. The original  structure can then be recovered by means of
the so-called ``decategorification'' functor.  Algebras are typically categorified to additive categories with additional structure and
decategorification is usually given by the (split)
Grothendieck group.
In this article we study an  alternative  decategorification functor
given by the trace or the zeroth Hochschild--Mitchell homology.
We show that this form of decategorification endows any 2-representation of the categorified quantum $\fsl_n$ with an action of the current algebra $\bfU(\fsl_n[t])$ on its center.
\end{abstract}

\maketitle

%
\section{Introduction}
%
\newcommand{\onel}{{\mathbf 1}_{\lambda}}

%
\subsection{What is categorification?}
%

Categorification is perhaps best explained by defining decategorification.  Quite broadly, {\em decategorification} is a rigorously defined procedure for forgetting information and reducing the complexity of a given mathematical structure.   A bit more formally, a {\em decategorification} can be thought of as a map
\[
 \xy
  (-20,0)*+{\txt{$(n-1)$-category}}="1";
  (20,0)*+{\txt{$n$-category}}="2";
  {\ar_-{\cal{D}} "2";"1" };
 \endxy
\]
for simplifying an $n$-categorical structure into an $(n-1)$-categorical structure.

Categorifcation asks the question ``given a  specific $(n-1)$-category $A$, find an $n$-category $B$ such that $A \cong \cal{D}(B)$".  In this case, we say that $A$ is the decategorification of $B$ and that $B$ categorifies $A$. Often it is the case that the existence of a categorification of $A$ reveals new insights and hidden structure that could not be seen without knowledge of $B$.  For more introductory material on categorification see~\cite{BaezDolan,KMS,Lau3,Savage}.

An example when $n=1$ can be obtained by considering the 1-category $\cat{Vect}_{\k}$ of finite dimensional $\k$-vector spaces.  We can define a decategorification
\[
 \xy
  (-20,0)*+{\N}="1";
  (20,0)*+{\cat{Vect}_{\k}}="2";
  {\ar_{\cal{D}} "2";"1" };
 \endxy
\]
from the 1-category $\cat{Vect}_{\k}$ to the $0$-category, or set, of natural numbers $\N$ by setting $\cal{D}=\dim$.  In this example a natural number $n$ is categorified by any vector space $V$ with $\dim(V)=n$.  This categorification lifts much of the structure of natural numbers, i.e. $\dim(V \oplus W) = \dim(V) + \dim (W)$ and $\dim (V \otimes W) = \dim (V) \times \dim (W)$.

Generalizing the previous example, let $\cat{gVect}_{\k}$ denote the category of graded vector spaces $V = \oplus_{k \in \Z} V_k$.  One way to decategorify a graded vector space is to take its graded dimension
\[
 \xy
  (-20,0)*+{\N[q,q]^{-1}}="1";
  (20,0)*+{\cat{gVect}_{\k}}="2";
  {\ar_-{\cal{D} = \dim_q} "2";"1" };
 \endxy
\]
where
\[
 \dim_q V := \sum_{k \in \Z} q^k \dim V_k.
\]
This example can be extended another direction by considering the category $\Kom(\cat{gVect}_{\k})$ of complexes of graded vector spaces. The graded Euler characteristic gives rise to a decategorification map taking a bounded complex of graded vector spaces into a Laurent polynomial $\chi(C_{\bullet})=\sum_i (-1)^i \dim_q C^i \in \Z[q,q^{-1}]$.  While these examples may seem somewhat trivial, these elementary ideas resurface in more sophisticated categorifications discussed below.

Several important observations are in order.  A categorification need not be unique, in the sense that there may be two different
$n$-categories $B$ and $B'$ with $A=\cal{D}(B)=\cal{D}(B')$.  Furthermore, there may be several different ways to simplify a given
$n$-categorical structure into an $(n-1)$-categorical structure.  That is to say, it may be possible to decategorify a given structure in more than one way.  In this article we will focus on two ways of decategorifying an (additive) category to produce an abelian group.  These are the ``Grothendieck group" and ``trace".

%
\subsection{The Grothendieck group}
%

For any additive category $\C$, its split Grothendieck group $K_0(\C)$ is the abelian group generated by isomorphisms classes of its objects $\{[X]_{\cong} \, |\, X\in \Ob(\C)\}$, modulo the relation that $[A \oplus B]_{\cong} = [A]_{\cong}  + [B]_{\cong}$.  Categories are organized into a 2-categorical structure $\cat{Cat}$ consisting of categories, functors, and natural transformations.  (Here and throughout this article we ignore issues of size of categories.) Put into the framework described above, the (split) Grothendieck group can be thought of as the procedure for turning the 2-categorical structure of an (additive) category into a 1-categorical structure of an abelian group.
\[
 \xy
  (-20,0)*+{\cat{Ab}}="1";
  (20,0)*+{\cat{AdCat} .}="2";
  {\ar_-{\cal{D} = K_0 } "2";"1" };
 \endxy
\]

We showed above that we can decategorify a vector space $V$ into a natural number by taking its dimension $\dim(V)$ as a decategorification map.  This map shows up again when we decategorify the {\em category} of vector spaces $\cat{Vect}_{\k}$ using the Grothendieck group decategorification map.  By choosing a basis, every vector space is isomorphic to a direct sum
 of copies of the ground field $[V]_{\cong}=[\k^{\dim(V)}]_{\cong}=\dim(V)[\k]_{\cong}$ for any $V\in \Ob(\cat{Vect}_{\k})$.   Hence, the Grothendieck group $K_0(\cat{Vect}_{\k})$ of the category of $\k$-vector spaces can be identified with the infinite cyclic abelian group $\Z$ after sending $[\k]_{\cong} \mapsto 1$.

If $\C$ is a graded additive category, then the Grothendieck group $K_0(\C)$ acquires the structure of a $\Z[q,q^{-1}]$-module by declaring that $[x \la t \ra]_{\cong} = q^t [x]_{\cong}$.  For example, the split Grothendieck group of the graded additive category of graded vector spaces $\cat{gVect}_{\k}$ can be identified with $\Z[q,q^{-1}]$ by sending $[\k]_{\cong}\to 1$ since
\[
[V]_{\cong} = [\bigoplus_{k \in \Z} V_k ]_{\cong} = \sum_{k \in \Z} q^k [V_k]_{\cong}
 = \sum_{k \in \Z} q^k \dim V_k[\k]_{\cong} = \dim_q V [\k]_{\cong}.
\]
Again, we see our previous decategorification map $\dim_q$ sending a graded vector spaces to $\N[q,q^{-1}]$ appearing in the context of the split Grothendieck group taking the category of graded vector spaces $\cat{gVect}_{\k}$ into the abelian group $K_0(\cat{gVect}_{\k})$.

%
\subsubsection{The Grothendieck group and Euler characteristics}
%

The Grothendieck group has a universal property making it the
universal way of producing an abelian group from an additive category.  This universal property makes the Grothendieck group into a universal receptacle for generalized Euler characteristics.

If we denote by $\Kom(\cat{gVect}_{\k})$ the category of bounded complexes of graded vector spaces, then the graded Euler characteristic gives rise to a decategorification map
\[
 \xy
  (-20,0)*+{K_0(\cat{gVect}_\k)}="1";
  (20,0)*+{\Kom(\cat{gVect}_\k)}="2";
  {\ar_-{ \chi} "2";"1" };
 \endxy
\]
given by taking a complex $C^{\bullet}$ of graded vector spaces into a Laurent polynomial
\[
 \chi(C^\bullet)= \sum_i (-1)^i [C^i]_{\cong} =
 \sum_i (-1)^i \dim_q (C^i) [\k] \in
  K_0(\cat{gVect}_\k) \cong \Z[q,q^{-1}]\, .
\]

More generally, for any additive category $\C$, the Euler characteristic $\chi(C^\bullet)= \sum_i (-1)^i [C^i]_{\cong}$ of a complex in $\Kom(\C)$ is element of the Grothendieck group $K_0(\C)$.  Thus, we see that the Euler characteristic fits nicely with the Grothendieck group decategorification map.

%
\subsection{The trace decategorification map}
%

The trace, or zeroth  Hochschild homology, is another procedure for turning a linear category $\C$ into an abelian group.  It is defined by
 \begin{gather*}
  \Tr(\C ):=
\bigoplus_{x\in \Ob(\modC )}\modC (x,x)/\Span\{fg-gf\}
\end{gather*}
where $f$ and $g$ run through all pairs of morphisms $f:x\to y$ and $g:y\to x$ with $x,y\in \Ob(\C)$.
We write $[f]$ for the class of an endomorphism $f \maps x \to x$ in $\Tr(\C)$.
In section \ref{sec_trace_lincat} we show that the trace satisfies the property that $[f \oplus g] = [f] + [g]$.

In the category $\cat{Vect}_{\k}$, the class $[\phi]$ of an endomorphism $\phi \maps V \to V$ is
equal to the class $\tr(\phi)[1_{\k}]$, where $\tr(\phi)$ is the usual trace of the linear endomorphism $\phi$ (see exercise \ref{ex-1}).    Hence, the categorical trace of $\cat{Vect}_{\k}$ is closely connected to the usual trace map sending a vector space to an element of the ground field $\k$.

Note that $\dim_V=\tr(1_V)$ so that the trace can be seen as a generalization of the dimension decategorification map.   In Section~\ref{sec_trace_lincat} we show that
the trace $\Tr(\cat{Vect}_{\k})$ of the category of vector spaces can be identified with the ground field $\k$ and that the trace $\Tr(\cat{gVect}_{\k})$ of
the category of graded vector spaces is isomorphic to $\k[q,q^{-1}]$.

%
\subsubsection{Traced Euler characteristics}
%

The Euler characteristic in  $\cat{Vect}_{\k}$ can be recast in the language of traces by observing that $\dim_V=\tr(1_V)$, which allows to define so called traced  Euler characteristic
$$\chi_{\tr}(C^\bullet)=\sum_i (-1)^i\;\; \tr (1_{C^i}) \, .$$
This notion can be extended to any linear category $\C$ as follows. Given a complex
$C^{\bullet}\in   \Kom(\C)$,
\[
 \xy
  (-20,0)*+{\Tr(\C)}="1";
  (20,0)*+{\Kom(\C)}="2";
  {\ar_-{ \chi_{tr}} "2";"1" };
 \endxy
 \]
is defined by setting  $\chi_{tr}(C^{\bullet})=\sum_i (-1)^i [1_{C^i}]$, where $[f]$ denotes the class of the endomorphism $f$ in $\Tr(\C)$.
In general, $\chi_{\tr}$ does not need to coincide with the usual Euler characteristic.

%
\subsection{Comparing Grothendieck group and trace}
%

We have now described two flavors of decategorification.  One which we will refer to as the {\em Grothendieck decategorification} which is closely connected to the notion of dimension, graded dimension, and Euler characteristic.
Our second notion of decategorification, which we will call the {\em trace decategorification}, seems to generalize the Grothendieck group decategorification in much the same way that the trace of linear map generalizes the dimension of a vector space $\dim(V) = \tr(1_V)$.   The trace decategorification is closely connected with the usual trace and it gives rise to a notion of traced Euler characteristic.

These two notions of decategorification can actually produce the same results in some very interesting examples.
One of the most spectacular examples is the categorification
of the Jones polynomial by the Khovanov complex of graded $\Z$-modules, where
the Jones polynomial can be recovered as the graded Euler characteristic~\cite{Kh1,Kh2}.
Bar-Natan generalized Khovanov's construction by defining the so-called Khovanov bracket:
a complex over an additive category of special 2-dimensional cobordisms.
In his setting, one recovers the Jones polynomial by means of the
traced Euler characteristic (Theorem 6 \cite{Dror}). Hence,   the
functor $\Tr$ or the zeroth Hochschild--Mitchell homology replaces $K_0$ in the Bar-Natan setting.


This survey paper is devoted to the comparison of $K_0$ and $\Tr$ as decategorification functors.  The interplay between these two notions of decategorification is surprisingly rich.  It can be thought of as a categorical counterpart to the Chern character map, see for example~\cite{CW}.

%

%
\subsection{Quantum group categorifications}
%

There is a strong interplay between quantum link invariants and algebraic objects known as quantum groups. Given the existence of link homology theories categorifying link invariants, it is perhaps not surprising to discover that quantum groups can also be categorified.  In this article we will examine how the two notions of decategorification given by the Grothendieck group and the trace can be applied to categorified quantum groups.

Quantum groups are Hopf algebras associated to Lie algebras $\mf{g}$.  They are obtained by $q$-deforming the universal enveloping algebra $\mathbf{U}(\mf{g})$.   Lusztig's modified, or idempotented, form $\dot{\mathbf{U}}(\mf{g})$ of the quantum group $\mathbf{U}_q(\mf{g})$ is well suited for categorification. These algebras are equipped with a family of orthogonal idempotents $1_{\lambda}$ indexed by the weight lattice $X$ of $\mf{g}$.  Any $\k$-algebra $A$ equipped with orthogonal idempotents indexed by a set $X$ can be regarded as an additive category whose objects are the elements of $X$.  Given two elements $\lambda, \mu \in X$,  $\k$-vector space of maps from $\lambda$ to $\mu$ can be defined by $1_{\mu} A 1_{\lambda}$.  In this framework, the idempotents $1_{\lambda}$ are thought of as identity morphisms from the object $\lambda$ to itself.  Composition in the category
\[
1_{\mu'} A 1_{\lambda'} \times 1_{\mu} A 1_{\lambda}\to \delta_{\mu,\lambda'} 1_{\mu'} A 1_{\lambda}
\]
is given by multiplication in $A$.

By thinking of a quantum group $\mathbf{U}_q(\mf{g})$ as a linear category in this way, we are naturally led to consider a categorification of $\mathbf{U}_q(\mf{g})$ given by the structure of a 2-category and our decategorification map must reduce a linear 2-category to a linear 1-category.

Associated to a field $\k$ and a symmetrizable Kac-Moody algebra $\mf{g}$, a 2-category $\UcatD(\mf{g})$ was defined in \cite{KL3} generalizing the $\mf{sl}_2$ case from \cite{Lau1}.   The 2-category $\UcatD(\mf{g})$ is the idempotent completion, or Karoubi envelope, of a 2-category $\Ucat(\mf{g})$ defined in terms of a graphical calculus.  More precisely, the objects of the category $\Ucat(\mf{g})$ are indexed by the weight lattice $X$ of $\mf{g}$.  The morphisms are formal direct sums of composites of maps $\onel \maps \lambda \to \lambda$, $\cal{E}_i \onel \maps \lambda \to \lambda + \alpha_i$, and $\cal{F}_i\onel \maps \lambda \to \lambda-\alpha_i$ corresponding to the Chevalley basis of $\mf{g}$.  For each 1-morphism $x$ there is also a grading shifted 1-morphism $x\la t\ra$ in $\Ucat(\mf{g})$.  The 2-morphisms are given by $\Bbbk$-linear degree preserving combinations of certain planar diagrams modulo local relations. See Figure~\ref{fig_Ucat-diagram} for an example of planar diagram in $\Ucat(\mf{g})$.

\begin{figure}[h]
 $
 \vcenter{
    \xy 0;/r.16pc/:
  (-8,-4)*{\fecross};(8,-4)*{\fecross};
  (0,2)*{\scap};(0,4)*{\lcap};
  (0,-10)*{\scupfe};(0,-14)*{\lcupef};
  (13,-12)*{\scs i}; (13,5)*{\scs k};
   (-18,-18)*{\lambda'};
  \endxy} \;\;
  \vcenter{\xy 0;/r.16pc/:
 (-4,-15)*{}; (-20,25) **\crv{(-3,-6) & (-20,4)}?(0)*\dir{<}?(.6)*\dir{}+(0,0)*{\bullet};
 (-12,-15)*{}; (-4,25) **\crv{(-12,-6) & (-4,0)}?(0)*\dir{<}?(.6)*\dir{}+(.2,0)*{\bullet};
 ?(0)*\dir{<}?(.75)*\dir{}+(.2,0)*{\bullet};?(0)*\dir{<}?(.9)*\dir{}+(0,0)*{\bullet};
 (-28,25)*{}; (-12,25) **\crv{(-28,10) & (-12,10)}?(1)*\dir{>};
  ?(.2)*\dir{}+(0,0)*{\bullet}?(.35)*\dir{}+(0,0)*{\bullet};
 (-36,-15)*{}; (-36,25) **\crv{(-34,-6) & (-35,4)}?(1)*\dir{>};
 (-28,-15)*{}; (-42,25) **\crv{(-28,-6) & (-42,4)}?(1)*\dir{>};
 (-42,-15)*{}; (-20,-15) **\crv{(-42,-5) & (-20,-5)}?(1)*\dir{>};
 (6,10)*{\cbub{m}{}};
 (-23,0)*{\cbub{}{}};
 (-2,-12)*{\scs i}; (-14,-12)*{\scs k};  (-19,-12)*{\scs i};
    (-46,22)*{\scs \ell}; (-33,22)*{\scs i}; (-10,22)*{\scs j};
    (8,18)*{\scs j}; (-20,7)*{\scs \ell};
 \endxy}
 \vcenter{ \xy 0;/r.16pc/: (-14,8)*{\xybox{
 (0,-10)*{}; (-16,10)*{} **\crv{(0,-6) & (-16,6)}?(.5)*\dir{};
 (-16,-10)*{}; (-8,10)*{} **\crv{(-16,-6) & (-8,6)}?(1)*\dir{}+(.1,0)*{\bullet};
  (-8,-10)*{}; (0,10)*{} **\crv{(-8,-6) & (-0,6)}?(.6)*\dir{}+(.2,0)*{\bullet}?
  (1)*\dir{}+(.1,0)*{\bullet};
  (0,10)*{}; (-16,30)*{} **\crv{(0,14) & (-16,26)}?(1)*\dir{>};
 (-16,10)*{}; (-8,30)*{} **\crv{(-16,14) & (-8,26)}?(1)*\dir{>};
  (-8,10)*{}; (0,30)*{} **\crv{(-8,14) & (-0,26)}?(1)*\dir{>}?(.6)*\dir{}+(.25,0)*{\bullet};
 (-18,-8)*{\scs i}; (-6,-8)*{\scs i}; (2,-8)*{\scs k};
   }};
 (5,0)*{\lambda};
 \endxy}$
\caption{An example of a diagram representing a 2-morphism in $\Ucat(\mf{g})$ from
$
\cal{E}_i^2\cal{E}_{\ell}\cal{F}_i \cal{F}_k \cal{F}_i \cal{E}_i\cal{E}_i \cal{E}_k \onel
$
to $
 \cal{E}_{\ell} \cal{E}_i \cal{F}_j \cal{F}_j \cal{F}_i \cal{E}_j \cal{E}_i \cal{E}_k \cal{E}_i\onel\la t\ra
$.  The degree $\la t \ra$ of this 2-morphism is determined by the relationship between the simple roots $i,j,k, \ell$ in the Cartan datam associated to $\mf{g}$.} \label{fig_Ucat-diagram}
\end{figure}

Extending the split Grothendieck group decategorification map for linear categories, it is possible to define the split Grothendieck group $K_0$ of a linear 2-category.  It was shown in \cite{KL3} for $\mf{g}=\mf{sl}_n$ that applying this notion of decategorification to the Karoubi envelope $\UcatD(\mf{sl}_n)$ of the 2-category $\Ucat(\mf{sl}_n)$ produces the integral idempotented form of the quantum enveloping algebra $\mathbf{U}_q(\mf{sl}_n)$.  Hence, with this notion of decategorification the 2-category $\UcatD(\mf{sl}_n)$ can be viewed as a categorification of the quantum group $\mathbf{U}_q(\mf{sl}_n)$.  The $n=2$ case of this result was proven in~\cite{Lau1} and the case of general $\mf{g}$ appears in \cite{Web5}.  For a closely related construction see ~\cite{Rou2}.

More recently it has been shown that using a 2-categorical analog of the trace as the decategorification map produces some very rich structures~\cite{BHLZ,Marko,BGHL}.   The trace relation can be realized diagrammatically by considering the diagrams for $\Ucat(\mf{g})$ on an annulus.
\[
 \vcenter{ \xy 0;/r.16pc/:
 (8,0)*{
 \hackcenter{\begin{tikzpicture}
  \path[draw,blue, very thick, fill=blue!10]
   (-2,-.6) to (-2,.6) .. controls ++(0,1.85) and ++(0,1.85) .. (2,.6)
   to (2,-.6)  .. controls ++(0,-1.85) and ++(0,-1.85) .. (-2,-.6);
    \path[draw, blue, very thick, fill=white]
    (-0.25,0) .. controls ++(0,.35) and ++(0,.35) .. (0.25,0)
            .. controls ++(0,-.35) and ++(0,-.35) .. (-0.25,0);
\end{tikzpicture}}
 };
(10,8)*{\lambda}; (-15,12)*{\lambda'};
  (8,8)*{\lcap};
  (8,14)*{\xlcap};
 (8,-14)*{\xlcupef};
 (-8,0)*{\ecross};
 (20,0)*{\sfline};
 (28,0)*{\sfline};
 (8,-10)*{\lcupef};
(-2,-3)*{\scs i}; (-15,-3)*{\scs i};
 (-10,1)*{\bullet};
 \endxy}
\]
The invariance of the graphical calculus of $\Ucat(\mf{g})$ under planar deformations allows us to slide portions of the diagram around the annulus imposing the trace relation.

%
\subsection{Traces of categorified quantum groups}
%

In this article we hope to emphasize that the trace decategorification map $\Tr$ has various
advantages to $K_0$.
\begin{itemize}
  \item Using $\Tr$ we can work with the 2-category $\Ucat(\mf{g})$ rather than its Karoubi envelope, since $\Tr$ does not change under the passage to the Karoubi envelope.  This is a nice advantage as the passage to the Karoubi envelope often takes us out of the purely diagrammatic world.
\item Trace is defined for linear categories; in order for $K_0$ to be defined the category needs to be additive.
  \item The trace can have a richer structure than $K_0$ as is demonstrated by Theorems \ref{qqq} and \ref{thm_main}.
%
\end{itemize}

We outline the methods and tools used in studying $K_0$ and traces for categorified quantum groups.

%
\subsubsection{Trace and $K_0$ of $\UcatD$}
%

Just as the Euler characteristic and traced Euler characteristic agree for Khovanov homology,  we will explain that the 2-categories $\Ucat(\mf{g})$ simultaneously categorify the quantum group $\mathbf{U}(\mf{sl}_n)$ via the Grothendieck group decategorification functor $K_0$ and the trace decategorification functor $\Tr$.

\begin{thm}[\cite{KL3,BGHL}]
Let $\k$ be a field of characteristic $0$ and $\g=\fsl_n$.
Then there are isomorphisms
\[
 \Tr(\Ucat) \cong K_0(\UcatD) \otimes_\Z \k \cong \dot{\mathbf{U}}.
\]
and $\HH_i(\Ucat)=0$ for $i>0$.
\end{thm}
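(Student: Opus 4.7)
The plan is to split the theorem into three pieces and handle them in turn. The isomorphism $K_0(\UcatD)\cong \dot{\mathbf{U}}$ is the main result of \cite{KL3}, which I would cite directly; tensoring with $\k$ yields $K_0(\UcatD)\otimes_\Z \k$. Thus the substantive content is the isomorphism $\Tr(\Ucat)\cong K_0(\UcatD)\otimes_\Z \k$, together with the vanishing $\HH_i(\Ucat)=0$ for $i>0$.

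For the trace isomorphism, I would build a Chern-character map
\[
h \maps K_0(\UcatD)\otimes_\Z \k \longrightarrow \Tr(\UcatD) = \Tr(\Ucat),
\]
where the equality uses the invariance of $\Tr$ under passage to the Karoubi envelope (a general fact about the zeroth Hochschild--Mitchell homology of additive $\k$-linear categories, to be verified separately). On generators one sets $h([X]_\cong)=[\mathrm{id}_X]$; this is well-defined thanks to the additivity $[\mathrm{id}_{X\oplus Y}]=[\mathrm{id}_X]+[\mathrm{id}_Y]$ in $\Tr$. To show $h$ is an isomorphism, I would match bases. On the left, $K_0(\UcatD)\otimes_\Z\k$ has a $\k$-basis indexed by isomorphism classes of indecomposable 1-morphisms, identified by \cite{KL3} with an integral basis (e.g.\ Lusztig's canonical basis) of $\dot{\mathbf{U}}$. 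On the right, the hom-spaces in $\Ucat$ are controlled by KLR and nilHecke algebras together with bubble extensions; after using the bubble relations to rewrite mixed $\cal{E}\cal{F}$-composites in terms of $\cal{F}\cal{E}$-composites, one computes the commutator quotient of these hom-spaces and extracts a $\k$-basis of $\Tr(\Ucat)$ naturally parametrized by the same combinatorial data. One then checks that $h$ carries the basis on the left to this basis on the right.

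The hardest step will be the vanishing $\HH_i(\Ucat)=0$ for $i>0$. The strategy is to set up a spectral sequence reducing the computation to the higher Hochschild homology of the endomorphism algebras of $\Ucat$ at fixed weight. The nilHecke algebra $\NH_n$ is Morita equivalent to the ring of symmetric polynomials $\k[x_1,\dots,x_n]^{S_n}$, which is smooth and hence has no Hochschild homology above its Krull dimension; the KLR algebras and the bubble-extended versions relevant here inherit analogous acyclicity in positive Hochschild degrees over a field of characteristic zero. The main obstacle is organizing this reduction across the infinite weight lattice: one must construct a cofiltration of $\Ucat$ compatible with the Hochschild differential, check that the associated graded pieces have vanishing higher Hochschild--Mitchell homology, and verify that no extension terms survive. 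Combining the three steps yields the claimed chain of isomorphisms together with the vanishing statement.
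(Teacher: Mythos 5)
The step in your proposal that fails is the vanishing argument for $\HH_i$, $i>0$, and it fails for a reason your own wording already concedes: smoothness does not give acyclicity in positive degrees. By Hochschild--Kostant--Rosenberg, a smooth commutative $\k$-algebra of Krull dimension $n$ such as $\sym_n \cong \k[x_1,\dots,x_n]^{S_n}$ (itself a polynomial ring) has $\HH_i \cong \Omega^i_{\sym_n/\k} \neq 0$ for all $0 < i \leq n$; ``no Hochschild homology above the Krull dimension'' leaves exactly the range $1 \leq i \leq n$ nonvanishing, contradicting the statement you are trying to prove. So if the diagonal algebras appearing in your spectral sequence really were the nilHecke/KLR endomorphism algebras (Morita equivalent to $\sym_n$ and its bubble-extended generalizations), the theorem would be false. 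The point the paper exploits is that in $\U$ the 2-morphisms are degree \emph{preserving}: the honest endomorphism ring of each indecomposable 1-morphism is only the degree-zero part of a positively graded, graded-local ring, hence is $\k$ --- not $\sym_n$. Concretely, the paper deduces from \cite{VV} and \cite{Web4} that the indecomposables categorify the canonical basis and form a strongly upper-triangular basis of $\Udot$ in the sense of Remark~\ref{rem-overk}: endomorphisms are $\k$, and almost orthogonality forces all homs between distinct basis objects into strictly positive degree, so any cycle of nonzero morphisms strictly increases the grading shift and upper triangularity holds. Proposition~\ref{r24} (via Lemma~\ref{r19}) then collapses the whole Hochschild--Mitchell complex onto the diagonal copies of $\k$, yielding $\Tr(\U) \cong \k B \cong K_0(\Udot)\otimes_\Z \k$ \emph{and} $\HH_i(\U)=0$ for $i>0$ in one stroke; no cofiltration, extension analysis, or smoothness input is needed, and in fact the smoothness route computes the wrong groups.

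Your middle step has a related gap. The proposed direct computation of the commutator quotient of the 2-hom spaces, with a basis matched to the canonical basis, is precisely the $\fsl_2$ strategy of \cite{BHLZ}, and the paper explicitly notes that it does not extend: no explicit description of the canonical basis of $\UA$ is available for $n>2$, so there is no concrete ``combinatorial data'' to parametrize your putative basis of $\Tr(\Ucat)$. Moreover, unless the degree-zero restriction is used carefully, the commutator quotient retains all the positive-degree bubble classes --- this is exactly what happens in $\Tr(\U^\ast)$, where one obtains the much larger current algebra rather than $K_0 \otimes_\Z \k$ --- so the computation must be organized through the grading, which is what the strongly-upper-triangular formalism does automatically. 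Your opening moves (citing \cite{KL3} for $K_0(\Udot)\cong \UA$, the Chern character $h$, and invariance of $\Tr$ under the Karoubi envelope, Proposition~\ref{r1}) are correct and agree with the paper.
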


Again, this result is somewhat surprising since these two decategorification procedures need not agree in general.  We will explain that there is a large class of examples closely connected to geometric settings where this phenomenon is likely to occur.

Very recently a direct connection was established between categorified quantum groups and $\mf{sl}_n$ link homology theories~\cite{LQR,QR}.  Working in an enhanced foam category introduced by Christian Blanchet, one can realize all the foam relations in a Bar-Natan like setting as arising from the relations in the categorified quantum group $\Ucat(\mf{sl}_n)$ via so called foamation functors first studied in \cite{MackFoam}.   Once translated through these foamation functors, the theorem above immediately implies that the
traced and usual Euler characteristics in the  $\mf{sl}_n$ link homology theories coincide.

\begin{cor}
In $\fsl_n$ link homology theories, traced and usual Euler characteristics do coincide.
\end{cor}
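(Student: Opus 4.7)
My plan is to deduce the corollary by transporting the isomorphism $\Tr(\Ucat)\cong K_0(\UcatD)\otimes_\Z \k$ through the foamation functors of \cite{LQR,QR,MackFoam}. The key observation is that the $\fsl_n$ link homology of a link $L$ is constructed as a bounded complex $C^\bullet(L)$ in an enhanced foam category $\cal{F}_n$, and this complex is the image under a foamation 2-functor $\Phi\maps \Ucat(\fsl_n)\to \cal{F}_n$ (or a module 2-category thereof) of a corresponding complex $\tilde C^\bullet(L)$ built from the Chevalley generators $\cal E_i,\cal F_i$ on a highest weight object. So both flavors of Euler characteristic of $C^\bullet(L)$ can be read off from $\tilde C^\bullet(L)$.

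First, I would spell out compatibility of $\Phi$ with the two decategorifications. The functor $\Phi$ is additive (in fact $\k$-linear), so it induces a map $K_0(\Phi)\maps K_0(\UcatD(\fsl_n))\to K_0(\cal{F}_n)$; likewise, since $\Phi$ preserves composition and $\k$-linear structure, it induces $\Tr(\Phi)\maps \Tr(\Ucat(\fsl_n))\to \Tr(\cal{F}_n)$. The Euler characteristics satisfy $\chi(\Phi(\tilde C^\bullet(L)))=K_0(\Phi)(\chi(\tilde C^\bullet(L)))$ and $\chi_{\tr}(\Phi(\tilde C^\bullet(L)))=\Tr(\Phi)(\chi_{\tr}(\tilde C^\bullet(L)))$, since both $\chi$ and $\chi_{\tr}$ are defined termwise from classes of objects, respectively of identity endomorphisms, which are preserved by any additive functor.

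Second, I would invoke the preceding theorem, which gives a commutative diagram
\[
\begin{CD}
\Tr(\Ucat(\fsl_n)) @>\cong>> K_0(\UcatD(\fsl_n))\otimes_\Z \k \\
@VV\Tr(\Phi)V @VVK_0(\Phi)\otimes \mathrm{id}V \\
\Tr(\cal{F}_n) @>>> K_0(\cal{F}_n)\otimes_\Z \k
\end{CD}
\]
in which the top arrow sends $[1_x]$ to $[x]_{\cong}\otimes 1$ for every 1-morphism $x$, because under the identification with $\dU(\fsl_n)$ both classes correspond to the same element (the class of the 1-morphism in the Lusztig basis). Therefore $\chi_{\tr}(\tilde C^\bullet(L))$ and $\chi(\tilde C^\bullet(L))$ are identified along the top row, and applying the two vertical arrows yields $\chi_{\tr}(C^\bullet(L))=\chi(C^\bullet(L))\otimes 1$ in $\Tr(\cal{F}_n)\cong K_0(\cal{F}_n)\otimes_\Z\k$, the common value being the quantum $\fsl_n$ link invariant of $L$.

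The main obstacle is the middle step: verifying that the induced map $\Tr(\Ucat(\fsl_n))\to\Tr(\cal{F}_n)$ genuinely fits into the commutative square above and that the horizontal identifications really send classes of identity 2-morphisms to classes of 1-morphisms. Once the Chern-character-type comparison of the previous theorem is in hand for both $\Ucat$ and for $\cal{F}_n$ (the latter following from the foamation presentation, cf.\ \cite{LQR,QR}), the rest is formal bookkeeping with additive functors and bounded complexes.
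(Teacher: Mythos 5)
Your overall route --- pushing both decategorifications through the foamation functor and comparing them upstairs in $\Ucat(\fsl_n)$ via Theorem \ref{K0n} --- is exactly the strategy of the paper's proof of Corollary \ref{chi}, and your first step (termwise compatibility of $\chi$ and $\chi_{\tr}$ with the induced maps $K_0(\Phi)$ and $\Tr(\Phi)$) is correct. The gap is in your square and its punchline. The bottom arrow $\Tr(\cal{F}_n) \to K_0(\cal{F}_n)\otimes_\Z\k$ does not exist in general: the only natural comparison is the generalized Chern character $h \col K_0 \Rightarrow \Tr$, which goes the \emph{other} way, and your closing identification $\Tr(\cal{F}_n) \cong K_0(\cal{F}_n)\otimes_\Z \k$ is precisely what is not available. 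Theorem \ref{K0n} is a statement about $\Ucat(\fsl_n)$ itself, not about the foam category; neither this paper nor \cite{LQR,QR} establishes a Chern-character isomorphism for $\cal{F}_n$. So the step you yourself flag as ``the main obstacle'' and then defer to the foamation presentation is a genuine missing ingredient, not formal bookkeeping.

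The paper closes exactly this hole without ever needing an isomorphism downstairs. It sets $\C := F(\Udot(\fsl_m))$, the \emph{image} of the foamation functor (where the link complexes live), and uses the Chern character $h_{\C} \maps K_0(\C) \to \Tr(\C)$ in its natural direction. Naturality of $h$ gives $h_{\C} \circ K_0(F) = \Tr(F) \circ h_{\Udot}$, and since $h$ sends $[x]_{\cong} \mapsto [1_x]$ termwise, one obtains $h_{\C}(\chi(C^{\bullet})) = \chi_{\tr}(C^{\bullet})$ for free --- this is the easy (``surjective'') half, and it is all your square can deliver once the bottom arrow is reversed. The substantive half is injectivity of the comparison on the relevant classes, which the paper obtains upstairs: $K_0(\Udot)$ is freely generated by indecomposables and $h_{\Udot}$ becomes an isomorphism after tensoring with a field of characteristic $0$ (Theorem \ref{K0n}), hence $h_{\Udot}$ is injective; since any linear functor commutes with $K_0$ and $\Tr$, both downstairs Euler characteristics are the images under $K_0(F)$, resp.\ $\Tr(F)$, of classes already identified injectively upstairs. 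If you replace your bottom arrow by $h_{\C}$ (on $\C$, not on all of $\cal{F}_n$) and substitute this injectivity argument for the unproven isomorphism, your proof becomes the paper's.
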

%
\subsubsection{Trace and $K_0$ of $\Ucat^{\ast}$}
%

By allowing homogeneous, but not necessarily grading preserving 2-morphisms in $\Ucat(\mf{g})$ we can define a version of the 2-category with larger 2-hom spaces.  Let $\Ucat^{\ast}(\mf{g})$ denote the 2-category with the same objects and one morphisms as $\Ucat(\mf{g})$, but with 2-hom spaces between one morphisms $X \onel$ and $Y\onel$ given by
\[
 \Ucat^{\ast}(\mf{g})(X\onel, Y\onel) := \bigoplus_{t\in \Z} \Ucat(\mf{g})(X\onel, Y\onel \la t \ra).
\]
One can alternatively think of $\Ucat^{\ast}$ as the result of adding isomorphisms $X\onel \to X\onel \la t\ra $ for all $t\in \Z$.  This essentially kills the grading making Grothendieck ring $K_0(\Ucat^{\ast})$ only a $\Z$-module, rather than a $\Z[q,q^{-1}]$-module.  While this version of the 2-category is less interesting from a $K_0$ perspective, it has interesting consequences for the trace.

The trace of the graded version $\U^*$ of the categorified quantum $\fsl_2$ is isomorphic
to  the idempotent form of the current algebra $\bfU(\fsl_2[t]):=\bfU(\fsl_2\otimes \Q[t])$ \cite{BHLZ}. In \cite{BHLZ}
it is also shown that for the integral version $\U_\Z(\fsl_2)$ ($\k$ is replaced by $\Z$ in the definition), we have $$\Tr(\U_\Z)=K_0(\Udot_\Z)=K_0(\Udot)\, .$$
It is interesting that one loses the $q$ by considering the graded category $\U^{\ast}$, but one sees the positive part of the loop algebra $\mf{sl}_2 \otimes \Q[t,t^{-1}]$ appear.

We expect that these results generalize to finite type simply-laced Kac-Moody algebras $\mf{g}$.  However, to ease the exposition in this article, we set $\mf{g}=\fsl_n$ and describe evidence for the following conjecture in section~\ref{current-sln}.

\begin{conjecture}
For $\g=\fsl_n$, we have
$$K_0(\Udot_\Z)=
\Tr(\U_\Z)=K_0(\Udot)$$
and $\Tr(\U^\ast_\Z)$ coincides with the integral idempotented version of the current algebra
$\bfU(\fsl_n[t]) $ defined in Section \ref{current-sln}.
\end{conjecture}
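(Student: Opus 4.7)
The plan is to proceed in two stages, mirroring the structure of the $\fsl_2$ result of \cite{BHLZ}. For the first chain of equalities $K_0(\Udot_\Z)=\Tr(\U_\Z)=K_0(\Udot)$, I would assemble a commutative triangle whose edges are: (a) the natural Chern character map $\Tr(\U_\Z) \to K_0(\Udot_\Z)$ sending the class $[f]$ of an endomorphism $f\maps X\to X$ to $[X,f]$ after passing through the Karoubi envelope; (b) the base-change map $K_0(\Udot_\Z) \to K_0(\Udot)$ coming from the categorification theorem of \cite{KL3,Web5}; and (c) a surjection from Lusztig's integral idempotented form $\UA$ onto $\Tr(\U_\Z)$ built from diagrammatic generators. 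Surjectivity of (c) follows from the annular graphical calculus: every annular diagram reduces to a standard form via the nilHecke/KLR relations together with bubble moves. Injectivity is obtained by pairing the Chern character against a faithful 2-representation on the cohomology of partial flag varieties, reproducing the $\fsl_n$ analogue of the argument used for $\fsl_2$.

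For the identification $\Tr(\U^\ast_\Z) \cong \bfU(\fsl_n[t])_\Z$, the first task is to construct a homomorphism from the integral current algebra into the trace. For each node $i$ and each $r\ge 0$, produce candidate classes representing the generators $E_{i,r}\onel$, $F_{i,r}\onel$, and $h_{i,r}\onel$: an upward (respectively downward) $i$-colored strand carrying $r$ dots, and a dotted counterclockwise bubble, all drawn on the annulus. Using the 2-morphism relations of $\Ucat$ together with ambient isotopy invariance on the annulus, one verifies (i) the mutual commutativity of $\{E_{i,r}\}_{r\ge 0}$ for fixed $i$; (ii) the quantum Serre relations for pairs $i,j$ with $a_{ij}=-1$, where dots slide through $ij$-crossings up to correction terms that vanish modulo the $\Udot$-Serre relation together with $t$-linear contributions giving the current-algebra Serre relation; and (iii) the mixed commutators $[E_{i,r}, F_{j,s}]$, which unfold via bubble slides into the expected combination of $h_{i,r+s}$-type classes with weight $a_{ij}$. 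This yields a surjective homomorphism $\bfU(\fsl_n[t])_\Z \twoheadrightarrow \Tr(\U^\ast_\Z)$.

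Injectivity is the heart of the argument. I would reduce every annular diagram to a PBW-type normal form by ordering upward strands (via KLR relations), pulling all dots to the top, eliminating all but a minimal system of caps and cups of each orientation, and collecting the resulting bubble polynomial into the $h_{i,r}$. Matching this spanning set against a PBW basis of $\bfU(\fsl_n[t])_\Z$ reduces the question to showing that the candidate classes are linearly independent, which I would establish by evaluating on a faithful family of 2-representations (for instance on equivariant cohomology of partial flag varieties) and invoking computations of the Hochschild--Mitchell homology of the relevant cyclotomic KLR algebras.

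The main obstacle is this injectivity step. For $\fsl_2$ the underlying Hochschild calculation reduces to the nilHecke algebra, whose $\HH_0$ is explicitly known. For $\fsl_n$ one must control $\HH_0$ of cyclotomic KLR algebras of arbitrary highest weight and rule out the appearance of extra classes beyond those coming from $\bfU(\fsl_n[t])_\Z$. A related technical difficulty is integrality: normalizing bubble classes over $\Q$ introduces denominators whose cancellation in $\Z$ relies on the fake-bubble expansions of \cite{KL3}, and extending this cancellation throughout the $\fsl_n$ PBW normal form requires an $\fsl_n$-analogue of the rigidity arguments developed in \cite{BHLZ}.
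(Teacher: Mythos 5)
You have written a plan for an open problem, not a proof: in the paper this statement is explicitly a \emph{conjecture}, and the paper itself supplies only partial evidence, namely the homomorphism $\bfU(\fsl_n[t]) \to \Tr(\U^{\ast}(\fsl_n))$ of Proposition \ref{prop-cur} (verified generator-by-generator in the annular calculus), the complete $\fsl_2$ case (Theorem \ref{qqq}, from \cite{BHLZ}), and the characteristic-zero field case of the $K_0$/$\Tr$ comparison (Theorem \ref{K0n}), whose full argument is deferred to \cite{BGHL}. So your proposal must be judged as a research program, and its decisive steps are precisely the ones you concede are not carried out: the injectivity of the map from the integral current algebra, and the integral control of $\HH_0$. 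Acknowledging the obstacle does not close it; as written, the proposal proves nothing beyond what Proposition \ref{prop-cur} already establishes.

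Beyond the admitted gap there are two concrete errors. First, your edge (a) is not a well-defined map: you propose $\Tr(\U_\Z) \to K_0(\Udot_\Z)$, $[f] \mapsto [X,f]$, but $(X,f)$ is an object of the Karoubi envelope only when $f^2 = f$, and no natural map in that direction exists in general --- the Chern character of the paper goes the other way, $h_{\C} \maps K_0(\C) \to \Tr(\C)$, $[x]_{\cong} \mapsto [1_x]$, and $\Tr$ is typically strictly larger than $K_0$ (e.g.\ $\Tr(\NH_n) \cong \sym_n$ while $K_0(\NH_n) \cong \Z$), so your commutative triangle cannot be assembled as stated. The paper's intended mechanism for the first chain of equalities is entirely different: exhibit a strongly upper-triangular basis of $\Udot_\Z$ (as done for $\fsl_2$ in Theorem \ref{K0}), whereupon Proposition \ref{r24} computes $K_0$, $\Tr$, and all higher Hochschild--Mitchell homology simultaneously; for $\fsl_n$ this structure is currently known only over a field of characteristic zero, via \cite{VV,Web4}. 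Second, your candidate classes for the Cartan generators $h_{i,r}$ are single dotted bubbles, but single bubbles correspond to complete or elementary symmetric functions; the classes that actually satisfy $[E_{i,r},F_{i,s}] = H_{i,r+s}$ are the power-sum combinations $p_{i,r}(\lambda)$ of \eqref{eq_defpil}, and the homomorphism moreover requires the sign normalization $(-1)^{(i+1)r}$ and a choice of scalars with $t_{ij}^{-1}t_{ji}=1$ --- with your assignment the mixed commutator computation in Proposition \ref{prop-cur} fails. Also, the Serre-type relation to be checked is the current-algebra relation \textbf{C6}, not the quantum Serre relation.
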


In this paper we  construct a homomorphism $\bfU(\fsl_n[t]) \to \Tr(\Ucat^{\ast}(\fsl_n))$ utilizing the graphical
calculus and verifying relations directly  (see Proposition \ref{prop-cur}).  This implies that
the center of objects
$Z(\Udot):=\bigoplus_{\lambda \in X}\End_{\Udot}(\onel)$ of $\Udot$ (see section~\ref{action} for more details) is a module over the current algebra $\bfU(\fsl_n[t])$.
An immediate consequence of this result is the following theorem.

\begin{thm}\label{2-reps}
Any 2-representation $\U(\fsl_n) \to \K$ gives rise to an action of the current algebra
$\bfU(\fsl_n[t])$ on $\Tr(\K)$ and the
center $Z(\K)$ of objects in $\K$.
\end{thm}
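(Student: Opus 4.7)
The plan is to derive the theorem as an essentially formal consequence of two ingredients already in place: functoriality of the trace decategorification, and Proposition~\ref{prop-cur}, which supplies the homomorphism $\bfU(\fsl_n[t]) \to \Tr(\U^{\ast}(\fsl_n))$.

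First I would recall that $\Tr$ is functorial on $\k$-linear 2-categories: any $\k$-linear 2-functor $G \colon \mathcal{C} \to \mathcal{D}$ induces a $\k$-linear map $\Tr(G) \colon \Tr(\mathcal{C}) \to \Tr(\mathcal{D})$ which is compatible with the algebra structure coming from horizontal composition. Given a 2-representation $F \colon \U(\fsl_n) \to \K$, I would extend it by summing over grading shifts to an ungraded 2-functor $F^{\ast} \colon \U^{\ast}(\fsl_n) \to \K$, and apply trace functoriality to obtain $\Tr(F^{\ast}) \colon \Tr(\U^{\ast}(\fsl_n)) \to \Tr(\K)$. Composing with the current algebra homomorphism of Proposition~\ref{prop-cur} then yields
\[
\bfU(\fsl_n[t]) \;\longrightarrow\; \Tr(\U^{\ast}(\fsl_n)) \;\longrightarrow\; \Tr(\K),
\]
which is the desired action of the current algebra on $\Tr(\K)$.

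For the action on the center $Z(\K) = \bigoplus_{\lambda} \End_{\K}(\onel)$, I would exploit the fact that the generators constructed in Proposition~\ref{prop-cur} are realized diagrammatically as bubbles and related endomorphisms of identity 1-morphisms; thus the homomorphism $\bfU(\fsl_n[t]) \to \Tr(\U^{\ast}(\fsl_n))$ factors through the natural map $Z(\U^{\ast}) \to \Tr(\U^{\ast})$. Since any 2-functor sends identity 1-morphisms to identity 1-morphisms, $F^{\ast}$ restricts to a linear map $Z(\U^{\ast}) \to Z(\K)$, yielding the desired current algebra action on $Z(\K)$ by composition.

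The main obstacle is not the formal assembly, which is brief, but rather verifying that the current algebra generators genuinely admit bubble-type lifts in $Z(\U^{\ast})$ before being projected to $\Tr(\U^{\ast})$. This requires a close look at the explicit diagrammatic construction underlying Proposition~\ref{prop-cur}, modeled on the $\fsl_2$ case of \cite{BHLZ}. Once these lifts are made explicit, the compatibility with $F^{\ast}$ and the descent to $Z(\K)$ follow immediately from 2-functoriality.
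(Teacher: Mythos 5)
Your first step is exactly the paper's argument: the paper likewise obtains the action on $\Tr(\K)$ by extending along $\U^{\ast}$ and composing the homomorphism of Proposition~\ref{prop-cur} with the map $\Tr(\U^{\ast}(\fsl_n)) \to \Tr(\K)$ induced by functoriality of the trace. The route to the action on $Z(\K)$, however, has a genuine gap: the homomorphism $\bfU(\fsl_n[t]) \to \Tr(\U^{\ast}(\fsl_n))$ does \emph{not} factor through the natural map $Z(\U^{\ast}) \to \Tr(\U^{\ast})$, and no ``bubble-type lifts'' of the $E$- and $F$-generators exist in $Z(\U^{\ast})$. The generators $\mathsf{E}_{i,r}1_{\lambda}$ and $\mathsf{F}_{j,s}1_{\lambda}$ of Proposition~\ref{prop-cur} are trace classes of dotted identity 2-morphisms of the 1-morphisms $\cal{E}_i\onel$ and $\cal{F}_j\onel$: open diagrams consisting of a single dotted strand, which only become closed (bubble-like) after being wrapped around the annulus, i.e.\ after passing to $\Tr$. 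Concretely, in the linear category $\Tr(\U^{\ast})$ the class $\mathsf{E}_{i,r}1_{\lambda}$ is a morphism from the object $\lambda$ to the object $\lambda+\alpha_i$, whereas the image of $Z(\U^{\ast})$ consists only of endomorphism classes $\lambda \to \lambda$. For the same reason the action on $Z(\K)$ cannot be given by pushing forward central elements and multiplying: $x^{\pm}_{i,r}$ must move between the weight components $Z(\lambda)$ and $Z(\lambda \pm \alpha_i)$, and only the Cartan generators $\xi_{i,r}$ act by multiplication by central elements (the $p_{i,r}(\lambda)$).

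What replaces your factorization is the paper's Proposition~\ref{thm_center-action}: in a cyclic 2-category (here one uses the choice of scalars $t_{ij}=t_{ji}=1$ so that the calculus is isotopy-invariant) the trace acts on the center of objects by ``cutting the annulus'' --- the class $[f]$ of $f \maps \onelp x \onel \To \onelp x \onel$ sends a closed diagram $D \in Z(\lambda)$ to the element of $Z(\lambda')$ obtained by inserting $D$ inside the annular diagram for $[f]$ and then forgetting the annulus. This partial-trace operation is built from the cup/cap (biadjunction) 2-morphisms, and since a 2-representation $\Psi$ transports exactly this duality data and its relations into $\K$, the same construction makes $Z(\K)$ a module over $\Tr(\U^{\ast}(\fsl_n))$; composing with Proposition~\ref{prop-cur} then gives the current algebra action, with $x^{+}_{i,r}$ acting by encircling an element of $Z(\lambda)$ with a dotted $i$-labelled bubble (Theorem~\ref{thm_main}). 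Your reduction of the $\Tr(\K)$ part to 2-functoriality survives unchanged; the center part needs this module structure of the center over the trace in place of the nonexistent lift to $Z(\U^{\ast})$.
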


This theorem is proven in Section \ref{action}. Let us discuss one implication of this result.   Brundan made the surprising discovery that one could define an action of the Lie algebra $\hat{\mf{g}}:=\mf{gl}_{\infty}(\mathbb{C})$ on the center $Z(\cal{O}) = \bigoplus_{\nu} Z(\cal{O}_{\nu})$ of all integral blocks $\cal{O}_{\nu}$ of category $\cal{O}$~\cite{Brundan}.
In this action, the Chevalley generators of $\hat{\mf{g}}$ act as certain trace maps associated to canonical adjunction maps between special translation functors that arise from tensoring with a $\mf{g}$-module and its dual.
In \cite{BO} it is explained that this action is closely connected to Ginzburg's geometric construction of representations of the general linear group~\cite{Gin}.
Theorem \ref{2-reps} provides a context  for understanding this surprising action by traces on category $\cal{O}$.  Indeed, Brundan's action is part of a broad phenomenon that occurs within the context of higher representation theory:  any categorified representation
of the quantum group $\mathbf{U}_q(\mf{sl}_n)$ immediately gives rise to an action of $\bfU(\mf{sl}_n[t])$ on the center of objects in the 2-representation via categorical traces. In particular,
an action of $\UcatD(\mf{sl}_n)$ on graded category $\cal{O}$ would automatically imply and generalize Brundan's result.  More details about this action will be given in \cite{BGHL}.

%
\subsubsection{Organization of this paper}
%
This paper is organized as follows. After recalling the
 diagrammatics for the trace  in pivotal categories, taking as example
 $\cat{Vect}_\k$,  we define the trace functor for linear and additive 1- and 2-categories and study its properties.
In Section 3 we recall the notion of (strongly) upper-triangular category from \cite{BHLZ}, which is
particularly convenient for computing all Hochschild--Mitchell homology groups.
In the next sections  different versions of the categorified
 quantum  $\fsl_n$ are defined and their decategorifications by means
of $K_0$ and $\Tr$ are compared.
The last chapter is devoted to the proof of Theorem \ref{2-reps}.

To make this  paper more  accessible for graduate students,
each section is supplied with many examples, exercises, pictures and references to textbooks.
Places where special knowledge  is required
are marked and can be ignored in the first reading.

\bigskip
\noindent {\bf Acknowledgments:}
A.B. and Z.G. were  supported by Swiss National Science Foundation under Grant PDFMP2-141752/1.
K.H. was partially supported by JSPS Grant-in-Aid for Scientific Research (C) 24540077.
A.D.L  was partially supported by NSF grant DMS-1255334 and by the John Templeton Foundation.
A.D.L is extremely grateful to Mikhail Khovanov for sharing his insights and vision about higher representation theory.
Some of the ideas and calculations appearing in this article were done as part of this collaboration.  He is also grateful to Arun Ram for helpful decategorification discussions.
A.B. would like to thank Benjamin Cooper  for helpful conversations.

%
\section{Elementary notions of trace}
%

%
\subsection{Traces of linear maps}
%

In this section we recall the definition and properties of the trace
for any linear endomorphism. Throughout this paper $\k$ is assumed to be a field.

The trace of an $n \times n$ matrix $M=\{M_{ij}\}_{1 \leq i,j,\leq n}$ is the sum of its diagonal entries $\sum_{i} M_{ii}$.
Given two square matrices
$A,B$, the trace satisfies the {\em trace property}
\begin{equation} \label{trace-property}\tr(AB)=\tr(BA)\, .
\end{equation}

\begin{exercise}\label{ex-1} Show that
 property \eqref{trace-property} uniquely characterize the trace
in the following sense:
Two linear functions on the space of matrices satisfying
\eqref{trace-property} are proportional. Hint: any non-diagonal elementary
matrix   can be written as a commutator, i.e. $E_{ij}=[E_{ii}, E_{ij}]$.
\end{exercise}

More generally, for any finite-dimensional $\k$-vector space $V$, trace is a linear map
 $$ \tr: \End_{\k}(V) \to \k$$
such that the {\it trace relation} holds:
$$
\tr (fg)=\tr (gf)\quad\text{for any}\quad f,g \in \End_{\k}(V)\, .$$

\begin{exercise}
Show that this (basis independent) definition of the trace coincides with the usual one, where
 the trace of a matrix representing $f$ in some basis is taken.
Hint: use Exercise \ref{ex-1}.
\end{exercise}

As a simple  invariant of any linear endomorphism, the trace has a variety of
important applications. Let us just mention few of them.
\begin{itemize}
 \item  For any representation $\phi: G\to {\rm Aut}_{\k}(V)$ of a group $G$,
the character $\chi_{\phi} \maps G \to \k$ of the representation $\phi$ is the function sending each group element $g\in G$ to the trace of $\phi(g)$.  The character of a representation carries much of the essential information about a representation.  In particular, for $G$ a finite group and $\k=\mathbb{C}$, the complex representations of $G$ are determined up to isomorphism by their characters.


 \item  A Lie algebra $\g$ defined over $\k$ acts on itself via the adjoint action sending $x \in \g$ to the endomorphism ${\rm ad}_x$ defined by ${\rm ad}_x(y)=[x,y]$ for all $y \in \g$.  When $\g$ is finite dimensional, there is a symmetric bilinear form (or Killing form) $B: \g \times \g \to \k$ defined by $B(x,y):= \tr({\rm ad}_x{\rm ad}_y)$.  The non-degeneracy of this form is used to determine if a Lie algebra is semisimple.

\item In topology Markov trace is used to construct link invariants out of
representations of braid groups.
\end{itemize}

%
\subsubsection{Diagrammatics for the trace}\label{tr-diag}
%

The category $\cat{Vect}_{\k}$ has the structure of a pivotal monoidal category.   What this means is that this category admits a surprisingly rich diagrammatic calculus.  In this calculus, edges are labelled by finite dimensional vector spaces and planar diagrams  represent linear maps between the tensor product of vector spaces labelling the bottom strands to the tensor product of vector spaces labelling the top strands.
\[
\hackcenter{\begin{tikzpicture}
    \draw[very thick] (-1,1.75) [out=-90, in=150] to (0,.75);
    \draw[very thick] (1,1.75) [out=-90, in=30] to (0,.75);
    \draw[very thick] (-.5,-.25) [out=90, in=210] to (0,.75);
    \draw[very thick] (.5,-.25) [out=90, in=-30] to (0,.75);
    \draw[very thick] (0,1.75) -- (0,.75);
    \node[draw, thick, fill=blue!20,rounded corners=4pt,inner sep=3pt] () at (0,.75) {$f$};
    \node () at (.75,-.15) {$V_2$};
    \node () at (-.75,-.15) {$V_1$};
    \node () at (-1.25,1.6) {$W_1$};
    \node () at (1.25,1.6) {$W_2$};
    \node () at (.3,1.6) {$W_3$};
\end{tikzpicture}}
\qquad f \maps V_1 \otimes V_2 \longrightarrow W_1 \otimes W_2 \otimes W_3.
\]
In this graphical calculus the ground field $\k$ is depicted by the empty strand.  The ability to compose linear maps and to tensor linear maps is reflected in the graphical calculus by stacking the diagrams on top of each other and juxtaposing them side by side, respectively.

The ``pivotal" structure on the category $\cat{Vect}_{\k}$ of finite-dimensional vector spaces indicates the existence of certain diagrams in the graphical calculus.  To be concrete, fix a basis $\{ e_1, \dots , e_n\}$ of the vector space $V$.  Denote by $\{e_1^{\ast}, \dots, e_n^{\ast} \}$ the basis for the dual space $V^{\ast} = \Hom(V, \k)$, with $e_i^{\ast}(e_j) = \delta_{ij}$. Then there are linear maps
\[
 \hackcenter{\begin{tikzpicture}
    \draw [very thick] (0,.5) to (0,1) .. controls ++(0,.35) and ++(0,.35) .. (1,1) to (1,.5);
    \node[draw, thick, fill=blue!20,rounded corners=4pt,inner sep=3pt] () at (.5,1.25) {$ev$};
      \node () at (-.35,.65) {$V^{\ast}$};
        \node () at (1.25,.65) {$V$};
\end{tikzpicture}}
\qquad
\begin{array}{c}
  ev \maps V^{\ast} \otimes V \to \k \\
  e_i^{\ast} \otimes e_j \mapsto e_i^{\ast}(e_j)
\end{array}
\qquad \qquad
 \hackcenter{\begin{tikzpicture}
    \draw [very thick] (0,.5) to (0,0) .. controls ++(0,-.35) and ++(0,-.35) .. (1,0) to (1,.5);
    \node[draw, thick, fill=blue!20,rounded corners=4pt,inner sep=3pt] () at (.5,-.25) {$coev$};
     \node () at (-.35,.35) {$V$};
        \node () at (1.25,.35) {$V^{\ast}$};
\end{tikzpicture}}
\qquad
\begin{array}{c}
  coev \maps \k \to V\otimes V^{\ast} . \\
  1  \mapsto \sum_i e_i \otimes e_i^{\ast}
\end{array}
\]

\begin{exercise}
Prove that the maps $ev$ and $coev$ satisfy the following identities
\[
 (\Id_V \otimes ev) \circ (coev \otimes \Id_V) = \Id_V, \quad \text{and} \quad
 (ev \otimes \Id_{V^{\ast}})\circ (\Id_{V^{\ast}} \otimes coev) = \Id_{V^{\ast}}
\]
and depict these relations in the graphical calculus described above.
\end{exercise}

It is common practice in this diagrammatic calculus to simplify these diagrams
\[
 \hackcenter{\begin{tikzpicture}
    \draw [very thick] (0,.5) to (0,1) .. controls ++(0,.35) and ++(0,.35) .. (1,1) to (1,.5);
      \node () at (-.35,.65) {$V^{\ast}$};
        \node () at (1.25,.65) {$V$};
\end{tikzpicture}} \;\; :=\;\;
 \hackcenter{\begin{tikzpicture}
    \draw [very thick] (0,.5) to (0,1) .. controls ++(0,.35) and ++(0,.35) .. (1,1) to (1,.5);
    \node[draw, thick, fill=blue!20,rounded corners=4pt,inner sep=3pt] () at (.5,1.25) {$ev$};
      \node () at (-.35,.65) {$V^{\ast}$};
        \node () at (1.25,.65) {$V$};
\end{tikzpicture}}
\qquad \qquad
 \hackcenter{\begin{tikzpicture}
    \draw [very thick] (0,.5) to (0,0) .. controls ++(0,-.35) and ++(0,-.35) .. (1,0) to (1,.5);
     \node () at (-.35,.35) {$V$};
        \node () at (1.25,.35) {$V^{\ast}$};
\end{tikzpicture}} \;\; := \;\;
 \hackcenter{\begin{tikzpicture}
    \draw [very thick] (0,.5) to (0,0) .. controls ++(0,-.35) and ++(0,-.35) .. (1,0) to (1,.5);
    \node[draw, thick, fill=blue!20,rounded corners=4pt,inner sep=3pt] () at (.5,-.25) {$coev$};
     \node () at (-.35,.35) {$V$};
        \node () at (1.25,.35) {$V^{\ast}$};
\end{tikzpicture}}
\]
used to represent $ev$ and $coev$. Using the flip map in $\cat{Vect}_{\k}$
\[
 \hackcenter{\begin{tikzpicture}
    \draw [very thick] (0,0) to (1,1);
    \draw [very thick] (1,0) to (0,1);
     \node () at (-.25,.05) {$V$};
        \node () at (1.25,.05) {$W$};
        \node () at (-.25,.95) {$W$};
        \node () at (1.25,.95) {$V$};
\end{tikzpicture}}
\qquad
\begin{array}{rcc}
  \tau \maps V \otimes W &\to &W\otimes V  \\
   v \otimes w  &\mapsto  &w \otimes v,
   \end{array}
\]
we can also define other caps and caps
\[
 \hackcenter{\begin{tikzpicture}
    \draw [very thick] (0,.5) to (0,1) .. controls ++(0,.35) and ++(0,.35) .. (1,1) to (1,.5);
      \node () at (-.25,.65) {$V$};
        \node () at (1.35,.65) {$V^{\ast}$};
\end{tikzpicture}} \;\; :=\;\;
 \hackcenter{\begin{tikzpicture}
    \draw [very thick] (1,0) [out=90, in=-90] to (0,.7) to (0,1) .. controls ++(0,.35) and ++(0,.35) .. (1,1) to (1,.7) [out=-90, in=90] to (0,0);
    \node[draw, thick, fill=blue!20,rounded corners=4pt,inner sep=3pt] () at (.5,1.25) {$ev$};
      \node () at (-.25,.65) {$V^{\ast}$};
        \node () at (1.35,.65) {$V$};
\end{tikzpicture}}
\qquad \qquad
 \hackcenter{\begin{tikzpicture}
    \draw [very thick] (0,.5) to (0,0) .. controls ++(0,-.35) and ++(0,-.35) .. (1,0) to (1,.5);
     \node () at (-.25,.35) {$V^{\ast}$};
        \node () at (1.35,.35) {$V$};
\end{tikzpicture}} \;\; := \;\;
 \hackcenter{\begin{tikzpicture}
    \draw [very thick] (1,1) [out=-90, in=90] to (0,.3) to (0,0) .. controls ++(0,-.35) and ++(0,-.35) .. (1,0) to (1,.3) [out=90, in=-90] to (0,1);
    \node[draw, thick, fill=blue!20,rounded corners=4pt,inner sep=3pt] () at (.5,-.25) {$coev$};
     \node () at (-.35,.35) {$V$};
        \node () at (1.25,.35) {$V^{\ast}$};
\end{tikzpicture}}
\]

\begin{exercise}
Given a linear endomorphism $f:V\to V$ the following diagram
\[
\hackcenter{\begin{tikzpicture}
    \draw[very thick] (0,0) -- (0,1.5);
    \draw [very thick] (0,1.5) .. controls ++(0,.35) and ++(0,.35) .. (1,1.5) to (1,0)
    .. controls ++(0,-.35) and ++(0,-.35) .. (0,0);
     \node () at (-.25,.05) {$V$};
    \node[draw, thick, fill=blue!20,rounded corners=4pt,inner sep=3pt] () at (0,.75) {$f$};
\end{tikzpicture}}
\]
is multiplication by an element of the ground field $\k$.  Show that this element is exactly the trace of $f$.
\end{exercise}

Utilizing the evaluation and coevaluation map it is possible to dualize any map $f \maps V \to W$ to get a map $f^{\ast} \maps W^{\ast} \to V^{\ast}$.  Using these dual morphisms it is possible to slide linear maps through caps and cups
\[
\hackcenter{\begin{tikzpicture}
    \draw[very thick] (0,0) -- (0,1.5);
    \draw [very thick]  (1,1.5) to (1,0) .. controls ++(0,-.35) and ++(0,-.35) .. (0,0);
     \node () at (-.25,.05) {$V$};
       \node () at (-.35,1.25) {$W$};
       \node () at (1.35,1.25) {$V^{\ast}$};
    \node[draw, thick, fill=blue!20,rounded corners=4pt,inner sep=3pt] () at (0,.75) {$f$};
\end{tikzpicture}}
\;\; = \;\;
\hackcenter{\begin{tikzpicture}
    \draw[very thick] (0,0) -- (0,1.5);
    \draw [very thick]  (1,1.5) to (1,0) .. controls ++(0,-.35) and ++(0,-.35) .. (0,0);
     \node () at (1.35,.05) {$W^{\ast}$};
       \node () at (-.35,1.25) {$W$};
        \node () at (1.35,1.25) {$V^{\ast}$};
    \node[draw, thick, fill=blue!20,rounded corners=4pt,inner sep=3pt] () at (1,.75) {$f^{\ast}$};
\end{tikzpicture}}
\qquad \qquad
\hackcenter{\begin{tikzpicture}
    \draw[very thick] (0,0) -- (0,1.5);
    \draw [very thick] (0,1.5) .. controls ++(0,.35) and ++(0,.35) .. (1,1.5) to (1,0);
     \node () at (1.25,.1) {$V$};
     \node () at (-.35,1.25) {$W^{\ast}$};
     \node () at (1.25,1.25) {$W$};
    \node[draw, thick, fill=blue!20,rounded corners=4pt,inner sep=3pt] () at (1,.75) {$f$};
\end{tikzpicture}}
\;\; = \;\;
\hackcenter{\begin{tikzpicture}
    \draw[very thick] (0,0) -- (0,1.5);
    \draw [very thick] (0,1.5) .. controls ++(0,.35) and ++(0,.35) .. (1,1.5) to (1,0);
     \node () at (-.35,.1) {$W^{\ast}$};
     \node () at (-.35,1.25) {$V^{\ast}$};
     \node () at (1.35,1.25) {$V$};
    \node[draw, thick, fill=blue!20,rounded corners=4pt,inner sep=3pt] () at (0,.75) {$f^{\ast}$};
\end{tikzpicture}}
\]
Using these facts and the identification of $(V^{\ast})^{\ast}$ with $V$, it is not hard to verify the trace property:
\[
\hackcenter{\begin{tikzpicture}
    \draw[very thick] (0,0) -- (0,1.5);
    \draw [very thick] (0,1.5) .. controls ++(0,.35) and ++(0,.35) .. (1,1.5) to (1,0)
    .. controls ++(0,-.35) and ++(0,-.35) .. (0,0);
    \node[draw, thick, fill=blue!20,rounded corners=4pt,inner sep=3pt] () at (0,1.15) {$f$};
    \node[draw, thick, fill=blue!20,rounded corners=4pt,inner sep=3pt] () at (0,.35) {$g$};
\end{tikzpicture}}
\;\; = \;\;
\hackcenter{\begin{tikzpicture}
    \draw[very thick] (0,0) -- (0,1.5);
    \draw [very thick] (0,1.5) .. controls ++(0,.35) and ++(0,.35) .. (1,1.5) to (1,0)
    .. controls ++(0,-.35) and ++(0,-.35) .. (0,0);
    \node[draw, thick, fill=blue!20,rounded corners=4pt,inner sep=3pt] () at (0,1.15) {$g$};
    \node[draw, thick, fill=blue!20,rounded corners=4pt,inner sep=3pt] () at (0,.35) {$f$};
\end{tikzpicture}}
\]
In particular, for $f=1_V$, the identity endomorphism of $V$,
 $\tr(1_V)=\dim (V)$.  Here we are using the ``right" trace.  In a pivotal category one could also take the ``left" trace and these two traces need not coincide.

This pictorial presentation was extended to  the category
of colored ribbon graphs, graphically describing arbitrary ribbon categories such as the category of finite dimensional representation of quantum groups \cite{RT, Tur}.
This graphical calculus can also be seen as a specific instance of string diagrams in a 2-category~\cite{Street,js2,Lau3}.

%
\subsection{Hochschild homology and the trace of a ring} \label{subsec_HH0ring}
%

In this section we will explain how the Hochschild homology of a ring $R$ can be viewed as ``trace" of the ring $R$. Recall that for a ring $R$, the Hochschild homology $\HH_*(R)$ of $R$
can be defined as the homology of the Hochschild complex (see \cite{Mitchell,Loday}):
$$
C_{\bullet}=C_{\bullet}(R):\quad \quad \dots\longrightarrow C_n \xto{d_n} C_{n-1} \xto{d_{n-1}} \dots \xto{d_2} C_1 \xto{d_1} C_0 \longrightarrow 0,
$$
where $C_n(R)=R^{\otimes n+1}$ and
$$
d_n(a_0\otimes\dots\otimes
a_n):=\sum_{i=0}^{n-1}(-1)^ia_0\otimes\dots\otimes
a_ia_{i+1}\otimes\dots\otimes a_n+
(-1)^na_na_0\otimes a_1\otimes\dots\otimes a_{n-1}
$$
for $a_0,\ldots ,a_{n-1}\in R$.

\begin{exercise}
Show that the zeroth Hochschild homology of the ring $R$ is isomorphic to the quotient of $R$ by the commutator subgroup,
\[
\HH_0(R) \cong R/[R,R].
\]
In particular, $\HH_0(R) \cong R$ for any commutative ring $R$.
\end{exercise}

\begin{exercise}\label{mat}
If $\Mat_n(R)$ denotes the ring of $n \times n$ dimensional matrices with coefficients in $R$, show that
\[
\HH_0(\Mat_n(R) ) \cong \HH_0(R).
\] Hint: consider exercise~\ref{ex-1}.
\end{exercise}

%
\subsubsection{Diagrammatics for zeroth Hochschild homology}
%

Many interesting rings $R$ can be represented by diagrams in the plane modulo some local relations.  Multiplication in $R$ is represented by stacking diagrams on top of each other:
\[
\hackcenter{\begin{tikzpicture}
    \draw[very thick] (-.55,0) -- (-.55,1.5);
    \draw[very thick] (0,0) -- (0,1.5);
    \draw[very thick] (.55,0) -- (.55,1.5);
    \draw[fill=red!20,] (-.8,.5) rectangle (.8,1);
    \node () at (0,.75) {$a$};
\end{tikzpicture}}
\;\;
\circ \;\;
\hackcenter{\begin{tikzpicture}
    \draw[very thick] (-.55,0) -- (-.55,1.5);
    \draw[very thick] (0,0) -- (0,1.5);
    \draw[very thick] (.55,0) -- (.55,1.5);
    \draw[fill=red!20,] (-.8,.5) rectangle (.8,1);
    \node () at (0,.75) {$b$};
\end{tikzpicture}}
\;\; = \;\;
\hackcenter{\begin{tikzpicture}
    \draw[very thick] (-.55,0) -- (-.55,2.5);
    \draw[very thick] (0,0) -- (0,2.5);
    \draw[very thick] (.55,0) -- (.55,2.5);
    \draw[fill=red!20,] (-.8,.5) rectangle (.8,1);
    \draw[fill=red!20,] (-.8,1.5) rectangle (.8,2);
    \node () at (0,.75) {$b$};
    \node () at (0,1.75) {$a$};
\end{tikzpicture}}
\]
for $a,b \in R$.  Then $\HH_0(R)$ can be described by diagrams on the annulus, modulo the same local relations
as those for $R$ and the map
\[
 R \longrightarrow R/[R,R]
\]
is given on diagrams by
\[
\hackcenter{\begin{tikzpicture}
    \draw[very thick] (-.55,0) -- (-.55,1.5);
    \draw[very thick] (0,0) -- (0,1.5);
    \draw[very thick] (.55,0) -- (.55,1.5);
    \draw[fill=red!20,] (-.8,.5) rectangle (.8,1);
    \node () at (0,.75) {$a$};
\end{tikzpicture}}
\quad \mapsto
\quad
\hackcenter{\begin{tikzpicture}
      \path[draw,blue, very thick, fill=blue!10]
        (-2.3,-.6) to (-2.3,.6) .. controls ++(0,1.85) and ++(0,1.85) .. (2.3,.6)
         to (2.3,-.6)  .. controls ++(0,-1.85) and ++(0,-1.85) .. (-2.3,-.6);
        \path[draw, blue, very thick, fill=white]
            (-0.2,0) .. controls ++(0,.35) and ++(0,.35) .. (0.2,0)
            .. controls ++(0,-.35) and ++(0,-.35) .. (-0.2,0);
    \draw[very thick] (-1.65,-.7) -- (-1.65, .7).. controls ++(0,.95) and ++(0,.95) .. (1.65,.7)
        to (1.65,-.7) .. controls ++(0,-.95) and ++(0,-.95) .. (-1.65,-.7);
    \draw[very thick] (-1.1,-.55) -- (-1.1,.55) .. controls ++(0,.65) and ++(0,.65) .. (1.1,.55)
        to (1.1,-.55) .. controls ++(0,-.65) and ++(0,-.65) .. (-1.1, -.55);
    \draw[very thick] (-.55,-.4) -- (-.55,.4) .. controls ++(0,.35) and ++(0,.35) .. (.55,.4)
        to (.55, -.4) .. controls ++(0,-.35) and ++(0,-.35) .. (-.55,-.4);
    \draw[fill=red!20,] (-1.8,-.25) rectangle (-.4,.25);
    \node () at (-1,0) {$a$};
\end{tikzpicture}}
 \]

Notice that by sliding elements of $R$ around the annulus we see that $ab=ba$ in the quotient $\HH_0(R)$. Thus, the Hochschild homology of a ring $R$ exhibits similar properties to the trace of a linear map.

This method can be applied to Hecke algebras $H_{n,q}$ and their degenerations (0-Hecke algebras, nilCoxeter algebras) for a diagrammatic description of their Hochschild homology $\HH_0$~\cite{Brichard}.  In the next section we will use it to study $\HH_0$ of the nilHecke algebra.

%
\subsubsection{Example calculations} \label{sample}
%

The nilHecke ring $\NH_n$ is the graded unital associative ring generated by elements $x_1,\ldots,x_n$ of degree 2 and elements $\partial_1,\ldots,\partial_{n-1}$ of degree $-2$, subject to the relations
\begin{equation}\begin{split} \label{eq_rel_nilHecke}
&\partial_i^2=0,\qquad\partial_i\partial_{i+1}\partial_i=\partial_{i+1}\partial_i\partial_{i+1}\\
&x_i\partial_i-\partial_ix_{i+1}=1,\qquad\partial_ix_i-x_{i+1}\partial_i=1\\
&x_ix_j =x_jx_i \quad(i\neq j),\qquad\partial_i\partial_j =\partial_j\partial_i \quad(|i-j|>1),\\
&x_i\partial_j =\partial_jx_i  \quad(|i-j|>1).
\end{split}\end{equation}

The ring $\NH_n$ has graphical presentation via $n$-stranded planar diagrams, generated by dots and crossings, modulo planar isotopies and certain local relations.  In the graphical depiction of $\NH_n$ a dot on the $i$th strand corresponds to a generator $x_i$, while a crossing of the $i$ and $i+1$st strand represents $\partial_i$.  Then we impose local relations
\begin{equation}
\vcenter{
\xy 0;/r.17pc/:
	(-4,-4)*{};(4,4)*{} **\crv{(-4,-1) & (4,1)}?(1)*\dir{};
	(4,-4)*{};(-4,4)*{} **\crv{(4,-1) & (-4,1)}?(1)*\dir{};
	(-4,4)*{};(4,12)*{} **\crv{(-4,7) & (4,9)}?(1)*\dir{};
	(4,4)*{};(-4,12)*{} **\crv{(4,7) & (-4,9)}?(1)*\dir{};
	(-4,12); (-4,13) **\dir{-}?(1)*\dir{};
	(4,12); (4,13) **\dir{-}?(1)*\dir{};
\endxy}
 \;\; =\;\; 0, \qquad \quad
\vcenter{\xy 0;/r.17pc/:
    (-4,-4)*{};(4,4)*{} **\crv{(-4,-1) & (4,1)}?(1)*\dir{};
    (4,-4)*{};(-4,4)*{} **\crv{(4,-1) & (-4,1)}?(1)*\dir{};
    (4,4)*{};(12,12)*{} **\crv{(4,7) & (12,9)}?(1)*\dir{};
    (12,4)*{};(4,12)*{} **\crv{(12,7) & (4,9)}?(1)*\dir{};
    (-4,12)*{};(4,20)*{} **\crv{(-4,15) & (4,17)}?(1)*\dir{};
    (4,12)*{};(-4,20)*{} **\crv{(4,15) & (-4,17)}?(1)*\dir{};
    (-4,4)*{}; (-4,12) **\dir{-};
    (12,-4)*{}; (12,4) **\dir{-};
    (12,12)*{}; (12,20) **\dir{-};
    (4,20); (4,21) **\dir{-}?(1)*\dir{};
    (-4,20); (-4,21) **\dir{-}?(1)*\dir{};
    (12,20); (12,21) **\dir{-}?(1)*\dir{};
\endxy}
 \;\; =\;\;
\vcenter{\xy 0;/r.17pc/:
    (4,-4)*{};(-4,4)*{} **\crv{(4,-1) & (-4,1)}?(1)*\dir{};
    (-4,-4)*{};(4,4)*{} **\crv{(-4,-1) & (4,1)}?(1)*\dir{};
    (-4,4)*{};(-12,12)*{} **\crv{(-4,7) & (-12,9)}?(1)*\dir{};
    (-12,4)*{};(-4,12)*{} **\crv{(-12,7) & (-4,9)}?(1)*\dir{};
    (4,12)*{};(-4,20)*{} **\crv{(4,15) & (-4,17)}?(1)*\dir{};
    (-4,12)*{};(4,20)*{} **\crv{(-4,15) & (4,17)}?(1)*\dir{};
    (4,4)*{}; (4,12) **\dir{-};
    (-12,-4)*{}; (-12,4) **\dir{-};
    (-12,12)*{}; (-12,20) **\dir{-};
    (4,20); (4,21) **\dir{-}?(1)*\dir{};
    (-4,20); (-4,21) **\dir{-}?(1)*\dir{};
    (-12,20); (-12,21) **\dir{-}?(1)*\dir{};
\endxy}
 \label{eq_nil_rels}
  \end{equation}
\begin{equation}
 \xy 0;/r.18pc/:
  (4,6);(4,-4) **\dir{-}?(0)*\dir{}+(2.3,0)*{};
  (-4,6);(-4,-4) **\dir{-}?(0)*\dir{}+(2.3,0)*{};
 \endxy
 \quad =
\xy 0;/r.18pc/:
  (0,0)*{\xybox{
    (-4,-4)*{};(4,6)*{} **\crv{(-4,-1) & (4,1)}?(1)*\dir{}?(.25)*{\bullet};
    (4,-4)*{};(-4,6)*{} **\crv{(4,-1) & (-4,1)}?(1)*\dir{};
     (-10,0)*{};(10,0)*{};
     }};
  \endxy
 \;\; -
\xy 0;/r.18pc/:
  (0,0)*{\xybox{
    (-4,-4)*{};(4,6)*{} **\crv{(-4,-1) & (4,1)}?(1)*\dir{}?(.75)*{\bullet};
    (4,-4)*{};(-4,6)*{} **\crv{(4,-1) & (-4,1)}?(1)*\dir{};
     (-10,0)*{};(10,0)*{};
     }};
  \endxy
 \;\; =
\xy 0;/r.18pc/:
  (0,0)*{\xybox{
    (-4,-4)*{};(4,6)*{} **\crv{(-4,-1) & (4,1)}?(1)*\dir{};
    (4,-4)*{};(-4,6)*{} **\crv{(4,-1) & (-4,1)}?(1)*\dir{}?(.75)*{\bullet};
     (-10,0)*{};(10,0)*{};
     }};
  \endxy
 \;\; -
  \xy 0;/r.18pc/:
  (0,0)*{\xybox{
    (-4,-4)*{};(4,6)*{} **\crv{(-4,-1) & (4,1)}?(1)*\dir{} ;
    (4,-4)*{};(-4,6)*{} **\crv{(4,-1) & (-4,1)}?(1)*\dir{}?(.25)*{\bullet};
     (-10,0)*{};(10,0)*{};
     }};
  \endxy \label{eq_nil_dotslide}
\end{equation}

By \cite[Section 2.5]{KLMS}, the nilHecke ring
$\NH_n$ is isomorphic to the $n!\times n!$ matrix algebra ${\rm Mat}(n!,\sym_n)$
with coefficients in the ring $\sym_n$ of symmetric polynomials in $n$ variables,
\begin{equation}
  \sym_n \cong \Z[x_1, \dots, x_n]^{S_n}.
\end{equation}
Hence, by exercise~\ref{mat} it follows that $\Tr(\NH_n) \cong \sym_n$. The isomorphism between $\NH_n$ and the matrix algebra holds in the graded case, with $n!$ replaced by the non-symmetric quantum factorial $(n)^!_{q^2}:=(n)_{q^2} (n-1)_{q^2} \dots (1)_{q^2}$ where $(n)_{q^2}=(1-q^{2n})/(1-q^2)$, and with each variable of $\sym_n$ assigned degree $2$.

Let us illustrate the isomorphism
$\Tr(\NH_n) \cong \sym_n$ of graded abelian groups by some computations:
  \begin{enumerate}
  \item The diagram $ \xy 0;/r.18pc/:
   (8,0)*{
 \hackcenter{\begin{tikzpicture}
  \path[draw,blue, very thick, fill=blue!10]
   (-1.4,-.6) to (-1.4,.6) .. controls ++(0,1.15) and ++(0,1.15) ..
   (1.4,.6)
   to (1.4,-.6)  .. controls ++(0,-1.15) and ++(0,-1.15) ..
   (-1.4,-.6);
    \path[draw, blue, very thick, fill=white]
    (-0.15,0) .. controls ++(0,.3) and ++(0,.3) .. (0.15,0)
            .. controls ++(0,-.3) and ++(0,-.3) .. (-0.15,0);
\end{tikzpicture}}
 };
  (8,8)*{\lcap};
 (8,6)*{\scap};
 (8,-6)*{\scup};
 (0,0)*{\naecross};
 (12,0)*{\sline}; (20,0)*{\sline}; (8,-10)*{\lcup};
 \endxy\;$
 has degree $-2$.  $\sym_n$ is non-negatively
  graded, so this diagram should be 0 in $\Tr$.  Indeed, we have
\begin{align}
 \xy 0;/r.16pc/:
   (8,0)*{
 \hackcenter{\begin{tikzpicture}
  \path[draw,blue, very thick, fill=blue!10]
   (-1.2,-.6) to (-1.2,.6) .. controls ++(0,1) and ++(0,1) ..
   (1.2,.6)
   to (1.2,-.6)  .. controls ++(0,-1) and ++(0,-1) ..
   (-1.2,-.6);
    \path[draw, blue, very thick, fill=white]
    (-0.15,0) .. controls ++(0,.3) and ++(0,.3) .. (0.15,0)
            .. controls ++(0,-.3) and ++(0,-.3) .. (-0.15,0);
  \end{tikzpicture}}
 };
  (8,8)*{\lcap};
 (8,6)*{\scap};
 (8,-6)*{\scup};
 (0,0)*{\naecross};
 (12,0)*{\sline};
 (20,0)*{\sline};
 (8,-10)*{\lcup};
 \endxy
  \;\;
 =
  \;\;
 \vcenter{ \xy 0;/r.16pc/:
    (8,4)*{
 \hackcenter{\begin{tikzpicture}
  \path[draw,blue, very thick, fill=blue!10]
   (-1.4,-.6) to (-1.4,.6) .. controls ++(0,1.15) and ++(0,1.15) ..
   (1.4,.6)
   to (1.4,-.6)  .. controls ++(0,-1.15) and ++(0,-1.15) ..
   (-1.4,-.6);
    \path[draw, blue, very thick, fill=white]
    (-0.15,0) .. controls ++(0,.3) and ++(0,.3) .. (0.15,0)
            .. controls ++(0,-.3) and ++(0,-.3) .. (-0.15,0);
  \end{tikzpicture}}
 };
  (8,16)*{\lcap};
 (8,14)*{\scap};
 (8,-6)*{\scup};
 (0,0)*{\naecross};(0,8)*{\naecross};
 (12,4)*{\mline};
 (20,4)*{\mline};
 (8,-10)*{\lcup};
 (-2.8,6)*{\bullet};
 \endxy}
  \;\; = \;\;
  \vcenter{ \xy 0;/r.16pc/:
   (8,0)*{
 \hackcenter{\begin{tikzpicture}
  \path[draw,blue, very thick, fill=blue!10]
   (-1.2,-.6) to (-1.2,.6) .. controls ++(0,1) and ++(0,1) ..
   (1.2,.6)
   to (1.2,-.6)  .. controls ++(0,-1) and ++(0,-1) ..
   (-1.2,-.6);
    \path[draw, blue, very thick, fill=white]
    (-0.15,0) .. controls ++(0,.3) and ++(0,.3) .. (0.15,0)
            .. controls ++(0,-.3) and ++(0,-.3) .. (-0.15,0);
  \end{tikzpicture}}
 };
  (8,8)*{\lcap};
 (8,6)*{\scap};
 (8,-6)*{\scup};
 (0,0)*{\naecross};(16,0)*{\naecross};
 (8,-10)*{\lcup};
(-2.5,1.5)*{\bullet};
 \endxy}   \;\; = \;
\vcenter{ \xy 0;/r.16pc/:
    (8,4)*{
 \hackcenter{\begin{tikzpicture}
  \path[draw,blue, very thick, fill=blue!10]
   (-1.4,-.6) to (-1.4,.6) .. controls ++(0,1.15) and ++(0,1.15) ..
   (1.4,.6)
   to (1.4,-.6)  .. controls ++(0,-1.15) and ++(0,-1.15) ..
   (-1.4,-.6);
    \path[draw, blue, very thick, fill=white]
    (-0.15,0) .. controls ++(0,.3) and ++(0,.3) .. (0.15,0)
            .. controls ++(0,-.3) and ++(0,-.3) .. (-0.15,0);
  \end{tikzpicture}}
 };
  (8,16)*{\lcap};
 (8,14)*{\scap};
 (8,-6)*{\scup};
 (0,0)*{\naecross};(0,8)*{\naecross};
 (12,4)*{\mline};
 (20,4)*{\mline};
 (8,-10)*{\lcup};
 (-2.5,9.5)*{\bullet};
 \endxy}   \;\;\refequal{\eqref{eq_nil_rels}} \;
  0, \label{eq_bubble_rel_zero}
\end{align}
where the first equality follows from a combination of \eqref{eq_nil_dotslide} and the first equality in \eqref{eq_nil_rels}.

 \item The degree 0 part of $\Tr(\NH_2)$ is spanned by $ \xy 0;/r.16pc/:
   (8,0)*{
 \hackcenter{\begin{tikzpicture}
  \path[draw,blue, very thick, fill=blue!10]
   (-1.2,-.6) to (-1.2,.6) .. controls ++(0,1) and ++(0,1) ..
   (1.2,.6)
   to (1.2,-.6)  .. controls ++(0,-1) and ++(0,-1) ..
   (-1.2,-.6);
    \path[draw, blue, very thick, fill=white]
    (-0.15,0) .. controls ++(0,.3) and ++(0,.3) .. (0.15,0)
            .. controls ++(0,-.3) and ++(0,-.3) .. (-0.15,0);
  \end{tikzpicture}}
 };
  (8,8)*{\lcap};
 (8,6)*{\scap};
 (8,-6)*{\scup};
 (0,0)*{\naecross};
 (12,0)*{\sline};
 (20,0)*{\sline};
 (8,-10)*{\lcup};
 (-3.3,6)*{\bullet};
 \endxy\;$.
 We have
\begin{center}
\makebox[0pt]{$
 \xy 0;/r.16pc/:
   (8,0)*{
 \hackcenter{\begin{tikzpicture}
  \path[draw,blue, very thick, fill=blue!10]
   (-1.2,-.6) to (-1.2,.6) .. controls ++(0,1) and ++(0,1) ..
   (1.2,.6)
   to (1.2,-.6)  .. controls ++(0,-1) and ++(0,-1) ..
   (-1.2,-.6);
    \path[draw, blue, very thick, fill=white]
    (-0.15,0) .. controls ++(0,.3) and ++(0,.3) .. (0.15,0)
            .. controls ++(0,-.3) and ++(0,-.3) .. (-0.15,0);
  \end{tikzpicture}}
 };
  (8,8)*{\lcap};
 (8,6)*{\scap};
 (8,-6)*{\scup};
 (0,0)*{\naecross};
 (12,0)*{\sline};
 (20,0)*{\sline};
 (8,-10)*{\lcup};
 (-3.3,6)*{\bullet};
 \endxy
  \; \refequal{\eqref{eq_nil_dotslide}}  -
  \vcenter{ \xy 0;/r.16pc/:
    (8,4)*{
 \hackcenter{\begin{tikzpicture}
  \path[draw,blue, very thick, fill=blue!10]
   (-1.4,-.6) to (-1.4,.6) .. controls ++(0,1.15) and ++(0,1.15) ..
   (1.4,.6)
   to (1.4,-.6)  .. controls ++(0,-1.15) and ++(0,-1.15) ..
   (-1.4,-.6);
    \path[draw, blue, very thick, fill=white]
    (-0.15,0) .. controls ++(0,.3) and ++(0,.3) .. (0.15,0)
            .. controls ++(0,-.3) and ++(0,-.3) .. (-0.15,0);
  \end{tikzpicture}}
 };
  (8,16)*{\lcap};
 (8,14)*{\scap};
 (8,-6)*{\scup};
 (0,0)*{\naecross};(0,8)*{\naecross};
 (12,4)*{\mline};
 (20,4)*{\mline};
 (8,-10)*{\lcup};
 (-3.3,14)*{\bullet};(3.2,6)*{\bullet};
 \endxy}
  \;=
\; - \vcenter{ \xy 0;/r.16pc/:
   (8,0)*{
 \hackcenter{\begin{tikzpicture}
  \path[draw,blue, very thick, fill=blue!10]
   (-1.2,-.6) to (-1.2,.6) .. controls ++(0,1) and ++(0,1) ..
   (1.2,.6)
   to (1.2,-.6)  .. controls ++(0,-1) and ++(0,-1) ..
   (-1.2,-.6);
    \path[draw, blue, very thick, fill=white]
    (-0.15,0) .. controls ++(0,.3) and ++(0,.3) .. (0.15,0)
            .. controls ++(0,-.3) and ++(0,-.3) .. (-0.15,0);
  \end{tikzpicture}}
 };
  (8,8)*{\lcap};
 (8,6)*{\scap};
 (8,-6)*{\scup};
 (0,0)*{\naecross};(16,0)*{\naecross};
 (8,-10)*{\lcup};
 (18.3,-1.5)*{\bullet};(2.7,1.5)*{\bullet};
 \endxy}   \; =
\; - \vcenter{ \xy 0;/r.18pc/:
    (8,4)*{
 \hackcenter{\begin{tikzpicture}
  \path[draw,blue, very thick, fill=blue!10]
   (-1.4,-.6) to (-1.4,.6) .. controls ++(0,1.15) and ++(0,1.15) ..
   (1.4,.6)
   to (1.4,-.6)  .. controls ++(0,-1.15) and ++(0,-1.15) ..
   (-1.4,-.6);
    \path[draw, blue, very thick, fill=white]
    (-0.15,0) .. controls ++(0,.3) and ++(0,.3) .. (0.15,0)
            .. controls ++(0,-.3) and ++(0,-.3) .. (-0.15,0);
  \end{tikzpicture}}
 };
  (8,16)*{\lcap};
 (8,14)*{\scap};
 (8,-6)*{\scup};
 (0,0)*{\naecross};(0,8)*{\naecross};
 (12,4)*{\mline};
 (20,4)*{\mline};
 (8,-10)*{\lcup};
 (2.8,9.5)*{\bullet};(-2.8,6)*{\bullet};
 \endxy}   \; =
  -\;\xy 0;/r.16pc/:
   (8,0)*{
 \hackcenter{\begin{tikzpicture}
  \path[draw,blue, very thick, fill=blue!10]
   (-1.2,-.6) to (-1.2,.6) .. controls ++(0,1) and ++(0,1) ..
   (1.2,.6)
   to (1.2,-.6)  .. controls ++(0,-1) and ++(0,-1) ..
   (-1.2,-.6);
    \path[draw, blue, very thick, fill=white]
    (-0.15,0) .. controls ++(0,.3) and ++(0,.3) .. (0.15,0)
            .. controls ++(0,-.3) and ++(0,-.3) .. (-0.15,0);
  \end{tikzpicture}}
 };
  (8,8)*{\lcap};
 (8,6)*{\scap};
 (8,-6)*{\scup};
 (0,0)*{\naecross};
 (12,0)*{\sline};
 (20,0)*{\sline};
 (8,-10)*{\lcup};
 (2.8,1.5)*{\bullet};
 \endxy
$ }
\end{center}
\begin{equation}
 \label{eq_bubble_rel}
\end{equation}
 which implies
\begin{equation}
 \xy 0;/r.16pc/:
   (8,0)*{
 \hackcenter{\begin{tikzpicture}
  \path[draw,blue, very thick, fill=blue!10]
   (-1.2,-.6) to (-1.2,.6) .. controls ++(0,1) and ++(0,1) ..
   (1.2,.6)
   to (1.2,-.6)  .. controls ++(0,-1) and ++(0,-1) ..
   (-1.2,-.6);
    \path[draw, blue, very thick, fill=white]
    (-0.15,0) .. controls ++(0,.3) and ++(0,.3) .. (0.15,0)
            .. controls ++(0,-.3) and ++(0,-.3) .. (-0.15,0);
  \end{tikzpicture}}
 };
  (8,8)*{\lcap};
 (8,6)*{\scap};
 (8,-6)*{\scup};
(-4,0)*{\sline};
 (4,0)*{\sline};
 (12,0)*{\sline};
 (20,0)*{\sline};
 (8,-10)*{\lcup};
 (-3.3,6)*{};
 \endxy
  \;\; = \;\; \vcenter{
    \xy 0;/r.16pc/:
   (0,-4)*{
 \hackcenter{\begin{tikzpicture}
  \path[draw,blue, very thick, fill=blue!10]
   (-1.2,-.6) to (-1.2,.6) .. controls ++(0,1) and ++(0,1) ..
   (1.2,.6)
   to (1.2,-.6)  .. controls ++(0,-1) and ++(0,-1) ..
   (-1.2,-.6);
    \path[draw, blue, very thick, fill=white]
    (-0.15,0) .. controls ++(0,.3) and ++(0,.3) .. (0.15,0)
            .. controls ++(0,-.3) and ++(0,-.3) .. (-0.15,0);
  \end{tikzpicture}}
 };
  (-8,-4)*{\naecross};(4,-4)*{\sline};(12,-4)*{\sline};
  (0,2)*{\scap};(0,4)*{\lcap};
  (0,-10)*{\scup};(0,-14)*{\lcup};
  (-9.8,-3)*{\bullet};
  \endxy}
  \;\; - \;\;
  \vcenter{
    \xy 0;/r.16pc/:
   (0,-4)*{
 \hackcenter{\begin{tikzpicture}
  \path[draw,blue, very thick, fill=blue!10]
   (-1.2,-.6) to (-1.2,.6) .. controls ++(0,1) and ++(0,1) ..
   (1.2,.6)
   to (1.2,-.6)  .. controls ++(0,-1) and ++(0,-1) ..
   (-1.2,-.6);
    \path[draw, blue, very thick, fill=white]
    (-0.15,0) .. controls ++(0,.3) and ++(0,.3) .. (0.15,0)
            .. controls ++(0,-.3) and ++(0,-.3) .. (-0.15,0);
  \end{tikzpicture}}
 };
  (-8,-4)*{\naecross};(4,-4)*{\sline};(12,-4)*{\sline};
  (0,2)*{\scap};(0,4)*{\lcap};
  (0,-10)*{\scup};(0,-14)*{\lcup};
  (-4.8,-6)*{\bullet};
  \endxy}
  \;\;\refequal{\eqref{eq_bubble_rel}}\;\;
  2  \;   \vcenter{\xy 0;/r.16pc/:
    (0,-4)*{
 \hackcenter{\begin{tikzpicture}
  \path[draw,blue, very thick, fill=blue!10]
   (-1.2,-.6) to (-1.2,.6) .. controls ++(0,1) and ++(0,1) ..
   (1.2,.6)
   to (1.2,-.6)  .. controls ++(0,-1) and ++(0,-1) ..
   (-1.2,-.6);
    \path[draw, blue, very thick, fill=white]
    (-0.15,0) .. controls ++(0,.3) and ++(0,.3) .. (0.15,0)
            .. controls ++(0,-.3) and ++(0,-.3) .. (-0.15,0);
  \end{tikzpicture}}
 };
  (-8,-4)*{\naecross};(4,-4)*{\sline};(12,-4)*{\sline};
  (0,2)*{\scap};(0,4)*{\lcap};
  (0,-10)*{\scup};(0,-14)*{\lcup};
  (-9.8,-3)*{\bullet};
  \endxy}
\end{equation}
  \end{enumerate}

More generally, the element
\begin{equation}
  e_n = \xy 0;/r.15pc/:
 (-12,-20)*{}; (12,20) **\crv{(-12,-8) & (12,8)}?(1)*\dir{>};
 (-4,-20)*{}; (4,20) **\crv{(-4,-13) & (12,2) & (12,8)&(4,13)}?(1)*\dir{>};?(.88)*\dir{}+(0.1,0)*{\bullet};
 (4,-20)*{}; (-4,20) **\crv{(4,-13) & (12,-8) & (12,-2)&(-4,13)}?(1)*\dir{>}?(.86)*\dir{}+(0.1,0)*{\bullet};
 ?(.92)*\dir{}+(0.1,0)*{\bullet};
 (12,-20)*{}; (-12,20) **\crv{(12,-8) & (-12,8)}?(1)*\dir{>}?(.70)*\dir{}+(0.1,0)*{\bullet};
 ?(.90)*\dir{}+(0.1,0)*{\bullet};?(.80)*\dir{}+(0.1,0)*{\bullet};
 \endxy
\end{equation}
(shown for $n=4$) is a minimal idempotent in $\NH_n$.  The composition
\begin{equation}
 \xymatrix{
 \sym_n  \ar@{^{(}->}[r] & \NH_n  \ar[r]& \Tr(\NH_n) \\ a \ar@{|->}[r] & a e_n &}
\end{equation}
is a grading-preserving isomorphism of abelian groups.
In particular, any basis of $\sym_n$ gives a basis of $\Tr(\NH_n)$.
Taking the basis $\{\epsilon_{\lambda}\}_{\lambda}$ of $\sym_n$, over all partitions with at
most $n$ parts, $\lambda=(\lambda_1,\dots, \lambda_n)$, $\lambda_1 \geq \dots
\geq \lambda_n$, $\epsilon_{\lambda} = \epsilon_{\lambda_1}\dots
\epsilon_{\lambda_n}$, $\epsilon_k$ the $k$-th elementary symmetric function,
gives us a basis of $\Tr(\NH_n)$.

Let $x^{\lambda}=x_1^{\lambda_1}\dots x_n^{\lambda_n} \in \NH_n$.

\begin{prop}[\cite{BHLZ}]
The image of $\{ x^{\lambda} e_n \}_{\lambda}$ with $\lambda$ as above is a basis
of $\Tr(\NH_n)$.
\end{prop}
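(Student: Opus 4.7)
The strategy is to leverage the already-established $\Z$-linear isomorphism $\sym_n \cong \Tr(\NH_n)$, $a \mapsto [a e_n]$, and to identify each class $[x^\lambda e_n]$ with the Schur polynomial $s_\lambda \in \sym_n$. Since the Schur polynomials $\{s_\lambda\}_\lambda$, indexed by partitions with at most $n$ parts satisfying $\lambda_1 \geq \dots \geq \lambda_n \geq 0$, form a $\Z$-basis of $\sym_n$, proving that $[x^\lambda e_n] = [s_\lambda e_n]$ in $\Tr(\NH_n)$ will finish the proof.

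To establish this equality, I would first apply the trace relation $[ab]=[ba]$ together with the idempotency $e_n^2 = e_n$ to obtain
$$[x^\lambda e_n] \;=\; [e_n \cdot e_n x^\lambda] \;=\; [e_n x^\lambda e_n].$$
The task then reduces to computing $e_n x^\lambda e_n$ inside $\NH_n$, or equivalently inside the faithful polynomial representation of $\NH_n$ on $\Z[x_1,\dots,x_n]$. Using a standard realization of the minimal idempotent---namely that $e_n$ is proportional to $\partial_{w_0}\cdot x^\delta$, where $w_0 \in S_n$ is the longest element and $\delta = (n-1, n-2, \dots, 1, 0)$---together with the $\sym_n$-linearity $\partial_{w_0}(gh) = g\,\partial_{w_0}(h)$ for $g \in \sym_n$ (a consequence of the fact that $\partial_i$ commutes past any polynomial symmetric in $x_i, x_{i+1}$, which follows directly from the dot-slide relation of $\NH_n$), one obtains
$$e_n \, x^\lambda \, e_n \;=\; \partial_{w_0}(x^{\lambda+\delta}) \cdot e_n \;=\; s_\lambda \cdot e_n,$$
where the last step is the classical formula $s_\lambda = \partial_{w_0}(x^{\lambda+\delta})$ for the Schur polynomial (equivalently, the Jacobi bi-alternant formula).

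Combining the two steps yields $[x^\lambda e_n] = [s_\lambda e_n]$, which under the isomorphism $\sym_n \cong \Tr(\NH_n)$ corresponds to $s_\lambda$. Hence the collection $\{[x^\lambda e_n]\}_\lambda$ is sent bijectively onto the Schur basis $\{s_\lambda\}_\lambda$ of $\sym_n$ and is therefore a $\Z$-basis of $\Tr(\NH_n)$. The main obstacle is the precise identity $e_n x^\lambda e_n = s_\lambda \cdot e_n$: it rests on the commutation property $\partial_{w_0}(gh) = g\,\partial_{w_0}(h)$ for symmetric $g$, applied with $g = \partial_{w_0}(x^\delta f)$ appearing as an intermediate quantity when evaluating both sides on an arbitrary $f \in \Z[x_1, \dots, x_n]$; once this is in place the proof reduces to bookkeeping with the classical formula for $s_\lambda$ and to transporting the Schur basis across an isomorphism that the previous discussion has already produced.
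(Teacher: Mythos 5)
Your proof is correct, and it is essentially the same argument as the one in \cite{BHLZ} to which the paper defers (the paper itself states the proposition without proof, having already set up the isomorphism $\sym_n \cong \Tr(\NH_n)$, $a \mapsto [ae_n]$): one uses the trace relation to pass from $[x^\lambda e_n]$ to $[e_n x^\lambda e_n]$, the realization $e_n = x^\delta\partial_{w_0}$ together with the identity $\partial_{w_0}\, f\, \partial_{w_0} = \partial_{w_0}(f)\,\partial_{w_0}$ (verified, as you indicate, on the faithful polynomial representation via $\sym_n$-linearity of $\partial_{w_0}$), and the bialternant formula $\partial_{w_0}(x^{\lambda+\delta}) = s_\lambda$ to conclude $e_n x^\lambda e_n = s_\lambda e_n$, so that $\{[x^\lambda e_n]\}_\lambda$ is carried to the Schur basis of $\sym_n$. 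The only cosmetic slips are the ordering of $e_n$ (the paper's diagram gives $x^\delta\partial_{w_0}$ rather than $\partial_{w_0}x^\delta$, though both are idempotents and the computation is identical, with no proportionality constant needed since $\partial_{w_0}(x^\delta)=1$) and the double use of $f$ as both the sandwiched element and the test polynomial in your final paragraph.
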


We call this basis the standard basis of $\Tr(\NH_n)$.

\begin{problem}
 Determine the coefficients of inclusions $\xymatrix{\NH_{n_1} \otimes \NH_{n_2} \ar@{^{(}->}[r]^{i} &
\Tr(\NH_{n_1+n_2})  }$ in the standard basis.
\end{problem}

%
\subsubsection{Zeroth Hochschild cohomology} \label{subsec_HHco}
%

The zeroth Hochschild cohomology $\HH^0(R)$ of a ring $R$ can be identified with the center of $R$,
\[
 \HH^0(R) \cong Z(R) .
\]
The Hochschild cohomology is naturally a module over the Hochschild homology.  This action has a nice graphical interpretation as well.  The action of $Z(R)$ on $\HH_0(R)$ can be graphically understood by cutting the diagram of an element $b \in \HH_0(R)$ along any ray emanating from the origin and inserting the diagram of $a\in Z(R)$ there:
\[ a\quad\rhd\quad
\hackcenter{\begin{tikzpicture}
      \path[draw,blue, very thick, fill=blue!10]
        (-2.3,-.6) to (-2.3,1.1) .. controls ++(0,1.85) and ++(0,1.85) .. (2.3,1.1)
         to (2.3,-.6)  .. controls ++(0,-1.85) and ++(0,-1.85) .. (-2.3,-.6);
    \draw[very thick] (-1.65,-.7) -- (-1.65, 1.2).. controls ++(0,.95) and ++(0,.95) .. (1.65,1.2)
        to (1.65,-.7) .. controls ++(0,-.95) and ++(0,-.95) .. (-1.65,-.7);
    \draw[very thick] (-1.1,-.55) -- (-1.1,1.05) .. controls ++(0,.65) and ++(0,.65) .. (1.1,1.05)
        to (1.1,-.55) .. controls ++(0,-.65) and ++(0,-.65) .. (-1.1, -.55);
    \draw[very thick] (-.55,-.4) -- (-.55,.9) .. controls ++(0,.35) and ++(0,.35) .. (.55,.9)
        to (.55, -.4) .. controls ++(0,-.35) and ++(0,-.35) .. (-.55,-.4);
    \draw[fill=red!20,] (-1.8,-.25) rectangle (-.4,.25);
    \node () at (-1,0) {$b$};
   \draw[very thick, red, dashed] (0,.25) -- (-2.3,2.3);
            \path[draw, blue, very thick, fill=white]
            (-0.2,.25) .. controls ++(0,.35) and ++(0,.35) .. (0.2,.25)
            .. controls ++(0,-.35) and ++(0,-.35) .. (-0.2,.25);
\end{tikzpicture}}
\qquad  =\qquad
\hackcenter{\begin{tikzpicture}
      \path[draw,blue, very thick, fill=blue!10]
        (-2.3,-.6) to (-2.3,1.1) .. controls ++(0,1.85) and ++(0,1.85) .. (2.3,1.1)
         to (2.3,-.6)  .. controls ++(0,-1.85) and ++(0,-1.85) .. (-2.3,-.6);
    \draw[very thick] (-1.65,-.7) -- (-1.65, 1.2).. controls ++(0,.95) and ++(0,.95) .. (1.65,1.2)
        to (1.65,-.7) .. controls ++(0,-.95) and ++(0,-.95) .. (-1.65,-.7);
    \draw[very thick] (-1.1,-.55) -- (-1.1,1.05) .. controls ++(0,.65) and ++(0,.65) .. (1.1,1.05)
        to (1.1,-.55) .. controls ++(0,-.65) and ++(0,-.65) .. (-1.1, -.55);
    \draw[very thick] (-.55,-.4) -- (-.55,.9) .. controls ++(0,.35) and ++(0,.35) .. (.55,.9)
        to (.55, -.4) .. controls ++(0,-.35) and ++(0,-.35) .. (-.55,-.4);
    \draw[fill=red!20,] (-1.8,-.25) rectangle (-.4,.25);
    \node () at (-1,0) {$b$};
      \draw[fill=red!20,] (-1.8,.5) rectangle (-.4,1);
    \node () at (-1,0) {$b$};\node () at (-1,.75) {$a$};
            \path[draw, blue, very thick, fill=white]
            (-0.2,.25) .. controls ++(0,.35) and ++(0,.35) .. (0.2,.25)
            .. controls ++(0,-.35) and ++(0,-.35) .. (-0.2,.25);
\end{tikzpicture}}
\]

In section~\ref{action} we study an extension of this relationship between Hochschild homology and cohomology.

%
\section{Trace of a linear  category} \label{sec_trace_lincat}
%

In this section we define trace or zeroth  Hochschild homology groups for linear and additive categories, and  discuss their properties.

A category $\C$ is called {\it linear} if its hom-spaces are equipped
with structures of abelian groups and the composition maps are bilinear.
Such a structure is sometimes called an $\rm\bf {Ab}$-category, or a category enriched in the monoidal category $\rm \bf {Ab}$ of abelian groups.
A {\it linear} functor between two linear categories $\C$ and $\cal{D}$ is a functor $F:\C\to \cal{D}$ such that
for $x,y\in \Ob(\C)$ the map $F: \C(x,y)\to \cal{D}(F(x), F(y))$ is an abelian group homomorphism.

\nc\TL{\rm TL}
\begin{example} The Temperley-Lieb category $\TL$ is defined as a quotient of the $\Z[A,A^{-1}]$-span of  tangles by the Kauffman bracket skein relations:
\begin{align*}
 \left\langle \;\hackcenter{
\begin{tikzpicture}
\draw [very thick] (0,0) -- (1,1);
\draw [very thick] (1,0) -- (.6, .4);
\draw [very thick] (.4,.6) -- (0,1);
\end{tikzpicture} } \;
\right\rangle
\;\;  &= \;\; A \; \left\langle \;
\hackcenter{
\begin{tikzpicture}
\draw [very thick] (0,0) .. controls (.25,.25) and (.25,.75) .. (0,1);
\draw [very thick] (1,0) .. controls (.75,.25) and (.75,.75) .. (1,1);
\end{tikzpicture} } \; \right\rangle
\;\; + \;\; A^{-1}  \left\langle \;
 \hackcenter{
\begin{tikzpicture}
\draw [very thick] (0,0) .. controls (.25,.25) and (.75,.25) .. (1,0);
\draw [very thick] (0,1) .. controls (.25,.75) and (.75,.75) .. (1,1);
\end{tikzpicture} } \; \right\rangle,
\\
 \left\langle \;\hackcenter{ T  \coprod
\hackcenter{\begin{tikzpicture}
\draw [very thick] (0,.5).. controls ++(0,.35) and ++(0,.35) .. (.8,.5);
\draw [very thick] (0,.5).. controls ++(0,-.35) and ++(0,-.35) .. (.8,.5);
\end{tikzpicture} } } \; \right\rangle &= (-A^{-2}-A^{2}) \la T \ra.
\end{align*}
The objects of this category are natural numbers corresponding to the boundary points of the tangle.
A morphism in $\TL$ is a  $\Z[A,A^{-1}]$-linear combination of
crossingless matchings between the boundary points.
\[
\hackcenter{\begin{tikzpicture}
    \draw[thick] (-0.25,0.5) .. controls ++(0,-0.5) and ++(0,-0.5) .. (0.25,0.5);
    \draw[thick] (-1.25,0.5) .. controls ++(0,-0.5) and ++(0,-0.5) .. (-.75,0.5);
    \draw[thick] (-1.75,0.5) .. controls ++(-0,-0.5) and ++(0,0.85) .. (-0.25,-1);
    \draw[thick] (-2.25,-1) .. controls ++(-0,0.85) and ++(0,0.85) .. (-0.75,-1);
    \draw[thick] (-1.75,-1) .. controls ++(-0,0.35) and ++(0,0.35) .. (-1.25,-1);
    \draw[thick] (0.75,0.5) .. controls ++(-0,-0.5) and ++(0,0.5) .. (0.25,-1);
     \draw[thick] (1.25,-1) .. controls ++(-0,0.5) and ++(0,0.5) .. (0.75,-1);
      \node at (-0.25,0.5) {$\bullet$};
      \node at (-0.75,0.5) {$\bullet$};
      \node at (-1.25,0.5) {$\bullet$};
      \node at (-1.75,0.5) {$\bullet$};
      \node at (0.25,0.5) {$\bullet$};
      \node at (0.75,0.5) {$\bullet$};
      \node at (1.25,-1) {$\bullet$};
      \node at (-0.25,-1) {$\bullet$};
      \node at (-0.75,-1) {$\bullet$};
      \node at (-1.25,-1) {$\bullet$};
      \node at (-1.75,-1) {$\bullet$};
      \node at (-2.25,-1) {$\bullet$};
      \node at (.25,-1) {$\bullet$};
      \node at (.75,-1) {$\bullet$};
\end{tikzpicture}}
\]
\end{example}

%
\subsection{Hochschild--Mitchell homology and the trace} \label{sec_HH0trace}
%

Any ring $R$ with unit $1_R$ can be viewed as a linear category with one object $\ast$ and whose morphisms $\Hom(\ast,\ast)$ are given by the abelian group $R$.  Composition is given by multiplication in $R$ and the identity morphisms of $\ast$ corresponds to the unit $1_R \in R$.
Utilizing this observation, it possible to define the Hochschild--Mitchell homology of an arbitrary linear category as follows.

Let $\modC $ be a small linear category. Define the {\em Hochschild--Mitchell complex} of $\modC $
$$
C_{\bullet}=C_{\bullet}(\modC ):\quad \quad \dots\longrightarrow C_n \xto{d_n} C_{n-1} \xto{d_{n-1}} \dots \xto{d_2} C_1 \xto{d_1} C_0 \longrightarrow 0,
$$
where
$$
C_n=C_n(\modC ):=\bigoplus_{x_0,\dots,x_n\in\Ob(\C)}\C(x_n,x_0)\otimes\C(x_{n-1},x_n)\otimes\dots\otimes\C(x_0,x_1),
$$
$$
d_n(\sigma_n\otimes\sigma_{n-1}\otimes\dots\otimes\sigma_0):=\sum_{i=0}^{n-1}(-1)^i\sigma_n\otimes\dots\otimes\sigma_{n-i}\sigma_{n-i-1}\otimes\dots\otimes\sigma_0+
(-1)^n\sigma_0\sigma_n\otimes\sigma_{n-1}\otimes\dots\otimes\sigma_{1}.
$$

The Hochschild--Mitchell homology $\HH_{\bullet}(\modC )$ of $\modC $ is defined to
be the homology of the chain complex $C_{\bullet}(\modC )$.  One can easily check that if $\modC$ is a unital ring $R$ regarded as a linear category, then this definition agrees with the one given in section~\ref{subsec_HH0ring}.

In this article we will be especially concerned with the zeroth Hochschild--Mitchell homology
$\HH_0(\modC )=C_0/d_1(C_1)$ of $\modC$.  We introduce the notation
\[
\Tr(\modC ) := \HH_0(\modC )
\]
which we call the  {\em trace} of the linear category $\modC$.  This terminology is justified by the following observation. For any $x\in \Ob(\C)$ we denote by $\C(x,x)=\Hom_\C(x,x)=\End_\C(x)$ the
space of its endomorphisms.  Then it is an easy calculation to verify that the trace $\Tr(\modC )$ of the linear category $\modC$ is given by
\begin{gather*}
  \Tr(\C )=
\left( \bigoplus_{x\in \Ob(\modC )}\modC (x,x) \right)/\Span\{fg-gf\},
\end{gather*}
where $f$ and $g$ run through all pairs of morphisms $f\col
x\longrightarrow y$, $g\col y\longrightarrow x$ with $x,y\in
\Ob(\modC )$.

We denote by $[f]$ the equivalence class of $f\in \C(x,x)$ in $\Tr(\C)$.
The trace $\Tr$ gives rise to a functor from the category of (small) linear
categories to the category of abelian groups.

%
\subsection{The trace and direct sums}
%

In this section we show that the trace behaves well with respect to direct sums in additive categories.
An {\em additive} category is a linear category equipped with a zero
object and biproducts, also called direct sums.

\begin{lem}
  \label{r14}
  If $\modC $ is an additive category, then for $f\col x\rightarrow x$ and $g\col y\rightarrow y$,
  we have
  \begin{gather*}
    [f\oplus g]=[f]+[g]
  \end{gather*}
in $\Tr(\modC)$.
\end{lem}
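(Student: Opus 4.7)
The plan is to exploit the universal property of the biproduct in an additive category, which equips $x\oplus y$ with inclusions $i_x\colon x\to x\oplus y$, $i_y\colon y\to x\oplus y$ and projections $p_x\colon x\oplus y\to x$, $p_y\colon x\oplus y\to y$ satisfying $p_xi_x=1_x$, $p_yi_y=1_y$, $p_xi_y=0$, $p_yi_x=0$, and $i_xp_x+i_yp_y=1_{x\oplus y}$. The morphism $f\oplus g$ is then characterized as $f\oplus g=i_xfp_x+i_ygp_y$.

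First I would apply additivity of the trace quotient (it is defined as a quotient of a direct sum of abelian groups, so the class map is additive on each $\End$-group) to obtain
\[
[f\oplus g]=[i_xfp_x+i_ygp_y]=[i_xfp_x]+[i_ygp_y].
\]
Next I would apply the trace relation $[\sigma\tau]=[\tau\sigma]$ to each term: factor $i_xfp_x$ as $(i_xf)\circ p_x$ with $i_xf\colon x\to x\oplus y$ and $p_x\colon x\oplus y\to x$, so
\[
[i_xfp_x]=[p_x\circ i_xf]=[(p_xi_x)f]=[1_x f]=[f],
\]
and analogously $[i_ygp_y]=[g]$. Summing gives the desired identity.

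There is essentially no obstacle here; the only subtlety to flag is that the trace relation as stated in Section~\ref{sec_HH0trace} applies to arbitrary pairs $\sigma\colon x\to y$, $\tau\colon y\to x$ with possibly distinct source and target, so the rewriting $[i_xfp_x]=[p_xi_xf]$ is legitimate even though $x\ne x\oplus y$. Everything else is a direct computation using the defining equations of the biproduct.
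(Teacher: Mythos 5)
Your proof is correct and follows essentially the same route as the paper's: both decompose $f\oplus g$ as $i_xfp_x+i_ygp_y$ (the paper writes this as $(f\oplus 0)+(0\oplus g)$), use linearity of the class map, and then apply the trace relation to get $[i_xfp_x]=[p_xi_xf]=[f]$. Your remark that the trace relation applies to morphisms with distinct source and target is a legitimate and worthwhile clarification, but it does not change the substance of the argument.
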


\begin{proof}
  Since $f\oplus g=(f\oplus 0)+(0\oplus g)\col x\oplus y\rightarrow x\oplus y$, we have
  \begin{gather*}
    [f\oplus g]=[f\oplus 0]+[0\oplus g].
  \end{gather*}
  Now we have $[f\oplus0]=[ifp]=[pif]=[f]$ where $p\col x\oplus y\rightarrow x$
  and $i\col x\rightarrow x\oplus y$ are the projection and the
  inclusion. Similarly, we have $[0\oplus g]=[g]$. Hence the result.
\end{proof}

A linear (resp. additive) category enriched over $\cat{Vect}_\k$ will be called $\k$-linear, i.e.  its homs are $\k$-linear vector spaces and composition is $\k$-bilinear.

\begin{example}
Let $\C=\cat{Vect}_\k$ be the $\k$-linear category of finite dimensional $\k$-vector spaces.
Since any finite dimensional vector space is isomorphic to a finite direct sum of copies of $\k$,
by Lemma~\ref{r14} we have $\Tr(\cat{Vect}_\k)=\k$.
\end{example}

\nc\GrVect{{\rm gVect}}
\begin{example}
Let $\C=\cat{gVect}_\k$ be the $\k$-linear category of
finite dimensional $\Z$-graded vector spaces, i.e.
any $V\in\Ob(\C)$ decomposes as $V=\bigoplus_{n\in\Z} V_{n}$
with $\deg(x)=n$ for any $x\in V_n$ and
 morphisms in $\cat{gVect}_\k$ are degree preserving. Then
$\Tr(\cat{gVect}_\k)=\bigoplus_{n\in\Z}\k=\k [q,q^{-1}]$. The
multiplication by $q$ is interpreted as a shift of degree by one.
\end{example}

The previous examples can be further generalized. For a linear category $\modC $, there is a
``universal'' additive category generated by $\modC $, called the {\em
additive closure} $\modC ^{\oplus}$, in which the objects are formal
finite direct sums of objects in $\modC $ and the morphisms are
matrices of morphisms in $\modC$.  There is a canonical fully faithful
functor $ \modC \rightarrow \modC ^\oplus$.  Every linear functor
$F\col \modC \rightarrow \modD $  to an additive category
$\modD $ factors through it uniquely up to natural isomorphism.

\begin{exercise}
Show that for a linear category $\C$,
 $\Tr(\C^{\oplus})=\Tr(\C)$. Hint: Hom-spaces of $\C^{\oplus}$
are modules consisting of matrices whose entries are morphisms in $\C$.
\end{exercise}

\begin{exercise}
Show that $\Tr(\TL)=\Z[{\mathit{A}},{\mathit{A}}^{-1}][\mathit{z}]$, where $[z] = [1_1]$.  Hint:  $\Tr(\TL)$ can be naturally identified with the Kauffman skein module of the solid torus
by sending $[1_1]$ to the non-contractible circle.
\end{exercise}

%
\subsection{The trace and idempotent completions}
%

A {\em projection} or {\em idempotent} in $\cat{Vect}_{\k}$ is an endomorphism $p \maps V \to V$ satisfying the relation $p^2=p$.  In this case, $(\Id_V-p )\maps V \to V$ is also an idempotent and together these two idempotents decompose the space $V$ into a direct sum of the image of the projection $p$ and the image of the projection $\Id_V-p$.

More generally, idempotents can be defined in any category $\modC$ as endomorphisms $e \maps x \to x$ satisfying $e^2=e$, but here we will always assume that $\modC$ is a linear category.
\[\hackcenter{\begin{tikzpicture}
    \draw[very thick] (0,0) -- (0,1.5);
    \node[draw, thick, fill=blue!20,rounded corners=4pt,inner sep=3pt] () at (0,1.05) {$e$};
    \node[draw, thick, fill=blue!20,rounded corners=4pt,inner sep=3pt] () at (0,.45) {$e$};
        \node at (0.15,.1) { $\scs x$};
    \node at (.15,1.4) { $\scs x$};
\end{tikzpicture}} \;\; = \;\;
\hackcenter{\begin{tikzpicture}
    \draw[very thick] (0,0) -- (0,1.5);
    \node[draw, thick, fill=blue!20,rounded corners=4pt,inner sep=3pt] () at (0,.75) {$e$};
    \node at (0.15,.1) { $\scs x$};
    \node at (.15,1.4) { $\scs x$};
\end{tikzpicture}}
\]
An idempotent $e\col x\rightarrow x$ in $\modC $ is said to {\em split} if there is an object $y$ and morphisms $g\col x\rightarrow y$, $h\col y\rightarrow x$ such that $hg=e$ and $gh=1_y$.
\[\hackcenter{\begin{tikzpicture}
    \draw[very thick] (0,0) -- (0,1.5);
    \node[draw, thick, fill=blue!20,rounded corners=4pt,inner sep=3pt] () at (0,1.05) {$h$};
    \node[draw, thick, fill=blue!20,rounded corners=4pt,inner sep=3pt] () at (0,.45) {$g$};
        \node at (0.15,.1) { $\scs x$};
    \node at (.15,1.4) { $\scs x$};
\end{tikzpicture}} \;\; = \;\;
\hackcenter{\begin{tikzpicture}
    \draw[very thick] (0,0) -- (0,1.5);
    \node[draw, thick, fill=blue!20,rounded corners=4pt,inner sep=3pt] () at (0,.75) {$e$};
    \node at (0.15,.1) { $\scs x$};
    \node at (.15,1.4) { $\scs x$};
\end{tikzpicture}}
\qquad \qquad
\hackcenter{\begin{tikzpicture}
    \draw[very thick] (0,0) -- (0,1.5);
    \node[draw, thick, fill=blue!20,rounded corners=4pt,inner sep=3pt] () at (0,1.05) {$g$};
    \node[draw, thick, fill=blue!20,rounded corners=4pt,inner sep=3pt] () at (0,.45) {$h$};
        \node at (0.15,.1) { $\scs y$};
    \node at (.15,1.4) { $\scs y$};
\end{tikzpicture}} \;\; = \;\;
\hackcenter{\begin{tikzpicture}
    \draw[very thick] (0,0) -- (0,1.5);
    \node[draw, thick, fill=blue!20,rounded corners=4pt,inner sep=3pt] () at (0,.75) {$1_y$};
    \node at (0.15,.1) { $\scs y$};
    \node at (.15,1.4) { $\scs y$};
\end{tikzpicture}}
\]

Unfortunately, even in an additive category $\modC$ it is not always possible to split idempotents.
Such phenomenon occurs quite often when studying additive categories $\modC$ defined diagrammatically by generators and relations~\cite{Kup,Mor,BNM-Kar}.  In this case, making sense out of the ``image" of a diagram is not usually possible.  A common solution is to enlarge the category $\modC$ by passing to the {\em Karoubi envelope} $\Kar(\mathcal C)$ (also called {\em idempotent}   or {\em Cauchy completion}) of $\modC $.  The Karoubi envelope can be thought of as a minimal enlargement of the category $\modC$ in which all idempotents split.

More formally, the Karoubi envelope $\Kar(\modC)$ is the category whose objects are pairs $(x,e)$ of objects $x\in \Ob(\C)$ and
an idempotent endomorphism $e : x\rightarrow x$ in $\mathcal
C$. The morphisms
$$f : (x,e)\rightarrow(y,e')$$
are morphisms $f\col x\rightarrow y$ in $\modC $ such that $f=e'fe$, or alternatively such that
\[
\hackcenter{\begin{tikzpicture}
    \draw[very thick] (0,0) -- (0,2);
    \node[draw, thick, fill=blue!20,rounded corners=4pt,inner sep=3pt] () at (0,1) {$f$};
    \node at (0.15,.1) { $\scs x$};
    \node at (.15,1.9) { $\scs y$};
\end{tikzpicture}} \;\; = \;\;
\hackcenter{\begin{tikzpicture}
    \draw[very thick] (0,0) -- (0,2.5);
    \node[draw, thick, fill=blue!20,rounded corners=4pt,inner sep=3pt] () at (0,1.25) {$f$};
    \node[draw, thick, fill=blue!20,rounded corners=4pt,inner sep=3pt] () at (0,.5) {$e$};
    \node[draw, thick, fill=blue!20,rounded corners=4pt,inner sep=3pt] () at (0,2) {$e'$};
        \node at (0.15,.1) { $\scs x$};
    \node at (.15,2.4) { $\scs y$};
\end{tikzpicture}} \; .
\]
Composition is induced by the composition in $\mathcal C$ and the identity morphism is
$e : (x,e)\rightarrow(x,e)$. $\Kar(\modC )$ is equipped with a linear category structure.

We can 
identify $(x,1_x)$ with the object $x$ of $\mathcal{C}$. This identification gives rise to a  natural embedding functor $\iota:\mathcal C\rightarrow
\Kar(\mathcal C)$ such that $\iota(x)=(x,1_x)$ for $x\in \Ob(\modC )$ and
$\iota(f\col x\rightarrow y)=f$.  The Karoubi envelope $\Kar(\modC )$ has
the universality property that if $F\col \modC \rightarrow \modD $ is
a linear functor to a linear category $\modD $ with split idempotents,
then $F$ extends to a functor from $\Kar(\modC)$ to $\modD$ uniquely
up to natural isomorphism \cite[Proposition 6.5.9]{Bor}.

\begin{example}
Consider the Temperley-Lieb category $\TL$ defined over $\mathbb{C}(q)$, where $q$ is an indeterminant with $q=-A^{2}$.   The Karoubi envelope $\Kar(\TL)$ is equivalent as a category to the category of finite-dimensional representations of quantum ${\fsl}_2$ (\cite{Kauffman} and \cite{Penrose}).
 The irreducible $N$-dimensional representation arises as the image of a certain idempotent  called {\em Jones-Wenzl} idempotent.
\end{example}

\begin{exercise}
Show that the Karoubi envelope of an additive category
is additive.
\end{exercise}

The following proposition illustrates one of the key advantages of the trace, namely its invariance under passing to the Karoubi envelope.

\begin{prop}
  \label{r1}
  The map $\Tr(\iota)\col \Tr(\modC )\longrightarrow \Tr(\Kar(\modC ))$ induced by $\iota$ is bijective.
\end{prop}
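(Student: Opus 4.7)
The plan is to construct an explicit inverse $\psi\col \Tr(\Kar(\modC))\to \Tr(\modC)$ and check both compositions are the identity. The key observation is that an endomorphism of $(x,e)$ in $\Kar(\modC)$ is literally a morphism $g\col x\to x$ in $\modC$ satisfying $g=ege$, so there is an obvious ``forget the idempotent'' map on chains
\[
\phi\col C_0(\Kar(\modC))=\bigoplus_{(x,e)}\End_{\Kar(\modC)}((x,e))\longrightarrow C_0(\modC)=\bigoplus_{x}\End_{\modC}(x),
\]
sending $g\in\End_{\Kar(\modC)}((x,e))$ to the same morphism viewed in $\End_{\modC}(x)$.

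First I would verify that $\phi$ descends to traces. Given a commutator $\alpha\beta-\beta\alpha$ in $C_0(\Kar(\modC))$, with $\alpha\col (x,e)\to(y,e')$ and $\beta\col(y,e')\to(x,e)$, forgetting the idempotents gives morphisms $\alpha\col x\to y$ and $\beta\col y\to x$ in $\modC$, whose commutator $\alpha\beta-\beta\alpha\in\End_\modC(y)\oplus\End_\modC(x)$ is exactly a defining relation in $\Tr(\modC)$. Hence $\phi$ induces a well-defined map $\psi\col\Tr(\Kar(\modC))\to\Tr(\modC)$.

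The composition $\psi\circ\Tr(\iota)$ is the identity on the nose at the level of chains: $\iota(f\col x\to x)$ is the same morphism $f$ viewed as an endomorphism of $(x,1_x)$, and $\phi$ sends it back to $f\in\End_\modC(x)$. The harder direction is $\Tr(\iota)\circ\psi=\mathrm{id}$, which amounts to showing that for every endomorphism $g\col(x,e)\to(x,e)$ in $\Kar(\modC)$ (so $g=ege$) the classes $[g]_{(x,e)}$ and $[g]_{(x,1_x)}$ coincide in $\Tr(\Kar(\modC))$. The trick here is to apply the trace relation to the pair
\[
\alpha:= e\col (x,e)\to(x,1_x),\qquad \beta:= g\col (x,1_x)\to(x,e),
\]
both of which are legal morphisms in $\Kar(\modC)$ because $e=1_x\cdot e\cdot e$ and $g=e\cdot g\cdot 1_x$. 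Then $\beta\alpha=ge$ as an endomorphism of $(x,e)$ equals $g$ (using $g=ege$), while $\alpha\beta=eg$ as an endomorphism of $(x,1_x)$ likewise equals $g$, so $[\alpha\beta]=[\beta\alpha]$ gives precisely $[g]_{(x,1_x)}=[g]_{(x,e)}$.

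I expect the only subtlety to be bookkeeping: being precise about which idempotent an endomorphism is considered relative to, and verifying that the source/target matching in the defining commutators of $\Tr(\Kar(\modC))$ translates faithfully to the corresponding matching in $\Tr(\modC)$ under $\phi$. Once this is handled, the two identity computations above close the argument.
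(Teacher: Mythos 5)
Your proof is correct and follows essentially the same route as the paper, which defines the same ``forget the idempotent'' map $u\col \Tr(\Kar(\modC))\to \Tr(\modC)$ and asserts without detail that it inverts $\Tr(\iota)$. Your verification via the trace relation applied to $e\col (x,e)\to (x,1_x)$ and $g\col (x,1_x)\to (x,e)$, giving $[eg]=[ge]$ and hence $[g]_{(x,1_x)}=[g]_{(x,e)}$, is exactly the check the paper leaves implicit.
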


\begin{proof}
Recall that an endomorphism $f\maps (x,e) \to (x,e)$ in $\Kar(\modC)$ is just a morphism $f\maps x \to x$ in $\modC$ satisfying the condition that $f=efe$.  Define a map $u\col \Tr(\Kar(\modC ))\longrightarrow \Tr(\modC )$ sending $[f]\in \Tr(\Kar(\modC ))$ to $[f]\in \Tr(\modC )$.
Then one can check that $u$ is an inverse to $\Tr(\iota)$.
\end{proof}

\begin{example}
Adding Jones-Wenzl idempotents as  objects into $\TL$ does not affect  $\Tr(\TL)$.
\end{example}

%
\subsection{The trace and categories of complexes}
%

The trace of a linear category $\C$ gives rise to an interesting variant of Euler characteristic.
Let $\Kom(\C)$ be the category of bounded complexes over $\C$. Recall that the objects of $\Kom(\C)$ are complexes
$$
A_{\bullet}:\quad \quad \dots\longrightarrow A_i \xto{d_i} A_{i-1} \xto{d_{i-1}} \dots \xto{d_2} A_1 \xto{d_1} A_0 \longrightarrow 0,
$$
where $A_i  \in \Ob(\C)$ and
$d_i\in \C(A_i, A_{i-1})$, and the morphisms are chain maps.

The ``traced'' Euler characteristic of a complex $A_{\bullet}$ is
defined by
\[
\chi_{\tr}(A_{\bullet})=\sum_i (-1)^i [1_{A_i}]  \in \Tr(\C),
\]
where $[1_x]$ is the class of $1_x$ in $\Tr(\C)$ for any $x\in \Ob(\C)$.
More generally, to any chain endomorphism $F:A_{\bullet}\to A_{\bullet}$ we can associate its
``traced''  Lefschetz number
$$ \tau(F)=\sum_i (-1)^i [F_i]\, .$$ Then $\chi_{tr} (A_{\bullet})=\tau (1_{A_{\bullet}})$.

\begin{lem}  For any $A_{\bullet} \in \Kom(\C)$ and homotopic chain endomorphisms $F, G: A_{\bullet}\to A_{\bullet}$
we have $\tau(F)=\tau(G)$. Furthermore,
$\chi_{\tr}$ is  homotopy invariant.
\end{lem}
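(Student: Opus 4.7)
My plan is to prove the first claim directly from the chain homotopy formula and the trace property $[fg]=[gf]$ in $\Tr(\C)$, and then deduce homotopy invariance of $\chi_{\tr}$ as a formal consequence.

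For the first claim, suppose $F,G\col A_\bullet \to A_\bullet$ are chain homotopic via maps $h_i\col A_i\to A_{i+1}$ satisfying
\[
F_i - G_i = d_{i+1}h_i + h_{i-1}d_i.
\]
I would then compute
\[
\tau(F)-\tau(G) = \sum_i (-1)^i [F_i-G_i] = \sum_i (-1)^i [d_{i+1}h_i] + \sum_i (-1)^i [h_{i-1}d_i].
\]
The key step is to apply the trace relation to each summand of the first sum: since $d_{i+1}\col A_{i+1}\to A_i$ and $h_i\col A_i\to A_{i+1}$ are morphisms in $\C$ between the objects $A_i$ and $A_{i+1}$, we have $[d_{i+1}h_i]=[h_id_{i+1}]$ in $\Tr(\C)$. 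After reindexing $j=i+1$, the first sum becomes $-\sum_j (-1)^j [h_{j-1}d_j]$, which cancels the second sum. Hence $\tau(F)=\tau(G)$.

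For the homotopy invariance of $\chi_{\tr}$, suppose $A_\bullet$ and $B_\bullet$ are chain homotopy equivalent via $f\col A_\bullet \to B_\bullet$ and $g\col B_\bullet\to A_\bullet$ with $gf\sim 1_{A_\bullet}$ and $fg\sim 1_{B_\bullet}$. By the previous part,
\[
\chi_{\tr}(A_\bullet)=\tau(1_{A_\bullet})=\tau(gf)=\sum_i(-1)^i[g_if_i].
\]
Applying the trace property once more, $[g_if_i]=[f_ig_i]$ in $\Tr(\C)$, so $\tau(gf)=\tau(fg)=\tau(1_{B_\bullet})=\chi_{\tr}(B_\bullet)$.

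There is no real obstacle here; the only point requiring a moment's care is the bookkeeping of signs and the observation that although $d_{i+1}h_i$ and $h_id_{i+1}$ are endomorphisms of \emph{different} objects ($A_i$ and $A_{i+1}$, respectively), they still represent the same class in $\Tr(\C)$ because the trace identifies $[fg]$ and $[gf]$ across pairs of morphisms $f\col x\to y$, $g\col y\to x$ with possibly distinct $x,y\in\Ob(\C)$.
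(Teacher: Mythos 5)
Your proof is correct and follows exactly the route the paper intends: the paper leaves this lemma as an exercise with the hint to use $F-G=dh+hd$, and your argument — writing $\tau(F)-\tau(G)=\sum_i(-1)^i[d_{i+1}h_i]+\sum_i(-1)^i[h_{i-1}d_i]$, applying the trace relation $[d_{i+1}h_i]=[h_id_{i+1}]$, and cancelling after reindexing — is the standard solution. Your deduction of homotopy invariance of $\chi_{\tr}$ via $\tau(gf)=\tau(fg)$, using the trace relation across the two complexes together with the first part applied to $gf\sim 1_{A_\bullet}$ and $fg\sim 1_{B_\bullet}$, is likewise sound and is the expected argument.
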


\begin{exercise}
Prove this lemma using the definition of the chain homotopy, i.e.
$F-G=dh+hd$.
\end{exercise}
\nc\Cob{{\rm Cob}}

An interesting application of the traced Euler characteristic was given
 by Bar-Natan~\cite{Dror}. There he defined a quotient $\Cob_/$ of the $\Z[1/2]$-linear category of 2-dimensional cobordisms,
and associated a complex in $\Kom(\Cob^{\oplus}_/)$ to any tangle, which he called a Khovanov bracket. He showed that isotopies of the tangle do not change the homotopy type of this complex.  The following theorem of Bar-Natan shows that the traced Euler characteristic can be used as decategorification map in a categorification of the Jones polynomial.

\begin{thm}[Theorem 6 \cite{Dror}]    \label{Dror}
The traced Euler characteristic of the Khovanov bracket
of a tangle in Bar-Natan's category  is equal to the Jones
polynomial.
\end{thm}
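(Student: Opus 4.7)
The strategy is to reduce to the defining skein axioms of the Kauffman bracket and invoke its uniqueness: the Kauffman bracket is characterized by (i) the crossing skein relation, (ii) the unknot removal relation $\langle U \sqcup D\rangle = (-A^{2}-A^{-2})\langle D\rangle$, and (iii) isotopy invariance. I would verify each of these at the level of $\chi_{\tr}$ applied to Bar-Natan's bracket $[[T]]$.

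\emph{Step 1 (identify the target).} For a tangle $T$ with $2n$ boundary points, the relevant objects of $\Cob_{/}^{\oplus}$ are formal direct sums of crossingless matchings on $2n$ points. By Lemma \ref{r14} the trace $\Tr(\Cob_{/}^{\oplus})$ on this subcategory is generated as a $\Z[A,A^{\pm 1}]$-module by classes $[1_X]$, and I would identify this with the Kauffman bracket skein module of the disk with $2n$ marked boundary points via $[1_X]\mapsto X$. The S, T, and 4Tu relations of $\Cob_{/}$, together with the commutator relation built into $\Tr$, must be shown to impose exactly the neck-cutting/circle evaluation relation $[1_{U\sqcup X}]=(-A^{2}-A^{-2})[1_X]$; this is a direct computation using a saddle as both $fg$ and $gf$.

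\emph{Step 2 (crossing relation).} Bar-Natan's formal bracket of a single crossing is, by construction, a two-term complex involving the $0$- and $1$-resolutions in consecutive homological degrees with the grading/weight conventions chosen so that
\[
\chi_{\tr}\bigl([[X]]\bigr)=A\,[1_{)(}]+A^{-1}\,[1_{\asymp}].
\]
Since $[[-]]$ is defined locally (as a tensor product of single-crossing complexes placed inside the planar diagram), and $\chi_{\tr}$ is additive under disjoint union and tensor product of complexes, induction on the number of crossings extends this to an arbitrary tangle: $\chi_{\tr}([[T]])$ coincides with the Kauffman state sum in the skein module identified in Step 1.

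\emph{Step 3 (isotopy invariance and conclusion).} Bar-Natan shows that Reidemeister moves induce chain homotopy equivalences of the brackets $[[T]]$, and the lemma preceding this theorem guarantees that $\chi_{\tr}$ is homotopy invariant. Thus $T\mapsto \chi_{\tr}([[T]])$ is an isotopy invariant satisfying the two skein relations; uniqueness of the Kauffman bracket identifies it with $\langle T\rangle$, which on closed links yields the Jones polynomial. The main obstacle is Step 1: one must check that the Bar-Natan relations together with the trace quotient give precisely the skein relations, and no finer identifications, so that the map $[1_X]\mapsto X$ is well defined and injective.
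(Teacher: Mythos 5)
The paper itself contains no proof of this statement: it is imported from Bar-Natan with the citation ``Theorem 6 \cite{Dror}'', so your proposal can only be measured against Bar-Natan's original argument. In spirit you have reconstructed it: the bracket of a crossing is by construction a two-term complex (the cone on a saddle), so $\chi_{\tr}$ satisfies the Kauffman crossing relation; $\chi_{\tr}$ is multiplicative under planar composition; and homotopy invariance of $\chi_{\tr}$ (the unlabeled lemma in the paper just before the theorem) disposes of the Reidemeister moves. Your Steps 2 and 3 are sound.

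The genuine gap is in Step 1, and it is a conceptual misidentification, not just a missing verification. The trace of Bar-Natan's category is \emph{not} the Kauffman skein module of the disk: as the paper's own exercise records, $\Tr(\TL)=\Z[A,A^{-1}][z]$ is the skein module of the \emph{annulus} (solid torus), with $[1_1]$ the noncontractible circle $z$. In general $[1_X]$ is the annular closure of $X$; for instance, for a $(1,1)$-tangle the unique matching closes to $z$, which is not $(-A^{2}-A^{-2})$ times anything, and distinct matchings of the $2n$ boundary points close to distinct configurations of annuli pairing the boundary circles. So the assignment $[1_X]\mapsto X\in\mathrm{Skein}(D^2,2n)$ cannot be an isomorphism, and your requirement that $\Tr$ impose ``exactly the skein relations and no finer identifications'' is misdirected as stated. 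What the argument actually needs is the map in the opposite direction, $\mathrm{Skein}(D^2,2n)\to\Tr(\Cob_/^{\oplus})$, $X\mapsto[1_X]$: its well-definedness is precisely the circle evaluation $[1_{U\sqcup X}]=(-A^{2}-A^{-2})[1_X]$, which follows from neck-cutting (the $4Tu$ relation with the $\Z[1/2]$ coefficient, or dotted births and deaths) together with the sphere/torus relations and the trace relation --- note that your ``saddle as both $fg$ and $gf$'' computation with undotted birth and death only yields $0$ via the sphere relation, so the handles or dots are essential, and this is also where the grading shifts produce the two-term evaluation rather than the integer $2$. With that map in hand, your Steps 2--3 prove the equality $\chi_{\tr}([[T]]) = $ (image of $\langle T\rangle$) with no injectivity needed, which is the content of the theorem; injectivity of the map --- needed only if one wants $\chi_{\tr}$ to \emph{determine} $\langle T\rangle$ --- is a separate nontrivial computation of $\Tr(\Cob_/)$, most easily obtained by applying a TQFT functor (e.g.\ Khovanov's) and evaluating graded traces, not by inspecting the local relations directly.
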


%
\subsection{The trace and split Grothendieck groups}\label{sec:split-groth-groups}
%

For a small additive category $\modC $, the {\em split Grothendieck group}
$K_0(\modC )$ of $\modC $ is the abelian group generated by the
isomorphism classes of objects of $\modC $ with relations $[x\oplus
y]_{\cong}=[x]_{\cong}+[y]_{\cong}$ for $x,y\in \Ob(\modC)$.  Here
$[x]_{\cong}$ denotes the isomorphism class of $x$.  The split
Grothendieck group $K_0$ defines a functor from additive categories to abelian groups. Here the morphisms in the category of additive categories are additive  functors.

\begin{example}
$K_0(\cat{Vect}_\k)=\Z$ is generated by $[\k]_{\cong}$.
\end{example}

More generally, for any commutative ring $R$, let $\FMod_R$ be the additive category of finitely generated free $R$-modules. Then $K_0(\FMod_R)=\Z$ with the isomorphism given by rank.
The split Grothendieck group $K_0$ of the additive category of finitely generated projective $R$-modules is generated by the indecomposable projective modules. Over a local ring, any finitely generated projective module is free, so up to isomorphism there is a unique indecomposable projective module, see for example~\cite[Chapter 1]{Milnor} or \cite{Ros}.
%

In $\Kom(\C)$ the Euler characteristic of a complex $A$  defined by
$$\chi(A_{\bullet})= \sum_i (-1)^i K_0(A_i)=
\sum_i (-1)^i [A_i]_{\cong}\, $$
is homotopy invariant.


%
\subsubsection{Generalized Chern character}
%

Classically, the Chern character is a homomorphism of rings from the topological $K$-theory of a manifold into  the de Rham cohomology of the manifold.  Given a vector bundle on a manifold $M$, the Chern character map produces an element in the de Rham cohomology of the manifold.  In the algebraic framework, a commutative associative algebra $A$ plays the role of the manifold $M$, and a vector bundle in topological $K$-theory is replaced by a finitely-generated projective module over $A$ in algebraic $K$-theory.  Passing to the non-commutative setting where $A$ is no longer assumed commutative, the Chern character map can still be defined, giving a homomorphism from $K_0(A)$ into the cyclic homology of $A$ (see chapter 8 of ~\cite{Loday}).  The zeroth cyclic homology $\mathrm{HC}_0(A)$ is isomorphic to the zeroth Hochschild homology $\HH_0(A)$~\cite[Section 2.1.12]{Loday}, so we can view the Chern character as a map from $K_0(A) \to \HH_0(A)$.
In this section we study a further generalization of the Chern character map.

Define a homomorphism
\begin{gather*}
  h_\modC \col K_0(\modC )\longrightarrow \Tr(\modC )
\end{gather*}
by
\begin{gather*}
  h_\modC ([x]_{\cong})=[1_x]
\end{gather*}
for $x\in \Ob(\modC )$.  Indeed, one can easily check that
\begin{gather*}
  h_\modC ([x\oplus y]_{\cong})=[1_x]+[1_y] \, ,
\end{gather*}
since $1_{f \oplus g} = 1_f \oplus 1_g$.
The map $h_\modC $ defines a natural transformation
\begin{gather*}
  h\col K_0 \Rightarrow  \Tr \col \AdCat\rightarrow \Ab,
\end{gather*}
where $\AdCat$ denotes the category of additive small categories.

\begin{example} \label{example_chern-char}
In Example 8.3.6 of \cite{Loday}, it is shown that for an associative unital algebra $A$ the map
$h \maps K_0(A) \to {\HH}_0(A)=A/[A,A]$ is an isomorphism whenever $A$ is a field, a local ring, or $\Z$.
\end{example}

We illustrate below that $h_\C$ is neither injective nor surjective in general.

\begin{example}
Let $F_p = \Z / p\Z$ denote the field with $p$ elements for $p$ prime.
If $\modC$ is the category of finite dimensional $F_p$-vector spaces, then
$ h_{\cal{C}}$  maps $K_0(\modC) = \Z$ surjectively onto $\Tr(\modC)=F_p$, but it is not injective.
\end{example}

Since $\Tr$ is always linear over the ground ring we are using, and $K_0$ is $\Z$-linear, one can wonder whether they are isomorphic after tensoring $K_0$ with the ground ring.
Computations for the nilHecke ring and its quotients illustrate that this  is not true in general.

\begin{example}  We showed earlier that $\Tr(\NH_n) = \Tr(\sym_n) = \sym_n$.  However, $K_0(\NH_n) \cong \Z$ so the map
\[
 h_{\NH_n} \maps  K_0(\NH_n) = \Z \longrightarrow \sym_n = \Tr(\NH_n)
\]
is clearly not surjective.
\end{example}

\begin{example}
For any integer $0 \leq k \leq n$ define cyclotomic quotients of the nilHecke algebra $\NH_n^k$ as the
quotient $\NH_n / (x_1)^k$.    Taken together over all $k$, these cyclotomic quotients give rise to categorifications of irreducible representations of
${\bf U}_q(\fsl_2)$~\cite{BK1,BK2,Lau3,LV,Web}. In ~\cite[Proposition 5.3]{Lau3} it is shown that
\[
\NH_n^k \cong \Mat( (n)^!_{q^2} ; H^{\ast}(\Gras(k,n)) ),
\]
where $H^{\ast}(\Gras(k,n))$ denotes the graded cohomology ring of the Grassmannian of $k$-dimensional planes in complex $n$-dimensional space.  This ring has an explicit description as the quotient of the graded polynomial ring $\Z[c_1, \dots, c_k; \bar{c}_1, \dots, \bar{c}_{N-k}]$ with $\deg c_j =2j$, $\deg \bar{c}_j = 2j$ by the homogeneous ideal generated by the elements $\sum c_j \bar{c}_{\alpha-j}$ for $\alpha>0$.  Note that we set $c_0 = \bar{c}_0=1$, $c_j=0$ for $j<0$ or $j>k$, and $\bar{c}_{\ell} =0$ for $\ell<0$ and $\ell > N-k$.

Since the cohomology ring $H^{\ast}(\Gras(k,n))$ is graded local, $K_0(H^{\ast}(\Gras(k,n))) \cong \Z$, but since $H^{\ast}(\Gras(k,n))$ is commutative,
by exercise \ref{mat} we have $\Tr(\NH_n^k) = \Tr(H^{\ast}(\Gras(k,n))) = H^{\ast}(\Gras(k,n))$, so that the homomorphism
\[
 h_{\NH_n^k} \maps K_0(\NH_n^k) \cong \Z \longrightarrow H^{\ast}(\Gras(k,n)) = \Tr(\NH_n^k)
\]
is also not surjective.
\end{example}

%
\subsection{The trace of a graded linear category}
%

A {\em graded} linear category is a category equipped with an auto-equivalence $\la 1 \ra$. We denote by $\la t \ra$ the auto-equivalence obtained by applying $\la 1 \ra$ $t$ times.  Thus, given any object $x\in \Ob(\modC)$, there exists an object $x \la t \ra \in \Ob(\modC)$ for all $t$.  Likewise, given any morphism $f \maps x \to y$ in $\modC$, there is a morphism $f \la t \ra \maps x \la t \ra \to y \la t \ra$.  This equips Hom-sets with the structure of  $\Z$-graded abelian groups in which composition maps are grading preserving.

Given a graded linear category $\modC$, its Grothendieck group $K_0(\modC)$ is a $\Z[q,q^{-1}]$-module with $[x \la t \ra]_{\cong} := q^t[x]$.  Likewise, the trace $\Tr(\modC)$ is $\Z[q,q^{-1}]$-module with $[f\la t \ra] = q^t [f]$ for an endomorphism $f \maps x \to x$ in $\modC$.    In this case, the generalized Chern character map
\[
 K_0(\modC) \to \Tr(\modC)
\]
is a homomorphism of $\Z[q,q^{-1}]$-modules.

A graded linear category $\modC$ is said to {\em admit a translation} if for every object $x$ in $\modC$ and integer $t\in \Z$, there is an isomorphism  $x \to x\la t \ra$.  Given any graded linear category $\modC$, we can form a graded linear category $\modC^{\ast}$ that admits translations by declaring that $\Ob(\C)=\Ob(\C^{\ast})$ and setting
\[
 \modC^{\ast}(x,y) := \bigoplus_{t \in \Z} \modC(x,y\la t\ra),
\]
for all objects $x,y \in \Ob(\modC)$.  The translations $x \to x \la t \ra$ is given by $1_x$ in $\modC^{\ast}$ since
\[
 1_x \in \modC(x,x\la 0 \ra) = \modC\left(x,\left(x\la t \ra\right) \la -t\ra\right) \subset \modC^*(x,x\la t \ra),
\]
has inverse
\[
 1_x\la t\ra \in \modC(x \la t \ra, x \la t \ra) = \modC(x \la t \ra, (x \la 0 \ra) \la t \ra)   \subset \modC^{\ast}(x\la t\ra, x).
\]

By enlarging the hom spaces by adding isomorphisms $x \to x \la t\ra$, the Grothendieck group  of graded linear category with translation $\modC^{\ast}$ is only a $\Z$-module rather than a $\Z[q,q^{-1}]$-module since $[x\la t \ra]_{\cong} = [x]_{\cong}$.   However, the trace $\Tr(\modC^{\ast})$ changes more drastically because, rather than only considering degree preserving endomorphisms $f\maps x \to x$ in $\modC$, we can now consider arbitrary degree maps $x \to x\la t \ra$ and compose with the translation map from $x\la t \ra \to x$ to get a new endomorphism.  Furthermore, using the translations $x \cong x \la t \ra$, it is not hard to see that given any endomorphism $f\maps x \to x$, we have $[f\la t \ra] = [f]$ in $\Tr(\modC^{\ast})$, so that the graded version of the trace $\Tr(\modC^{\ast})$ is also just a $\Z$-module.

As explained at the beginning of section~\ref{sec_HH0trace}, any ring with unit can be viewed as a linear category
with one object $\ast$.  A graded ring $R = \oplus_{n \in \Z} R_n$ can be viewed as a graded category whose objects are indexed by the integers $n = \ast \la n \ra$ for $n \in \Z$.  The morphisms from $n$ to $m$ are given by the abelian group $R_{m-n}$.   Composition is given by the grading preserving multiplication in $R$.  Example \ref{example_chern-char} can be generalized to show that the generalized Chern character map  $K_0(R) \to \Tr(R_0)$ is an isomorphism whenever $R$ is a graded local ring, so that the trace of the degree zero part of the ring is isomorphic to the Grothendieck ring.

\begin{exercise} \label{ex_equiv}
If $\modC$ denotes the graded linear category associated to a graded ring $R$, then the graded linear category with translation $\modC^{\ast}$ is equivalent as a category to the linear category with one object $\ast$ obtained by forgetting the grading on $R$.
\end{exercise}

\begin{example}
The nilHecke algebra $\NH_n$ is a graded algebra with $\deg(x_i)=2$ and $\deg(\partial_i)=-2$.  Recall that the nilHecke ring $\NH_n$ is isomorphic as a graded algebra to the matrix ring ${\Mat}((n)^!_{q^2} ; \sym_n)$ of $n! \times n!$ matrices with coefficients in the ring $\sym_n=\Z[x_1,\dots,x_n]^{S_n}$ of symmetric functions in $n$ variables.  Here we write $(n)^!_{q^2}:= (1-q^{2n})/(1-q^2)$ for the non-symmetrized quantum integers and each variable of $\sym_n$ has degree $2$.

To compute the trace of $\NH_n$ regarded as a graded category $\modC$, observe that $\Tr(\modC) = \Tr(\sym)$ and that $\sym$ is positively graded and one dimensional in degree zero, in particular it is graded local, so that the graded trace of $\Tr(\modC) \cong K_0(\NH_n) \cong \Z$.  Compare this result with section~\ref{sample} where we showed $\Tr(\NH_n) \cong \sym_n$.   Exercise~\ref{ex_equiv} shows that by enlarging the graded category $\modC$ to $\modC^{\ast}$, we again have $\Tr(\modC^{\ast}) = \sym_n$.
\end{example}

%
\subsection{Hochschild--Mitchell homology of strongly upper triangular categories}
\label{sec:upper-triang-line}
%
Recall that the trace of a linear category is just the zeroth Hochschild--Mitchell homology.
There is a class of linear categories for which we can compute explicitly their Hochschild--Mitchell homology.  The results developed in this section will be useful in our study of the Hochschild--Mitchell homology of categorified quantum groups.

\begin{defn}
  \label{r22}
  A linear category $\modC $ is said to be {\em upper-triangular} if
there is no sequence
\begin{gather*}
  x_0\xto{f_0}x_1\xto{f_1}\dots \xto{f_{n-1}}x_n\xto{f_n}x_0 \quad (n\ge 1)
\end{gather*}
of nonzero morphisms in $\modC $ unless $x_0=x_1=\dots =x_n$.
Alternatively, a linear category $\modC $ is upper-triangular if there is
a partial order $\le$ on $\Ob(\modC )$ such that $\modC (x,y)\neq0$ implies
$x\le y$.
\end{defn}

For a linear category $\modC $, define a ring $\End(\modC )$ by
\begin{gather*}
  \End(\modC )=\bigoplus_{x\in \Ob(\modC )}\modC (x,x).
\end{gather*}

\begin{lem}[\cite{BHLZ}]
  \label{r19}  For an upper-triangular linear category $\modC $, we have
  \begin{gather}
    \label{e5}
    \HH_*(\modC )
    \cong\bigoplus_{x\in \Ob(\modC )}\HH_*(\modC (x,x))
    \cong \HH_*(\End(\modC )).
  \end{gather}
\end{lem}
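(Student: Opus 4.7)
The plan is to exploit the upper-triangular hypothesis to show that the Hochschild--Mitchell complex of $\modC$ splits as a direct sum, indexed by objects, of ordinary Hochschild complexes, and then to identify that direct sum with the Hochschild complex of $\End(\modC)$.

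First I would analyze $C_n(\modC)$. Recall that a summand
\[
\modC(x_n,x_0)\otimes \modC(x_{n-1},x_n)\otimes \cdots \otimes \modC(x_0,x_1)
\]
is nonzero only when each factor is nonzero. Such a choice of objects yields a cyclic sequence of nonzero morphisms $x_0\to x_1 \to \cdots \to x_n \to x_0$, so upper-triangularity forces $x_0=x_1=\cdots=x_n$. Consequently
\[
C_n(\modC)\;\cong\;\bigoplus_{x\in\Ob(\modC)} \modC(x,x)^{\otimes(n+1)},
\]
and the right-hand side is precisely $\bigoplus_{x} C_n(\modC(x,x))$, the degree-$n$ part of the Hochschild complex of the ring $\modC(x,x)$ summed over $x$.

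Next I would check that the Hochschild--Mitchell differential $d_n$ respects this decomposition. If $\sigma_n\otimes\cdots\otimes\sigma_0$ lives in the summand where all $x_i=x$, then each term $\sigma_n\otimes\cdots\otimes \sigma_{n-i}\sigma_{n-i-1}\otimes\cdots\otimes\sigma_0$ and the cyclic term $\sigma_0\sigma_n\otimes\sigma_{n-1}\otimes\cdots\otimes\sigma_1$ are compositions of morphisms from $x$ to itself, hence stay in $\modC(x,x)^{\otimes n}$. Thus $d_n$ preserves the object-indexed decomposition, and the isomorphism of complexes
\[
C_\bullet(\modC)\;\cong\;\bigoplus_{x\in\Ob(\modC)} C_\bullet(\modC(x,x))
\]
holds; passing to homology and using that homology commutes with direct sums yields the first isomorphism in \eqref{e5}.

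For the second isomorphism, observe that $\End(\modC)=\bigoplus_{x\in\Ob(\modC)}\modC(x,x)$ is a (possibly non-unital, but with a complete system of orthogonal idempotents $\{1_x\}$) ring in which products across distinct summands vanish. In particular the $1_x$ are central orthogonal idempotents, so Hochschild homology distributes over the direct-sum decomposition: $\HH_*(\End(\modC))\cong\bigoplus_x \HH_*(\modC(x,x))$. Combining this with the first isomorphism gives the claim. The only subtle point, and the place I would be careful, is this last step when $\Ob(\modC)$ is infinite: one needs the standard fact that Hochschild homology commutes with direct sums of rings in the non-unital/idempotented setting, which follows by the same object-indexed decomposition of the bar complex of $\End(\modC)$ that we used for $\modC$ itself (nonzero tensor chains are forced to lie in a single summand $\modC(x,x)^{\otimes(n+1)}$). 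I do not expect a serious obstacle beyond this bookkeeping.
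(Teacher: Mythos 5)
Your decomposition of the Hochschild--Mitchell complex is correct and is exactly the intended route (the paper states the lemma with a reference to \cite{BHLZ} and only adds a remark about the non-unital issue): upper-triangularity kills every summand $\modC(x_n,x_0)\otimes\cdots\otimes\modC(x_0,x_1)$ with not-all-equal objects, because at least one of the hom groups is already the zero group, so $C_\bullet(\modC)\cong\bigoplus_{x}C_\bullet(\modC(x,x))$ on the nose, the differential automatically respects this splitting, and the first isomorphism in \eqref{e5} follows since homology commutes with direct sums of complexes.

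The genuine gap is in your final parenthetical. For the ring $\End(\modC)=\bigoplus_x\modC(x,x)$ it is \emph{not} true that nonzero tensor chains are forced into a single summand: the chain group $\End(\modC)^{\otimes(n+1)}$ contains mixed summands such as $\modC(x,x)\otimes\modC(y,y)$ with $x\neq y$, and these are in general nonzero abelian groups --- what vanishes across distinct summands is the \emph{multiplication}, not the tensor product. This is precisely where the bar complex of the ring $\End(\modC)$ differs from the Hochschild--Mitchell complex of $\modC$, where the mixed summands die because the hom groups themselves are zero. So the bar complex of $\End(\modC)$ does not split object-wise; rather, the mixed summands form a subcomplex that one must show is acyclic, and this genuinely uses the local units $1_x$. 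For instance, a mixed $1$-chain $a\otimes b$ with $a\in\modC(x,x)$, $b\in\modC(y,y)$, $x\neq y$, is a cycle (since $ab=ba=0$) but is also a boundary: $d_2\bigl(-(a\otimes 1_y\otimes b)\bigr)=a\otimes b$. To close the argument, either invoke the well-known fact that the Hochschild homology of a direct sum of unital rings is the direct sum of the Hochschild homologies --- which is exactly what the paper's remark following the lemma does, while flagging that $\End(\modC)$ is non-unital when $\Ob(\modC)$ is infinite --- or prove it by writing $\End(\modC)$ as the filtered colimit of the finite products $\prod_{x\in F}\modC(x,x)$ over finite $F\subset\Ob(\modC)$, using that the bar complex commutes with filtered colimits together with the classical case of a finite product of unital rings. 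With that substitution your proof is complete; as written, the justification of the second isomorphism would fail.
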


\begin{remark} Note that the ring End(C) is not unital if Ob(C) is infinite.
The definition of the Hochschild homology of a non-unital ring R is
different from the case of unital rings (see \cite{Loday}), but it is well known
that the Hochschild homology of the direct sum of unital rings is
isomorphic to the direct sum of the Hochschild homology of its direct
summands.
\end{remark}

\begin{defn}
  \label{r23}
A {\em strongly upper-triangular} linear category is an
upper-triangular linear category $\modC $ such that for all $x\in \Ob(\modC )$,
we have $\modC (x,x)\cong \modZ $.
\end{defn}

Using that $\HH_0(\Z)=\Z$ and for $n>0$, $\HH_n(\Z)=0$,
from the previous lemma we immediately get the following.

\begin{cor}[\cite{BHLZ}]
  \label{r20}
  For a strongly upper-triangular linear category $\modC $, we have
  \begin{gather*}
    \Tr\modC =\HH_0(\modC ) \cong \End(\modC )\cong\modZ \Ob(\modC ),\\
    \HH_i(\modC )=0\quad \text{for $i>0$}.
  \end{gather*}
  Here $\modZ \Ob(\modC )$ denotes the free $\modZ $-module spanned by $\Ob(\modC )$.
\end{cor}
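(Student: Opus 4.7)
The plan is to deduce the corollary directly from Lemma \ref{r19} combined with the standard computation of the Hochschild homology of $\modZ$. Since $\modC$ is strongly upper-triangular (Definition \ref{r23}), every endomorphism algebra $\modC(x,x)$ is isomorphic to $\modZ$, so the ring $\End(\modC) = \bigoplus_{x\in \Ob(\modC)} \modC(x,x)$ is as an abelian group the free $\modZ$-module $\modZ\Ob(\modC)$ spanned by the objects of $\modC$. This already identifies $\End(\modC)$ with $\modZ\Ob(\modC)$ in the statement.

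First I would apply Lemma \ref{r19} to rewrite
\[
\HH_*(\modC) \;\cong\; \bigoplus_{x\in \Ob(\modC)} \HH_*(\modC(x,x)) \;\cong\; \bigoplus_{x\in \Ob(\modC)} \HH_*(\modZ).
\]
Next I would invoke the classical computation of the Hochschild homology of $\modZ$: one has $\HH_0(\modZ) \cong \modZ$, which is immediate since $\modZ$ is commutative and $\HH_0(R) \cong R/[R,R]$, while $\HH_i(\modZ) = 0$ for $i>0$, which follows from the length-one free resolution of $\modZ$ as a $\modZ$-bimodule (equivalently, $\modZ$ has Hochschild dimension zero). This is the hint built into the statement preceding the corollary.

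Assembling these ingredients would immediately give $\HH_i(\modC) = 0$ for $i > 0$, and for $i=0$
\[
\Tr(\modC) \;=\; \HH_0(\modC) \;\cong\; \bigoplus_{x\in \Ob(\modC)} \modZ \;\cong\; \modZ\Ob(\modC),
\]
which is identified with $\End(\modC)$ via the observation in the first paragraph. This yields the full chain of isomorphisms claimed.

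I do not anticipate a real obstacle: the only mildly subtle point is the appeal to Lemma \ref{r19} when $\Ob(\modC)$ is infinite, in which case $\End(\modC)$ is a non-unital ring. However, as noted in the remark following Lemma \ref{r19}, Hochschild homology is additive over (possibly infinite) direct sums of unital rings, and this additivity is precisely what the computation above uses. Beyond this bookkeeping, the proof is entirely structural and consists only in combining Lemma \ref{r19} with the vanishing $\HH_{>0}(\modZ) = 0$.
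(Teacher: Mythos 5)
Your proposal is correct and follows exactly the paper's own argument: the corollary is deduced immediately from Lemma \ref{r19} together with the classical facts $\HH_0(\modZ)\cong\modZ$ and $\HH_n(\modZ)=0$ for $n>0$, with $\End(\modC)\cong\modZ\Ob(\modC)$ coming directly from Definition \ref{r23}. Your remark on the non-unital ring $\End(\modC)$ when $\Ob(\modC)$ is infinite likewise matches the remark the paper places after Lemma \ref{r19}.
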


Let $\modC $ be an additive category, and let $B\subset \Ob(\modC )$ be a subset.
Denote by $\modC |_B$ the full subcategory of $\modC $ with
$\Ob(\modC |_B)=B$.

The set $B$ is called a {\em strongly upper-triangular basis} of
$\modC $ if the following two conditions hold.
\begin{enumerate}
\item The inclusion functor $\modC |_B\rightarrow \modC $ induces equivalence of additive
  categories $(\modC |_B)^\oplus\simeq \modC $.
\item $\modC |_B$ is strongly upper-triangular.
\end{enumerate}

\begin{prop}[\cite{BHLZ}]
  \label{r24}
  Let $\modC $ be an additive category with a strongly upper-triangular
  basis $B$.  Then we have
  \begin{gather*}
    \Tr\modC =\HH_0(\modC )\cong \modZ B,\\
    \HH_i(\modC )=0\quad \text{for $i>0$}.
  \end{gather*}
\end{prop}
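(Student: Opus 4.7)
The plan is to reduce to Corollary \ref{r20} applied to the strongly upper-triangular subcategory $\modC|_B$. Since by assumption the inclusion induces an equivalence of additive categories $(\modC|_B)^\oplus \simeq \modC$, the proof comes down to showing two invariance properties of Hochschild--Mitchell homology: (i) invariance under equivalence of linear categories, and (ii) invariance under passage from a linear category to its additive closure. Together these give
\[
\HH_*(\modC) \;\cong\; \HH_*\!\bigl((\modC|_B)^\oplus\bigr) \;\cong\; \HH_*(\modC|_B),
\]
and Corollary \ref{r20} immediately yields $\HH_0(\modC) \cong \modZ B$ and $\HH_i(\modC) = 0$ for $i > 0$.

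For (i), given an equivalence $F\colon \modC \to \modD$ with quasi-inverse $G$, one writes down the induced chain maps $F_*$, $G_*$ on the Hochschild--Mitchell complexes and uses the natural isomorphisms $GF \cong 1$, $FG \cong 1$ to produce explicit chain homotopies showing $F_* G_* \simeq \mathrm{id}$ and $G_* F_* \simeq \mathrm{id}$, exactly mirroring the classical Morita argument for rings.

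For (ii), the case $i=0$ is already the content of an exercise in the text. For higher $i$ one defines a chain map $\pi_\bullet \colon C_\bullet(\modC^\oplus) \to C_\bullet(\modC)$ that expands each object $\bigoplus_k x_k$ into its summands and each morphism $f\colon \bigoplus_k x_k \to \bigoplus_\ell y_\ell$ into its matrix of components $f_{\ell k}\colon x_k \to y_\ell$, summing over all choices of indices. The composition $\pi_\bullet \circ C_\bullet(\iota)$ is the identity on the nose, and a chain homotopy witnessing $C_\bullet(\iota) \circ \pi_\bullet \simeq \mathrm{id}$ is built from the orthogonal idempotents $e_k = i_k p_k$ in $\modC^\oplus$, letting the cyclic term of the Hochschild differential absorb the discrepancy between a morphism and its matrix reassembly.

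The main obstacle is the bookkeeping of the chain homotopy in (ii) for general $n$: writing it out degree by degree is straightforward but tedious. A cleaner alternative is to invoke the Morita-invariance of Hochschild--Mitchell homology for linear categories (see \cite{Mitchell}, and cf.\ \cite{Loday}), applied to the Morita equivalence $\iota\colon \modC \hookrightarrow \modC^\oplus$: every object of $\modC^\oplus$ is a finite direct sum of images of objects of $\modC$, so the two categories have equivalent module categories. Either route yields $\HH_*(\modC^\oplus) \cong \HH_*(\modC)$ and completes the proof.
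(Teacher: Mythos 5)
Your proof is correct and follows what is essentially the intended argument: the paper states Proposition~\ref{r24} with only a citation to \cite{BHLZ}, but the surrounding development (Corollary~\ref{r20} together with the exercise that $\Tr(\modC^\oplus)=\Tr(\modC)$) points exactly to your reduction $\HH_*(\modC)\cong\HH_*\bigl((\modC|_B)^\oplus\bigr)\cong\HH_*(\modC|_B)$ followed by Corollary~\ref{r20}. The two invariance properties you supply --- invariance of Hochschild--Mitchell homology under linear equivalences (via natural isomorphisms inducing chain homotopies) and under additive closure (via the generalized trace map, or Morita invariance \`a la \cite{Mitchell,Loday}) --- are precisely the standard facts the survey leaves implicit, and your sketches of both are sound.
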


\begin{remark} \label{rem-overk}
For $\k$-linear categories it is natural to modify the Definition~\ref{r23} to define a strongly upper-triangular $\k$-linear category by requiring that for all $x\in \Ob(\modC )$,
we have $\modC (x,x)\cong \k$.  In that case, Corollary~\ref{r20} and Proposition ~\ref{r24} remain valid with $\Z$ replaced by the field $\k$.
\end{remark}

An additive category is said to be {\em Krull-Schmidt} if any object has a unique decomposition into a finite direct sum of objects having local endomorphism rings.  That is to say, given an isomorphism $x_1 \oplus x_2 \oplus \dots \oplus x_r \cong y_1 \oplus y_2 \oplus \dots \oplus y_s$ where $x_i$ and $y_j$ are indecomposable, then $r=s$, and there exists some permutation $\sigma \in S_r$ such that $x_{\sigma(i)} \simeq y_i$.

\begin{exercise}
Show that any category $\C$ enriched in $\cat{Vect}_{\k}$, i.e. the hom spaces form vector spaces and composition is a bilinear map, is Krull-Schmidt if and only if all idempotents split.
\end{exercise}

In a Krull-Schmidt category  $\C$ the classes of indecomposables in the split Grothendieck group $K_0(\C)$ give rise to a distinguished basis.
When $\C$ is a graded linear category, then the graded hom between a pair of objects $x$, $y \in \Ob(\C)$ is defined as
\[
 \HOM_{\C}(x,y) := \bigoplus_{k \in \Z} \Hom_{\C}(x, y \la k \ra).
\]
When $\C$ is a graded $\k$-linear category, this graded hom gives rise to a sesquilinear inner product $\la - , - \ra \maps K_0(\C) \times K_0(\C) \to \Z((q))$ defined by
\[
 \la [x]_{\cong}, [y]_{\cong} \ra := \dim_q \HOM_{\C}(x,y) = \sum_{k \in \Z} q^k \dim \Hom_{\C}(x, y \la k\ra).
\]

One consequence of a graded linear category possessing a
strongly upper triangular basis $B$ is that the classes of $B$ in the Grothendieck group satisfy the property that for any pair $x, y \in B$, we have
\[
 \la x, y \ra \in \delta_{x,y} + q\N[q],
\]
in which case the basis resulting from $B$ is said to be {\em almost orthogonal}.

\vspace*{3mm}
\noindent
{\bf Remarks for experts}.
Strongly upper triangular bases often arise in geometric constructions where the categories of interest are categories of sheaves on certain varieties.
Usually one starts with a triangulated category $\C$ and produces an abelian category $\cal{A}$ as the heart of a $t$-structure for $\C$. Distinguished basis  $B$ arise as the simple objects in the heart of the corresponding $t$-structure.
 If $\cal{A}$ is endowed with a grading given by the cohomological ${\rm Ext}$-grading from $\C$, then the resulting basis of simple objects
 is strongly upper triangular and their classes in the Grothendieck group are almost orthogonal.

For example, categories of perverse sheaves are the heart of a certain $t$-structure on the derived category of sheaves~\cite{BBD}.
 In this way, simple perverse sheaves naturally give rise to distinguished bases in geometric representation theory.  The Kazhdan-Lusztig
 basis of a Hecke algebra associated to a Weyl group $W$ arise in this way~\cite{KazLus}.  Likewise, simple integrable representations and
 their tensor products of quantum Kac-Moody algebra  possess canonical bases~\cite{Lus1,Lus2,Lus3,Lus4,Kash1,Kash2}. Outside of geometry,
there is a $t$-structure on the bounded derived category $D^b(\Lambda{\rm -mod})$ of a finite-dimensional $\k$-algebra $\Lambda$ in which
the simple $\Lambda$-modules form a strongly upper triangular basis in the heart of the $t$-structure~\cite{AN}.

%
\section{Categorified  quantum groups}\label{sln}
%
Fix a base field $\Bbbk$. We will always work over this field which is not assumed to be of characteristic 0, nor algebraically closed.

%
\subsection{Quantum $\mf{sl}_n$}
%
%
\subsubsection{The Cartan datum }\label{sec:datum}
%

Let $I=\{1,2,\dots,n-1\}$ consist of the set of vertices of the Dynkin diagram of type $A_{n-1}$
\begin{equation} \label{eq_dynkin_sln}
    \xy
  (-15,0)*{\circ}="1";
  (-5, 0)*{\circ}="2";
  (5,  0)*{\circ}="3";
  (35,  0)*{\circ}="4";
  "1";"2" **\dir{-}?(.55)*\dir{};
  "2";"3" **\dir{-}?(.55)*\dir{};
  "3";(15,0) **\dir{-}?(.55)*\dir{};
  (25,0);"4" **\dir{-}?(.55)*\dir{};
  (-15,2.2)*{\scs 1};
  (-5,2.2)*{\scs 2};
  (5,2.2)*{\scs 3};
  (35,2.2)*{\scs n-1};
  (20,0)*{\cdots };
  \endxy
  \nn
\end{equation}
enumerated from left to right. Let $X=\Z^{n-1}$ denote the weight lattice for $\mathfrak{sl}_n$ and
$\{\alpha_i\}_{i\ \in I} \subset X$ and $\{\Lambda_i\}_{i \in I} \subset X$ denote the collection of simple roots
and fundamental weights, respectively.
There is a symmetric bilinear form on $X$ defined by $(\alpha_i, \alpha_j) = a_{ij}$ where
\[ a_{ij} =
\left\{
\begin{array}{ll}
  2 & \text{if $i=j$}\\
  -1&  \text{if $|i-j|=1$} \\
  0 & \text{if $|i-j|>1$}
\end{array}
\right.
\]
is the (symmetric) Cartan matrix associated to $\mathfrak{sl}_n$. For $i \in I$ denote the simple coroots by
$h_i \in X^\vee = \Hom_{\Z}(X,\Z)$.  Write $\langle \cdot, \cdot \rangle \maps X^{\vee} \times X
\to \Z$ for the canonical pairing
$\la i,\lambda\ra :=\langle h_i, \lambda \rangle = 2 \frac{(\alpha_i,\lambda)}{(\alpha_i,\alpha_i)}$
for $i \in I$ and $\lambda \in X$ that satisfies $\la h_i, \Lambda_i \ra = \delta_{i,j}$. Any weight $\lambda \in X$
can be written as $\lambda = (\lambda_1,\lambda_2, \dots, \lambda_{n-1})$, where
$\lambda_i=\la h_i, \lambda \ra$.

We let $X^+ \subset X$ denote the dominant weights, which are those of the form
$\sum_i \lambda_i \Lambda_i$
with $\lambda_i \ge 0$.  Finally, let $[k]=\frac{q^k-q^{-k}}{q-q^{-1}}$ and $[k]!=[k][k-1]\dots [1]$.

%
\subsubsection{The algebra $\mathbf{U}_q(\mathfrak{sl}_n)$} \label{subsubsec_algebrasln}
%

The algebra $\mathbf{U}_q(\mathfrak{sl}_n)$ is the $\Q(q)$-algebra with unit generated by the elements $E_i$,
$F_i$ and $K_i^{\pm 1}$ for $i = 1, 2, \dots , n-1$, with  the defining relations
\begin{equation}
K_iK_i^{-1} = K_i^{-1}K_i = 1, \quad  K_iK_j = K_jK_i,
\end{equation}
\begin{equation}
K_iE_jK_i^{-1} = q^{a_{ij}} E_j, \quad  K_iF_jK_i^{-1} = q^{-a_{ij}} F_j,
\end{equation}
\begin{equation}
E_iF_j - F_jE_i = \delta_{ij} \frac{K_i-K_{i}^{-1}}{q-q^{-1}},
\end{equation}
\begin{equation}
E_i^2E_j-(q+q^{-1})E_iE_jE_i+E_jE_i^2 =0\;\; \text{if $j=i\pm 1$},
\end{equation}\begin{equation}
F_i^2F_j-(q+q^{-1})F_iF_jF_i+F_jF_i^2=0 \;\; \text{if $j=i\pm 1$},
\end{equation}\begin{equation}
E_iE_j=E_jE_i, \quad F_iF_j = F_jF_i \;\; \text{if $|i-j|>1$.}
\end{equation}

Let $\dot{\mathbf{U}}(\mathfrak{sl}_n)$ be the idempotented version of $\mathbf{U}_q(\mathfrak{sl}_n)$
where the unit is replaced by a collection of orthogonal idempotents $1_{\lambda}$ indexed by the weight lattice $X$ of $\mathfrak{sl}_n$,
\begin{equation}
  1_{\lambda}1_{\lambda'} = \delta_{\lambda\lambda'} 1_{\lambda},
\end{equation}
such that if $\lambda = (\lambda_1,\lambda_2, \dots, \lambda_{m-1})$, then
\begin{equation} \label{eq_onesubn}
K_i1_{\lambda} =1_{\lambda}K_i= q^{\lambda_i} 1_{\lambda}, \quad
E_i^{}1_{\lambda} = 1_{\lambda+\alpha_i}E_i, \quad F_i1_{\lambda} = 1_{\lambda-\alpha_i}F_i
,
\end{equation}
where
\begin{eqnarray} \label{eq_weight_action1}
 \lambda +\alpha_i &=& \quad \left\{
\begin{array}{ccl}
   (\lambda_1+2, \lambda_2-1,\lambda_3,\dots,\lambda_{m-2}, \lambda_{m-1}) & \quad & \text{if $i=1$} \\
   (\lambda_1, \lambda_2,\dots,\lambda_{m-2},\lambda_{m-1}-1, \lambda_{m-1}+2) & \quad & \text{if $i=n-1$} \\
  (\lambda_1, \dots, \lambda_{i-1}-1, \lambda_i+2, \lambda_{i+1}-1, \dots,
\lambda_{m-1}) & \quad & \text{otherwise.}
\end{array}
 \right.
\end{eqnarray}
$\dot{\mathbf{U}}(\mathfrak{sl}_n)$ can be viewed as a category with objects $\l\in X$ and morphisms
given  by compositions of $E_i$ and $F_i$ with $1\leq i <n$ modulo the above relations.

Let $\cal{A} := \Z[q,q^{-1}]$; the $\cal{A}$-algebra $\UA(\mathfrak{sl}_n)$ is the integral form of
$\dot{\mathbf{U}}(\mathfrak{sl}_n)$ generated by products of divided powers $E^{(a)}_i1_{\lambda}:=
\frac{E^{a}_i}{[a]!}1_{\lambda}$, $F^{(a)}_i1_{\lambda}:=
\frac{F^{a}_i}{[a]!}1_{\lambda}$ for $\lambda \in X$ and $i = 1, 2, \dots , n-1$.

%
\subsection{The 2-category $\Ucat_Q(\mathfrak{sl}_n)$}
%

Here we describe a categorification of $\mathbf{U}(\mathfrak{sl}_n)$ mainly following \cite{CLau} and \cite{KL3}.
For an elementary introduction to the categorification of $\mf{sl}_2$ see \cite{Lau3}.

%
\subsubsection{Choice of scalars $Q$} \label{subsec-choice}
%

Associated to the Cartan datum for $\mathfrak{sl}_n$ we also fix a choice of scalars $Q$ consisting of:
\begin{itemize}
  \item $t_{ij}$ for all $i,j \in I$,
\end{itemize}
such that
\begin{itemize}
\item $t_{ii}=1$ for all $i \in I$ and $t_{ij} \in \Bbbk^{\times}$ for $i\neq j$,
 \item $t_{ij}=t_{ji}$ when $a_{ij}=0$.
\end{itemize}

%
\subsubsection{The definition }
%

By a graded linear 2-category we mean a category enriched in graded linear categories, so that the hom spaces form graded linear categories, and the composition map is grading preserving.

Given a fixed choice of scalars $Q$ we can define the following 2-category.

\begin{defn} \label{defU_cat}
The 2-category $\Ucat_Q(\mathfrak{sl}_n)$ is the graded linear
2-category consisting of:
\begin{itemize}
\item objects $\lambda$ for $\lambda \in X$.
\item 1-morphisms are formal direct sums of (shifts of) compositions of
$$\onel, \quad \onenn{\l+\alpha_i} \sE_i = \onenn{\l+\alpha_i} \sE_i\onel = \sE_i \onel, \quad \text{ and }\quad \onenn{\lambda-\alpha_i} \sF_i = \onenn{\lambda-\alpha_i} \sF_i\onel = \sF_i\onel$$
for $i \in I$ and $\l \in X$.
\item 2-morphisms are $\Bbbk$-vector spaces spanned by compositions of (decorated) tangle-like diagrams illustrated below.
\begin{align}
  \xy 0;/r.17pc/:
 (0,7);(0,-7); **\dir{-} ?(.75)*\dir{>};
 (0,0)*{\bullet};
 (7,3)*{ \scs \lambda};
 (-9,3)*{\scs  \lambda+\alpha_i};
 (-2.5,-6)*{\scs i};
 (-10,0)*{};(10,0)*{};
 \endxy &\maps \cal{E}_i\onel \to \cal{E}_i\onel\la (\alpha_i,\alpha_i) \ra  & \quad
 &
    \xy 0;/r.17pc/:
 (0,7);(0,-7); **\dir{-} ?(.75)*\dir{<};
 (0,0)*{\bullet};
 (7,3)*{ \scs \lambda};
 (-9,3)*{\scs  \lambda-\alpha_i};
 (-2.5,-6)*{\scs i};
 (-10,0)*{};(10,0)*{};
 \endxy\maps \cal{F}_i\onel \to \cal{F}_i\onel\la (\alpha_i,\alpha_i) \ra  \nn \\
   & & & \nn \\
   \xy 0;/r.17pc/:
  (0,0)*{\xybox{
    (-4,-4)*{};(4,4)*{} **\crv{(-4,-1) & (4,1)}?(1)*\dir{>} ;
    (4,-4)*{};(-4,4)*{} **\crv{(4,-1) & (-4,1)}?(1)*\dir{>};
    (-5.5,-3)*{\scs i};
     (5.5,-3)*{\scs j};
     (9,1)*{\scs  \lambda};
     (-10,0)*{};(10,0)*{};
     }};
  \endxy \;\;&\maps \cal{E}_i\cal{E}_j\onel  \to \cal{E}_j\cal{E}_i\onel\la - (\alpha_i,\alpha_j) \ra  &
  &
   \xy 0;/r.17pc/:
  (0,0)*{\xybox{
    (-4,4)*{};(4,-4)*{} **\crv{(-4,1) & (4,-1)}?(1)*\dir{>} ;
    (4,4)*{};(-4,-4)*{} **\crv{(4,1) & (-4,-1)}?(1)*\dir{>};
    (-6.5,-3)*{\scs i};
     (6.5,-3)*{\scs j};
     (9,1)*{\scs  \lambda};
     (-10,0)*{};(10,0)*{};
     }};
  \endxy\;\; \maps \cal{F}_i\cal{F}_j\onel  \to \cal{F}_j\cal{F}_i\onel\la - (\alpha_i,\alpha_j) \ra  \nn \\
  & & & \nn \\
     \xy 0;/r.17pc/:
    (0,-3)*{\bbpef{i}};
    (8,-5)*{\scs  \lambda};
    (-10,0)*{};(10,0)*{};
    \endxy &\maps \onel  \to \cal{F}_i\cal{E}_i\onel\la 1 + (\l, \alpha_i) \ra   &
    &
   \xy 0;/r.17pc/:
    (0,-3)*{\bbpfe{i}};
    (8,-5)*{\scs \lambda};
    (-10,0)*{};(10,0)*{};
    \endxy \maps \onel  \to\cal{E}_i\cal{F}_i\onel\la 1 - (\l, \alpha_i) \ra  \nn \\
      & & & \nn \\
  \xy 0;/r.17pc/:
    (0,0)*{\bbcef{i}};
    (8,4)*{\scs  \lambda};
    (-10,0)*{};(10,0)*{};
    \endxy & \maps \cal{F}_i\cal{E}_i\onel \to\onel\la 1 + (\l, \alpha_i) \ra  &
    &
 \xy 0;/r.17pc/:
    (0,0)*{\bbcfe{i}};
    (8,4)*{\scs  \lambda};
    (-10,0)*{};(10,0)*{};
    \endxy \maps\cal{E}_i\cal{F}_i\onel  \to\onel\la 1 - (\l, \alpha_i) \ra \nn
\end{align}
\end{itemize}
\end{defn}
Here we follow the grading conventions in \cite{CLau} and \cite{LQR}
which are opposite to those from \cite{KL3}.
In this $2$-category (and those throughout the paper) we
read diagrams from right to left and bottom to top.  The identity 2-morphism of the 1-morphism
$\cal{E}_i \onel$ is
represented by an upward oriented line labeled by $i$ and the identity 2-morphism of $\cal{F}_i \onel$ is
represented by a downward such line.

The 2-morphisms satisfy the following relations:
\begin{enumerate}
\item \label{item_cycbiadjoint} The 1-morphisms $\cal{E}_i \onel$ and $\cal{F}_i \onel$ are biadjoint (up to a specified degree shift). These conditions are expressed diagrammatically as
    \begin{equation} \label{eq_biadjoint1}
 \xy   0;/r.17pc/:
    (-8,0)*{}="1";
    (0,0)*{}="2";
    (8,0)*{}="3";
    (-8,-10);"1" **\dir{-};
    "1";"2" **\crv{(-8,8) & (0,8)} ?(0)*\dir{>} ?(1)*\dir{>};
    "2";"3" **\crv{(0,-8) & (8,-8)}?(1)*\dir{>};
    "3"; (8,10) **\dir{-};
    (12,-9)*{\lambda};
    (-6,9)*{\lambda+\alpha_i};
    \endxy
    \; =
    \;
\xy   0;/r.17pc/:
    (-8,0)*{}="1";
    (0,0)*{}="2";
    (8,0)*{}="3";
    (0,-10);(0,10)**\dir{-} ?(.5)*\dir{>};
    (5,8)*{\lambda};
    (-9,8)*{\lambda+\alpha_i};
    \endxy
\qquad \quad \xy  0;/r.17pc/:
    (8,0)*{}="1";
    (0,0)*{}="2";
    (-8,0)*{}="3";
    (8,-10);"1" **\dir{-};
    "1";"2" **\crv{(8,8) & (0,8)} ?(0)*\dir{<} ?(1)*\dir{<};
    "2";"3" **\crv{(0,-8) & (-8,-8)}?(1)*\dir{<};
    "3"; (-8,10) **\dir{-};
    (12,9)*{\lambda+\alpha_i};
    (-6,-9)*{\lambda};
    \endxy
    \; =
    \;
\xy  0;/r.17pc/:
    (8,0)*{}="1";
    (0,0)*{}="2";
    (-8,0)*{}="3";
    (0,-10);(0,10)**\dir{-} ?(.5)*\dir{<};
    (9,-8)*{\lambda+\alpha_i};
    (-6,-8)*{\lambda};
    \endxy
\end{equation}

\begin{equation}\label{eq_biadjoint2}
 \xy   0;/r.17pc/:
    (8,0)*{}="1";
    (0,0)*{}="2";
    (-8,0)*{}="3";
    (8,-10);"1" **\dir{-};
    "1";"2" **\crv{(8,8) & (0,8)} ?(0)*\dir{>} ?(1)*\dir{>};
    "2";"3" **\crv{(0,-8) & (-8,-8)}?(1)*\dir{>};
    "3"; (-8,10) **\dir{-};
    (12,9)*{\lambda};
    (-5,-9)*{\lambda+\alpha_i};
    \endxy
    \; =
    \;
    \xy 0;/r.17pc/:
    (8,0)*{}="1";
    (0,0)*{}="2";
    (-8,0)*{}="3";
    (0,-10);(0,10)**\dir{-} ?(.5)*\dir{>};
    (5,-8)*{\lambda};
    (-9,-8)*{\lambda+\alpha_i};
    \endxy
\qquad \quad \xy   0;/r.17pc/:
    (-8,0)*{}="1";
    (0,0)*{}="2";
    (8,0)*{}="3";
    (-8,-10);"1" **\dir{-};
    "1";"2" **\crv{(-8,8) & (0,8)} ?(0)*\dir{<} ?(1)*\dir{<};
    "2";"3" **\crv{(0,-8) & (8,-8)}?(1)*\dir{<};
    "3"; (8,10) **\dir{-};
    (12,-9)*{\lambda+\alpha_i};
    (-6,9)*{\lambda};
    \endxy
    \; =
    \;
\xy   0;/r.17pc/:
    (-8,0)*{}="1";
    (0,0)*{}="2";
    (8,0)*{}="3";
    (0,-10);(0,10)**\dir{-} ?(.5)*\dir{<};
   (9,8)*{\lambda+\alpha_i};
    (-6,8)*{\lambda};
    \endxy
\end{equation}

  \item The 2-morphisms are $Q$-cyclic with respect to this biadjoint structure.
\begin{equation} \label{eq_cyclic_dot}
 \xy 0;/r.17pc/:
    (-8,5)*{}="1";
    (0,5)*{}="2";
    (0,-5)*{}="2'";
    (8,-5)*{}="3";
    (-8,-10);"1" **\dir{-};
    "2";"2'" **\dir{-} ?(.5)*\dir{<};
    "1";"2" **\crv{(-8,12) & (0,12)} ?(0)*\dir{<};
    "2'";"3" **\crv{(0,-12) & (8,-12)}?(1)*\dir{<};
    "3"; (8,10) **\dir{-};
    (17,-9)*{\lambda+\alpha_i};
    (-12,9)*{\lambda};
    (0,4)*{\bullet};
    (10,8)*{\scs };
    (-10,-8)*{\scs };
    \endxy
    \quad = \quad
      \xy 0;/r.17pc/:
 (0,10);(0,-10); **\dir{-} ?(.75)*\dir{<}+(2.3,0)*{\scriptstyle{}}
 ?(.1)*\dir{ }+(2,0)*{\scs };
 (0,0)*{\bullet};
 (-6,5)*{\lambda};
 (10,5)*{\lambda+\alpha_i};
 (-10,0)*{};(10,0)*{};(-2,-8)*{\scs };
 \endxy
    \quad = \quad
   \xy 0;/r.17pc/:
    (8,5)*{}="1";
    (0,5)*{}="2";
    (0,-5)*{}="2'";
    (-8,-5)*{}="3";
    (8,-10);"1" **\dir{-};
    "2";"2'" **\dir{-} ?(.5)*\dir{<};
    "1";"2" **\crv{(8,12) & (0,12)} ?(0)*\dir{<};
    "2'";"3" **\crv{(0,-12) & (-8,-12)}?(1)*\dir{<};
    "3"; (-8,10) **\dir{-};
    (17,9)*{\lambda+\alpha_i};
    (-12,-9)*{\lambda};
    (0,4)*{\bullet};
    (-10,8)*{\scs };
    (10,-8)*{\scs };
    \endxy
\end{equation}
The $Q$-cyclic relations for crossings are given by
\begin{equation} \label{eq_almost_cyclic}
   \xy 0;/r.17pc/:
  (0,0)*{\xybox{
    (-4,4)*{};(4,-4)*{} **\crv{(-4,1) & (4,-1)}?(1)*\dir{>} ;
    (4,4)*{};(-4,-4)*{} **\crv{(4,1) & (-4,-1)}?(1)*\dir{>};
    (-6.5,-3)*{\scs i};
     (6.5,-3)*{\scs j};
     (9,1)*{\scs  \lambda};
     (-10,0)*{};(10,0)*{};
     }};
  \endxy \quad = \quad
  t_{ij}^{-1}\xy 0;/r.17pc/:
  (0,0)*{\xybox{
    (4,-4)*{};(-4,4)*{} **\crv{(4,-1) & (-4,1)}?(1)*\dir{>};
    (-4,-4)*{};(4,4)*{} **\crv{(-4,-1) & (4,1)};
     (-4,4)*{};(18,4)*{} **\crv{(-4,16) & (18,16)} ?(1)*\dir{>};
     (4,-4)*{};(-18,-4)*{} **\crv{(4,-16) & (-18,-16)} ?(1)*\dir{<}?(0)*\dir{<};
     (-18,-4);(-18,12) **\dir{-};(-12,-4);(-12,12) **\dir{-};
     (18,4);(18,-12) **\dir{-};(12,4);(12,-12) **\dir{-};
     (8,1)*{ \lambda};
     (-10,0)*{};(10,0)*{};
     (-4,-4)*{};(-12,-4)*{} **\crv{(-4,-10) & (-12,-10)}?(1)*\dir{<}?(0)*\dir{<};
      (4,4)*{};(12,4)*{} **\crv{(4,10) & (12,10)}?(1)*\dir{>}?(0)*\dir{>};
      (-20,11)*{\scs j};(-10,11)*{\scs i};
      (20,-11)*{\scs j};(10,-11)*{\scs i};
     }};
  \endxy
\quad =  \quad t_{ji}^{-1}
\xy 0;/r.17pc/:
  (0,0)*{\xybox{
    (-4,-4)*{};(4,4)*{} **\crv{(-4,-1) & (4,1)}?(1)*\dir{>};
    (4,-4)*{};(-4,4)*{} **\crv{(4,-1) & (-4,1)};
     (4,4)*{};(-18,4)*{} **\crv{(4,16) & (-18,16)} ?(1)*\dir{>};
     (-4,-4)*{};(18,-4)*{} **\crv{(-4,-16) & (18,-16)} ?(1)*\dir{<}?(0)*\dir{<};
     (18,-4);(18,12) **\dir{-};(12,-4);(12,12) **\dir{-};
     (-18,4);(-18,-12) **\dir{-};(-12,4);(-12,-12) **\dir{-};
     (8,1)*{ \lambda};
     (-10,0)*{};(10,0)*{};
      (4,-4)*{};(12,-4)*{} **\crv{(4,-10) & (12,-10)}?(1)*\dir{<}?(0)*\dir{<};
      (-4,4)*{};(-12,4)*{} **\crv{(-4,10) & (-12,10)}?(1)*\dir{>}?(0)*\dir{>};
      (20,11)*{\scs i};(10,11)*{\scs j};
      (-20,-11)*{\scs i};(-10,-11)*{\scs j};
     }};
  \endxy
\end{equation}

Sideways crossings can then be defined utilizing the $Q$-cyclic condition by the equalities:
\begin{equation} \label{eq_crossl-gen}
  \xy 0;/r.18pc/:
  (0,0)*{\xybox{
    (-4,-4)*{};(4,4)*{} **\crv{(-4,-1) & (4,1)}?(1)*\dir{>} ;
    (4,-4)*{};(-4,4)*{} **\crv{(4,-1) & (-4,1)}?(0)*\dir{<};
    (-5,-3)*{\scs j};
     (6.5,-3)*{\scs i};
     (9,2)*{ \lambda};
     (-12,0)*{};(12,0)*{};
     }};
  \endxy
\quad := \quad
 \xy 0;/r.17pc/:
  (0,0)*{\xybox{
    (4,-4)*{};(-4,4)*{} **\crv{(4,-1) & (-4,1)}?(1)*\dir{>};
    (-4,-4)*{};(4,4)*{} **\crv{(-4,-1) & (4,1)};
     (-4,4);(-4,12) **\dir{-};
     (-12,-4);(-12,12) **\dir{-};
     (4,-4);(4,-12) **\dir{-};(12,4);(12,-12) **\dir{-};
     (16,1)*{\lambda};
     (-10,0)*{};(10,0)*{};
     (-4,-4)*{};(-12,-4)*{} **\crv{(-4,-10) & (-12,-10)}?(1)*\dir{<}?(0)*\dir{<};
      (4,4)*{};(12,4)*{} **\crv{(4,10) & (12,10)}?(1)*\dir{>}?(0)*\dir{>};
      (-14,11)*{\scs i};(-2,11)*{\scs j};
      (14,-11)*{\scs i};(2,-11)*{\scs j};
     }};
  \endxy
  \quad = \quad t_{ij} \;\;
 \xy 0;/r.17pc/:
  (0,0)*{\xybox{
    (-4,-4)*{};(4,4)*{} **\crv{(-4,-1) & (4,1)}?(1)*\dir{<};
    (4,-4)*{};(-4,4)*{} **\crv{(4,-1) & (-4,1)};
     (4,4);(4,12) **\dir{-};
     (12,-4);(12,12) **\dir{-};
     (-4,-4);(-4,-12) **\dir{-};(-12,4);(-12,-12) **\dir{-};
     (16,1)*{\lambda};
     (10,0)*{};(-10,0)*{};
     (4,-4)*{};(12,-4)*{} **\crv{(4,-10) & (12,-10)}?(1)*\dir{>}?(0)*\dir{>};
      (-4,4)*{};(-12,4)*{} **\crv{(-4,10) & (-12,10)}?(1)*\dir{<}?(0)*\dir{<};
     }};
     (12,11)*{\scs j};(0,11)*{\scs i};
      (-17,-11)*{\scs j};(-5,-11)*{\scs i};
  \endxy
\end{equation}
\begin{equation} \label{eq_crossr-gen}
  \xy 0;/r.18pc/:
  (0,0)*{\xybox{
    (-4,-4)*{};(4,4)*{} **\crv{(-4,-1) & (4,1)}?(0)*\dir{<} ;
    (4,-4)*{};(-4,4)*{} **\crv{(4,-1) & (-4,1)}?(1)*\dir{>};
    (5.1,-3)*{\scs i};
     (-6.5,-3)*{\scs j};
     (9,2)*{ \lambda};
     (-12,0)*{};(12,0)*{};
     }};
  \endxy
\quad := \quad
 \xy 0;/r.17pc/:
  (0,0)*{\xybox{
    (-4,-4)*{};(4,4)*{} **\crv{(-4,-1) & (4,1)}?(1)*\dir{>};
    (4,-4)*{};(-4,4)*{} **\crv{(4,-1) & (-4,1)};
     (4,4);(4,12) **\dir{-};
     (12,-4);(12,12) **\dir{-};
     (-4,-4);(-4,-12) **\dir{-};(-12,4);(-12,-12) **\dir{-};
     (16,-6)*{\lambda};
     (10,0)*{};(-10,0)*{};
     (4,-4)*{};(12,-4)*{} **\crv{(4,-10) & (12,-10)}?(1)*\dir{<}?(0)*\dir{<};
      (-4,4)*{};(-12,4)*{} **\crv{(-4,10) & (-12,10)}?(1)*\dir{>}?(0)*\dir{>};
      (14,11)*{\scs j};(2,11)*{\scs i};
      (-14,-11)*{\scs j};(-2,-11)*{\scs i};
     }};
  \endxy
  \quad = \quad t_{ji} \;\;
  \xy 0;/r.17pc/:
  (0,0)*{\xybox{
    (4,-4)*{};(-4,4)*{} **\crv{(4,-1) & (-4,1)}?(1)*\dir{<};
    (-4,-4)*{};(4,4)*{} **\crv{(-4,-1) & (4,1)};
     (-4,4);(-4,12) **\dir{-};
     (-12,-4);(-12,12) **\dir{-};
     (4,-4);(4,-12) **\dir{-};(12,4);(12,-12) **\dir{-};
     (16,6)*{\lambda};
     (-10,0)*{};(10,0)*{};
     (-4,-4)*{};(-12,-4)*{} **\crv{(-4,-10) & (-12,-10)}?(1)*\dir{>}?(0)*\dir{>};
      (4,4)*{};(12,4)*{} **\crv{(4,10) & (12,10)}?(1)*\dir{<}?(0)*\dir{<};
      (-14,11)*{\scs i};(-2,11)*{\scs j};(14,-11)*{\scs i};(2,-11)*{\scs j};
     }};
  \endxy
\end{equation}
where the second equality in \eqref{eq_crossl-gen} and \eqref{eq_crossr-gen}
follow from \eqref{eq_almost_cyclic}.

\item The $\cal{E}$'s carry an action of the KLR algebra associated to $Q$. The KLR algebra $R=R_Q$ associated to $Q$ is defined by finite $\Bbbk$-linear combinations of braid--like diagrams in the plane, where each strand is labeled by a vertex $i \in I$.  Strands can intersect and can carry dots but triple intersections are not allowed.  Diagrams are considered up to planar isotopy that do not change the combinatorial type of the diagram. We recall the local relations:
\begin{enumerate}[i)]
\item
If all strands are labeled by the same $i \in I$ then the  nilHecke algebra axioms \eqref{eq_nil_rels} and \eqref{eq_nil_dotslide} hold.

\item For $i \neq j$
\begin{equation}
 \vcenter{\xy 0;/r.17pc/:
    (-4,-4)*{};(4,4)*{} **\crv{(-4,-1) & (4,1)}?(1)*\dir{};
    (4,-4)*{};(-4,4)*{} **\crv{(4,-1) & (-4,1)}?(1)*\dir{};
    (-4,4)*{};(4,12)*{} **\crv{(-4,7) & (4,9)}?(1)*\dir{};
    (4,4)*{};(-4,12)*{} **\crv{(4,7) & (-4,9)}?(1)*\dir{};
    (8,8)*{\lambda};
    (4,12); (4,13) **\dir{-}?(1)*\dir{>};
    (-4,12); (-4,13) **\dir{-}?(1)*\dir{>};
  (-5.5,-3)*{\scs i};
     (5.5,-3)*{\scs j};
 \endxy}
 \qquad = \qquad
 \left\{
 \begin{array}{ccc}
     t_{ij}\;\xy 0;/r.17pc/:
  (3,9);(3,-9) **\dir{-}?(0)*\dir{<}+(2.3,0)*{};
  (-3,9);(-3,-9) **\dir{-}?(0)*\dir{<}+(2.3,0)*{};
  (-5,-6)*{\scs i};     (5.1,-6)*{\scs j};
 \endxy &  &  \text{if $(\alpha_i, \alpha_j)=0$,}\\ \\
 t_{ij} \vcenter{\xy 0;/r.17pc/:
  (3,9);(3,-9) **\dir{-}?(0)*\dir{<}+(2.3,0)*{};
  (-3,9);(-3,-9) **\dir{-}?(0)*\dir{<}+(2.3,0)*{};
  (-3,4)*{\bullet};(-6.5,5)*{};
  (-5,-6)*{\scs i};     (5.1,-6)*{\scs j};
 \endxy} \;\; + \;\; t_{ji}
  \vcenter{\xy 0;/r.17pc/:
  (3,9);(3,-9) **\dir{-}?(0)*\dir{<}+(2.3,0)*{};
  (-3,9);(-3,-9) **\dir{-}?(0)*\dir{<}+(2.3,0)*{};
  (3,4)*{\bullet};(7,5)*{};
  (-5,-6)*{\scs i};     (5.1,-6)*{\scs j};
 \endxy}
   &  & \text{if $(\alpha_i, \alpha_j) \neq 0$,}
 \end{array}
 \right. \label{eq_r2_ij-gen}
\end{equation}

\item For $i \neq j$ the dot sliding relations
\begin{equation} \label{eq_dot_slide_ij-gen}
\xy 0;/r.18pc/:
  (0,0)*{\xybox{
    (-4,-4)*{};(4,6)*{} **\crv{(-4,-1) & (4,1)}?(1)*\dir{>}?(.75)*{\bullet};
    (4,-4)*{};(-4,6)*{} **\crv{(4,-1) & (-4,1)}?(1)*\dir{>};
    (-5,-3)*{\scs i};
     (5.1,-3)*{\scs j};
     (-10,0)*{};(10,0)*{};
     }};
  \endxy
 \;\; =
\xy 0;/r.18pc/:
  (0,0)*{\xybox{
    (-4,-4)*{};(4,6)*{} **\crv{(-4,-1) & (4,1)}?(1)*\dir{>}?(.25)*{\bullet};
    (4,-4)*{};(-4,6)*{} **\crv{(4,-1) & (-4,1)}?(1)*\dir{>};
    (-5,-3)*{\scs i};
     (5.1,-3)*{\scs j};
     (-10,0)*{};(10,0)*{};
     }};
  \endxy
\qquad  \xy 0;/r.18pc/:
  (0,0)*{\xybox{
    (-4,-4)*{};(4,6)*{} **\crv{(-4,-1) & (4,1)}?(1)*\dir{>};
    (4,-4)*{};(-4,6)*{} **\crv{(4,-1) & (-4,1)}?(1)*\dir{>}?(.75)*{\bullet};
    (-5,-3)*{\scs i};
     (5.1,-3)*{\scs j};
     (-10,0)*{};(10,0)*{};
     }};
  \endxy
\;\;  =
  \xy 0;/r.18pc/:
  (0,0)*{\xybox{
    (-4,-4)*{};(4,6)*{} **\crv{(-4,-1) & (4,1)}?(1)*\dir{>} ;
    (4,-4)*{};(-4,6)*{} **\crv{(4,-1) & (-4,1)}?(1)*\dir{>}?(.25)*{\bullet};
    (-5,-3)*{\scs i};
     (5.1,-3)*{\scs j};
     (-10,0)*{};(12,0)*{};
     }};
  \endxy
\end{equation}
hold.

\item Unless $i = k$ and $(\alpha_i, \alpha_j) < 0$ the relation
\begin{equation}
\vcenter{\xy 0;/r.17pc/:
    (-4,-4)*{};(4,4)*{} **\crv{(-4,-1) & (4,1)}?(1)*\dir{};
    (4,-4)*{};(-4,4)*{} **\crv{(4,-1) & (-4,1)}?(1)*\dir{};
    (4,4)*{};(12,12)*{} **\crv{(4,7) & (12,9)}?(1)*\dir{};
    (12,4)*{};(4,12)*{} **\crv{(12,7) & (4,9)}?(1)*\dir{};
    (-4,12)*{};(4,20)*{} **\crv{(-4,15) & (4,17)}?(1)*\dir{};
    (4,12)*{};(-4,20)*{} **\crv{(4,15) & (-4,17)}?(1)*\dir{};
    (-4,4)*{}; (-4,12) **\dir{-};
    (12,-4)*{}; (12,4) **\dir{-};
    (12,12)*{}; (12,20) **\dir{-};
    (4,20); (4,21) **\dir{-}?(1)*\dir{>};
    (-4,20); (-4,21) **\dir{-}?(1)*\dir{>};
    (12,20); (12,21) **\dir{-}?(1)*\dir{>};
   (18,8)*{\lambda};  (-6,-3)*{\scs i};
  (6,-3)*{\scs j};
  (15,-3)*{\scs k};
\endxy}
 \;\; =\;\;
\vcenter{\xy 0;/r.17pc/:
    (4,-4)*{};(-4,4)*{} **\crv{(4,-1) & (-4,1)}?(1)*\dir{};
    (-4,-4)*{};(4,4)*{} **\crv{(-4,-1) & (4,1)}?(1)*\dir{};
    (-4,4)*{};(-12,12)*{} **\crv{(-4,7) & (-12,9)}?(1)*\dir{};
    (-12,4)*{};(-4,12)*{} **\crv{(-12,7) & (-4,9)}?(1)*\dir{};
    (4,12)*{};(-4,20)*{} **\crv{(4,15) & (-4,17)}?(1)*\dir{};
    (-4,12)*{};(4,20)*{} **\crv{(-4,15) & (4,17)}?(1)*\dir{};
    (4,4)*{}; (4,12) **\dir{-};
    (-12,-4)*{}; (-12,4) **\dir{-};
    (-12,12)*{}; (-12,20) **\dir{-};
    (4,20); (4,21) **\dir{-}?(1)*\dir{>};
    (-4,20); (-4,21) **\dir{-}?(1)*\dir{>};
    (-12,20); (-12,21) **\dir{-}?(1)*\dir{>};
  (10,8)*{\lambda};
  (-14,-3)*{\scs i};
  (-6,-3)*{\scs j};
  (6,-3)*{\scs k};
\endxy}
 \label{eq_r3_easy-gen}
\end{equation}
holds. Otherwise, $(\alpha_i, \alpha_j) =-1$ and
\begin{equation}
\vcenter{\xy 0;/r.17pc/:
    (-4,-4)*{};(4,4)*{} **\crv{(-4,-1) & (4,1)}?(1)*\dir{};
    (4,-4)*{};(-4,4)*{} **\crv{(4,-1) & (-4,1)}?(1)*\dir{};
    (4,4)*{};(12,12)*{} **\crv{(4,7) & (12,9)}?(1)*\dir{};
    (12,4)*{};(4,12)*{} **\crv{(12,7) & (4,9)}?(1)*\dir{};
    (-4,12)*{};(4,20)*{} **\crv{(-4,15) & (4,17)}?(1)*\dir{};
    (4,12)*{};(-4,20)*{} **\crv{(4,15) & (-4,17)}?(1)*\dir{};
    (-4,4)*{}; (-4,12) **\dir{-};
    (12,-4)*{}; (12,4) **\dir{-};
    (12,12)*{}; (12,20) **\dir{-};
    (4,20); (4,21) **\dir{-}?(1)*\dir{>};
    (-4,20); (-4,21) **\dir{-}?(1)*\dir{>};
    (12,20); (12,21) **\dir{-}?(1)*\dir{>};
   (18,8)*{\lambda};  (-6,-3)*{\scs i};
  (6,-3)*{\scs j};
  (15,-3)*{\scs k};
\endxy}
\quad - \quad
\vcenter{\xy 0;/r.17pc/:
    (4,-4)*{};(-4,4)*{} **\crv{(4,-1) & (-4,1)}?(1)*\dir{};
    (-4,-4)*{};(4,4)*{} **\crv{(-4,-1) & (4,1)}?(1)*\dir{};
    (-4,4)*{};(-12,12)*{} **\crv{(-4,7) & (-12,9)}?(1)*\dir{};
    (-12,4)*{};(-4,12)*{} **\crv{(-12,7) & (-4,9)}?(1)*\dir{};
    (4,12)*{};(-4,20)*{} **\crv{(4,15) & (-4,17)}?(1)*\dir{};
    (-4,12)*{};(4,20)*{} **\crv{(-4,15) & (4,17)}?(1)*\dir{};
    (4,4)*{}; (4,12) **\dir{-};
    (-12,-4)*{}; (-12,4) **\dir{-};
    (-12,12)*{}; (-12,20) **\dir{-};
    (4,20); (4,21) **\dir{-}?(1)*\dir{>};
    (-4,20); (-4,21) **\dir{-}?(1)*\dir{>};
    (-12,20); (-12,21) **\dir{-}?(1)*\dir{>};
  (10,8)*{\lambda};
  (-14,-3)*{\scs i};
  (-6,-3)*{\scs j};
  (6,-3)*{\scs i};
\endxy}
 \;\; =\;\;
 t_{ij} \;\;
\xy 0;/r.17pc/:
  (4,12);(4,-12) **\dir{-}?(0)*\dir{<};
  (-4,12);(-4,-12) **\dir{-}?(0)*\dir{<}?(.25)*\dir{};
  (12,12);(12,-12) **\dir{-}?(0)*\dir{<}?(.25)*\dir{};
  (-6,-9)*{\scs i};     (6.1,-9)*{\scs j};
  (14,-9)*{\scs i};
 \endxy
 \label{eq_r3_hard-gen}
\end{equation}
\end{enumerate}

\item When $i \ne j$ one has the mixed relations  relating $\cal{E}_i \cal{F}_j$ and $\cal{F}_j \cal{E}_i$:
\begin{equation} \label{mixed_rel}
 \vcenter{   \xy 0;/r.18pc/:
    (-4,-4)*{};(4,4)*{} **\crv{(-4,-1) & (4,1)}?(1)*\dir{>};
    (4,-4)*{};(-4,4)*{} **\crv{(4,-1) & (-4,1)}?(1)*\dir{<};?(0)*\dir{<};
    (-4,4)*{};(4,12)*{} **\crv{(-4,7) & (4,9)};
    (4,4)*{};(-4,12)*{} **\crv{(4,7) & (-4,9)}?(1)*\dir{>};
  (8,8)*{\lambda};(-6,-3)*{\scs i};
     (6,-3)*{\scs j};
 \endxy}
 \;\; = \;\; t_{ji}\;\;
\xy 0;/r.18pc/:
  (3,9);(3,-9) **\dir{-}?(.55)*\dir{>}+(2.3,0)*{};
  (-3,9);(-3,-9) **\dir{-}?(.5)*\dir{<}+(2.3,0)*{};
  (8,2)*{\lambda};(-5,-6)*{\scs i};     (5.1,-6)*{\scs j};
 \endxy
\qquad \quad
    \vcenter{\xy 0;/r.18pc/:
    (-4,-4)*{};(4,4)*{} **\crv{(-4,-1) & (4,1)}?(1)*\dir{<};?(0)*\dir{<};
    (4,-4)*{};(-4,4)*{} **\crv{(4,-1) & (-4,1)}?(1)*\dir{>};
    (-4,4)*{};(4,12)*{} **\crv{(-4,7) & (4,9)}?(1)*\dir{>};
    (4,4)*{};(-4,12)*{} **\crv{(4,7) & (-4,9)};
  (8,8)*{\lambda};(-6,-3)*{\scs i};
     (6,-3)*{\scs j};
 \endxy}
 \;\;=\;\; t_{ij}\;\;
\xy 0;/r.18pc/:
  (3,9);(3,-9) **\dir{-}?(.5)*\dir{<}+(2.3,0)*{};
  (-3,9);(-3,-9) **\dir{-}?(.55)*\dir{>}+(2.3,0)*{};
  (8,2)*{\lambda};(-5,-6)*{\scs i};     (5.1,-6)*{\scs j};
 \endxy
\end{equation}

\item \label{item_positivity} Negative degree bubbles are zero. That is, for all $m \in \Z_+$ one has
\begin{equation} \label{eq_positivity_bubbles}
\xy 0;/r.18pc/:
 (-12,0)*{\icbub{m}{i}};
 (-8,8)*{\lambda};
 \endxy
  = 0
 \qquad  \text{if $m<\lambda_i-1$,} \qquad \xy 0;/r.18pc/: (-12,0)*{\iccbub{m}{i}};
 (-8,8)*{\lambda};
 \endxy = 0\quad
  \text{if $m< -\lambda_i-1$.}
\end{equation}
On the other hand, a dotted bubble of degree zero is just  the identity 2-morphism:
\[
\xy 0;/r.18pc/:
 (0,0)*{\icbub{\lambda_i-1}{i}};
  (4,8)*{\lambda};
 \endxy
  =  \Id_{\onenn{\lambda}} \quad \text{for $\lambda_i \geq 1$,}
  \qquad \quad
  \xy 0;/r.18pc/:
 (0,0)*{\iccbub{-\lambda_i-1}{i}};
  (4,8)*{\lambda};
 \endxy  =  \Id_{\onenn{\lambda}} \quad \text{for $\lambda_i \leq -1$.}\]

\item \label{item_highersl2} For any $i \in I$ one has the extended ${\mathfrak{sl}}_2$-relations. In order to describe certain extended ${\mathfrak{sl}}_2$ relations it is convenient to use a shorthand notation from \cite{Lau1} called fake bubbles. These are diagrams for dotted bubbles where the labels of the number of dots is negative, but the total degree of the dotted bubble taken with these negative dots is still positive. They allow us to write these extended ${\mathfrak{sl}}_2$ relations more uniformly (i.e. independent on whether the weight $\lambda_i$ is positive or negative).
\begin{itemize}
 \item Degree zero fake bubbles are equal to the identity 2-morphisms
\[
 \xy 0;/r.18pc/:
    (2,0)*{\icbub{\l_i-1}{i}};
  (12,8)*{\lambda};
 \endxy
  =  \Id_{\onenn{\lambda}} \quad \text{if $\lambda_i \leq 0$,}
  \qquad \quad
\xy 0;/r.18pc/:
    (2,0)*{\iccbub{-\lambda_i-1}{i}};
  (12,8)*{\lambda};
 \endxy =  \Id_{\onenn{\lambda}} \quad  \text{if $\lambda_i \geq 0$}.\]

  \item Higher degree fake bubbles for $\lambda_i<0$ are defined inductively as
  \begin{equation} \label{eq_fake_nleqz}
  \vcenter{\xy 0;/r.18pc/:
    (2,-11)*{\icbub{\l_i-1+j}{i}};
  (12,-2)*{\l};
 \endxy} \;\; =
 \left\{
 \begin{array}{cl}
  \;\; -\;\;
\xsum{\xy (0,6)*{};  (0,1)*{\scs a+b=j}; (0,-2)*{\scs b\geq 1}; \endxy}
\;\; \vcenter{\xy 0;/r.18pc/:
    (2,0)*{\cbub{\l_i-1+a}{}};
    (20,0)*{\ccbub{-\l-1+b}{}};
  (12,8)*{\lambda};
 \endxy}  & \text{if $0 \leq j < -\l_i+1$} \\ & \\
   0 & \text{if $j < 0$. }
 \end{array}
\right.
 \end{equation}

  \item Higher degree fake bubbles for $\lambda_i>0$ are defined inductively as
   \begin{equation} \label{eq_fake_ngeqz}
  \vcenter{\xy 0;/r.18pc/:
    (2,-11)*{\iccbub{-\l_i-1+j}{i}};
  (12,-2)*{\l};
 \endxy} \;\; =
 \left\{
 \begin{array}{cl}
  \;\; -\;\;
\xsum{\xy (0,6)*{}; (0,1)*{\scs a+b=j}; (0,-2)*{\scs a\geq 1}; \endxy}
\;\; \vcenter{\xy 0;/r.18pc/:
    (2,0)*{\cbub{\l_i-1+a}{}};
    (20,0)*{\ccbub{-\l-1+b}{}};
  (12,8)*{\lambda};
 \endxy}  & \text{if $0 \leq j < \l_i+1$} \\ & \\
   0 & \text{if $j < 0$. }
 \end{array}
\right.
\end{equation}
\end{itemize}
These equations arise from the homogeneous terms in $t$ of the `infinite Grassmannian' equation
\begin{center}
\begin{eqnarray}
 \makebox[0pt]{ $
\left( \xy 0;/r.15pc/:
 (0,0)*{\iccbub{-\l_i-1}{i}};
  (4,8)*{\l};
 \endxy
 +
 \xy 0;/r.15pc/:
 (0,0)*{\iccbub{-\l_i-1+1}{i}};
  (4,8)*{\l};
 \endxy t
 + \cdots +
\xy 0;/r.15pc/:
 (0,0)*{\iccbub{-\l_i-1+\alpha}{i}};
  (4,8)*{\l};
 \endxy t^{\alpha}
 + \cdots
\right)
\left(\xy 0;/r.15pc/:
 (0,0)*{\icbub{\l_i-1}{i}};
  (4,8)*{\l};
 \endxy
 + \xy 0;/r.15pc/:
 (0,0)*{\icbub{\l_i-1+1}{i}};
  (4,8)*{\l};
 \endxy t
 +\cdots +
\xy 0;/r.15pc/:
 (0,0)*{\icbub{\l_i-1+\alpha}{i}};
 (4,8)*{\l};
 \endxy t^{\alpha}
 + \cdots
\right) = \Id_{\onel}.$ } \nn \\ \label{eq_infinite_Grass}
\end{eqnarray}
\end{center}
Now we can define the extended ${\mathfrak{sl}}_2$ relations.  Note that in \cite{CLau} additional curl relations were provided that can be derived from those above.  Here we provide a minimal set of relations.

If $\l_i > 0$ then we have:
\begin{equation} \label{eq_reduction-ngeqz}
  \xy 0;/r.17pc/:
  (14,8)*{\l};
  (-3,-10)*{};(3,5)*{} **\crv{(-3,-2) & (2,1)}?(1)*\dir{>};?(.15)*\dir{>};
    (3,-5)*{};(-3,10)*{} **\crv{(2,-1) & (-3,2)}?(.85)*\dir{>} ?(.1)*\dir{>};
  (3,5)*{}="t1";  (9,5)*{}="t2";
  (3,-5)*{}="t1'";  (9,-5)*{}="t2'";
   "t1";"t2" **\crv{(4,8) & (9, 8)};
   "t1'";"t2'" **\crv{(4,-8) & (9, -8)};
   "t2'";"t2" **\crv{(10,0)} ;
   (-6,-8)*{\scs i};
 \endxy\;\; = \;\; 0
   \qquad \quad
 \vcenter{\xy 0;/r.17pc/:
  (-8,0)*{};(-6,-8)*{\scs i};(6,-8)*{\scs i};
  (8,0)*{};
  (-4,10)*{}="t1";
  (4,10)*{}="t2";
  (-4,-10)*{}="b1";
  (4,-10)*{}="b2";
  "t1";"b1" **\dir{-} ?(.5)*\dir{>};
  "t2";"b2" **\dir{-} ?(.5)*\dir{<};
  (10,2)*{\l};
  \endxy}
\;\; = \;\; -\;\;
   \vcenter{\xy 0;/r.17pc/:
    (-4,-4)*{};(4,4)*{} **\crv{(-4,-1) & (4,1)}?(1)*\dir{<};?(0)*\dir{<};
    (4,-4)*{};(-4,4)*{} **\crv{(4,-1) & (-4,1)}?(1)*\dir{>};
    (-4,4)*{};(4,12)*{} **\crv{(-4,7) & (4,9)}?(1)*\dir{>};
    (4,4)*{};(-4,12)*{} **\crv{(4,7) & (-4,9)};
  (8,8)*{\l};(-6.5,-3)*{\scs i};  (6,-3)*{\scs i};
 \endxy}
\end{equation}
\begin{equation}
 \vcenter{\xy 0;/r.17pc/:
  (-8,0)*{};
  (8,0)*{};
  (-4,10)*{}="t1";
  (4,10)*{}="t2";
  (-4,-10)*{}="b1";
  (4,-10)*{}="b2";(-6,-8)*{\scs i};(6,-8)*{\scs i};
  "t1";"b1" **\dir{-} ?(.5)*\dir{<};
  "t2";"b2" **\dir{-} ?(.5)*\dir{>};
  (10,2)*{\l};
  \endxy}
\;\; = \;\; -\;\;
 \vcenter{   \xy 0;/r.17pc/:
    (-4,-4)*{};(4,4)*{} **\crv{(-4,-1) & (4,1)}?(1)*\dir{>};
    (4,-4)*{};(-4,4)*{} **\crv{(4,-1) & (-4,1)}?(1)*\dir{<};?(0)*\dir{<};
    (-4,4)*{};(4,12)*{} **\crv{(-4,7) & (4,9)};
    (4,4)*{};(-4,12)*{} **\crv{(4,7) & (-4,9)}?(1)*\dir{>};
  (8,8)*{\l};
     (-6,-3)*{\scs i};
     (6.5,-3)*{\scs i};
 \endxy}
  \;\; + \;\;
   \sum_{ \xy  (0,3)*{\scs f_1+f_2+f_3}; (0,0)*{\scs =\lambda_i-1};\endxy}
    \vcenter{\xy 0;/r.17pc/:
    (-12,10)*{\l};
    (-8,0)*{};
  (8,0)*{};
  (-4,-15)*{}="b1";
  (4,-15)*{}="b2";
  "b2";"b1" **\crv{(5,-8) & (-5,-8)}; ?(.05)*\dir{<} ?(.93)*\dir{<}
  ?(.8)*\dir{}+(0,-.1)*{\bullet}+(-3,2)*{\scs f_3};
  (-4,15)*{}="t1";
  (4,15)*{}="t2";
  "t2";"t1" **\crv{(5,8) & (-5,8)}; ?(.15)*\dir{>} ?(.95)*\dir{>}
  ?(.4)*\dir{}+(0,-.2)*{\bullet}+(3,-2)*{\scs \; f_1};
  (0,0)*{\iccbub{\scs \quad -\l_i-1+f_2}{i}};
  (7,-13)*{\scs i};
  (-7,13)*{\scs i};
  \endxy} \label{eq_ident_decomp-ngeqz}
\end{equation}

If $\lambda_i < 0$ then we have:
\begin{equation} \label{eq_reduction-nleqz}
  \xy 0;/r.17pc/:
  (-14,8)*{\l};
  (3,-10)*{};(-3,5)*{} **\crv{(3,-2) & (-2,1)}?(1)*\dir{>};?(.15)*\dir{>};
    (-3,-5)*{};(3,10)*{} **\crv{(-2,-1) & (3,2)}?(.85)*\dir{>} ?(.1)*\dir{>};
  (-3,5)*{}="t1";  (-9,5)*{}="t2";
  (-3,-5)*{}="t1'";  (-9,-5)*{}="t2'";
   "t1";"t2" **\crv{(-4,8) & (-9, 8)};
   "t1'";"t2'" **\crv{(-4,-8) & (-9, -8)};
   "t2'";"t2" **\crv{(-10,0)} ;
   (6,-8)*{\scs i};
 \endxy\;\; = \;\;
0
\qquad \qquad
\vcenter{\xy 0;/r.17pc/:
  (-8,0)*{};
  (8,0)*{};
  (-4,10)*{}="t1";
  (4,10)*{}="t2";
  (-4,-10)*{}="b1";
  (4,-10)*{}="b2";(-6,-8)*{\scs i};(6,-8)*{\scs i};
  "t1";"b1" **\dir{-} ?(.5)*\dir{<};
  "t2";"b2" **\dir{-} ?(.5)*\dir{>};
  (10,2)*{\l};
  \endxy}
\;\; = \;\; -\;\;
\vcenter{   \xy 0;/r.17pc/:
    (-4,-4)*{};(4,4)*{} **\crv{(-4,-1) & (4,1)}?(1)*\dir{>};
    (4,-4)*{};(-4,4)*{} **\crv{(4,-1) & (-4,1)}?(1)*\dir{<};?(0)*\dir{<};
    (-4,4)*{};(4,12)*{} **\crv{(-4,7) & (4,9)};
    (4,4)*{};(-4,12)*{} **\crv{(4,7) & (-4,9)}?(1)*\dir{>};
  (8,8)*{\l};(-6,-3)*{\scs i};
     (6.5,-3)*{\scs i};
 \endxy}
\end{equation}
\begin{equation}
 \vcenter{\xy 0;/r.17pc/:
  (-8,0)*{};(-6,-8)*{\scs i};(6,-8)*{\scs i};
  (8,0)*{};
  (-4,10)*{}="t1";
  (4,10)*{}="t2";
  (-4,-10)*{}="b1";
  (4,-10)*{}="b2";
  "t1";"b1" **\dir{-} ?(.5)*\dir{>};
  "t2";"b2" **\dir{-} ?(.5)*\dir{<};
  (10,2)*{\l};
  (-10,2)*{\l};
  \endxy}
\;\; = \;\;
  -\;\;\vcenter{\xy 0;/r.17pc/:
    (-4,-4)*{};(4,4)*{} **\crv{(-4,-1) & (4,1)}?(1)*\dir{<};?(0)*\dir{<};
    (4,-4)*{};(-4,4)*{} **\crv{(4,-1) & (-4,1)}?(1)*\dir{>};
    (-4,4)*{};(4,12)*{} **\crv{(-4,7) & (4,9)}?(1)*\dir{>};
    (4,4)*{};(-4,12)*{} **\crv{(4,7) & (-4,9)};
  (8,8)*{\l};(-6.5,-3)*{\scs i};  (6,-3)*{\scs i};
 \endxy}
  \;\; + \;\;
    \sum_{ \xy  (0,3)*{\scs g_1+g_2+g_3}; (0,0)*{\scs =-\l_i-1};\endxy}
    \vcenter{\xy 0;/r.17pc/:
    (-8,0)*{};
  (8,0)*{};
  (-4,-15)*{}="b1";
  (4,-15)*{}="b2";
  "b2";"b1" **\crv{(5,-8) & (-5,-8)}; ?(.1)*\dir{>} ?(.95)*\dir{>}
  ?(.8)*\dir{}+(0,-.1)*{\bullet}+(-3,2)*{\scs g_3};
  (-4,15)*{}="t1";
  (4,15)*{}="t2";
  "t2";"t1" **\crv{(5,8) & (-5,8)}; ?(.15)*\dir{<} ?(.9)*\dir{<}
  ?(.4)*\dir{}+(0,-.2)*{\bullet}+(3,-2)*{\scs g_1};
  (0,0)*{\icbub{\scs \quad\; \l_i-1 + g_2}{i}};
    (7,-13)*{\scs i};
  (-7,13)*{\scs i};
  (-10,10)*{\l};
  \endxy} \label{eq_ident_decomp-nleqz}
\end{equation}

If $\lambda_i =0$ then we have:
\begin{equation}\label{eq_reduction-neqz}
  \xy 0;/r.17pc/:
  (14,8)*{\l};
  (-3,-10)*{};(3,5)*{} **\crv{(-3,-2) & (2,1)}?(1)*\dir{>};?(.15)*\dir{>};
    (3,-5)*{};(-3,10)*{} **\crv{(2,-1) & (-3,2)}?(.85)*\dir{>} ?(.1)*\dir{>};
  (3,5)*{}="t1";  (9,5)*{}="t2";
  (3,-5)*{}="t1'";  (9,-5)*{}="t2'";
   "t1";"t2" **\crv{(4,8) & (9, 8)};
   "t1'";"t2'" **\crv{(4,-8) & (9, -8)};
   "t2'";"t2" **\crv{(10,0)} ;
   (-6,-8)*{\scs i};
 \endxy\;\; = \;\;-
   \xy 0;/r.17pc/:
  (-8,8)*{\l};
  (0,0)*{\bbe{}};
  (-3,-8)*{\scs i};
 \endxy
\qquad \qquad
  \xy 0;/r.17pc/:
  (-14,8)*{\l};
  (3,-10)*{};(-3,5)*{} **\crv{(3,-2) & (-2,1)}?(1)*\dir{>};?(.15)*\dir{>};
    (-3,-5)*{};(3,10)*{} **\crv{(-2,-1) & (3,2)}?(.85)*\dir{>} ?(.1)*\dir{>};
  (-3,5)*{}="t1";  (-9,5)*{}="t2";
  (-3,-5)*{}="t1'";  (-9,-5)*{}="t2'";
   "t1";"t2" **\crv{(-4,8) & (-9, 8)};
   "t1'";"t2'" **\crv{(-4,-8) & (-9, -8)};
   "t2'";"t2" **\crv{(-10,0)} ;
   (6,-8)*{\scs i};
 \endxy \;\; = \;\;
  \xy 0;/r.17pc/:
  (-8,8)*{\l};
  (0,0)*{\bbe{}};
  (-3,-8)*{\scs i};
 \endxy
\end{equation}
\begin{equation}\label{eq_reduction-neqz_2}
 \vcenter{\xy 0;/r.17pc/:
  (-8,0)*{};
  (8,0)*{};
  (-4,10)*{}="t1";
  (4,10)*{}="t2";
  (-4,-10)*{}="b1";
  (4,-10)*{}="b2";(-6,-8)*{\scs i};(6,-8)*{\scs i};
  "t1";"b1" **\dir{-} ?(.5)*\dir{<};
  "t2";"b2" **\dir{-} ?(.5)*\dir{>};
  (10,2)*{\l};
  \endxy}
\;\; = \;\;
 \;\; - \;\;
 \vcenter{   \xy 0;/r.17pc/:
    (-4,-4)*{};(4,4)*{} **\crv{(-4,-1) & (4,1)}?(1)*\dir{>};
    (4,-4)*{};(-4,4)*{} **\crv{(4,-1) & (-4,1)}?(1)*\dir{<};?(0)*\dir{<};
    (-4,4)*{};(4,12)*{} **\crv{(-4,7) & (4,9)};
    (4,4)*{};(-4,12)*{} **\crv{(4,7) & (-4,9)}?(1)*\dir{>};
  (8,8)*{l};(-6,-3)*{\scs i};
     (6.5,-3)*{\scs i};
 \endxy}
   \qquad \quad
 \vcenter{\xy 0;/r.17pc/:
  (-8,0)*{};(-6,-8)*{\scs i};(6,-8)*{\scs i};
  (8,0)*{};
  (-4,10)*{}="t1";
  (4,10)*{}="t2";
  (-4,-10)*{}="b1";
  (4,-10)*{}="b2";
  "t1";"b1" **\dir{-} ?(.5)*\dir{>};
  "t2";"b2" **\dir{-} ?(.5)*\dir{<};
  (10,2)*{\l};
  \endxy}
\;\; = \;\;
 \;\; - \;\;
   \vcenter{\xy 0;/r.17pc/:
    (-4,-4)*{};(4,4)*{} **\crv{(-4,-1) & (4,1)}?(1)*\dir{<};?(0)*\dir{<};
    (4,-4)*{};(-4,4)*{} **\crv{(4,-1) & (-4,1)}?(1)*\dir{>};
    (-4,4)*{};(4,12)*{} **\crv{(-4,7) & (4,9)}?(1)*\dir{>};
    (4,4)*{};(-4,12)*{} **\crv{(4,7) & (-4,9)};
  (8,8)*{\l};(-6,-3)*{\scs i};  (6,-3)*{\scs i};
 \endxy}
\end{equation}
\end{enumerate}

While the definition above may seem quite involved, the relations above encode a great deal of interesting combinatorics.  In particular, the relations above imply that all of the equations between generators in $\dot{\mathbf{U}}(\mathfrak{sl}_n)$ lift to explicit isomorphisms in $\Ucat_Q(\mf{sl}_n)$.  For more details see ~\cite{KL3}.

In what follows it is often convenient to introduce a shorthand notation
\[
    \xy 0;/r.18pc/:
  (4,8)*{\lambda};
  (2,-2)*{\icbub{\spadesuit+\alpha}{i}};
 \endxy \;\; := \;\;
   \xy 0;/r.18pc/:
  (4,8)*{\lambda};
  (2,-2)*{\icbub{\lambda_i-1+\alpha}{i}};
 \endxy
 \qquad
 \qquad
    \xy 0;/r.18pc/:
  (4,8)*{\lambda};
  (2,-2)*{\iccbub{\spadesuit+\alpha}{i}};
 \endxy \;\; := \;\;
   \xy 0;/r.18pc/:
  (4,8)*{\lambda};
  (2,-2)*{\iccbub{-\lambda_i-1+\alpha}{i}};
 \endxy
\]
for all $\lambda_i$.  Note that as long as $\alpha \geq 0$ this notation makes sense even when $\spadesuit+\alpha <0$.  These negative values are the fake bubbles defined above.

\begin{exercise} \label{exercise_bubble-slides}
Use the relations above to show that the following bubble slide equations
\begin{eqnarray}
    \xy 0;/r.18pc/:
  (14,8)*{\lambda};
  (0,0)*{\bbe{}};
  (0,-12)*{\scs j};
  (12,-2)*{\iccbub{\spadesuit+\alpha}{i}};
  (0,6)*{ }+(7,-1)*{\scs  };
 \endxy
 &\quad = \quad&
 \left\{
 \begin{array}{ccl}
  \xsum{f=0}{\alpha}(\alpha+1-f)
   \xy 0;/r.18pc/:
  (0,8)*{\lambda+\alpha_j};
  (12,0)*{\bbe{}};
  (12,-12)*{\scs j};
  (0,-2)*{\iccbub{\spadesuit+f}{i}};
  (12,6)*{\bullet}+(5,-1)*{\scs \alpha-f};
 \endxy
    &  & \text{if $i=j$} \\ \\
        \xy 0;/r.18pc/:
  (0,8)*{\lambda+\alpha_j};
  (12,0)*{\bbe{}};
  (11,-12)*{\scs j};
  (0,-2)*{\iccbub{\spadesuit+\alpha}{i}};
 \endxy
 \quad + \quad  t_{ij}^{-1}t_{ji} \;\;
  \xy 0;/r.18pc/:
  (0,8)*{\lambda+\alpha_j};
  (12,0)*{\bbe{}};
  (12,-12)*{\scs j};
  (0,-2)*{\iccbub{\spadesuit+\alpha-1}{i}};
  (12,6)*{\bullet}+(5,-1)*{\scs };
 \endxy
   &  & \text{if $i \cdot j =-1$} \\
 \qquad \qquad  \xy 0;/r.18pc/:
  (0,8)*{\lambda+\alpha_j};
  (12,0)*{\bbe{}};
  (12,-12)*{\scs j};
  (0,-2)*{\iccbub{\spadesuit+\alpha}{i}};
 \endxy  &  & \text{if $i \cdot j=0$}
 \end{array}
 \right.
\end{eqnarray}
\begin{eqnarray} \label{c_slide_right}
    \xy 0;/r.18pc/:
  (15,8)*{\lambda};
  (11,0)*{\bbe{}};
  (11,-12)*{\scs j};
  (0,-2)*{\icbub{\spadesuit+\alpha\quad }{i}};
 \endxy
   &\quad = \quad&
  \left\{\begin{array}{ccl}
     \xsum{f=0}{\alpha}(\alpha+1-f)
     \xy 0;/r.18pc/:
  (18,8)*{\lambda};
  (0,0)*{\bbe{}};
  (0,-12)*{\scs j};
  (14,-4)*{\icbub{\spadesuit+f}{i}};
  (0,6)*{\bullet }+(5,-1)*{\scs \alpha-f};
 \endxy
         &  & \text{if $i=j$}  \\
  t_{ij}^{-1}t_{ji}\;\;  \xy 0;/r.18pc/:
  (18,8)*{\lambda};
  (0,0)*{\bbe{}};
  (0,-12)*{\scs j};
  (12,-2)*{\icbub{\spadesuit+ \alpha-1}{i}};
    (0,6)*{\bullet }+(5,-1)*{\scs};
 \endxy
 \quad + \quad
\xy 0;/r.18pc/:
  (18,8)*{\lambda};
  (0,0)*{\bbe{}};
  (0,-12)*{\scs j};
  (12,-2)*{\icbub{\spadesuit+ \alpha}{i}};
 \endxy
         & &  \text{if $i\cdot j =-1$}\\
    \xy 0;/r.18pc/:
  (15,8)*{\lambda};
  (0,0)*{\bbe{}};
  (0,-12)*{\scs j};
  (12,-2)*{\icbub{\spadesuit+\alpha}{i}};
 \endxy          &  & \text{if $i \cdot j = 0$}
         \end{array}
 \right.
\end{eqnarray}
 \begin{equation}
      \xy 0;/r.18pc/:
  (15,8)*{\lambda};
  (0,0)*{\bbe{}};
  (0,-12)*{\scs j};
  (12,-2)*{\icbub{\spadesuit+\alpha}{i}};
 \endxy
   =
  \left\{
  \begin{array}{cl}
     \xy 0;/r.18pc/:
  (0,8)*{\lambda+\alpha_i};
  (12,0)*{\bbe{}};
  (12,-12)*{\scs j};
  (0,-2)*{\icbub{\spadesuit+(\alpha-2)}{i}};
  (12,6)*{\bullet}+(3,-1)*{\scs 2};
 \endxy
   -2 \;
         \xy 0;/r.18pc/:
  (0,8)*{\lambda+\alpha_i};
  (12,0)*{\bbe{}};
  (12,-12)*{\scs j};
  (0,-2)*{\icbub{\spadesuit+(\alpha-1)}{i}};
  (12,6)*{\bullet}+(8,-1)*{\scs };
 \endxy
 + \;\;
     \xy 0;/r.18pc/:
  (0,8)*{\lambda+\alpha_i};
  (12,0)*{\bbe{}};
  (12,-12)*{\scs j};
  (0,-2)*{\icbub{\spadesuit+\alpha}{i}};
  (12,6)*{}+(8,-1)*{\scs };
 \endxy
  &   \text{if $i = j$} \\
  \xsum{f=0}{\alpha} (-t_{ij}^{-1}t_{ji})^{f}
  \xy 0;/r.18pc/:
  (0,8)*{\lambda+\alpha_j};
  (14,0)*{\bbe{}};
  (12,-12)*{\scs j};
  (0,-2)*{\icbub{\spadesuit+\alpha-f}{i}};
  (14,6)*{\bullet}+(3,-1)*{\scs f};
 \endxy &   \text{if $i\cdot j =-1$}
  \end{array}
 \right.
\end{equation}
\begin{equation} \label{cc_slide_right}
    \xy 0;/r.18pc/:
  (0,8)*{\lambda+\alpha_j};
  (12,0)*{\bbe{}};
  (12,-12)*{\scs j};
  (0,-2)*{\iccbub{\spadesuit+\alpha}{i}};
  (12,6)*{}+(8,-1)*{\scs };
 \endxy
  =
\left\{
\begin{array}{cc}
    \xy 0;/r.18pc/:
  (15,8)*{\lambda};
  (0,0)*{\bbe{}};
  (0,-12)*{\scs j};
  (12,-2)*{\iccbub{\spadesuit+(\alpha-2)}{i}};
  (0,6)*{\bullet }+(3,1)*{\scs 2};
 \endxy
  -2 \;
      \xy 0;/r.18pc/:
  (15,8)*{\lambda};
  (0,0)*{\bbe{}};
  (0,-12)*{\scs j};
  (12,-2)*{\iccbub{\spadesuit+(\alpha-1)}{i}};
  (0,6)*{\bullet }+(5,-1)*{\scs };
 \endxy
 + \;\;
      \xy 0;/r.18pc/:
  (15,8)*{\lambda};
  (0,0)*{\bbe{}};
  (0,-12)*{\scs j};
  (12,-2)*{\iccbub{(\spadesuit+\alpha}{i}};
 \endxy
  &   \text{if $i=j$} \\
   \xsum{f=0}{\alpha}(-t_{ij}^{-1}t_{ji})^f
    \xy 0;/r.18pc/:
  (15,8)*{\lambda};
  (0,0)*{\bbe{}};
  (0,-12)*{\scs j};
  (14,-2)*{\iccbub{\spadesuit+(\alpha-f)}{i}};
  (0,6)*{\bullet }+(3,1)*{\scs f};
 \endxy
    &   \text{if $i \cdot j =-1$}
\end{array}
\right.
\end{equation}
hold in $\Ucat_Q(\mf{sl}_n)$.   Hint: see equations (6.8) and (6.9) of \cite{CLau}.
\end{exercise}

%
\subsection{Symmetric functions and bubbles}\label{sec_sym-bub}
%

The calculus of closed diagrams in the 2-category $\Ucat_Q(\mf{sl}_n)$ is remarkably rich.   A prominent role is played by the non-nested dotted bubbles of a fixed orientation since any closed diagram in the graphical calculus for $\Ucat_Q(\mf{sl}_n)$ can be reduced to composites of such diagrams.

There is a beautiful analogy between dotted bubbles in the graphical calculus and various bases for the ring of symmetric functions.  In \cite{Lau1} it is shown that there is an isomorphism

\begin{eqnarray}
  \psi_{\lambda} \maps \;\;{\rm Sym} & \;\; \longrightarrow\;\;& Z(\l)= \Ucat_Q(\mf{sl}_2)(\onel,\onel) \\
  h_r & \;\;\mapsto \;\; &  \xy
  (0,0)*{\icbub{\spadesuit+r}{i}};
  (8,8)*{\lambda};
  \endxy \nn \\
   (-1)^s e_s& \;\;\mapsto \;\; & \xy
  (0,0)*{\iccbub{\spadesuit+s}{i}};
  (8,8)*{\lambda};
  \endxy \nn
\end{eqnarray}
where ${\rm Sym}$ denotes the ring of symmetric functions, $h_r$ denotes the complete symmetric function of degree $r$, and $e_{s}$ denotes the elementary symmetric function of degree $s$.  In fact, under this isomorphisms the well known relationship between complete and elementary symmetric functions becomes the infinite Grassmannian equation~\eqref{eq_infinite_Grass}.

It is well known that for a partition $\lambda = (\lambda_1, \dots, \lambda_n)$ with $\lambda_1 \geq \lambda_2 \geq \dots \geq \lambda_n$, products of elementary symmetric functions $e_{\lambda} = e_{\lambda_1} \dots e_{\lambda_n}$ form a basis for ${\rm Sym}$, see for example \cite{McD}.  Likewise, products of complete symmetric functions also provide a basis for $\sym$.  This mirrors the fact that any closed diagram in the graphical calculus for $\Ucat_Q(\mf{sl}_2)$ can be reduced to a product on non-nested bubbles of a given orientation.

In the $\mf{sl}_n$ calculus of the 2-category $\Ucat(\mf{sl}_n)$,  we have the isomorphism
$$ \psi_{\lambda} \maps \;\; \prod_{i\in I}{\rm Sym}  \;\; \longrightarrow\;\; Z(\l)= \Ucat_Q(\mf{sl}_n)(\onel,\onel) $$
 since any closed diagram can still be reduced to products of non-nested closed bubbles labelled by $i \in I$.

In what follows, it will be interesting to consider which products of closed diagrams correspond to the basis of $\sym$ given by the power sum $p_r$ symmetric functions.
Using a formula that expresses power sum symmetric functions in terms of products of complete and elementary symmetric functions, we can denote by $p_{i,r}(\lambda)$ for $r>0$, the image of the power symmetric polynomial on $i$-labelled strands in $Z(\l)$:
\begin{equation} \label{eq_defpil}
 p_{i,r}(\lambda) := \sum_{a+b=r} (a+1)
 \xy
   (0,-2)*{\icbub{\spadesuit+a}{i}};
   (12,-2)*{\iccbub{\spadesuit+b}{i}};
  (8,8)*{\lambda};
 \endxy
 =  - \sum_{a+b=r} (b+1)
 \xy
   (0,-2)*{\icbub{\spadesuit+a}{i}};
   (12,-2)*{\iccbub{\spadesuit+b}{i}};
  (8,8)*{\lambda};
 \endxy =  -\sum_{a+b=r} a
 \xy
   (0,-2)*{\icbub{\spadesuit+b}{i}};
   (12,-2)*{\iccbub{\spadesuit+a}{i}};
  (8,8)*{\lambda};
 \endxy
\end{equation}
For later convenience we set $p_{i,0}(\l) = \lambda_i$.

\begin{exercise}
Using the bubble slide formulas from exercise~\ref{exercise_bubble-slides}, show that
\[
 p_{i,r}(\lambda) =
\begin{cases} \quad -\;\;
 \text{$\xy
 (-6,0)*{};
  (6,0)*{};
  (15,12)*{\lambda};
  (-4,0)*{}="t1";  (4,0)*{}="t2";
 "t2";"t1" **\crv{(4,6) & (-4,6)}; ?(.02)*\dir{>}
   ?(1)*\dir{>} ?(.3)*\dir{}+(2,2)*{\scs i};
  "t2";"t1" **\crv{(4,-6) & (-4,-6)};
  ?(.7)*\dir{}+(0,0)*{\bullet}+(2,-3)*{\scs \spadesuit +r};
  (-12,0)*{}="t1";  (12,0)*{}="t2";
  "t2";"t1" **\crv{(12,15) & (-12,15)}; ?(0)*\dir{<} ?(1)*\dir{<} ?(.3)*\dir{}+(2,2)*{\scs i};
  "t2";"t1" **\crv{(12,-15) & (-12,-15)};
  ?(.7)*\dir{}+(0,0)*{\bullet}+(2,-4)*{\scs \spadesuit+0};
  \endxy $}
  & \mbox{if } \lambda_i  \geq 0  \\
\quad  \;\;\;\text{$\xy
 (-6,0)*{};
  (6,0)*{};
  (15,12)*{\lambda};
  (-4,0)*{}="t1";  (4,0)*{}="t2";
 "t2";"t1" **\crv{(4,6) & (-4,6)}; ?(0)*\dir{<}
   ?(.96)*\dir{<} ?(.3)*\dir{}+(2,2)*{\scs i};
  "t2";"t1" **\crv{(4,-6) & (-4,-6)};
  ?(.7)*\dir{}+(0,0)*{\bullet}+(2,-3)*{\scs \spadesuit +r};
  (-12,0)*{}="t1";  (12,0)*{}="t2";
  "t2";"t1" **\crv{(12,15) & (-12,15)}; ?(0)*\dir{>} ?(1)*\dir{>} ?(.3)*\dir{}+(2,2)*{\scs i};
  "t2";"t1" **\crv{(12,-15) & (-12,-15)};
  ?(.7)*\dir{}+(0,0)*{\bullet}+(2,-4)*{\scs \spadesuit+0};
  \endxy $}
  & \mbox{if }  \lambda_i  \leq 0,
\end{cases}
\]
so that the formulas defining $p_r(\lambda)$ can be obtained by simplifying a degree $r$ bubble with one orientation nested inside of a degree zero bubble with the opposite orientation.
\end{exercise}

Note that other interesting basis for symmetric functions also have natural graphical analogs.  In \cite{KLMS} diagrams corresponding to the basis of Schur functions are given.

\begin{exercise} \label{exercise_power-slides}
Using the bubble sliding equations from exercise~\ref{exercise_bubble-slides}, prove the following power sum slide rules
\begin{eqnarray}
\label{eq_powerslide1}
\xy 0;/r.18pc/:
  (12,6)*{\lambda-\alpha_j};
  (0,0)*{\bbe{}};
  (0,-12)*{\scs j};
  (14,-2)*{p_{i,r}(\lambda-\alpha_j)};
  (0,6)*{ }+(7,-1)*{\scs  };
 \endxy
 & = &
 \begin{cases}
   \xy 0;/r.18pc/:
  (-10,6)*{\lambda};
  (0,0)*{\bbe{}};
  (2,-7)*{\scs j};
  (-10,-2)*{p_{i,r}(\lambda)};
  (0,6)*{ }+(7,-1)*{\scs  };
 \endxy
 \;\; - \;\; 2\;\; \;\;\xy 0;/r.18pc/:
  (-8,6)*{\lambda};
  (0,0)*{\bbe{}};
  (2,-7)*{\scs j};
  (0,2)*{\bullet}+(3,1)*{r};
  (0,6)*{ }+(7,-1)*{\scs  };
 \endxy & \mbox{if } i = j, \\ & \\
    \xy 0;/r.18pc/:
  (-10,6)*{\lambda};
  (0,0)*{\bbe{}};
  (2,-7)*{\scs j};
  (-10,-2)*{p_{i,r}(\lambda)};
  (0,6)*{ }+(7,-1)*{\scs  };
 \endxy
 & \mbox{if } i \cdot j = 0, \\ & \\
    \xy 0;/r.18pc/:
  (-10,6)*{\lambda};
  (0,0)*{\bbe{}};
  (2,-7)*{\scs j};
  (-10,-2)*{p_{i,r}(\lambda)};
  (0,6)*{ }+(7,-1)*{\scs  };
 \endxy
 \;\; - \;\; (-t_{ij}^{-1}t_{ji})^r\;\;\;\;\xy 0;/r.18pc/:
  (-8,6)*{\lambda};
  (0,0)*{\bbe{}};
  (2,-7)*{\scs j};
  (0,2)*{\bullet}+(3,1)*{r};
  (0,6)*{ }+(7,-1)*{\scs  };
 \endxy
 & \mbox{if } i \cdot j = -1
 \end{cases}
 \\  \label{eq_powerslide2}
   \xy 0;/r.18pc/:
  (10,6)*{\lambda};
  (0,0)*{\bbe{}};
  (2,-7)*{\scs j};
  (-14,-2)*{p_{i,r}(\lambda+\alpha_j)};
  (0,6)*{ }+(7,-1)*{\scs  };
 \endxy
 & = &
 \begin{cases}
\xy 0;/r.18pc/:
  (14,6)*{\lambda};
  (0,0)*{\bbe{}};
  (0,-12)*{\scs j};
  (10,-2)*{p_{i,r}(\lambda)};
  (0,6)*{ }+(7,-1)*{\scs  };
 \endxy
 \;\; + \;\; 2\;\;\xy 0;/r.18pc/:
  (10,6)*{\lambda};
  (0,0)*{\bbe{}};
  (2,-7)*{\scs j};
  (0,2)*{\bullet}+(3,1)*{r};
  (0,6)*{ }+(7,-1)*{\scs  };
 \endxy & \mbox{if } i = j, \\ & \\
\xy 0;/r.18pc/:
  (14,6)*{\lambda};
  (0,0)*{\bbe{}};
  (0,-12)*{\scs j};
  (10,-2)*{p_{i,r}(\lambda)};
  (0,6)*{ }+(7,-1)*{\scs  };
 \endxy
 \;\; + \;\; (-t_{ij}^{-1}t_{ji})^r\;\;\xy 0;/r.18pc/:
  (10,6)*{\lambda};
  (0,0)*{\bbe{}};
  (2,-7)*{\scs j};
  (0,2)*{\bullet}+(3,1)*{r};
  (0,6)*{ }+(7,-1)*{\scs  };
 \endxy
 & \mbox{if } i \cdot j = -1
 \end{cases}
\end{eqnarray}
\end{exercise}

%
\subsection{$\Kar$, $K_0$ and $\Tr$ for 2-categories} \label{sec_traceK0-2cats}
%

We can extend many of the constructions from section \ref{sec_trace_lincat} defined for (additive) linear categories to the 2-categorical setting.  A 2-category $\CC$ is linear if the categories $\CC(x,y)$ are linear for all $x,y\in \Ob(\CC)$ and the composition functor preserves the linear structure (see \cite{Bor} for more details).  Similarly, an additive linear 2-category is a linear 2-category in which the categories $\CC(x,y)$ are also additive and composition is given by an additive functor.

The following definitions extend the Karoubi envelope, split Grothendieck group, and trace to the 2-categorical setting.
\begin{itemize}
\item Given an additive linear 2-category $\CC$, define the split Grothendieck group $K_0(\CC)$ of $\CC$ to be the linear category with $\Ob(K_0(\CC)) = \Ob(\CC)$ and with $K_0(\CC)(x,y) := K_0(\CC(x,y))$
for any two objects $x,y \in \Ob(\CC)$.  For $[f]_{\cong} \in
\Ob(K_0(\CC)(x,y))$ and $[g]_{\cong} \in \Ob(K_0(\CC)(y,z)) $ the
composition in $K_0(\CC)$ is defined by $[g]_{\cong} \circ [f]_{\cong}
:= [g \circ f]_{\cong}$.

\item The trace $\Tr(\CC)$ of a linear 2-category is the linear category with $\Ob(\Tr(\CC)) = \Ob(\CC)$ and with $\Tr(\CC)(x,y) := \Tr(\CC(x,y))$ for any two objects $x,y \in \Ob(\CC)$. For $\sigma \in \End_{\CC(x,y)}$ and
$\tau \in \End_{\CC(y,z)}$, we have
$[\tau ]\circ[\sigma ]=[\tau \circ\sigma ]$.
The identity morphism for $x\in \Ob(\TrCC)=\Ob(\CC)$ is given by
$[1_{I_x}]$.

\item The Karoubi envelope $\Kar(\CC)$ of a linear 2-category $\CC$ is the linear 2-category with $\Ob(\Kar(\CC)) = \Ob(\CC)$ and with hom categories $\Kar(\CC)(x,y) := \Kar(\CC(x,y))$.  The composition functor
$\Kar(\CC)(y,z) \times \Kar(\CC)(x,y) \to \Kar(\CC)(x,z)$ is induced
by the universal property of the Karoubi envelope from the composition
functor in $\CC$.  The fully-faithful additive functors $\CC(x,y) \to
\Kar(\CC(x,y))$ combine to form an additive $2$-functor $\CC \to
\Kar(\CC)$ that is universal with respect to splitting idempotents in
the  Hom-categories $\CC(x,y)$.
\end{itemize}

%

For categorification using $K_0$ it is necessary to consider the 2-category $\Kar(\Ucat(\mf{sl}_n))$.
We will denote by $\Udot$ the Karoubi envelope of $\U$.

The homomorphisms $h_{\CC(x,y)}$ taken over all objects $x,y \in \Ob(\CC)$ give rise to a linear functor
\begin{equation} \label{eq_chernchar}
h_{\CC} \col \K_0(\CC) \to \Tr(\CC)
\end{equation}
which is the identity map on objects and sends $K_0(\CC)(x,y) \to \Tr(\CC)(x,y)$ via the homomorphism $h_{\CC(x,y)}$.  It is easy to see that this assignment preserves composition since
\begin{align}
  h_{\CC}([g]_{\cong} \circ [f]_{\cong}) = h_{\CC}([g \circ f]_{\cong}) = [1_{g \circ f}]
   = [1_g \circ 1_f] = [1_g] \circ [1_f] = h_{\CC}([g]_{\cong}) \circ h_{\CC}([f]_{\cong}).
\end{align}

In the following sections we study the trace and $K_0$ for the 2-categories $\UcatD(\mf{sl}_n)$.

%
\subsection{Two categorifications of Lusztig's integral form $\UA(\mathfrak{sl}_2)$}
%
Here we present the results of \cite{Lau1,KLMS} computing $K_0$ and
of \cite{BHLZ} computing $\Tr$ of $\Ucat(\mf{sl}_2)$.

The algebra $\UA(\mf{sl}_2)$ has a well-known basis, called
Lusztig's canonical basis $\Bbb B$ consisting of
\begin{enumerate}[(i)]
     \item $E^{(a)}F^{(b)}1_{n} \quad $ for $a,b\geq 0$,
     $n\in\Z$, $n\leq b-a$,
     \item $F^{(b)}E^{(a)}1_{n} \quad$ for $a$,$b\geq 0$, $n\in\Z$,
     $n\geq
     b-a$,
\end{enumerate}
where $E^{(a)}F^{(b)}1_{b-a}=F^{(b)}E^{(a)}1_{b-a}$.
Let $_m\Bbb B_n$ be set of elements in $\Bbb B$ belonging to $1_m(\UA)1_n$.

We associate to
 each $x \in \Bbb B$  a $1$-morphism in $\UcatD$ as follows:
\begin{equation} \label{eq_basis}
  x \mapsto \cal{E}(x) := \left\{
\begin{array}{cl}
  \cal{E}^{(a)}\cal{F}^{(b)}\onen & \text{if $x=E^{(a)}F^{(b)}1_n$,} \nn \\
  \cal{F}^{(b)}\cal{E}^{(a)}\onen & \text{if $x=F^{(b)}E^{(a)}1_n$.} \nn
\end{array}
  \right.
\end{equation}
Let $\B=\{\E(x)\mid x\in\Bbb B\}$
and $_m\B_n =\{\E(x)\mid x\in\, _m{\Bbb B_n}\}$.

The first categorification of $\UA$ was given in \cite{Lau1} where the decategorification functor is given by $K_0$.  The original proof worked with the 2-category $\UcatD(\mf{sl}_2)$ defined over a field $\k$, so that spaces of 2-morphisms formed vector spaces and the Krull-Schmidt property could be utilized.  In rank one the choice of scalars $Q$ does not enter the definition.  Later this result was enhanced in \cite{KLMS} to prove the result when working over the integers.   Unique decomposition for spaces of 2-morphisms follows as a corollary of these results, implying that the category $\U_{\Z}(\mf{sl}_2)$ is Krull-Schmidt. Very recently, it was shown in \cite{BHLZ} that $\Udot_\Z$ also categorifies $\UA$ via the alternative trace decategorification functor $\Tr$.

\begin{thm}\label{K0}
(\cite{KLMS}, \cite{BHLZ}) For ${\fsl}_2$,
$\B$ is the strongly upper triangular basis of $\Udot_\Z$.
The maps
\[ \begin{array}{rrr}
\gamma\maps  \UA & \longrightarrow& K_0(\UcatD_\Z)
\\
   x & \mapsto & [\cal{E}(x)]_{\cong} \end{array}\quad{\text{and}}\quad
\begin{array}{rrr}
  h_{\Udot_\Z}\maps K_0(\Udot_\Z)&\longrightarrow& \Tr(\U_\Z)\\
 x &\mapsto & [1_x] \\
\end{array}\]
are isomorphisms of graded linear categories, providing two categorifications of $\UA$.
\end{thm}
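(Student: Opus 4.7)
The plan is to verify both isomorphisms by reducing them to Proposition~\ref{r24} (trace of an additive category with a strongly upper triangular basis) combined with the existing $K_0$ result from \cite{KLMS}.

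First I would establish that $\B$ is a strongly upper-triangular basis of $\Udot_\Z$. The first axiom, that $(\Udot_\Z|_\B)^\oplus \simeq \Udot_\Z$, follows from the Krull-Schmidt-type decomposition results of \cite{KLMS}: every 1-morphism in $\Udot_\Z$ decomposes (essentially uniquely) into shifted direct sums of the indecomposables $\cal{E}(x)$ for $x \in \Bbb B$. For the strongly upper-triangular condition, I would define a partial order on $\B$ compatible with the weight-and-length filtration on $\UA$ (for example, ordering $E^{(a)}F^{(b)}1_n$ and $F^{(b)}E^{(a)}1_n$ by the value $a+b$, breaking ties by $n$) and then verify that $\Hom_{\Udot_\Z}(\cal{E}(x), \cal{E}(y))$ vanishes in degree $\leq 0$ whenever $y$ is strictly smaller than $x$, and that $\End_{\Udot_\Z}(\cal{E}(x))$ is isomorphic to $\Z$ in degree $0$. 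Both statements amount to a dimension computation of graded Hom-spaces; these can be read off from the explicit basis of Hom-spaces in $\U_\Z$ given by the decorated diagrams of \cite{KLMS}, and the almost-orthogonality $\la \cal{E}(x), \cal{E}(y) \ra \in \delta_{x,y} + q\N[[q]]$ for canonical basis elements translates precisely into the strong upper-triangularity condition.

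Next, invoking Proposition~\ref{r24}, I immediately obtain
\[
\Tr(\U_\Z) \;\cong\; \Z \B \qquad \text{and} \qquad \HH_i(\U_\Z)=0 \ \ \text{for } i>0,
\]
and by Proposition~\ref{r1} the inclusion $\U_\Z \hookrightarrow \Udot_\Z$ induces an isomorphism $\Tr(\U_\Z) \cong \Tr(\Udot_\Z)$, so this latter trace is also free on $\B$. In parallel, the $K_0$ categorification theorem of \cite{KLMS} supplies the isomorphism $\gamma \maps \UA \simeqto K_0(\Udot_\Z)$, and by construction $\Ko(\Udot_\Z)$ is also the free $\cal{A}$-module on $\{[\cal{E}(x)]_\cong\}_{x \in \Bbb B}$.

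Finally, to see that $h_{\Udot_\Z}$ is an isomorphism of graded linear categories, I would chase generators: by definition
\[
h_{\Udot_\Z}\bigl([\cal{E}(x)]_\cong\bigr) \;=\; [1_{\cal{E}(x)}],
\]
and by the strongly upper-triangular basis description above, the classes $[1_{\cal{E}(x)}]$ for $x\in \Bbb B$ form a basis of $\Tr(\U_\Z)$ matched one-to-one (with matching $\Z[q,q^{-1}]$-module structure, since all shifts are tracked) with the basis $\{[\cal{E}(x)]_\cong\}$ of $K_0(\Udot_\Z)$. Hence $h_{\Udot_\Z}$ sends a basis to a basis bijectively, and is therefore an isomorphism.

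The main obstacle I expect is establishing the strong upper-triangularity cleanly: the partial order must be chosen so that \emph{every} nonzero morphism respects it (not merely morphisms preserving degree), and verifying this in the presence of mixed $\cal{E}\cal{F}$ compositions with both signs of $n$ requires careful use of the extended $\mf{sl}_2$-relations \eqref{eq_reduction-ngeqz}--\eqref{eq_reduction-neqz_2} to normalize compositions into canonical form. Once the calculus of \cite{KLMS} produces explicit bases of $\Hom$-spaces between the $\cal{E}(x)$ in terms of diagrams whose minimal degree is transparent, the verification reduces to reading off which bases contain the identity as their unique degree-zero element.
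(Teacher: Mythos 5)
Your proposal is correct and follows essentially the same route as the paper: establish that $\B$ is a strongly upper-triangular basis of $\Udot_\Z$ using the explicit calculus of \cite{KLMS}, apply Proposition~\ref{r24} (together with the Karoubi-invariance of the trace, Proposition~\ref{r1}) to identify $\Tr(\U_\Z)\cong\Tr(\Udot_\Z)$ as the free $\Z[q,q^{-1}]$-module on $\B$, import the $K_0$ isomorphism $\gamma$ from \cite{KLMS}, and conclude that $h_{\Udot_\Z}$ carries the basis $\{[\cal{E}(x)]_{\cong}\}$ bijectively to the basis $\{[1_{\cal{E}(x)}]\}$. The only cosmetic difference is that the paper obtains strong upper-triangularity from an explicit generators-and-relations presentation of the subcategories $\Udot(n,m)_\Z|_{_m\B_n}$ given in \cite{BHLZ}, rather than from your direct graded Hom-dimension/almost-orthogonality count, but both rest on the same \cite{KLMS} data.
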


%
\subsubsection{Strategy of the Proof}
%

In section 6 of \cite{BHLZ}, based on the results of  \cite{KLMS} we give an
explicit presentation of the subcategory $\Udot(n,m)_\Z|_{_m\B_n}$
by generators and relations, which shows that this category is strongly upper triangular.
Hence, the indecomposable 1-morphisms from $\B$ generates $K_0$ and, by Proposition \ref{r24}, also $\Tr$ as $\Z[q,q^{-1}]$-modules. In \cite{KLMS} explicit isomorphisms in $\Udot_\Z$ are given lifting the
the defining relations of $\UA$, showing that $\gamma$ is an isomorphism of graded categories. Since the functor $h_{\Udot}: K_0(\Udot_\Z) \to \Tr(\Udot_\Z)$ preserves composition.
Finally, we use that $\Tr(\Udot)=\Tr(\U)$, the result follows.

%
\subsection{Two categorifications of $\UA ({\mathfrak sl}_n)$}
%
Here we present the results of  \cite{KL3} computing $K_0$ and
of \cite{BGHL} computing $\Tr$ for general $n$.

There is a special choice of scalars, where all $t_{i\, j}=1$  for $1\leq i< n-1$.  In this case, the graphical calculus is cyclic with respect to the biadjoint structure, so that isotopic diagrams represent the same 2-morphism.   This will be our default choice of $Q$ in sections unless explicitly stated otherwise. We will omit explicitly mentioning $Q$ in the notation when no confusion is likely to arise.

Theorem \ref{K0} was possible because the explicit isomorphisms constructed in \cite{KLMS} provide decompositions of arbitrary 1-morphisms into indecomposables.   However, the lack of an explicit description of the canonical basis for $\UA$ beyond $\mf{sl}_2$ presents an obstacle to generalizing this result beyond $\mf{sl}_2$.   However, if one works with the 2-category $\Ucat(\mf{sl}_n)$ defined over a field $\k$ of characteristic 0, then one can utilize various connections between this category and known geometric constructions to prove the following result.

\begin{thm}\label{K0n}
(\cite{KL3}, \cite{BGHL}) Let $\k$ be a field of characteristic $0$.
For ${\mathfrak {sl}}_n$,
the 2-category $\Udot$
is strongly upper triangular (see Remark~\ref{rem-overk}).
Moreover,
$$K_0(\Udot)=\UA$$
as graded linear categories and
$$\Tr(\U)=K_0(\Udot)\otimes_\Z \k, \quad
\HH_i(\U)=0 \quad{\text {for}}\quad i>0.$$
\end{thm}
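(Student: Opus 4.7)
The plan is to deduce all three statements in the theorem from the single structural input that the indecomposable $1$-morphisms in $\Udot$, viewed as objects of a $\k$-linear category, furnish a strongly upper-triangular basis in the sense of Section~\ref{sec:upper-triang-line} (Remark~\ref{rem-overk}). Once that input is in hand, everything else is formal: by Proposition~\ref{r24} applied to the Hom-categories of $\Udot$ we simultaneously get $\Tr(\Udot)\cong \k B$, where $B$ indexes the indecomposables, and $\HH_i(\Udot)=0$ for $i>0$; combining this with the statement $K_0(\Udot)=\UA$ (which is exactly the main theorem of \cite{KL3}, the first half of what we want) and the observation that $\UA$ has $\Z$-basis indexed by the classes of the same indecomposables identifies $\Tr(\Udot)$ with $K_0(\Udot)\otimes_\Z \k$. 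Finally, Proposition~\ref{r1} applied Hom-category-wise gives $\Tr(\U)=\Tr(\Udot)$ and hence the stated equalities for $\U$, completing the argument.

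First I would cite $K_0(\Udot)=\UA$ from \cite{KL3} and focus the work on the strongly upper-triangular claim. To set things up, I pick, for each pair $\lambda,\mu\in X$, a complete set $B(\lambda,\mu)$ of isomorphism classes of indecomposable $1$-morphisms in $\Udot(\lambda,\mu)$; by \cite{KL3} and $K_0(\Udot)=\UA$, these indecomposables lift the canonical basis ${_\mu}\mathbb{B}_\lambda$ of ${_\mu}\UA{1_\lambda}$. Because we work over a field $\k$ of characteristic $0$ and $\Udot$ is graded with finite-dimensional graded pieces of Homs concentrated in degrees bounded below (this last being part of the positivity package of \cite{KL3}), the endomorphism ring of any indecomposable $L_b$ is a graded local $\k$-algebra whose degree-zero part is $\k$; this verifies the hypothesis $\End(L_b)\cong \k$ required by Remark~\ref{rem-overk}.

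Next I need a partial order $\le$ on $\bigsqcup_{\lambda,\mu} B(\lambda,\mu)$ such that $\Udot(L_b,L_{b'})\neq 0$ implies $b\le b'$ unless $b=b'$. The hook for this is the almost-orthogonality of the canonical basis: Lusztig's bar-invariance and positivity properties imply the graded hom pairing satisfies
\begin{equation*}
\langle [L_b], [L_{b'}] \rangle \in \delta_{b,b'} + q\,\N[[q]].
\end{equation*}
Hence for $b\ne b'$ every nonzero morphism $L_b\to L_{b'}$ has strictly positive degree, so picking any partial order refining the degree filtration on canonical basis elements (for instance, the one induced from the geometric $t$-structure on perverse sheaves on Lusztig's quiver varieties, as flagged in the ``Remarks for experts" paragraph of Section~\ref{sec:upper-triang-line}) yields upper-triangularity. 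This is the step I expect to be the main obstacle: in rank one one can use the very explicit presentations of \cite{KLMS}, but for general $\mathfrak{sl}_n$ the almost-orthogonality and positivity statements are not accessible by purely diagrammatic means and force the restriction $\mathrm{char}(\k)=0$, since the identification of indecomposables with simple perverse sheaves (and the consequent positivity) is precisely what fails in positive characteristic.

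Granting the strongly upper-triangular basis, Proposition~\ref{r24} (in the $\k$-linear form of Remark~\ref{rem-overk}) applied inside each $\Udot(\lambda,\mu)$ yields $\Tr(\Udot(\lambda,\mu))\cong \k\, B(\lambda,\mu)$ and vanishing of higher Hochschild--Mitchell homology; assembling these over $\lambda,\mu$ gives $\Tr(\Udot)\cong K_0(\Udot)\otimes_\Z \k$ as graded linear categories via the Chern character map $h_{\Udot}$ of \eqref{eq_chernchar}, and $\HH_i(\Udot)=0$ for $i>0$. Transferring to $\U$ via Proposition~\ref{r1}, which gives $\Tr(\U)\cong \Tr(\Udot)$ (and similarly kills higher $\HH_i$ because the Karoubi envelope is built Hom-category-wise), completes the proof.
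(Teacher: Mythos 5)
Your proposal is correct and follows essentially the same route as the paper: cite \cite{KL3} for $K_0(\Udot)=\UA$ (Krull--Schmidt plus the flag-variety action), deduce the strongly upper-triangular property of $\Udot$ over $\k$ of characteristic $0$ from the geometric identification of indecomposables with canonical basis elements via \cite{VV} and \cite{Web4} (which is exactly your almost-orthogonality/positivity step, and the reason for the characteristic-zero hypothesis), and then conclude formally by Proposition~\ref{r24} (in the $\k$-linear form of Remark~\ref{rem-overk}) together with the Karoubi-invariance of the trace from Proposition~\ref{r1}. The paper itself only sketches this, deferring the full argument to \cite{BGHL}, so your filled-in details (graded local endomorphism rings, the degree-based partial order, Morita/Karoubi invariance of higher Hochschild--Mitchell homology) are consistent elaborations of the same strategy rather than a different proof.
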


Following a recommendation
of the first author, the $\fsl_3$ case of this theorem was studied in
\cite{Marko}.

%
\subsubsection{Strategy of the Proof}
%
Let us make few remarks about the proof.
The result about $K_0$ was proved in \cite{KL3}. It relies on the fact that
$\Udot$ is Krull-Schmidt.  To control the size of $K_0$ an action of $\Udot$ was defined on the cohomology rings of partial flag varieties.
Even when for $n>2$, the explicit form of the Lusztig canonical basis is not known,
but the results of \cite{VV} and \cite{Web4} allows us to deduce that
$\Udot$ is strongly upper triangular.
The full argumentation will be given in \cite{BGHL}.

%
\subsubsection{Applications}
%

In \cite{LQR,QR} it was shown that $\mf{sl}_n$ link homology theories can be obtained
from $\Udot(\fsl_m)$ by means of the so-called foamation functors $F$, first studied in \cite{MackFoam}.
More precisely, there is an extended version $2{\rm BFoam}$ of the Bar-Natan foam category introduced by Blanchet~\cite{Blan}.   (The advantage of Blanchet's version is that it
is functorial with respect to link cobordisms, when Bar-Natan version is functorial up to sign.)
In \cite{LQR,QR} an analog $n{\rm BFoam}$ of the Blanchet foam category for $\mf{sl}_2$ is defined for $\mf{sl}_n$ and families of 2-representations $F$ for were defined from $\Udot(\fsl_m)$ into the foam categories $n{\rm BFoam}$.

A general procedure was laid out by Cautis ~\cite{Cautis-clasp} using categorical skew Howe duality to pull back complexes associated to a tangle to a complex in $\Kom(\Udot)$.  Combining this with the foamation functors allows one to pull back the complexes of foams associated to a tangle to a complex in $\Kom(\Udot)$.   It was shown that all of the foam relations used for $\mf{sl}_n$-link homology arise from relations in $\Udot(\fsl_m)$ via foamation functors.  These foamation functors categorify a skew Howe functor defined at the decategorified level by Cautis, Kamnitzer, and Morrison that realize web relations from relations in the quantum group $\dot{\mathbf{U}}_q(\mf{sl}_m)$~\cite{CKM}.

Recall that we have two flavors of Euler characteristic associated with $K_0$ and $\Tr$,
\begin{eqnarray*}
\chi_{\tr}: &\Kom(F(\U)) \to \Tr(F(\U))\\
\chi: &\Kom(F(\Udot)) \to K_0(F(\Udot))  .
\end{eqnarray*}
Below is one of the applications of Theorem \ref{K0n}.

\begin{cor}\label{chi}
In $\mf{sl}_n$ link homologies theories, we have
$$\chi_{tr}=\chi\, .$$
\end{cor}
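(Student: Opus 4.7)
The plan is to deduce Corollary \ref{chi} from Theorem \ref{K0n} together with the naturality of the generalized Chern character map $h_{\CC} \colon K_0(\CC) \to \Tr(\CC)$ introduced in \eqref{eq_chernchar}. The key observation is that at the level of complexes, the Chern character sends the usual Euler characteristic to the traced Euler characteristic: for any complex $C^{\bullet} \in \Kom(\CC)$,
\begin{equation*}
h_{\CC}\bigl(\chi(C^{\bullet})\bigr) = h_{\CC}\Bigl(\sum_i (-1)^i [C^i]_{\cong}\Bigr) = \sum_i (-1)^i [1_{C^i}] = \chi_{\tr}(C^{\bullet}).
\end{equation*}
Thus on the categorified quantum group side, $\chi_{\tr} = h_{\Udot} \circ \chi$, and by Theorem \ref{K0n}, after extending scalars to $\k$, the map $h_{\Udot}$ becomes an isomorphism $K_0(\Udot) \otimes_\Z \k \xrightarrow{\cong} \Tr(\U)$. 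So the two Euler characteristics agree in $\Udot$ up to this canonical identification.

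Next I would transport this statement through the foamation 2-functor $F \colon \Udot(\fsl_m) \to n\mathrm{BFoam}$ of \cite{LQR,QR,MackFoam}. Both $K_0$ and $\Tr$ are functorial on 2-categories (see Section \ref{sec_traceK0-2cats}), and the generalized Chern character defines a natural transformation $h \colon K_0 \Rightarrow \Tr$. Consequently, the square
\begin{equation*}
\begin{CD}
K_0(\Udot) \otimes \k @>{h_{\Udot} \otimes \k}>> \Tr(\U) \\
@V{K_0(F) \otimes \k}VV @VV{\Tr(F)}V \\
K_0(F(\Udot)) \otimes \k @>>{h_{F(\Udot)} \otimes \k}> \Tr(F(\U))
\end{CD}
\end{equation*}
commutes, and for any complex $C^{\bullet} \in \Kom(\Udot)$ one obtains $\chi_{\tr}(F(C^{\bullet})) = h_{F(\Udot)}(\chi(F(C^{\bullet})))$ in $\Tr(F(\U))$.

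To conclude, I would invoke the strategy of Cautis \cite{Cautis-clasp}: every complex of foams associated to a tangle arises as the image under $F$ of a complex $C^{\bullet} \in \Kom(\Udot)$ built via categorical skew Howe duality. Applying the commutative square above to such a $C^{\bullet}$, the equality $h_{F(\Udot)}(\chi(F(C^{\bullet}))) = \chi_{\tr}(F(C^{\bullet}))$ is precisely the assertion that under the identification induced by $h$, the usual and traced Euler characteristics of the link homology complex coincide.

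The main obstacle I anticipate is making precise the statement $\chi_{\tr} = \chi$ in the foam category: strictly speaking the two invariants live in different groups $\Tr(F(\U))$ and $K_0(F(\Udot))$, and the equality must be interpreted via the Chern character map $h_{F(\Udot)}$. Verifying that this map is an isomorphism (or at least injective on the image of $\chi$) in the foam setting requires the finiteness and Krull--Schmidt properties of $F(\Udot)$, which follow from the explicit description of foam categories and the fact that $F$ factors through a quotient where the relevant indecomposables remain distinct; this is essentially the content that underlies the foamation results of \cite{LQR,QR}.
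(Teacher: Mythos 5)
Your first three paragraphs follow the paper's proof essentially verbatim: the identity $h_{\CC}\bigl(\chi(C^{\bullet})\bigr)=\chi_{\tr}(C^{\bullet})$ is what the paper calls surjectivity of the induced map $\chi \to \chi_{\tr}$, and combining Theorem~\ref{K0n} with the naturality of the Chern character \eqref{eq_chernchar} under the foamation functor $F$ (via Cautis's skew Howe construction \cite{Cautis-clasp}) is exactly the published route. Up to that point your proposal and the paper agree.

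The gap is in your final paragraph. You locate the remaining injectivity question \emph{downstairs}, asking that $h_{F(\Udot)}$ be injective (at least on the image of $\chi$), and you propose to extract this from finiteness and Krull--Schmidt properties of $F(\Udot)$ together with the claim that ``the relevant indecomposables remain distinct'' under $F$. Neither assertion is justified: the foamation functors are very far from injective at the decategorified level --- they annihilate all weights outside a bounded range, so $K_0(F)$ has a large kernel --- and no Krull--Schmidt analysis of the foam categories establishing injectivity of $h_{F(\Udot)}$ is carried out in \cite{LQR,QR}. The paper avoids this problem entirely by settling injectivity \emph{upstairs}: $K_0(\Udot)$ is freely generated by the classes of indecomposables, and by Theorem~\ref{K0n} the map $h_{\Udot}$ becomes an isomorphism after tensoring with a field of characteristic zero, hence $h_{\Udot}$ is injective on the nose. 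Since any linear functor commutes with $K_0$ (and likewise with $\Tr$), both Euler characteristics of the foam complex are images under $F$ of the corresponding invariants in $\Udot$, so the comparison $\chi \leftrightarrow \chi_{\tr}$ is performed where $h_{\Udot}$ is known to be injective and only then pushed into $F(\Udot)$; in symbols, $F(K_0(\Udot))=F\bigl(h_{\Udot}(K_0(\Udot))\bigr)$. Replacing your last paragraph with this upstairs comparison closes the gap and makes your argument coincide with the paper's.
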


\begin{remark}
The $\mf{sl}_2$ case of  Corollary \ref{chi} was first proven  by Cooper-Krushkal~\cite[Section 3]{CK2}.
\end{remark}

\begin{proof}
Let $\C:=F(\Udot(\fsl_m))$. We have the Chern character map
$h_\C: K_0(\C) \to \Tr(\C)$ sending $[x]_{\cong} \to [1_x]$. The induced  map $\chi \to \chi_{\tr}$ is surjective,
since $\chi_{\tr}$ uses trace classes of the identity maps only.
It remains to show that this map is injective.
Note that $K_0(\Udot)$ is freely generated by indecomposables and $h_{\Udot}$ is an isomorphism after tensoring with any field of zero characteristic by Theorem \ref{K0n}, so
 $h_{\Udot}$ is injective. Hence, $F(K_0(\Udot))=F(h_{\Udot}(K_0(\Udot)))$. The fact that any linear functor commutes with $K_0$ implies the result.
\end{proof}

%
\section{Graded traces of categorified quantum groups}
%

By allowing homogeneous, but not necessarily grading preserving 2-morphisms in $\Ucat(\mf{g})$ we can define a version of the 2-category with larger 2-hom spaces.  Let $\Ucat^{\ast}(\mf{g})$ denote the 2-category with the same objects and the same 1-morphisms as $\Ucat(\mf{g})$, but with 2-hom spaces between one morphisms $X \onel$ and $Y\onel$ given by
\[
 \Ucat^{\ast}(\mf{g})(X\onel, Y\onel) := \bigoplus_{t\in \Z} \Ucat(\mf{g})(X\onel, Y\onel \la t \ra).
\]
One can alternatively think of $\Ucat^{\ast}$ as the result of adding isomorphisms $X\onel \to X\onel \la t\ra $ for all $t\in \Z$.  This essentially kills the grading making Grothendieck ring $K_0(\Ucat^{\ast})$ only a $\Z$-module, rather than a $\Z[q,q^{-1}]$-module.  While this version of the 2-category is less interesting from a $K_0$ perspective, it has interesting consequences for the trace.

%
\subsection{The trace of 2-category $\U^\ast(\fsl_2)$ and the current algebra of $\mf{sl}_2$}
%

In this section we outline the results of \cite{BHLZ}. We work with $\g=\fsl_2$ and our 2-morphisms are defined over $\Z$.

Let us consider the  algebra
$\fsl_2[t]=\fsl_2\otimes \modQ [t]$.
As a $\modQ $-algebra, its
universal enveloping algebra $\mathbf U (\fsl_2[t])$ is generated by $E_i,F_i$ and $H_i$
for $i\geq 0$, where $X_i=X\otimes t^i$, subject to the following
relations:
\begin{gather*}
  [H_i,H_j]=[E_i,E_j]=[F_i,F_j]=0,\\
  [H_i,E_j]=2E_{i+j},\quad [H_i,F_j]=-2F_{i+j},\quad [E_i,F_j]=H_{i+j}.\label{rel2}
\end{gather*}
 An integral basis of $\mathbf U (\fsl_2[t])$
was constructed in \cite{Gar} and it is the lifting of the
integral Poincare-Birkhoff-Witt (PBW) type basis.
The following is one of the main results of \cite{BHLZ}.

\begin{thm} \label{qqq}
There is an isomorphism of additive categories
\[
 h \maps \dot{\bfU}(\fsl_2[t]) \longrightarrow \Tr(\U^\ast_\Z(\fsl_2)),
\]
where $\dot{\bfU}(\fsl_2[t])$ is the idempotented integral form  of the current algebra for $\mf{sl}_2$.
\end{thm}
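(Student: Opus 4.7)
The plan is to construct an explicit homomorphism $h \maps \dot{\bfU}(\fsl_2[t]) \to \Tr(\U^\ast_\Z(\fsl_2))$ on generators, verify the current algebra relations diagrammatically, and then establish bijectivity by comparing distinguished integral bases on both sides.

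First I would define $h$ on the Chevalley-like generators $E_r 1_n$, $F_r 1_n$, and $H_r 1_n$ of $\dot{\bfU}(\fsl_2[t])$ for $r \ge 0$. The natural candidates are the trace classes of an upward (resp.\ downward) strand carrying $r$ dots, namely
\begin{equation*}
E_r 1_n \;\longmapsto\; \bigl[\,\sUupdot^{r}\,\onen\bigr], \qquad F_r 1_n \;\longmapsto\; \bigl[\,\sUdowndot^{r}\,\onen\bigr],
\end{equation*}
while $H_r 1_n$ should correspond to the power-sum closed diagram $p_{r}(n)\in\End_{\U^\ast}(\onen)$ defined in \eqref{eq_defpil} (normalized so that $H_0 1_n = n[1_{\onen}]$). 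This choice is forced by the $\fsl_2$ calculation of Theorem \ref{K0} where $h([E^{(a)}F^{(b)}1_n]_\cong)$ already agrees with the $r=0$ case, and the extra dots become well-defined in $\Tr(\U^\ast_\Z)$ precisely because grading shifts are absorbed in $\U^\ast$.

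Next I would check that the defining relations of $\bfU(\fsl_2[t])$ hold among these classes in $\Tr(\U^\ast_\Z)$. The commutativity $[E_i,E_j]=[F_i,F_j]=0$ follows from the nilHecke relation that lets two dots slide past each other on parallel strands after closing up, together with the trace relation which makes the resulting crossings cancel as in \eqref{eq_bubble_rel}. The relation $[H_i,E_j]=2E_{i+j}$ (and the mirror for $F$) is exactly the power-sum bubble slide formula \eqref{eq_powerslide1}--\eqref{eq_powerslide2} from Exercise \ref{exercise_power-slides}: sliding $p_r$ past an $E$-strand produces the original configuration plus twice the same strand carrying $r$ additional dots, which is exactly $2E_{r+j}$ in the trace. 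The hardest relation $[E_i,F_j]=H_{i+j}$ requires resolving an $EF$--$FE$ bigon using the extended $\fsl_2$ relations \eqref{eq_reduction-ngeqz}--\eqref{eq_reduction-neqz_2} and then recognizing the resulting sum of nested bubbles as the power sum via the definition \eqref{eq_defpil}; this is where the main computational effort lies.

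For surjectivity I would argue that $\Tr(\U^\ast_\Z(\fsl_2))$ is spanned, in each block $\onenp \Tr(\U^\ast_\Z) \onen$, by classes of closed annular diagrams with fixed endpoints $n$ and $n'$. Using the fact that every diagram in $\U(\fsl_2)$ reduces to a normal form consisting of an $\cal{E}^a\cal{F}^b$ (or $\cal{F}^b\cal{E}^a$) skeleton decorated by dots and closed bubbles (this is essentially the Stosic-style basis from \cite{KLMS}, translated to the annulus as in \cite{BHLZ}), and since on the annulus dots can be slid freely across bubbles modulo the power-sum slides already verified, every trace class lies in the image of $h$. The key is that in $\U^\ast$ the grading shifts disappear, so different dot decorations of the same skeleton become genuine endomorphisms rather than living in shifted hom-spaces.

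For injectivity, the main obstacle, I would exhibit an explicit $\Z$-basis of $\Tr(\U^\ast_\Z)$ obtained by taking the Stosic-basis of $\U_\Z$, enlarged by all possible dot/power-sum decorations, and match it with Garland's integral PBW basis of $\dot{\bfU}(\fsl_2[t])$. Concretely, one shows that endomorphisms of $\onen$ of the form $\prod_i p_i(n)^{m_i}$ form a polynomial basis for the center part, and that in each $(n',n)$ block the trace decomposes as this polynomial ring tensored with a free module on the divided-power skeletons $\E(x)$ for $x\in {}_{n'}\Bbb B_n$. The hard part will be controlling this basis: one needs the upper-triangularity result for $\Udot^\ast_\Z$ analogous to Proposition \ref{r24}, together with the classical fact (Garland) that the PBW monomials in $E_r,F_r,H_r$ form a $\Z$-basis of $\dot{\bfU}(\fsl_2[t])$. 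Once these two bases are matched under $h$ via the surjectivity argument above, the map is a bijection on each hom-space, proving the theorem.
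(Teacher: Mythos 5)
Your construction of $h$ on generators and your verification of the relations coincide with the paper's route: the same images (dotted upward and downward strands for $E_r,F_s$, the power sums $p_r(\lambda)$ of \eqref{eq_defpil} for $H_r$), and the same tools (power-sum bubble slides for $[H_i,E_j]$, the extended $\fsl_2$ decompositions \eqref{eq_ident_decomp-ngeqz}--\eqref{eq_ident_decomp-nleqz} together with the infinite Grassmannian relation \eqref{eq_infinite_Grass}, in several cases, for $[E_r,F_s]=H_{r+s}$). Your surjectivity argument -- reduction of annular diagrams to dotted/bubbled $\cal{E}^{(a)}\cal{F}^{(b)}$ skeletons in the spirit of \cite{KLMS} -- also matches the paper's spanning computation. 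The genuine gap is in your injectivity step, where you invoke ``the upper-triangularity result for $\Udot^\ast_\Z$ analogous to Proposition \ref{r24}.'' No such result can hold: passing to $\U^{\ast}$ absorbs all grading shifts into honest 2-morphisms, and this destroys upper-triangularity of the basis $\B$. Concretely, the cup and cap 2-morphisms give nonzero maps in both directions between the distinct basis 1-morphisms $\onen$ and $\cal{E}\cal{F}\onen$ (their composite is a nonzero dotted bubble); in $\Udot_\Z$ these live in shifted hom-spaces and are invisible to the degree-zero upper-triangular structure used in Theorem \ref{K0}, but in $\U^{\ast}$ they are genuine morphisms, so $\U^{\ast}|_{\B}$ is not upper-triangular and neither Lemma \ref{r19} nor Proposition \ref{r24} applies. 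This failure is not incidental: it is exactly the mechanism by which $\Tr(\U^{\ast})$ becomes larger than $K_0$, so any argument that restores upper-triangularity would also erase the current algebra you are trying to exhibit. As written, your final step (``once these two bases are matched under $h$ via the surjectivity argument above, the map is a bijection'') assumes the linear independence in the trace that it is supposed to prove.

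What the paper does instead, following \cite{BHLZ}, is to invoke Lemma \ref{general}: one constructs an explicit linear projection $\pi$ from $\bigoplus_{x}\End_{\U^{\ast}}(x)$ onto a subgroup $K$ -- identified, via explicitly computed PBW-type generators and relations, with Garland's integral form of the current algebra -- such that $\pi$ fixes trace classes ($[\pi(f)]=[f]$) and satisfies $\pi(gh)=\pi(hg)$. This single device replaces both halves of your bijectivity argument: surjectivity because every class equals the class of its projection (your normal-form computation reappears here), and injectivity because the trace property of $\pi$ guarantees that the relations $[fg]=[gf]$ impose no collapsing beyond what already holds in $K$. To repair your proposal you would need to replace the appeal to Proposition \ref{r24} either by such a projection or by some other lower bound on $\Tr(\U^{\ast}_\Z)$, for instance evaluating trace classes against a sufficiently large family of 2-representations.
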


%
\subsubsection{Sketch of the Proof}
%

To define an isomorphism $h \maps \dot {\bfU}(\fsl_2[t]) \longrightarrow \Tr(\U^\ast_\Z(\fsl_2))$ we must specify where the generating elements of $ \dot{\bfU}(\fsl_2[t])$ are sent.
 To define this map over the integers as was done in \cite{BHLZ} we must work with the integral basis of the current algebra introduced by Garland.  However, to illustrate the central ideas of this theorem we will explain which elements
of the trace correspond to the current algebra generators $E_r=E\otimes t^r$, $F_s=F \otimes t^s$, and $H_k = H \otimes t^k$.  We illustrate these assignments below:
\[
E_r 1_{\lambda} \mapsto
\left[
\xy 0;/r.18pc/:
  (10,6)*{\lambda};
  (0,0)*{\bbe{}};
  (0,2)*{\bullet}+(3,1)*{r};
  (-8,6)*{ }; (12,-7)*{};
 \endxy
\right]
 \qquad \quad
F_s 1_{\lambda}\mapsto
\left[
\xy 0;/r.18pc/:
  (10,6)*{\lambda};
  (0,0)*{\bbf{}};
  (0,2)*{\bullet}+(3,1)*{s};
   (-8,6)*{ }; (12,-7)*{};
 \endxy
\right]
 \qquad \quad
H_r 1_{\lambda} \mapsto
\left[
 p_r(\lambda)\Id_{\onel}
\right]
\]
where $p_r(\lambda)$ is the degree $r$ endomorphism of $\onel$ given by the power sum symmetric function defined in section~\ref{sec_sym-bub} and  $p_0(\l)$ is  $\l$.

In order to verify that these assignments define a homomorphism we must verify the current algebra axioms.  Since our target is the trace of the 2-category $\U^\ast_\Z(\fsl_2)$, we perform these calculations in the graphical calculus as though each diagram was on an annulus.  For example, the most interesting relation $[E_r,F_s]=H_{r+s}$ follows from the computation achieved by twice utilizing the most sophisticated $sl(2)$-relations \eqref{eq_ident_decomp-ngeqz} and  \eqref{eq_ident_decomp-nleqz}:
\begin{equation}
 E_rF_s 1_{\lambda}
 \;\; = \;\;
  \xy 0;/r.18pc/:
    (8,-1)*{
 \hackcenter{\begin{tikzpicture}
  \path[draw,blue, very thick, fill=blue!10]
   (-1.4,-.6) to (-1.4,.6) .. controls ++(0,1.15) and ++(0,1.15) ..
   (1.4,.6)
   to (1.4,-.6)  .. controls ++(0,-1.15) and ++(0,-1.15) ..
   (-1.4,-.6);
    \path[draw, blue, very thick, fill=white]
    (-0.15,0) .. controls ++(0,.25) and ++(0,.25) .. (0.15,0)
            .. controls ++(0,-.25) and ++(0,-.25) .. (-0.15,0);
\end{tikzpicture}}
 };(8,4)*{\lambda};
  (8,8)*{\lcap};
 (8,6)*{\scap};
 (8,-6)*{\scupfe};
 (-4,0)*{\sline};
 (4,0)*{\sline};
 (12,0)*{\sline};
 (20,0)*{\sline};
 (8,-10)*{\lcupef};
 (-4,3)*{\bullet}+(-2.5,1)*{\scs r};
 (4.2,3)*{\bullet}+(-2.5,1)*{\scs s};
 \endxy
  \;\; \refequal{\eqref{eq_ident_decomp-ngeqz}} \;\; -\;
    \xy 0;/r.18pc/:
     (0,-1)*{
 \hackcenter{\begin{tikzpicture}
  \path[draw,blue, very thick, fill=blue!10]
   (-1.4,-.6) to (-1.4,.6) .. controls ++(0,1.55) and ++(0,1.55) ..
   (1.4,.6)
   to (1.4,-.6)  .. controls ++(0,-1.55) and ++(0,-1.55) ..
   (-1.4,-.6);
    \path[draw, blue, very thick, fill=white]
    (-0.15,0) .. controls ++(0,.25) and ++(0,.25) .. (0.15,0)
            .. controls ++(0,-.25) and ++(0,-.25) .. (-0.15,0);
\end{tikzpicture}}
 };(1,6)*{\lambda};(-8,-4)*{\fecross};(-8,4)*{\naecross};
  (4,0)*{\mline};(12,0)*{\mline};(0,10)*{\scap};(0,12)*{\lcap};
  (0,-10)*{\scupfe};(0,-14)*{\lcupef};
   (-12,8)*{\bullet}+(-2.5,1)*{\scs r};
 (-4,8)*{\bullet}+(-2.5,1)*{\scs s};
  \endxy
 \;\; + \;\;
  \sum_{ \xy  (0,3)*{\scs f_1+f_2+f_3}; (0,0)*{\scs =\lambda-1};\endxy} \;
  \xy 0;/r.18pc/:
       (-2,-1)*{
 \hackcenter{\begin{tikzpicture}
  \path[draw,blue, very thick, fill=blue!10]
   (-1.5,-.8) to (-1.5,.8) .. controls ++(0,2) and ++(0,2) ..
   (1.5,.8)
   to (1.5,-.8)  .. controls ++(0,-2) and ++(0,-2) ..
   (-1.5,-.8);
    \path[draw, blue, very thick, fill=white]
    (-0.15,0) .. controls ++(0,.25) and ++(0,.25) .. (0.15,0)
            .. controls ++(0,-.25) and ++(0,-.25) .. (-0.15,0);
\end{tikzpicture}}
 };
  (-1,7)*{\lambda};
  (-10,0)*{\ccbub{\spadesuit+f_2}{}};
  (-10,14)*{\scup};(-10,-14)*{\scap};
  (2,-8)*{\meline};(2,8)*{\mline};
  (10,-8)*{\mfline};(10,8)*{\mline};
  (-2,18)*{\scap};(-2,20)*{\lcap};
 (-2,-18)*{\scup};(-2,-22)*{\lcup};
 (-13,13.7)*{\bullet}+(-0,-3)*{\scs f_1};
 (-13,-13.7)*{\bullet}+(-2.5,3.3)*{\scs f_3};
 (-14,18)*{\bullet}+(-2.5,1)*{\scs r};
 (-6,18)*{\bullet}+(-2.5,1)*{\scs s};
  \endxy \nn
\end{equation}
\begin{equation}
  \;\; = \;\; -\; \vcenter{
    \xy 0;/r.18pc/:
      (0,-6)*{
 \hackcenter{\begin{tikzpicture}
  \path[draw,blue, very thick, fill=blue!10]
   (-1.4,-.6) to (-1.4,.6) .. controls ++(0,1.15) and ++(0,1.15) ..
   (1.4,.6)
   to (1.4,-.6)  .. controls ++(0,-1.15) and ++(0,-1.15) ..
   (-1.4,-.6);
    \path[draw, blue, very thick, fill=white]
    (-0.15,0) .. controls ++(0,.25) and ++(0,.25) .. (0.15,0)
            .. controls ++(0,-.25) and ++(0,-.25) .. (-0.15,0);
\end{tikzpicture}}
 };(0,0)*{\lambda};
  (-8,-4)*{\naecross};(8,-4)*{\fecross};
  (0,2)*{\scap};(0,4)*{\lcap};
  (0,-10)*{\scup};(0,-14)*{\lcup};
   (-12,-8)*{\bullet}+(-2.5,1)*{\scs r};
 (-4,-8)*{\bullet}+(-2.5,1)*{\scs s};
  \endxy}
 \;\; + \;\; \sum_{f_2=0}^{\lambda-1}\sum_{f_1=0}^{\lambda-1-f_2}
  \;
 \vcenter{ \xy 0;/r.18pc/:
        (4,4)*{
 \hackcenter{\begin{tikzpicture}
  \path[draw,blue, very thick, fill=blue!10]
   (-1.5,-.6) to (-1.5,.6) .. controls ++(0,1.15) and ++(0,1.15) ..
   (1.5,.6)
   to (1.5,-.6)  .. controls ++(0,-1.15) and ++(0,-1.15) ..
   (-1.5,-.6);
    \path[draw, blue, very thick, fill=white]
    (-0.15,0) .. controls ++(0,.25) and ++(0,.25) .. (0.15,0)
            .. controls ++(0,-.25) and ++(0,-.25) .. (-0.15,0);
\end{tikzpicture}}
 };(6,15)*{\lambda};
  (-8,6)*{\ccbub{\spadesuit+f_2}{}};
  (6,-6)*{\cbub{\spadesuit+r+s-f_2}{}};
  \endxy} \nn
 \end{equation}
\begin{equation}
  \;\; = \;\;
 \;\; -\;
    \xy 0;/r.18pc/:
      (0,-1)*{
 \hackcenter{\begin{tikzpicture}
  \path[draw,blue, very thick, fill=blue!10]
   (-1.4,-.6) to (-1.4,.6) .. controls ++(0,1.55) and ++(0,1.55) ..
   (1.4,.6)
   to (1.4,-.6)  .. controls ++(0,-1.55) and ++(0,-1.55) ..
   (-1.4,-.6);
    \path[draw, blue, very thick, fill=white]
    (-0.15,0) .. controls ++(0,.25) and ++(0,.25) .. (0.15,0)
            .. controls ++(0,-.25) and ++(0,-.25) .. (-0.15,0);
\end{tikzpicture}}
 };(1,6)*{\lambda};(-8,-4)*{\naecross};(-8,4)*{\fecross};
  (4,0)*{\mline};(12,0)*{\mline};(0,10)*{\scap};(0,12)*{\lcap};
  (0,-10)*{\scupef};(0,-15)*{\lcupfe};
  (6,-3)*{\scs i};(15,-3)*{\scs i};
     (-12,0)*{\bullet}+(-2.5,1)*{\scs r};
 (-4,0)*{\bullet}+(-2.5,1)*{\scs s};
  \endxy
  \;\;+ \;\; \sum_{f_2=0}^{\lambda-1}(\lambda-f_2)
  \;
   \vcenter{ \xy 0;/r.18pc/:
        (4,4)*{
 \hackcenter{\begin{tikzpicture}
  \path[draw,blue, very thick, fill=blue!10]
   (-1.5,-.6) to (-1.5,.6) .. controls ++(0,1.15) and ++(0,1.15) ..
   (1.5,.6)
   to (1.5,-.6)  .. controls ++(0,-1.15) and ++(0,-1.15) ..
   (-1.5,-.6);
    \path[draw, blue, very thick, fill=white]
    (-0.15,0) .. controls ++(0,.25) and ++(0,.25) .. (0.15,0)
            .. controls ++(0,-.25) and ++(0,-.25) .. (-0.15,0);
\end{tikzpicture}}
 };(6,15)*{\lambda};
  (-8,6)*{\ccbub{\spadesuit+f_2}{}};
  (6,-6)*{\cbub{\spadesuit+r+s-f_2}{}};
  \endxy}\nn
\end{equation}
Using the nilHecke relations and the identity decomposition equation it is not hard to check that
\begin{equation}
 \;\; -\;
    \xy 0;/r.18pc/:
       (0,-1)*{
 \hackcenter{\begin{tikzpicture}
  \path[draw,blue, very thick, fill=blue!10]
   (-1.4,-.6) to (-1.4,.6) .. controls ++(0,1.55) and ++(0,1.55) ..
   (1.4,.6)
   to (1.4,-.6)  .. controls ++(0,-1.55) and ++(0,-1.55) ..
   (-1.4,-.6);
    \path[draw, blue, very thick, fill=white]
    (-0.15,0) .. controls ++(0,.25) and ++(0,.25) .. (0.15,0)
            .. controls ++(0,-.25) and ++(0,-.25) .. (-0.15,0);
\end{tikzpicture}}
 };(1,6)*{\lambda};(-8,-4)*{\naecross};(-8,4)*{\fecross};
  (4,0)*{\mline};(12,0)*{\mline};(0,10)*{\scap};(0,12)*{\lcap};
  (0,-10)*{\scupef};(0,-15)*{\lcupfe};
     (-12,0)*{\bullet}+(-2.5,1)*{\scs r};
 (-4,0)*{\bullet}+(-2.5,1)*{\scs s};
  \endxy \;\; = \;\; F_s E_r 1_{\lambda}
 \;\;- \sum_{j=0}^{-\lambda-1+a+b}(-\lambda+r+s-j)
    \xy 0;/r.18pc/:
          (0,4)*{
 \hackcenter{\begin{tikzpicture}
  \path[draw,blue, very thick, fill=blue!10]
   (-1.5,-.6) to (-1.5,.6) .. controls ++(0,1.15) and ++(0,1.15) ..
   (1.5,.6)
   to (1.5,-.6)  .. controls ++(0,-1.15) and ++(0,-1.15) ..
   (-1.5,-.6);
    \path[draw, blue, very thick, fill=white]
    (-0.15,0) .. controls ++(0,.25) and ++(0,.25) .. (0.15,0)
            .. controls ++(0,-.25) and ++(0,-.25) .. (-0.15,0);
\end{tikzpicture}}
 };(4,11)*{\lambda};
  (-8,0)*{\ccbub{\spadesuit+r+s-j}{}};
  (12,0)*{\cbub{\spadesuit+j}{}};
  \endxy
\end{equation}
Therefore it suffices to show that the action of $H_{r+s}$ is equal to
\begin{equation} \label{eq_shoulbBh}
\sum_{f=0}^{\lambda-1}(\lambda-f)
  \;
  \xy 0;/r.18pc/:
            (2,4)*{
 \hackcenter{\begin{tikzpicture}
  \path[draw,blue, very thick, fill=blue!10]
   (-1.5,-.6) to (-1.5,.6) .. controls ++(0,1.15) and ++(0,1.15) ..
   (1.5,.6)
   to (1.5,-.6)  .. controls ++(0,-1.15) and ++(0,-1.15) ..
   (-1.5,-.6);
    \path[draw, blue, very thick, fill=white]
    (-0.15,0) .. controls ++(0,.25) and ++(0,.25) .. (0.15,0)
            .. controls ++(0,-.25) and ++(0,-.25) .. (-0.15,0);
\end{tikzpicture}}
 };(4,11)*{\lambda};
  (-8,0)*{\ccbub{\spadesuit+f}{}};
  (12,0)*{\cbub{\spadesuit+r+s-f}{}};
  \endxy
  \;\;- \sum_{j=0}^{-\lambda-1+a+b}(-\lambda+a+b-j)
    \xy 0;/r.18pc/:
            (0,4)*{
 \hackcenter{\begin{tikzpicture}
  \path[draw,blue, very thick, fill=blue!10]
   (-1.5,-.6) to (-1.5,.6) .. controls ++(0,1.15) and ++(0,1.15) ..
   (1.5,.6)
   to (1.5,-.6)  .. controls ++(0,-1.15) and ++(0,-1.15) ..
   (-1.5,-.6);
    \path[draw, blue, very thick, fill=white]
    (-0.15,0) .. controls ++(0,.25) and ++(0,.25) .. (0.15,0)
            .. controls ++(0,-.25) and ++(0,-.25) .. (-0.15,0);
\end{tikzpicture}}
 };(4,11)*{\lambda};
  (-8,0)*{\ccbub{\spadesuit+r+s-j}{}};
  (12,0)*{\cbub{\spadesuit+j}{}};
  \endxy
\end{equation}
The first summation is zero when $\lambda-1<0$ and the second summation is zero when $-\lambda-1+a+b<0$.

We prove the claim by considering four cases:
\begin{itemize}
  \item Case 1: $r+s = 0$.
  \item Case 2: $r+s>0$, $\lambda-1\geq 0$ and $-\lambda-1+r+s<0$.
  \item Case 3: $r+s>0$, $\lambda-1<0$ and $-\lambda-1+r+s\geq0$.
  \item Case 4: $r+s>0$, $\lambda-1\geq 0$ and $-\lambda-1+r+s \geq0$.
\end{itemize}
Case 1 is easy to verify.  For cases 2-4 we show that $\eqref{eq_shoulbBh}$ is equal to
\begin{equation} \label{eq_expandP}
  - \sum_{j =0}^{r+s}(r+s+1-j)     \xy 0;/r.18pc/:
            (0,4)*{
 \hackcenter{\begin{tikzpicture}
  \path[draw,blue, very thick, fill=blue!10]
   (-1.5,-.6) to (-1.5,.6) .. controls ++(0,1.15) and ++(0,1.15) ..
   (1.5,.6)
   to (1.5,-.6)  .. controls ++(0,-1.15) and ++(0,-1.15) ..
   (-1.5,-.6);
    \path[draw, blue, very thick, fill=white]
    (-0.15,0) .. controls ++(0,.25) and ++(0,.25) .. (0.15,0)
            .. controls ++(0,-.25) and ++(0,-.25) .. (-0.15,0);
\end{tikzpicture}}
 };(4,11)*{\lambda};
  (-8,0)*{\ccbub{\spadesuit+r+s-j}{}};
  (12,0)*{\cbub{\spadesuit+j}{}};
  \endxy
\end{equation}
by adding 0 in the form of the infinite Grassmannian relation \eqref{eq_infinite_Grass}.
For case 2 ,since $\lambda \geq 1$ we can add zero in the form
\begin{equation}
  -(a+b+1-\lambda)\sum_{f=0}^{\lambda} \xy 0;/r.18pc/:
 (4,11)*{\lambda};
  (-8,0)*{\ccbub{\spadesuit+f}{}};
  (12,0)*{\cbub{\spadesuit+r+s-f}{}};
  \endxy
\end{equation}
to \eqref{eq_shoulbBh} and get \eqref{eq_expandP}.  For case 3, we add
\begin{equation}
  -(\lambda+1)\sum_{j=0}^{-\lambda+r+s} \xy 0;/r.18pc/:
 (4,11)*{\lambda};
  (-8,0)*{\ccbub{\spadesuit+r+s-j}{}};
  (12,0)*{\cbub{\spadesuit+j}{}};
  \endxy
  \end{equation}
and for case 4 we re-index the first summation setting $j=r+s-f$, combine the terms, and add
\begin{equation}
  -(\lambda+1)\sum_{j=0}^{r+s} \xy 0;/r.18pc/:
 (4,11)*{\lambda};
  (-8,0)*{\ccbub{\spadesuit+r+s-j}{}};
  (12,0)*{\cbub{\spadesuit+j}{}};
  \endxy
  \end{equation}

Once we have defined a homomorphism $h \maps \dot {\bfU}(\fsl_2[t]) \longrightarrow \Tr(\U^\ast_\Z(\fsl_2))$, establishing that the map is an isomorphism utilizes the following Lemma.

\begin{lemma}
\label{general}
Let $\C$ be a linear category. Let $H:=\bigoplus_{x\in\Ob(\C)}\C(x,x)$,
and  let $K\subset H$ be a subgroup.
Assume that there is a linear map
 $\pi:H\rightarrow K$  with the following properties:
\begin{enumerate}
\item $\pi$ is a projection,
\item $[\pi(f)]=[f]$ for every $f\in H$, and
\item for every $g\in\C(x,y)$ and $h\in\C(y,x)$ ($x,y\in\Ob(\C)$), $\pi(gh)=\pi(hg)$;
\end{enumerate}
then $\Tr(\C)$ is isomorphic to $K$.
\end{lemma}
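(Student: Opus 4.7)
The plan is to build explicit, mutually inverse group homomorphisms $\iota \maps K \to \Tr(\C)$ and $\bar\pi \maps \Tr(\C) \to K$ directly from the data $\pi$, matching each hypothesis in the statement with one step of the argument.

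First I would define $\iota \maps K \to \Tr(\C)$ as the composition of the inclusion $K \hookrightarrow H$ with the quotient map $H \twoheadrightarrow H/\Span\{gh-hg\} = \Tr(\C)$, so that $\iota(k) = [k]$. This is evidently a well-defined group homomorphism and requires none of the three assumptions on $\pi$.

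Next I would construct $\bar\pi$ by showing that $\pi$ descends to $\Tr(\C)$. Property (3) gives $\pi(gh - hg) = 0$ for every $g \in \C(x,y)$ and $h \in \C(y,x)$, and since $\pi$ is linear it vanishes on the whole subgroup $\Span\{gh - hg\} \subset H$. Hence $\pi$ factors through the quotient, yielding a homomorphism $\bar\pi \maps \Tr(\C) \to K$ defined by $\bar\pi([f]) := \pi(f)$.

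Finally I would verify the two compositions are identities. For $k \in K$, property (1) (that $\pi$ is a projection onto $K$) gives $\bar\pi(\iota(k)) = \pi(k) = k$. In the other direction, for any $f \in H$, property (2) yields $\iota(\bar\pi([f])) = [\pi(f)] = [f]$. This completes the identification $\Tr(\C) \cong K$. There is no serious obstacle here: the lemma is designed so that each of the three hypotheses does exactly one job (well-definedness of $\bar\pi$, one inverse identity, and the other inverse identity), and the argument is essentially a formality once $\iota$ and $\bar\pi$ are written down.
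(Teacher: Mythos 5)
Your proof is correct, and it is exactly the routine verification intended here: the paper itself states this lemma without proof (deferring to \cite{BHLZ}), and the expected argument is precisely yours, with $\iota(k)=[k]$, hypothesis (3) forcing $\pi$ to kill $\Span\{gh-hg\}$ so it descends to $\bar\pi([f])=\pi(f)$, and hypotheses (1) and (2) giving $\bar\pi\circ\iota=\mathrm{id}_K$ and $\iota\circ\bar\pi=\mathrm{id}_{\Tr(\C)}$ respectively. Nothing is missing; in particular you correctly read ``projection'' as $\pi|_K=\mathrm{id}_K$ and correctly matched (3) against the defining spanning set of the trace relation, so the two maps are mutually inverse group isomorphisms.
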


The two main ingredients in establishing that $h$ is a homomorphism  are
\begin{itemize}
\item
 identification of a certain subset $K$ of 2-endomorphisms in $\U$ with the Garland integral form of the positive part of the loop algebra (or the current algebra),
\item
construction of the projection $\pi$ satisfying conditions of the lemma.
\end{itemize}

To solve the first problem, we explicitly construct generators of
 $\Tr(\U^*)$   and compute  relations between them. Then we  identify this  PBW type basis
 with Garland's integral form.

%
\subsection{The current algebra for $\mf{sl}_n$} \label{current-sln}
%

We expect to be able to generalize Theorem~\ref{qqq} to $\fsl_n$.

Let us define the current algebra
${\bf U}(\fsl_n[t])={\bf U}(\fsl_n\otimes\Q[t])$. This algebra is generated over $\k$ by
$x^+_{i,r}$, $x^-_{i,s}$ and $\xi_{i,k}$ for $r,s,k\in \N\cup\{0\}$ and $1\leq i<n$
modulo the following relations:
\begin{description}
  \item[C1] For $i,j$ in $I$ and $r,s \in \N\cup\{0\}$
\[
[ \xi_{i,r}, \xi_{j,s} ] = 0
\]

  \item[C2] For any $i,j \in I$ and $k \in \N\cup\{0\}$,
\[
 [ \xi_{i,0}, x^{\pm}_{j,k} ] = \pm a_{ij}x^{\pm}_{j,k}
\]

  \item[C3] For any $i,j \in I$ and $r,k \in \N\cup\{0\}$,
\[
 [\xi_{i,r}, x^{\pm}_{j,k} ] = \pm a_{ij} x^{\pm}_{j, r+k}
\]

  \item[C4] For $i,j \in I$ and $k, \ell \in \N\cup\{0\}$
\[
[x^\pm_{i, k+1}, x^\pm_{j,\ell}]=[x^\pm_{i,k}, x^\pm_{j,\ell+1}]
\]
  \item[C5] For $i,j \in I$ and $k,\ell \in \N\cup\{0\}$
\[
 [x^+_{i,k}, x^-_{j,\ell}] = \delta_{i,j} \xi_{i, k+\ell}
\]
  \item[C6] Let $i \neq j$ and set $m=1-a_{ij}$.  For every $k_1, \dots, k_m \in \N\cup\{0\}$ and $\ell \in \N\cup\{0\}$
\[
 \sum_{\pi \in S_m} \sum_{s=0}^{m} (-1)^s
 \binom{m}{s} x^{\pm}_{i, k_{\pi(1)}} \dots x^{\pm}_{i, k_{\pi(s)}}
 x^{\pm}_{j, \ell} x^{\pm}_{i, k_{\pi(s+1)}} \dots x^{\pm}_{i, k_{\pi(m)}} = 0.
\]
\end{description}
For more on the relationship between the current algebra and specializations of the Yangian see \cite{Bag}.
Recall that we denote by $\dot{\bfU}(\fsl_n[t])$ the idempotented form of the current algebra.

\begin{exercise}
Show for $t=1$ that the current algebra relations restrict to the defining relations
for ${\bf U}(\fsl_n)$.
\end{exercise}

The next proposition is
 a step towards generalizing Theorem~\ref{qqq}.

\begin{proposition}\label{prop-cur}
Let \(\mathsf{E}_{i,r}, \mathsf{F}_{j,s}, \mathsf{H}_{i,r}\) denote  the generators of \(\Tr(\U^\ast(\fsl_n))\):
\[\mathsf{E}_{i,r}1_{\lambda}:=  \left[
\xy 0;/r.18pc/:
  (10,6)*{\lambda};
  (0,0)*{\bbe{}};
  (0,2)*{\bullet}+(3,1)*{r};
  (-8,6)*{ }; (12,-7)*{}; (2,-7)*{\scs i};
 \endxy
\right],  \quad \quad
\mathsf{F}_{j,s}1_{\lambda}:=  \left[
\xy 0;/r.18pc/:
  (10,6)*{\lambda};
  (0,0)*{\bbf{}};
  (2,-7)*{\scs j};
  (0,4)*{\bullet}+(3,1)*{s};
   (-8,6)*{ }; (12,-7)*{};
 \endxy
\right],\quad \quad
  \mathsf{H}_{i,r}1_{\lambda}:= \left[
 p_{i,r}(\lambda)\; \Id_{\onel}
\right],\]where $p_{i,r}(\l)$ was defined in equation~\ref{eq_defpil}.
For any choice of scalars $Q$ with $t_{ij}^2=t_{ji}^2=t_ij^{-1}t_{ji}=1$ for $i \neq j$, there is a well defined homomorphism
\begin{equation} \label{eq_sln-homo}
 h \maps \dot{\bfU}(\fsl_n[t]) \longrightarrow \Tr(\U^\ast(\fsl_n)),
\end{equation}
given by
\begin{equation} \label{homomorp}
x^{+}_{i, r} 1_{\lambda} \mapsto
(-1)^{(i+1)r}\,\mathsf{E}_{i,r}1_{\lambda}
 ,\quad \quad
x^{-}_{j, s} 1_{\lambda}\mapsto
(-1)^{(j+1)s}\,\mathsf{F}_{j,s}1_{\lambda}
,\quad \quad
\xi_{i, r} 1_{\lambda}\mapsto
  (-1)^{(i+1)r}\,\mathsf{H}_{i,r}1_{\lambda}.
\end{equation}
\end{proposition}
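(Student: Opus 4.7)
The plan is to verify each of the defining relations \textbf{C1}--\textbf{C6} of $\dot{\bfU}(\fsl_n[t])$ directly on the trace side, interpreting equalities graphically as identities among annular diagrams in $\U^\ast(\fsl_n)$. The three workhorses will be: the bubble slide equations and power-sum slide equations from Exercise~\ref{exercise_bubble-slides} and Exercise~\ref{exercise_power-slides}; the mixed $\cal{E}\cal{F}\leftrightarrow \cal{F}\cal{E}$ relations \eqref{mixed_rel} and the extended $\fsl_2$ relations \eqref{eq_reduction-ngeqz}--\eqref{eq_reduction-neqz_2}; and invariance under cyclic rotation, which is the defining feature of the trace. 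Throughout, the sign prefactors $(-1)^{(i+1)r}$, $(-1)^{(j+1)s}$, $(-1)^{(i+1)r}$ appearing in \eqref{homomorp} will exactly compensate for the scalars $t_{ij}^{-1}t_{ji}$ arising in the slide identities under our standing assumption $t_{ij}^2=t_{ji}^2=t_{ij}^{-1}t_{ji}=1$.

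First, \textbf{C1} is immediate because $\mathsf{H}_{i,r}1_\lambda$ is by definition a scalar multiple of $\Id_{\onel}$, hence elements in $\End(\onel)$ commute with one another in the trace. Relations \textbf{C2} and \textbf{C3} reduce to a single computation: one must show that sliding $p_{i,r}(\lambda)$ past a dotted upward (resp.\ downward) $j$-strand in the annulus yields, modulo the expected straight-line term, the class of $\pm a_{ij}\cdot\mathsf{E}_{j,r+k}$ (resp.\ $\mp a_{ij}\cdot\mathsf{F}_{j,r+k}$). This is precisely the content of \eqref{eq_powerslide1}--\eqref{eq_powerslide2}: the correction term vanishes when $(\alpha_i,\alpha_j)=0$, equals $\pm 2$ times a dotted strand when $i=j$, and equals $\pm(-t_{ij}^{-1}t_{ji})^r=\pm 1$ times a dotted strand when $i\cdot j=-1$. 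Matching these against $\pm a_{ij}=\pm 2,\pm 1,0$ forces exactly the sign convention in \eqref{homomorp}. Relation \textbf{C4} is the ``dot-shift invariance'' identity: taking the difference of the two sides and closing into an annulus, it follows from the KLR dot-sliding axiom \eqref{eq_dot_slide_ij-gen} together with the fact that a single crossing can absorb a dot jump at the cost of an easily-verifiable cancellation once both strands are closed up.

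The heart of the argument is \textbf{C5}. When $i\neq j$, the mixed relations \eqref{mixed_rel} identify $\mathcal{E}_i\mathcal{F}_j\onel$ with $\mathcal{F}_j\mathcal{E}_i\onel$ up to scalar, so after closing one obtains $[\mathsf{E}_{i,k},\mathsf{F}_{j,\ell}]=0=\delta_{ij}\mathsf{H}_{i,k+\ell}$, matching the right-hand side. When $i=j$, the problem is local in $i$ and one is exactly in the $\fsl_2$ situation already handled in the sketched proof of Theorem~\ref{qqq} (the two-step application of \eqref{eq_ident_decomp-ngeqz} and \eqref{eq_ident_decomp-nleqz}, followed by the case-analysis on the sign of $\lambda_i$ and the use of the infinite Grassmannian relation \eqref{eq_infinite_Grass} to produce the correct coefficient $p_{i,k+\ell}(\lambda)$). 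The only new feature is bookkeeping the dots on the $i$-colored bubbles through general weights $\lambda\in X$, which is straightforward since the argument never used the specific value of $\lambda_i$ except through the sign case distinction.

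The main obstacle will be \textbf{C6}, the quantum Serre relations with loop parameters. Unlike the $\fsl_2$ computation, these involve $m=1-a_{ij}=2$ strands of color $i$ interacting with one strand of color $j$, each carrying independent dot parameters $k_1,\dots,k_m,\ell$. The strategy will be to fix $\ell$ and induct on $\sum k_p$, using the $i\neq j$ KLR relations \eqref{eq_r3_easy-gen}--\eqref{eq_r3_hard-gen} to slide the $j$-strand across the block of $i$-strands. The alternating sum $\sum_s(-1)^s\binom{m}{s}$ of the arrangements produces, after symmetrizing via the nilHecke action of $S_m$ on the $i$-strands, an expression in which the ``error'' contributions from \eqref{eq_r3_hard-gen} cancel by a combinatorial identity involving the power-sum terms $p_{i,r}$. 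The case-free verification will require separately establishing that both corrections from \eqref{eq_r3_hard-gen} and from dot-slide terms \eqref{eq_dot_slide_ij-gen} collapse under closure into the annulus; here the key lemma is that any such closed diagram with more $i$-strands than allowed by the Serre relation already vanishes in $\Tr(\U^\ast)$. This is the step I expect to require the most care, both because of the combinatorics of the alternating sum and because tracking the sign prefactors through the symmetrization demands the full force of the scalar assumption $t_{ij}=t_{ji}=\pm 1$.
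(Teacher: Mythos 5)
Your treatment of \textbf{C1}--\textbf{C5} is essentially sound and close to the paper's own route: \textbf{C1} is trivial, \textbf{C2}--\textbf{C3} follow from the power-sum slide rules of Exercise~\ref{exercise_power-slides} (the paper instead runs the underlying bubble-slide computation \eqref{c_slide_right}--\eqref{cc_slide_right} directly on the annulus, which amounts to the same thing), and \textbf{C5} is handled exactly as in the paper, via \eqref{mixed_rel} for $i\neq j$ and by quoting the $\fsl_2$ computation for $i=j$. Two bookkeeping corrections, though. First, for $i\cdot j=-1$ the correction term in the slide rule is $\pm(-t_{ij}^{-1}t_{ji})^r=\pm(-1)^r$, not $\pm 1$; the entire purpose of the prefactors $(-1)^{(i+1)r}$ in \eqref{homomorp} is to absorb this $(-1)^r$, using that $(-1)^{(j+1)a}=(-1)^{ia}$ when $j=i\pm1$, so you cannot dismiss it before those signs are in play. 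Second, your \textbf{C4} cannot follow from the dot-slide axiom \eqref{eq_dot_slide_ij-gen} alone: dot slides preserve the dot count on each strand and can never produce the shift $(a{+}1,b)$ versus $(a,b{+}1)$. The necessary ingredient is the $i\neq j$ double-crossing resolution \eqref{eq_r2_ij-gen}: resolving a crossing pair in the annular closure gives $t_{ij}\,\mathsf{E}_{i,a+1}\mathsf{E}_{j,b}+t_{ji}\,\mathsf{E}_{i,a}\mathsf{E}_{j,b+1}$, and sliding the dots around the annulus and resolving on the other side gives the reversed products; this is how the paper obtains $[\mathsf{E}_{i,a+1},\mathsf{E}_{j,b}]1_\lambda=-[\mathsf{E}_{i,a},\mathsf{E}_{j,b+1}]1_\lambda$.

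The genuine gap is \textbf{C6}. Your proposed key lemma---that any closed diagram with ``more $i$-strands than allowed by the Serre relation already vanishes in $\Tr(\U^\ast)$''---is false: the trace contains all products $\mathsf{E}_{i,a}\mathsf{E}_{i,c}\mathsf{E}_{j,b}1_\lambda$ and arbitrary products of dotted bubbles, and no annular closure dies merely on account of the number of $i$-colored strands. Nor do the error terms produced by \eqref{eq_r3_hard-gen} cancel ``by a combinatorial identity involving the power-sum terms $p_{i,r}$''; power sums never enter the Serre computation. The actual mechanism is an antisymmetry special to the annulus: writing $D(a,c)$ for the closure of two $i$-strands joined by a single crossing carrying $a$ and $c$ dots, one has $D(a,c)=-D(c,a)$ (equation \eqref{ac}, proved from \eqref{eq_bubble_rel} by induction), while \eqref{ac2} expresses the nested pair of dotted $i$-circles as $\mathsf{E}_{i,a}\mathsf{E}_{i,c}\,1_\lambda=D(a{+}1,c)+D(c{+}1,a)$. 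The paper applies \eqref{eq_r3_hard-gen} once to $\mathsf{E}_{i,a}\mathsf{E}_{j,b}\mathsf{E}_{i,c}1_\lambda$, simplifies the two resulting triple-crossing closures with \eqref{eq_r2_ij-gen} and cyclicity, adds the same identity with $a\leftrightarrow c$ exchanged, and then the four correction terms recombine via \eqref{ac}--\eqref{ac2} into exactly $\mathsf{E}_{j,b}\mathsf{E}_{i,a}\mathsf{E}_{i,c}1_\lambda+\mathsf{E}_{i,a}\mathsf{E}_{i,c}\mathsf{E}_{j,b}1_\lambda$; no induction on $\sum_p k_p$ is needed (induction appears only inside the proof of \eqref{ac}). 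Your instinct to symmetrize in the two $i$-dot parameters is correct---for $m=2$ the Serre relation is precisely this symmetrized identity---but without \eqref{ac} and \eqref{ac2} your scheme has no mechanism to kill the \eqref{eq_r3_hard-gen} corrections, so the argument as written would stall at that point.
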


\begin{proof}
To prove this proposition we verify the current algebra relations  using the relations in the 2-category $\U^\ast(\fsl_n)$. We only need
to consider the case \(i\neq j\), since the relations in $\U^\ast(\fsl_2)$ have been proven in \cite{BHLZ}.
\textbf{C1} is clear, since multiplication by a bubble is a commutative operation. We prove  the equality \textbf{C3} for the case \(j=i\pm1\).
For convenience we will only depict the interior of the annulus for diagrams on the annulus in our calculation below where $v_{ij}:= t_{ij}^{-1}t_{ji}$.
\[ \mathsf{H}_{i,a}\mathsf{E}_{j,b} 1_\lambda\;\;= \;\;
\sum^a_{k=0} k \; \;
 \xy
   (-12,-2)*{\icbub{\spadesuit+k}{i}};
   (0,-2)*{\iccbub{\spadesuit+a-k}{i}};
  (12,-2)*{\icbub{b}{j}};
    (12,1)*{\lambda}; (12,-3)*{ \hackcenter{\begin{tikzpicture}
    \path[draw, blue, very thick, fill=white]
    (-0.15,0) .. controls ++(0,.2) and ++(0,.2) .. (0.15,0)
            .. controls ++(0,-.2) and ++(0,-.2) .. (-0.15,0);
\end{tikzpicture}}};
 \endxy
 \]
\[ \refequal{\eqref{c_slide_right}} \;\;
\sum^a_{k=0} k \; \;
 \xy
   (-18,-2)*{\iccbub{\spadesuit+a-k}{i}};
   (-1,-2)*{\icbub{\spadesuit+k}{i}};
  (0,6.5)*{\lambda};(5,3)*{ \hackcenter{\begin{tikzpicture}
    \path[draw, blue, very thick, fill=white]
    (-0.15,0) .. controls ++(0,.2) and ++(0,.2) .. (0.15,0)
            .. controls ++(0,-.2) and ++(0,-.2) .. (-0.15,0);
\end{tikzpicture}}};
    (-10,0)*{}="t1";  (10,0)*{}="t2";
  "t2";"t1" **\crv{(10,14) & (-10,14)}; ?(0)*\dir{<} ?(1)*\dir{<} ?(.3)*\dir{}+(2,2)*{\scs j};
  "t2";"t1" **\crv{(10,-14) & (-10,-14)};
  ?(.7)*\dir{}+(0,0)*{\bullet}+(1,-3)*{\scs b};
   \endxy
   \;\;+\;\; v_{ij} \; \sum^a_{k=0} k \;
 \xy
   (-18,-2)*{\iccbub{\spadesuit+a-k}{i}};
   (-1,-1)*{\icbub{\spadesuit+k-1}{i}};
  (1,6.5)*{\lambda};(5,3)*{ \hackcenter{\begin{tikzpicture}
    \path[draw, blue, very thick, fill=white]
    (-0.15,0) .. controls ++(0,.2) and ++(0,.2) .. (0.15,0)
            .. controls ++(0,-.2) and ++(0,-.2) .. (-0.15,0);
\end{tikzpicture}}};
    (-10,0)*{}="t1";  (10,0)*{}="t2";
  "t2";"t1" **\crv{(10,14) & (-10,14)}; ?(0)*\dir{<} ?(1)*\dir{<} ?(.3)*\dir{}+(2,2)*{\scs j};
  "t2";"t1" **\crv{(10,-14) & (-10,-14)};
  ?(.7)*\dir{}+(0,0)*{\bullet}+(1,-3)*{\scs b+1};
   \endxy
 \]
\[\refequal{\eqref{cc_slide_right}}
\sum^a_{k=0}\;\sum^{a-k}_{\ell=0} k \; (-v_{ij})^{\ell}\;
 \xy
   (-6,-2)*{\icbub{\spadesuit+k}{i}};
   (8,-2)*{\iccbub{\spadesuit+a-k-\ell}{i}};
  (8,8)*{\lambda};(4,8)*{ \hackcenter{\begin{tikzpicture}
    \path[draw, blue, very thick, fill=white]
    (-0.15,0) .. controls ++(0,.2) and ++(0,.2) .. (0.15,0)
            .. controls ++(0,-.2) and ++(0,-.2) .. (-0.15,0);
\end{tikzpicture}}};
    (-20,0)*{}="t1";  (20,0)*{}="t2";
  "t2";"t1" **\crv{(20,24) & (-20,24)}; ?(0)*\dir{<} ?(1)*\dir{<} ?(.3)*\dir{}+(2,2)*{\scs j};
  "t2";"t1" **\crv{(20,-24) & (-20,-24)};
  ?(.7)*\dir{}+(0,0)*{\bullet}+(2,-4)*{\scs \ell+b};
   \endxy \;\; +\;\;v_{ij}\;\sum^a_{k=0}\;\sum^{a-k}_{\ell=0} k \; (-v_{ij})^{\ell}\;
 \xy
   (-6,-2)*{\icbub{\spadesuit+k-1}{i}};
   (8,-2)*{\iccbub{\spadesuit+a-k-\ell}{i}};
  (8,8)*{\lambda}; (4,8)*{ \hackcenter{\begin{tikzpicture}
    \path[draw, blue, very thick, fill=white]
    (-0.15,0) .. controls ++(0,.2) and ++(0,.2) .. (0.15,0)
            .. controls ++(0,-.2) and ++(0,-.2) .. (-0.15,0);
\end{tikzpicture}}};
    (-20,0)*{}="t1";  (20,0)*{}="t2";
  "t2";"t1" **\crv{(20,24) & (-20,24)}; ?(0)*\dir{<} ?(1)*\dir{<} ?(.3)*\dir{}+(2,2)*{\scs j};
  "t2";"t1" **\crv{(20,-24) & (-20,-24)};
  ?(.7)*\dir{}+(0,0)*{\bullet}+(2,-4)*{\scs \ell+b+1};
   \endxy
\]
\[  =\;\; \sum^a_{\ell=0}\;\sum^{a-\ell}_{k=0} k \; (-v_{ij})^{\ell}\;
 \xy
   (-9,-2)*{\icbub{\spadesuit+k}{i}};
   (5,-2)*{\iccbub{\spadesuit+a-k-\ell}{i}};
  (8,8)*{\lambda};(4,8)*{ \hackcenter{\begin{tikzpicture}
    \path[draw, blue, very thick, fill=white]
    (-0.15,0) .. controls ++(0,.2) and ++(0,.2) .. (0.15,0)
            .. controls ++(0,-.2) and ++(0,-.2) .. (-0.15,0);
\end{tikzpicture}}};
    (-20,0)*{}="t1";  (20,0)*{}="t2";
  "t2";"t1" **\crv{(20,24) & (-20,24)}; ?(0)*\dir{<} ?(1)*\dir{<} ?(.3)*\dir{}+(2,2)*{\scs j};
  "t2";"t1" **\crv{(20,-24) & (-20,-24)};
  ?(.7)*\dir{}+(0,0)*{\bullet}+(1,-3)*{\scs \ell+b};
   \endxy \;\; +\;v_{ij}\;\sum^a_{\ell=0}\;\sum^{a-\ell-1}_{k=0} (k+1) \; (-v_{ij})^{\ell}\;
 \xy
   (-9,-2)*{\icbub{\spadesuit+k}{i}};
   (6,-2)*{\iccbub{\spadesuit+a-k-\ell-1}{i}};
  (8,8)*{\lambda};(4,8)*{ \hackcenter{\begin{tikzpicture}
    \path[draw, blue, very thick, fill=white]
    (-0.15,0) .. controls ++(0,.2) and ++(0,.2) .. (0.15,0)
            .. controls ++(0,-.2) and ++(0,-.2) .. (-0.15,0);
\end{tikzpicture}}};
    (-20,0)*{}="t1";  (20,0)*{}="t2";
  "t2";"t1" **\crv{(20,24) & (-20,24)}; ?(0)*\dir{<} ?(1)*\dir{<} ?(.3)*\dir{}+(2,2)*{\scs j};
  "t2";"t1" **\crv{(20,-24) & (-20,-24)};
  ?(.7)*\dir{}+(0,0)*{\bullet}+(2,-4)*{\scs \ell+b+1};
   \endxy
\]
\[ \refequal{\eqref{eq_defpil}}
\;\; \sum^a_{\ell=0} \; (-v_{ij})^{\ell}\; \;
 \xy
  (5,5)*{\lambda}; (1,5)*{ \hackcenter{\begin{tikzpicture}
    \path[draw, blue, very thick, fill=white]
    (-0.15,0) .. controls ++(0,.2) and ++(0,.2) .. (0.15,0)
            .. controls ++(0,-.2) and ++(0,-.2) .. (-0.15,0);
\end{tikzpicture}}};
  (0,0)*{\mathsf{H}_{i,a-\ell}};
(-10,0)*{}="t1";  (10,0)*{}="t2";
  "t2";"t1" **\crv{(10,14) & (-10,14)}; ?(0)*\dir{<} ?(1)*\dir{<} ?(.3)*\dir{}+(2,2)*{\scs j};
  "t2";"t1" **\crv{(10,-14) & (-10,-14)};
  ?(.7)*\dir{}+(0,0)*{\bullet}+(1,-3)*{\scs \ell+b};
   \endxy \;\; -\;\;\sum^{a+1}_{\ell=1}\;\sum^{a-\ell}_{k=0} (k+1) \; (-v_{ij})^{\ell}\;
 \xy
   (-6,-2)*{\icbub{\spadesuit+k}{i}};
   (10,-2)*{\iccbub{\spadesuit+a-k-\ell}{i}};
  (8,8)*{\lambda}; (4,8)*{ \hackcenter{\begin{tikzpicture}
    \path[draw, blue, very thick, fill=white]
    (-0.15,0) .. controls ++(0,.2) and ++(0,.2) .. (0.15,0)
            .. controls ++(0,-.2) and ++(0,-.2) .. (-0.15,0);
\end{tikzpicture}}};
    (-20,0)*{}="t1";  (20,0)*{}="t2";
  "t2";"t1" **\crv{(20,24) & (-20,24)}; ?(0)*\dir{<} ?(1)*\dir{<} ?(.3)*\dir{}+(2,2)*{\scs j};
  "t2";"t1" **\crv{(20,-24) & (-20,-24)};
  ?(.7)*\dir{}+(0,0)*{\bullet}+(1,-3)*{\scs \ell+b};
   \endxy
\]
Pulling off $k$ of the summands in the second term and observing that the $\ell=a+1$ term vanishes we have
\[
=\;\; \sum^a_{\ell=0} \; (-v_{ij})^{\ell}\; \;
 \xy
  (5,5)*{\lambda}; (1,5)*{ \hackcenter{\begin{tikzpicture}
    \path[draw, blue, very thick, fill=white]
    (-0.15,0) .. controls ++(0,.2) and ++(0,.2) .. (0.15,0)
            .. controls ++(0,-.2) and ++(0,-.2) .. (-0.15,0);
\end{tikzpicture}}};
  (0,0)*{\mathsf{H}_{i,a-\ell}};
(-10,0)*{}="t1";  (10,0)*{}="t2";
  "t2";"t1" **\crv{(10,14) & (-10,14)}; ?(0)*\dir{<} ?(1)*\dir{<} ?(.3)*\dir{}+(2,2)*{\scs j};
  "t2";"t1" **\crv{(10,-14) & (-10,-14)};
  ?(.7)*\dir{}+(0,0)*{\bullet}+(1,-3)*{\scs \ell+b};
   \endxy \;\;  -\;\; \sum^a_{\ell=1} \; (-v_{ij})^{\ell}\; \;
 \xy
  (5,5)*{\lambda}; (1,5)*{ \hackcenter{\begin{tikzpicture}
    \path[draw, blue, very thick, fill=white]
    (-0.15,0) .. controls ++(0,.2) and ++(0,.2) .. (0.15,0)
            .. controls ++(0,-.2) and ++(0,-.2) .. (-0.15,0);
\end{tikzpicture}}};
  (0,0)*{\mathsf{H}_{i,a-\ell}};
(-10,0)*{}="t1";  (10,0)*{}="t2";
  "t2";"t1" **\crv{(10,14) & (-10,14)}; ?(0)*\dir{<} ?(1)*\dir{<} ?(.3)*\dir{}+(2,2)*{\scs j};
  "t2";"t1" **\crv{(10,-14) & (-10,-14)};
  ?(.7)*\dir{}+(0,0)*{\bullet}+(1,-3)*{\scs \ell+b};
   \endxy \;-\;\sum^{a}_{\ell=1}\;\sum^{a-\ell}_{k=0}  \; (-v_{ij})^{\ell}\;
 \xy
   (-6,-2)*{\icbub{\spadesuit+k}{i}};
   (10,-2)*{\iccbub{\spadesuit+a-k-\ell}{i}};
  (8,8)*{\lambda};(4,8)*{ \hackcenter{\begin{tikzpicture}
    \path[draw, blue, very thick, fill=white]
    (-0.15,0) .. controls ++(0,.2) and ++(0,.2) .. (0.15,0)
            .. controls ++(0,-.2) and ++(0,-.2) .. (-0.15,0);
\end{tikzpicture}}};
    (-20,0)*{}="t1";  (20,0)*{}="t2";
  "t2";"t1" **\crv{(20,24) & (-20,24)}; ?(0)*\dir{<} ?(1)*\dir{<} ?(.3)*\dir{}+(2,2)*{\scs j};
  "t2";"t1" **\crv{(20,-24) & (-20,-24)};
  ?(.7)*\dir{}+(0,0)*{\bullet}+(1,-3)*{\scs \ell+b};
   \endxy
  \]
\[=\; \;
 \xy
  (5,5)*{\lambda}; (1,5)*{ \hackcenter{\begin{tikzpicture}
    \path[draw, blue, very thick, fill=white]
    (-0.15,0) .. controls ++(0,.2) and ++(0,.2) .. (0.15,0)
            .. controls ++(0,-.2) and ++(0,-.2) .. (-0.15,0);
\end{tikzpicture}}};
  (0,0)*{\mathsf{H}_{i,a}};
(-10,0)*{}="t1";  (10,0)*{}="t2";
  "t2";"t1" **\crv{(10,14) & (-10,14)}; ?(0)*\dir{<} ?(1)*\dir{<} ?(.3)*\dir{}+(2,2)*{\scs j};
  "t2";"t1" **\crv{(10,-14) & (-10,-14)};
  ?(.7)*\dir{}+(0,0)*{\bullet}+(1,-3)*{\scs b};
   \endxy \;\; -\;\; \sum^a_{\ell=1} \; (-v_{ij})^{\ell}\; \;
 \xy
  (5,5)*{\lambda};(1,5)*{ \hackcenter{\begin{tikzpicture}
    \path[draw, blue, very thick, fill=white]
    (-0.15,0) .. controls ++(0,.2) and ++(0,.2) .. (0.15,0)
            .. controls ++(0,-.2) and ++(0,-.2) .. (-0.15,0);
\end{tikzpicture}}};
   (0,0)*{\delta_{a-\ell,0}};
(-10,0)*{}="t1";  (10,0)*{}="t2";
  "t2";"t1" **\crv{(10,14) & (-10,14)}; ?(0)*\dir{<} ?(1)*\dir{<} ?(.3)*\dir{}+(2,2)*{\scs j};
  "t2";"t1" **\crv{(10,-14) & (-10,-14)};
  ?(.7)*\dir{}+(0,0)*{\bullet}+(1,-3)*{\scs \ell+b};
   \endxy \;\; =\;\; \xy
  (5,5)*{\lambda};
  (1,5)*{ \hackcenter{\begin{tikzpicture}
    \path[draw, blue, very thick, fill=white]
    (-0.15,0) .. controls ++(0,.2) and ++(0,.2) .. (0.15,0)
            .. controls ++(0,-.2) and ++(0,-.2) .. (-0.15,0);
\end{tikzpicture}}};(0,0)*{\mathsf{H}_{i,a}};
(-10,0)*{}="t1";  (10,0)*{}="t2";
  "t2";"t1" **\crv{(10,14) & (-10,14)}; ?(0)*\dir{<} ?(1)*\dir{<} ?(.3)*\dir{}+(2,2)*{\scs j};
  "t2";"t1" **\crv{(10,-14) & (-10,-14)};
  ?(.7)*\dir{}+(0,0)*{\bullet}+(1,-3)*{\scs b};
   \endxy \;\; -\;\; (-v_{ij})^a\; \;
 \xy
  (3,3)*{\lambda}; (0,0)*{ \hackcenter{\begin{tikzpicture}
    \path[draw, blue, very thick, fill=white]
    (-0.15,0) .. controls ++(0,.2) and ++(0,.2) .. (0.15,0)
            .. controls ++(0,-.2) and ++(0,-.2) .. (-0.15,0);
\end{tikzpicture}}};
(-10,0)*{}="t1";  (10,0)*{}="t2";
  "t2";"t1" **\crv{(10,14) & (-10,14)}; ?(0)*\dir{<} ?(1)*\dir{<} ?(.3)*\dir{}+(2,2)*{\scs j};
  "t2";"t1" **\crv{(10,-14) & (-10,-14)};
  ?(.7)*\dir{}+(0,0)*{\bullet}+(1,-3)*{\scs a+b};
   \endxy
\]
\[=\;\;\mathsf{E}_{j,b} \mathsf{H}_{i,a} 1_\lambda - (-v_{ij})^a\;\mathsf{E}_{j,a+b}  1_\lambda. \]
By choosing \(v_{ij}=1\), we get
\[[ \mathsf{H}_{i,a}, \mathsf{E}_{j,b}] 1_\lambda = -(-1)^a \mathsf{E}_{j,a+b}1_\lambda .\]
The relation
\[[ \mathsf{H}_{i,a}, \mathsf{F}_{j,b}] 1_\lambda = (-1)^a \mathsf{F}_{j,a+b}1_\lambda .\]
can be proven in a similar way.
\textbf{C4} follows from the relations
\eqref{eq_r2_ij-gen} and \eqref{eq_dot_slide_ij-gen}:
\[
  t_{ij}\;\; \xy
 (-6,0)*{};
  (6,0)*{};
  (0,2)*{\lambda};(0,-2)*{ \hackcenter{\begin{tikzpicture}
    \path[draw, blue, very thick, fill=white]
    (-0.15,0) .. controls ++(0,.2) and ++(0,.2) .. (0.15,0)
            .. controls ++(0,-.2) and ++(0,-.2) .. (-0.15,0);
\end{tikzpicture}}};
  (-4,0)*{}="t1";  (4,0)*{}="t2";
 "t2";"t1" **\crv{(4,6) & (-4,6)}; ?(.02)*\dir{<}
   ?(1)*\dir{<} ?(.3)*\dir{}+(2,2)*{\scs j};
  "t2";"t1" **\crv{(4,-6) & (-4,-6)};
  ?(.7)*\dir{}+(0,0)*{\bullet}+(2,-3)*{\scs b};
  (-12,0)*{}="t1";  (12,0)*{}="t2";
  "t2";"t1" **\crv{(12,15) & (-12,15)}; ?(0)*\dir{<} ?(1)*\dir{<} ?(.3)*\dir{}+(2,2)*{\scs i};
  "t2";"t1" **\crv{(12,-15) & (-12,-15)};
  ?(.7)*\dir{}+(0,0)*{\bullet}+(2,-4)*{\scs a+1};
  \endxy \; + t_{ji}\;\;  \xy
 (-6,0)*{};
  (6,0)*{};
   (0,2)*{\lambda};(0,-2)*{ \hackcenter{\begin{tikzpicture}
    \path[draw, blue, very thick, fill=white]
    (-0.15,0) .. controls ++(0,.2) and ++(0,.2) .. (0.15,0)
            .. controls ++(0,-.2) and ++(0,-.2) .. (-0.15,0);
\end{tikzpicture}}};
  (-4,0)*{}="t1";  (4,0)*{}="t2";
 "t2";"t1" **\crv{(4,6) & (-4,6)}; ?(.02)*\dir{<}
   ?(1)*\dir{<} ?(.3)*\dir{}+(2,2)*{\scs j};
  "t2";"t1" **\crv{(4,-6) & (-4,-6)};
  ?(.7)*\dir{}+(0,0)*{\bullet}+(2,-3)*{\scs b+1};
  (-12,0)*{}="t1";  (12,0)*{}="t2";
  "t2";"t1" **\crv{(12,15) & (-12,15)}; ?(0)*\dir{<} ?(1)*\dir{<} ?(.3)*\dir{}+(2,2)*{\scs i};
  "t2";"t1" **\crv{(12,-15) & (-12,-15)};
  ?(.7)*\dir{}+(0,0)*{\bullet}+(2,-4)*{\scs a};
  \endxy
  \;\; \refequal{\eqref{eq_r2_ij-gen}}\;\;
    \xy 0;/r.18pc/:
     (0,-1)*{
 };
 (0,6)*{\lambda};(0,0)*{ \hackcenter{\begin{tikzpicture}
    \path[draw, blue, very thick, fill=white]
    (-0.15,0) .. controls ++(0,.2) and ++(0,.2) .. (0.15,0)
            .. controls ++(0,-.2) and ++(0,-.2) .. (-0.15,0);
\end{tikzpicture}}};
 (-8,-4)*{\ecross};(-8,4)*{\naecross};
  (4,0)*{\mline};(12,0)*{\mline};(0,10)*{\scap};(0,12)*{\lcap};
  (0,-10)*{\scupef};(0,-14)*{\lcupef};
  (6,-3)*{\scs j};(15,-3)*{\scs i};
   (-12,8)*{\bullet}+(-2.5,1)*{\scs a};
 (-4,8)*{\bullet}+(-2.5,1)*{\scs b};
  \endxy
\;\; \refequal{\eqref{eq_dot_slide_ij-gen}} \;\;
    \xy 0;/r.18pc/:
     (0,-1)*{
 };
(0,6)*{\lambda};(0,0)*{ \hackcenter{\begin{tikzpicture}
    \path[draw, blue, very thick, fill=white]
    (-0.15,0) .. controls ++(0,.2) and ++(0,.2) .. (0.15,0)
            .. controls ++(0,-.2) and ++(0,-.2) .. (-0.15,0);
\end{tikzpicture}}};
 (-8,-4)*{\ecross};(-8,4)*{\naecross};
  (4,0)*{\mline};(12,0)*{\mline};(0,10)*{\scap};(0,12)*{\lcap};
  (0,-10)*{\scupef};(0,-14)*{\lcupef};
  (6,-3)*{\scs i};(15,-3)*{\scs j};
   (-12,8)*{\bullet}+(-2.5,1)*{\scs b};
 (-4,8)*{\bullet}+(-2.5,1)*{\scs a};
  \endxy
\]
\[\; \refequal{\eqref{eq_r2_ij-gen}} \;
  t_{ji}\;\; \xy
 (-6,0)*{};
  (6,0)*{};
   (0,2)*{\lambda};(0,-2)*{ \hackcenter{\begin{tikzpicture}
    \path[draw, blue, very thick, fill=white]
    (-0.15,0) .. controls ++(0,.2) and ++(0,.2) .. (0.15,0)
            .. controls ++(0,-.2) and ++(0,-.2) .. (-0.15,0);
\end{tikzpicture}}};  (-4,0)*{}="t1";  (4,0)*{}="t2";
 "t2";"t1" **\crv{(4,6) & (-4,6)}; ?(.02)*\dir{<}
   ?(1)*\dir{<} ?(.3)*\dir{}+(2,2)*{\scs i};
  "t2";"t1" **\crv{(4,-6) & (-4,-6)};
  ?(.7)*\dir{}+(0,0)*{\bullet}+(2,-3)*{\scs a};
  (-12,0)*{}="t1";  (12,0)*{}="t2";
  "t2";"t1" **\crv{(12,15) & (-12,15)}; ?(0)*\dir{<} ?(1)*\dir{<} ?(.3)*\dir{}+(2,2)*{\scs j};
  "t2";"t1" **\crv{(12,-15) & (-12,-15)};
  ?(.7)*\dir{}+(0,0)*{\bullet}+(2,-4)*{\scs b+1};
  \endxy \; + t_{ij}\;\;  \xy
 (-6,0)*{};
  (6,0)*{};
   (0,2)*{\lambda};(0,-2)*{ \hackcenter{\begin{tikzpicture}
    \path[draw, blue, very thick, fill=white]
    (-0.15,0) .. controls ++(0,.2) and ++(0,.2) .. (0.15,0)
            .. controls ++(0,-.2) and ++(0,-.2) .. (-0.15,0);
\end{tikzpicture}}};
  (-4,0)*{}="t1";  (4,0)*{}="t2";
 "t2";"t1" **\crv{(4,6) & (-4,6)}; ?(.02)*\dir{<}
   ?(1)*\dir{<} ?(.3)*\dir{}+(2,2)*{\scs i};
  "t2";"t1" **\crv{(4,-6) & (-4,-6)};
  ?(.7)*\dir{}+(0,0)*{\bullet}+(2,-3)*{\scs a+1};
  (-12,0)*{}="t1";  (12,0)*{}="t2";
  "t2";"t1" **\crv{(12,15) & (-12,15)}; ?(0)*\dir{<} ?(1)*\dir{<} ?(.3)*\dir{}+(2,2)*{\scs j};
  "t2";"t1" **\crv{(12,-15) & (-12,-15)};
  ?(.7)*\dir{}+(0,0)*{\bullet}+(2,-4)*{\scs b};
  \endxy \; \; .\]
  Dividing both sides of this equation by \(t_{ij}\), and by choosing \(v_{ij}=1\), we get
  \[ [ \mathsf{E}_{i,a+1},\mathsf{E}_{j,b} ]1_\lambda=-[\mathsf{E}_{i,a},\mathsf{E}_{j,b+1} ]1_\lambda .\]
  If we reverse the arrows in the preceding equation, they still hold:
   \[ [ \mathsf{F}_{i,a+1},\mathsf{F}_{j,b} ]1_\lambda=-[\mathsf{F}_{i,a},\mathsf{F}_{j,b+1} ]1_\lambda .\]
The choice of signs  in \eqref{homomorp} correspond to
  \[ [x^\pm_{i, a+1}, x^\pm_{j,b}]=[x^\pm_{i,a}, x^\pm_{j,b+1}].\]
The relation \textbf{C5} for the case \(i\neq j\) can be checked the following way:  \[
\mathsf{E}_{i,a}\mathsf{F}_{j,b} 1_\lambda
 \;\; = \;\;
  \xy
 (-6,0)*{};
  (6,0)*{};
  (0,-2)*{ \hackcenter{\begin{tikzpicture}
    \path[draw, blue, very thick, fill=white]
    (-0.15,0) .. controls ++(0,.2) and ++(0,.2) .. (0.15,0)
            .. controls ++(0,-.2) and ++(0,-.2) .. (-0.15,0);
\end{tikzpicture}}};
(0,2)*{\lambda};
  (-4,0)*{}="t1";  (4,0)*{}="t2";
 "t2";"t1" **\crv{(4,6) & (-4,6)}; ?(.02)*\dir{>}
   ?(1)*\dir{>} ?(.3)*\dir{}+(2,2)*{\scs j};
  "t2";"t1" **\crv{(4,-6) & (-4,-6)};
  ?(.7)*\dir{}+(0,0)*{\bullet}+(2,-3)*{\scs b};
  (-12,0)*{}="t1";  (12,0)*{}="t2";
  "t2";"t1" **\crv{(12,15) & (-12,15)}; ?(0)*\dir{<} ?(1)*\dir{<} ?(.3)*\dir{}+(2,2)*{\scs i};
  "t2";"t1" **\crv{(12,-15) & (-12,-15)};
  ?(.7)*\dir{}+(0,0)*{\bullet}+(2,-4)*{\scs a};
  \endxy
  \;\; \refequal{\eqref{mixed_rel}} \;
   t^{-1}_{ji}\;  \xy 0;/r.18pc/:
     (0,-1)*{
 };(0,-2)*{ \hackcenter{\begin{tikzpicture}
    \path[draw, blue, very thick, fill=white]
    (-0.15,0) .. controls ++(0,.2) and ++(0,.2) .. (0.15,0)
            .. controls ++(0,-.2) and ++(0,-.2) .. (-0.15,0);
\end{tikzpicture}}};
(0,6)*{\lambda};
(-8,-4)*{\fecross};(-8,4)*{\naecross};
  (4,0)*{\mline};(12,0)*{\mline};(0,10)*{\scap};(0,12)*{\lcap};
  (0,-10)*{\scupfe};(0,-14)*{\lcupef};
  (6,-3)*{\scs j};(15,-3)*{\scs i};
   (-12,8)*{\bullet}+(-2.5,1)*{\scs a};
 (-4,8)*{\bullet}+(-2.5,1)*{\scs b};
  \endxy
  \;\; \refequal{\eqref{eq_dot_slide_ij-gen}} \;
\]  \[
\;\; \refequal{\eqref{eq_dot_slide_ij-gen}} \;
  t^{-1}_{ji}\;  \xy 0;/r.18pc/:
     (0,-1)*{ };
(0,-2)*{ \hackcenter{\begin{tikzpicture}
    \path[draw, blue, very thick, fill=white]
    (-0.15,0) .. controls ++(0,.2) and ++(0,.2) .. (0.15,0)
            .. controls ++(0,-.2) and ++(0,-.2) .. (-0.15,0);
\end{tikzpicture}}};
(0,6)*{\lambda};     (-8,-4)*{\efcross};(-8,4)*{\naecross};
  (4,0)*{\mline};(12,0)*{\mline};(0,10)*{\scap};(0,12)*{\lcap};
  (0,-10)*{\scupef};(0,-14)*{\lcupfe};
  (6,-3)*{\scs i};(15,-3)*{\scs j};
   (-12,8)*{\bullet}+(-2.5,1)*{\scs b};
 (-4,8)*{\bullet}+(-2.5,1)*{\scs a};
  \endxy
 \;\; \;\; \refequal{\eqref{mixed_rel}} \;
   \xy
 (-6,0)*{};
  (6,0)*{};
(0,-2)*{ \hackcenter{\begin{tikzpicture}
    \path[draw, blue, very thick, fill=white]
    (-0.15,0) .. controls ++(0,.2) and ++(0,.2) .. (0.15,0)
            .. controls ++(0,-.2) and ++(0,-.2) .. (-0.15,0);
\end{tikzpicture}}};
(0,2)*{\lambda};
  (-4,0)*{}="t1";  (4,0)*{}="t2";
 "t2";"t1" **\crv{(4,6) & (-4,6)}; ?(.02)*\dir{<}
   ?(1)*\dir{<} ?(.3)*\dir{}+(2,2)*{\scs i};
  "t2";"t1" **\crv{(4,-6) & (-4,-6)};
  ?(.7)*\dir{}+(0,0)*{\bullet}+(2,-3)*{\scs a};
  (-12,0)*{}="t1";  (12,0)*{}="t2";
  "t2";"t1" **\crv{(12,15) & (-12,15)}; ?(0)*\dir{>} ?(1)*\dir{>} ?(.3)*\dir{}+(2,2)*{\scs j};
  "t2";"t1" **\crv{(12,-15) & (-12,-15)};
  ?(.7)*\dir{}+(0,0)*{\bullet}+(2,-4)*{\scs b};
  \endxy  \;\; \;\; = \;
\mathsf{F}_{j,b} \mathsf{E}_{i,a}1_\lambda.
\]
We now prove the Serre relation \textbf{C6} in the case when $i \cdot j = -1$.
Let us prove the identity $$\mathsf{E}_{i,a}\mathsf{E}_{j,b}\mathsf{E}_{i,c}1_{\lambda} +\mathsf{E}_{i,c}\mathsf{E}_{j,b}\mathsf{E}_{i,a}1_{\lambda} = \mathsf{E}_{i,a}\mathsf{E}_{i,c}\mathsf{E}_{j,b} 1_{\lambda}+\mathsf{E}_{j,b}\mathsf{E}_{i,a}\mathsf{E}_{i,c} 1_{\lambda}.$$
\begin{equation} \label{eq_vv1}
\mathsf{E}_{i,a}\mathsf{E}_{j,b}\mathsf{E}_{i,c}1_{\lambda}  =\;\;
 \xy
 (-6,0)*{};
  (6,0)*{};
  (0,-2)*{ \hackcenter{\begin{tikzpicture}
    \path[draw, blue, very thick, fill=white]
    (-0.15,0) .. controls ++(0,.2) and ++(0,.2) .. (0.15,0)
            .. controls ++(0,-.2) and ++(0,-.2) .. (-0.15,0);
\end{tikzpicture}}};
(0,2)*{\lambda};
  (-4,0)*{}="t1";  (4,0)*{}="t2";
 "t2";"t1" **\crv{(4,6) & (-4,6)}; ?(.02)*\dir{<}
   ?(1)*\dir{<} ?(.3)*\dir{}+(2,2)*{\scs i};
  "t2";"t1" **\crv{(4,-6) & (-4,-6)};
  ?(.7)*\dir{}+(0,0)*{\bullet}+(2,-3)*{\scs c};
  (-12,0)*{}="t1";  (12,0)*{}="t2";
  "t2";"t1" **\crv{(12,15) & (-12,15)}; ?(0)*\dir{<} ?(1)*\dir{<} ?(.3)*\dir{}+(2,2)*{\scs j};
  "t2";"t1" **\crv{(12,-15) & (-12,-15)};
  ?(.7)*\dir{}+(0,0)*{\bullet}+(2,-4)*{\scs b};
   (-20,0)*{}="t1";  (20,0)*{}="t2";
  "t2";"t1" **\crv{(20,24) & (-20,24)}; ?(0)*\dir{<} ?(1)*\dir{<} ?(.3)*\dir{}+(2,2)*{\scs i};
  "t2";"t1" **\crv{(20,-24) & (-20,-24)};
  ?(.7)*\dir{}+(0,0)*{\bullet}+(2,-4)*{\scs a};
  \endxy
\;\; \refequal{\eqref{eq_r3_hard-gen}}\;\;
  t^{-1}_{ij} \vcenter{
 \xy 0;/r.18pc/:
    (-4,-4)*{};(4,4)*{} **\crv{(-4,-1) & (4,1)}?(1)*\dir{>};
    (4,-4)*{};(-4,4)*{} **\crv{(4,-1) & (-4,1)}?(1)*\dir{>};
    (4,4)*{};(12,12)*{} **\crv{(4,7) & (12,9)}?(1)*\dir{>};
    (12,4)*{};(4,12)*{} **\crv{(12,7) & (4,9)}?(1)*\dir{>};
    (-4,12)*{};(4,20)*{} **\crv{(-4,15) & (4,17)}?(1)*\dir{>};
    (4,12)*{};(-4,20)*{} **\crv{(4,15) & (-4,17)}?(1)*\dir{>};
    (-4,4)*{}; (-4,12) **\dir{-};
    (12,-4)*{}; (12,4) **\dir{-};
    (12,12)*{}; (12,20) **\dir{-};
 (16,4)*{ \hackcenter{\begin{tikzpicture}
    \path[draw, blue, very thick, fill=white]
    (-0.15,0) .. controls ++(0,.2) and ++(0,.2) .. (0.15,0)
            .. controls ++(0,-.2) and ++(0,-.2) .. (-0.15,0);
\end{tikzpicture}}};
(16,12)*{\lambda};
  (-6,-3)*{\scs i}; (6,-3)*{\scs j};(15,-3)*{\scs i};  (20,5)*{\bullet}+(3,3)*{\scs c};(28,5)*{\bullet}+(3,3)*{\scs b}; (36,5)*{\bullet}+(3,3)*{\scs a};
  (20,8)*{\lfline};(28,8)*{\lfline};(36,8)*{\lfline};
  (16,22)*{\scap};(16,24)*{\lcap};(16,30)*{\xlcap};
  (16,-6)*{\scupef};(16,-10)*{\lcupef};(16,-15)*{\xlcupef};
\endxy}\;
 \;\; -\;  t^{-1}_{ij} \;
 \vcenter{
 \xy 0;/r.18pc/:
    (12,-4)*{};(4,4)*{} **\crv{(12,-1) & (4,1)}?(1)*\dir{>};
    (4,-4)*{};(12,4)*{} **\crv{(4,-1) & (12,1)}?(1)*\dir{>};
    (4,4)*{};(-4,12)*{} **\crv{(-,7) & (-4,9)}?(1)*\dir{>};
    (-4,4)*{};(4,12)*{} **\crv{(-4,7) & (4,9)}?(1)*\dir{>};
    (12,12)*{};(4,20)*{} **\crv{(12,15) & (4,17)}?(1)*\dir{>};
    (4,12)*{};(12,20)*{} **\crv{(4,15) & (12,17)}?(1)*\dir{>};
    (12,4)*{}; (12,12) **\dir{-};
    (-4,-4)*{}; (-4,4) **\dir{-};
    (-4,12)*{}; (-4,20) **\dir{-};
   (16,4)*{ \hackcenter{\begin{tikzpicture}
    \path[draw, blue, very thick, fill=white]
    (-0.15,0) .. controls ++(0,.2) and ++(0,.2) .. (0.15,0)
            .. controls ++(0,-.2) and ++(0,-.2) .. (-0.15,0);
\end{tikzpicture}}};
(16,12)*{\lambda};
  (15,-3)*{\scs i};
  (2,-3)*{\scs j};
  (-6,-3)*{\scs i};(20,5)*{\bullet}+(3,3)*{\scs c};(28,5)*{\bullet}+(3,3)*{\scs b}; (36,5)*{\bullet}+(3,3)*{\scs a};
   (20,8)*{\lfline};(28,8)*{\lfline};(36,8)*{\lfline};
  (16,22)*{\scap};(16,24)*{\lcap};(16,30)*{\xlcap};
  (16,-6)*{\scupef};(16,-10)*{\lcupef};(16,-15)*{\xlcupef};
\endxy}\;\;.
\end{equation}
The first term on the right-hand-side of \eqref{eq_vv1} can be simplified
\begin{eqnarray}
t^{-1}_{ij} \vcenter{
 \xy 0;/r.18pc/:
    (-4,-4)*{};(4,4)*{} **\crv{(-4,-1) & (4,1)}?(1)*\dir{>};
    (4,-4)*{};(-4,4)*{} **\crv{(4,-1) & (-4,1)}?(1)*\dir{>};
    (4,4)*{};(12,12)*{} **\crv{(4,7) & (12,9)}?(1)*\dir{>};
    (12,4)*{};(4,12)*{} **\crv{(12,7) & (4,9)}?(1)*\dir{>};
    (-4,12)*{};(4,20)*{} **\crv{(-4,15) & (4,17)}?(1)*\dir{>};
    (4,12)*{};(-4,20)*{} **\crv{(4,15) & (-4,17)}?(1)*\dir{>};
    (-4,4)*{}; (-4,12) **\dir{-};
    (12,-4)*{}; (12,4) **\dir{-};
    (12,12)*{}; (12,20) **\dir{-};
   (16,4)*{ \hackcenter{\begin{tikzpicture}
    \path[draw, blue, very thick, fill=white]
    (-0.15,0) .. controls ++(0,.2) and ++(0,.2) .. (0.15,0)
            .. controls ++(0,-.2) and ++(0,-.2) .. (-0.15,0);
\end{tikzpicture}}};
(16,12)*{\lambda};
  (-6,-3)*{\scs i}; (6,-3)*{\scs j};(15,-3)*{\scs i};  (20,5)*{\bullet}+(3,3)*{\scs c};(28,5)*{\bullet}+(3,3)*{\scs b}; (36,5)*{\bullet}+(3,3)*{\scs a};
  (20,8)*{\lfline};(28,8)*{\lfline};(36,8)*{\lfline};
  (16,22)*{\scap};(16,24)*{\lcap};(16,30)*{\xlcap};
  (16,-6)*{\scupef};(16,-10)*{\lcupef};(16,-15)*{\xlcupef};
\endxy}\;
\;\; = \;\; t_{ij} \; \vcenter{ \xy 0;/r.18pc/:
 (8,4)*{ \hackcenter{\begin{tikzpicture}
    \path[draw, blue, very thick, fill=white]
    (-0.15,0) .. controls ++(0,.2) and ++(0,.2) .. (0.15,0)
            .. controls ++(0,-.2) and ++(0,-.2) .. (-0.15,0);
\end{tikzpicture}}};
(8,12)*{\lambda};
  (8,16)*{\lcap};
  (8,22)*{\xlcap};
 (8,14)*{\scap};
 (8,-6)*{\scupef};(8,-14)*{\xlcupef};
 (0,0)*{\ecross};(-4,8)*{\seline};(4,8)*{\seline};
 (-12,4)*{\meline};
 (24,8)*{\fcross};
 (24,0)*{\fcross};
 (12,4)*{\mfline};
 (8,-10)*{\lcupef};
 (-6,-3)*{\scs i}; (6,-3)*{\scs i};(-15,-3)*{\scs j}; (12,5)*{\bullet}+(3,3)*{\scs c};(-12,5)*{\bullet}+(3,3)*{\scs b}; (-4,5)*{\bullet}+(3,3)*{\scs a};
 \endxy} \;.
\end{eqnarray}
using the biadjoint relations and equations~\eqref{eq_crossl-gen} and \eqref{eq_crossr-gen} to slide crossings around the annulus.  This diagram can be further simplified using the KLR relations
\begin{equation}
   \;\; \refequal{\eqref{eq_r2_ij-gen}}\;\;
   t_{ji}^3 \; \vcenter{ \xy 0;/r.18pc/:
 (8,4)*{ \hackcenter{\begin{tikzpicture}
    \path[draw, blue, very thick, fill=white]
    (-0.15,0) .. controls ++(0,.2) and ++(0,.2) .. (0.15,0)
            .. controls ++(0,-.2) and ++(0,-.2) .. (-0.15,0);
\end{tikzpicture}}};
(8,12)*{\lambda}; (8,16)*{\lcap};  (8,22)*{\xlcap}; (8,14)*{\scap};  (8,-6)*{\scupef};
  (8,-14)*{\xlcupef}; (0,0)*{\ecross}; (-4,8)*{\seline}; (4,8)*{\seline}; (-12,4)*{\meline};
 (28,4)*{\mfline}; (20,4)*{\mfline}; (12,4)*{\mfline}; (8,-10)*{\lcupef}; (-6,-3)*{\scs i};
  (6,-3)*{\scs i};  (-15,-3)*{\scs j}; (12,5)*{\bullet}+(3,3)*{\scs c};(-12,5)*{\bullet}+(4,3)*{\scs {b+1}}; (-4,5)*{\bullet}+(3,3)*{\scs a};
 \endxy}
 + \; t_{ji}^2 t_{ij} \;
\vcenter{ \xy 0;/r.18pc/:
 (8,4)*{ \hackcenter{\begin{tikzpicture}
    \path[draw, blue, very thick, fill=white]
    (-0.15,0) .. controls ++(0,.2) and ++(0,.2) .. (0.15,0)
            .. controls ++(0,-.2) and ++(0,-.2) .. (-0.15,0);
\end{tikzpicture}}};
(8,12)*{\lambda};
  (8,16)*{\lcap};
  (8,22)*{\xlcap};
 (8,14)*{\scap};
 (8,-6)*{\scupef};(8,-14)*{\xlcupef};
 (0,0)*{\ecross};(-4,8)*{\seline};(4,8)*{\seline};
 (-12,4)*{\meline};
 (28,4)*{\mfline};
 (20,4)*{\mfline};
 (12,4)*{\mfline};
 (8,-10)*{\lcupef};
 (-6,-3)*{\scs i}; (6,-3)*{\scs i};(-15,-3)*{\scs j};
(12,5)*{\bullet}+(3,3)*{\scs c};(-12,5)*{\bullet}+(3,3)*{\scs b}; (-4,5)*{\bullet}+(4,3)*{\scs {a+1}};
 \endxy}
\end{equation}
where we freely slide dots through caps and cups using the dot cyclicity relation~\eqref{eq_cyclic_dot}.
The second term on the right-hand-side of \eqref{eq_vv1} can also be simplified
\[t^{-1}_{ij} \;
 \vcenter{
 \xy 0;/r.18pc/:
    (12,-4)*{};(4,4)*{} **\crv{(12,-1) & (4,1)}?(1)*\dir{>};
    (4,-4)*{};(12,4)*{} **\crv{(4,-1) & (12,1)}?(1)*\dir{>};
    (4,4)*{};(-4,12)*{} **\crv{(-,7) & (-4,9)}?(1)*\dir{>};
    (-4,4)*{};(4,12)*{} **\crv{(-4,7) & (4,9)}?(1)*\dir{>};
    (12,12)*{};(4,20)*{} **\crv{(12,15) & (4,17)}?(1)*\dir{>};
    (4,12)*{};(12,20)*{} **\crv{(4,15) & (12,17)}?(1)*\dir{>};
    (12,4)*{}; (12,12) **\dir{-};
    (-4,-4)*{}; (-4,4) **\dir{-};
    (-4,12)*{}; (-4,20) **\dir{-};
    (16,4)*{ \hackcenter{\begin{tikzpicture}
    \path[draw, blue, very thick, fill=white]
    (-0.15,0) .. controls ++(0,.2) and ++(0,.2) .. (0.15,0)
            .. controls ++(0,-.2) and ++(0,-.2) .. (-0.15,0);
\end{tikzpicture}}};
(16,12)*{\lambda};
  (15,-3)*{\scs i};
  (2,-3)*{\scs j};
  (-6,-3)*{\scs i};(20,5)*{\bullet}+(3,3)*{\scs c};(28,5)*{\bullet}+(3,3)*{\scs b}; (36,5)*{\bullet}+(3,3)*{\scs a};
   (20,8)*{\lfline};(28,8)*{\lfline};(36,8)*{\lfline};
  (16,22)*{\scap};(16,24)*{\lcap};(16,30)*{\xlcap};
  (16,-6)*{\scupef};(16,-10)*{\lcupef};(16,-15)*{\xlcupef};
\endxy} \;
 = \;  t_{ij} \;\vcenter{ \xy 0;/r.18pc/:
 (8,4)*{ \hackcenter{\begin{tikzpicture}
    \path[draw, blue, very thick, fill=white]
    (-0.15,0) .. controls ++(0,.2) and ++(0,.2) .. (0.15,0)
            .. controls ++(0,-.2) and ++(0,-.2) .. (-0.15,0);
\end{tikzpicture}}};
(8,12)*{\lambda};
  (8,16)*{\lcap};
  (8,22)*{\xlcap};
 (8,14)*{\scap};
 (8,-6)*{\scupef};(8,-14)*{\xlcupef};
 (-8,0)*{\ecross};(-12,8)*{\seline};(-4,8)*{\seline};
 (4,4)*{\meline};
 (16,8)*{\fcross};
 (16,0)*{\fcross};
 (28,4)*{\mfline};
 (8,-10)*{\lcupef};
 (-2,-3)*{\scs i}; (6,-3)*{\scs j};(-15,-3)*{\scs i};(12,5)*{\bullet}+(3,1)*{\scs c};(20,5)*{\bullet}+(3,2)*{\scs b}; (28,5)*{\bullet}+(3,2)*{\scs a};
 \endxy}
=\]\[
   \refequal{\eqref{eq_r2_ij-gen}} \;\;
   t_{ij}^2  \;\; \vcenter{ \xy 0;/r.18pc/:
 (8,-3)*{ \hackcenter{\begin{tikzpicture}
    \path[draw, blue, very thick, fill=white]
    (-0.15,0) .. controls ++(0,.2) and ++(0,.2) .. (0.15,0)
            .. controls ++(0,-.2) and ++(0,-.2) .. (-0.15,0);
\end{tikzpicture}}};
(8,4)*{\lambda};
  (8,8)*{\lcap};
  (8,14)*{\xlcap};
 (8,6)*{\scap};
 (8,-6)*{\scupef};(8,-14)*{\xlcupef};
 (-8,0)*{\ecross};
 (4,0)*{\seline};
 (12,0)*{\sfline};
 (20,0)*{\sfline};
 (28,0)*{\sfline};
 (8,-10)*{\lcupef};
 (-2,-3)*{\scs i}; (15,-3)*{\scs j};(-15,-3)*{\scs i};
  (12,0)*{\bullet}+(4,3)*{\scs b+1};(20,0)*{\bullet}+(3,3)*{\scs c}; (28,0)*{\bullet}+(3,3)*{\scs a};
  \endxy}
 \;\; + \;\;t_{ij}t_{ji}
\vcenter{ \xy 0;/r.18pc/:
 (8,-3)*{ \hackcenter{\begin{tikzpicture}
    \path[draw, blue, very thick, fill=white]
    (-0.15,0) .. controls ++(0,.2) and ++(0,.2) .. (0.15,0)
            .. controls ++(0,-.2) and ++(0,-.2) .. (-0.15,0);
\end{tikzpicture}}};
(8,4)*{\lambda};  (8,8)*{\lcap};
  (8,14)*{\xlcap};
 (8,6)*{\scap};
 (8,-6)*{\scupef};(8,-14)*{\xlcupef};
 (-8,0)*{\ecross};
 (4,0)*{\seline};
 (12,0)*{\sfline};
 (20,0)*{\sfline};
 (28,0)*{\sfline};
 (8,-10)*{\lcupef};
 (-2,-3)*{\scs i}; (15,-3)*{\scs j};(-15,-3)*{\scs i};
   (12,0)*{\bullet}+(3,3)*{\scs b};(20,0)*{\bullet}+(4,3)*{\scs c+1}; (28,0)*{\bullet}+(3,3)*{\scs a};
 \endxy} \nn
\]
Combining terms and simplifying using that $t_{ij}^2=t_{ji}^2=1, \, v_{ij}=1$ for our choice of scalars, we have
\begin{equation} \label{fin_eq}
 \xy
 (-6,0)*{};
  (6,0)*{};
 (0,-2)*{ \hackcenter{\begin{tikzpicture}
    \path[draw, blue, very thick, fill=white]
    (-0.15,0) .. controls ++(0,.2) and ++(0,.2) .. (0.15,0)
            .. controls ++(0,-.2) and ++(0,-.2) .. (-0.15,0);
\end{tikzpicture}}};
(0,2)*{\lambda};  (-4,0)*{}="t1";  (4,0)*{}="t2";
 "t2";"t1" **\crv{(4,6) & (-4,6)}; ?(.02)*\dir{<}
   ?(1)*\dir{<} ?(.3)*\dir{}+(2,2)*{\scs i};
  "t2";"t1" **\crv{(4,-6) & (-4,-6)};
  ?(.7)*\dir{}+(0,0)*{\bullet}+(2,-3)*{\scs c};
  (-12,0)*{}="t1";  (12,0)*{}="t2";
  "t2";"t1" **\crv{(12,15) & (-12,15)}; ?(0)*\dir{<} ?(1)*\dir{<} ?(.3)*\dir{}+(2,2)*{\scs j};
  "t2";"t1" **\crv{(12,-15) & (-12,-15)};
  ?(.7)*\dir{}+(0,0)*{\bullet}+(2,-4)*{\scs b};
   (-20,0)*{}="t1";  (20,0)*{}="t2";
  "t2";"t1" **\crv{(20,24) & (-20,24)}; ?(0)*\dir{<} ?(1)*\dir{<} ?(.3)*\dir{}+(2,2)*{\scs i};
  "t2";"t1" **\crv{(20,-24) & (-20,-24)};
  ?(.7)*\dir{}+(0,0)*{\bullet}+(2,-4)*{\scs a};
  \endxy
 \;\; = \;\;
   \vcenter{ \xy 0;/r.18pc/:
 (8,4)*{ \hackcenter{\begin{tikzpicture}
    \path[draw, blue, very thick, fill=white]
    (-0.15,0) .. controls ++(0,.2) and ++(0,.2) .. (0.15,0)
            .. controls ++(0,-.2) and ++(0,-.2) .. (-0.15,0);
\end{tikzpicture}}};
(8,12)*{\lambda}; (8,16)*{\lcap};  (8,22)*{\xlcap}; (8,14)*{\scap};  (8,-6)*{\scupef};
  (8,-14)*{\xlcupef}; (0,0)*{\ecross}; (-4,8)*{\seline}; (4,8)*{\seline}; (-12,4)*{\meline};
 (28,4)*{\mfline}; (20,4)*{\mfline}; (12,4)*{\mfline}; (8,-10)*{\lcupef}; (-6,-3)*{\scs i};
  (6,-3)*{\scs i};  (-15,-3)*{\scs j}; (12,5)*{\bullet}+(3,3)*{\scs c};(-12,5)*{\bullet}+(4,3)*{\scs {b+1}}; (-4,5)*{\bullet}+(3,3)*{\scs a};
 \endxy}\;
 + \;
\vcenter{ \xy 0;/r.18pc/:
 (8,4)*{ \hackcenter{\begin{tikzpicture}
    \path[draw, blue, very thick, fill=white]
    (-0.15,0) .. controls ++(0,.2) and ++(0,.2) .. (0.15,0)
            .. controls ++(0,-.2) and ++(0,-.2) .. (-0.15,0);
\end{tikzpicture}}};
(8,12)*{\lambda};
  (8,16)*{\lcap};
  (8,22)*{\xlcap};
 (8,14)*{\scap};
 (8,-6)*{\scupef};(8,-14)*{\xlcupef};
 (0,0)*{\ecross};(-4,8)*{\seline};(4,8)*{\seline};
 (-12,4)*{\meline};
 (28,4)*{\mfline};
 (20,4)*{\mfline};
 (12,4)*{\mfline};
 (8,-10)*{\lcupef};
 (-6,-3)*{\scs i}; (6,-3)*{\scs i};(-15,-3)*{\scs j};
(12,5)*{\bullet}+(3,3)*{\scs c};(-12,5)*{\bullet}+(3,3)*{\scs b}; (-4,5)*{\bullet}+(4,3)*{\scs {a+1}};
 \endxy}
   \end{equation}
   \[ - \;\; \vcenter{ \xy 0;/r.18pc/:
 (8,-3)*{ \hackcenter{\begin{tikzpicture}
    \path[draw, blue, very thick, fill=white]
    (-0.15,0) .. controls ++(0,.2) and ++(0,.2) .. (0.15,0)
            .. controls ++(0,-.2) and ++(0,-.2) .. (-0.15,0);
\end{tikzpicture}}};
(8,4)*{\lambda};
  (8,8)*{\lcap};
  (8,14)*{\xlcap};
 (8,6)*{\scap};
 (8,-6)*{\scupef};(8,-14)*{\xlcupef};
 (-8,0)*{\ecross};
 (4,0)*{\seline};
 (12,0)*{\sfline};
 (20,0)*{\sfline};
 (28,0)*{\sfline};
 (8,-10)*{\lcupef};
 (-2,-3)*{\scs i}; (15,-3)*{\scs j};(-15,-3)*{\scs i};
  (12,0)*{\bullet}+(4,3)*{\scs b+1};(20,0)*{\bullet}+(3,3)*{\scs c}; (28,0)*{\bullet}+(3,3)*{\scs a};
  \endxy}
 \;\; - \;\;
\vcenter{ \xy 0;/r.18pc/:
 (8,-3)*{ \hackcenter{\begin{tikzpicture}
    \path[draw, blue, very thick, fill=white]
    (-0.15,0) .. controls ++(0,.2) and ++(0,.2) .. (0.15,0)
            .. controls ++(0,-.2) and ++(0,-.2) .. (-0.15,0);
\end{tikzpicture}}};
(8,4)*{\lambda};  (8,8)*{\lcap};
  (8,14)*{\xlcap};
 (8,6)*{\scap};
 (8,-6)*{\scupef};(8,-14)*{\xlcupef};
 (-8,0)*{\ecross};
 (4,0)*{\seline};
 (12,0)*{\sfline};
 (20,0)*{\sfline};
 (28,0)*{\sfline};
 (8,-10)*{\lcupef};
 (-2,-3)*{\scs i}; (15,-3)*{\scs j};(-15,-3)*{\scs i};
   (12,0)*{\bullet}+(3,3)*{\scs b};(20,0)*{\bullet}+(4,3)*{\scs c+1}; (28,0)*{\bullet}+(3,3)*{\scs a};
 \endxy} \nn.\]
By interchanging \(a\) and \(c\) in the equation  \eqref{fin_eq}, we get
 \begin{equation} \label{fin_eq2}
 \xy
 (-6,0)*{};
  (6,0)*{};
 (0,-2)*{ \hackcenter{\begin{tikzpicture}
    \path[draw, blue, very thick, fill=white]
    (-0.15,0) .. controls ++(0,.2) and ++(0,.2) .. (0.15,0)
            .. controls ++(0,-.2) and ++(0,-.2) .. (-0.15,0);
\end{tikzpicture}}};
(0,2)*{\lambda};  (-4,0)*{}="t1";  (4,0)*{}="t2";
 "t2";"t1" **\crv{(4,6) & (-4,6)}; ?(.02)*\dir{<}
   ?(1)*\dir{<} ?(.3)*\dir{}+(2,2)*{\scs i};
  "t2";"t1" **\crv{(4,-6) & (-4,-6)};
  ?(.7)*\dir{}+(0,0)*{\bullet}+(2,-3)*{\scs a};
  (-12,0)*{}="t1";  (12,0)*{}="t2";
  "t2";"t1" **\crv{(12,15) & (-12,15)}; ?(0)*\dir{<} ?(1)*\dir{<} ?(.3)*\dir{}+(2,2)*{\scs j};
  "t2";"t1" **\crv{(12,-15) & (-12,-15)};
  ?(.7)*\dir{}+(0,0)*{\bullet}+(2,-4)*{\scs b};
   (-20,0)*{}="t1";  (20,0)*{}="t2";
  "t2";"t1" **\crv{(20,24) & (-20,24)}; ?(0)*\dir{<} ?(1)*\dir{<} ?(.3)*\dir{}+(2,2)*{\scs i};
  "t2";"t1" **\crv{(20,-24) & (-20,-24)};
  ?(.7)*\dir{}+(0,0)*{\bullet}+(2,-4)*{\scs c};
  \endxy
 \;\; = \;\;
   \vcenter{ \xy 0;/r.18pc/:
 (8,4)*{ \hackcenter{\begin{tikzpicture}
    \path[draw, blue, very thick, fill=white]
    (-0.15,0) .. controls ++(0,.2) and ++(0,.2) .. (0.15,0)
            .. controls ++(0,-.2) and ++(0,-.2) .. (-0.15,0);
\end{tikzpicture}}};
(8,12)*{\lambda}; (8,16)*{\lcap};  (8,22)*{\xlcap}; (8,14)*{\scap};  (8,-6)*{\scupef};
  (8,-14)*{\xlcupef}; (0,0)*{\ecross}; (-4,8)*{\seline}; (4,8)*{\seline}; (-12,4)*{\meline};
 (28,4)*{\mfline}; (20,4)*{\mfline}; (12,4)*{\mfline}; (8,-10)*{\lcupef}; (-6,-3)*{\scs i};
  (6,-3)*{\scs i};  (-15,-3)*{\scs j}; (12,5)*{\bullet}+(3,3)*{\scs a};(-12,5)*{\bullet}+(4,3)*{\scs {b+1}}; (-4,5)*{\bullet}+(3,3)*{\scs c};
 \endxy}\;
 + \;
\vcenter{ \xy 0;/r.18pc/:
 (8,4)*{ \hackcenter{\begin{tikzpicture}
    \path[draw, blue, very thick, fill=white]
    (-0.15,0) .. controls ++(0,.2) and ++(0,.2) .. (0.15,0)
            .. controls ++(0,-.2) and ++(0,-.2) .. (-0.15,0);
\end{tikzpicture}}};
(8,12)*{\lambda};
  (8,16)*{\lcap};
  (8,22)*{\xlcap};
 (8,14)*{\scap};
 (8,-6)*{\scupef};(8,-14)*{\xlcupef};
 (0,0)*{\ecross};(-4,8)*{\seline};(4,8)*{\seline};
 (-12,4)*{\meline};
 (28,4)*{\mfline};
 (20,4)*{\mfline};
 (12,4)*{\mfline};
 (8,-10)*{\lcupef};
 (-6,-3)*{\scs i}; (6,-3)*{\scs i};(-15,-3)*{\scs j};
(12,5)*{\bullet}+(3,3)*{\scs a};(-12,5)*{\bullet}+(3,3)*{\scs b}; (-4,5)*{\bullet}+(4,3)*{\scs {c+1}};
 \endxy}
   \end{equation}
   \[ - \;\; \vcenter{ \xy 0;/r.18pc/:
 (8,-3)*{ \hackcenter{\begin{tikzpicture}
    \path[draw, blue, very thick, fill=white]
    (-0.15,0) .. controls ++(0,.2) and ++(0,.2) .. (0.15,0)
            .. controls ++(0,-.2) and ++(0,-.2) .. (-0.15,0);
\end{tikzpicture}}};
(8,4)*{\lambda};
  (8,8)*{\lcap};
  (8,14)*{\xlcap};
 (8,6)*{\scap};
 (8,-6)*{\scupef};(8,-14)*{\xlcupef};
 (-8,0)*{\ecross};
 (4,0)*{\seline};
 (12,0)*{\sfline};
 (20,0)*{\sfline};
 (28,0)*{\sfline};
 (8,-10)*{\lcupef};
 (-2,-3)*{\scs i}; (15,-3)*{\scs j};(-15,-3)*{\scs i};
  (12,0)*{\bullet}+(4,3)*{\scs b+1};(20,0)*{\bullet}+(3,3)*{\scs a}; (28,0)*{\bullet}+(3,3)*{\scs c};
  \endxy}
 \;\; - \;\;
\vcenter{ \xy 0;/r.18pc/:
 (8,-3)*{ \hackcenter{\begin{tikzpicture}
    \path[draw, blue, very thick, fill=white]
    (-0.15,0) .. controls ++(0,.2) and ++(0,.2) .. (0.15,0)
            .. controls ++(0,-.2) and ++(0,-.2) .. (-0.15,0);
\end{tikzpicture}}};
(8,4)*{\lambda};
  (8,8)*{\lcap};
  (8,14)*{\xlcap};
 (8,6)*{\scap};
 (8,-6)*{\scupef};(8,-14)*{\xlcupef};
 (-8,0)*{\ecross};
 (4,0)*{\seline};
 (12,0)*{\sfline};
 (20,0)*{\sfline};
 (28,0)*{\sfline};
 (8,-10)*{\lcupef};
 (-2,-3)*{\scs i}; (15,-3)*{\scs j};(-15,-3)*{\scs i};
   (12,0)*{\bullet}+(3,3)*{\scs b};(20,0)*{\bullet}+(4,3)*{\scs a+1}; (28,0)*{\bullet}+(3,3)*{\scs c};
 \endxy} \nn.\]
 Using the relation \eqref{eq_bubble_rel} and induction, one can easily prove the following equation:
 \begin{equation} \label{ac}
  \xy 0;/r.16pc/:
   (8,-3)*{
 \hackcenter{\begin{tikzpicture}
  \path[draw, blue, very thick, fill=white]
    (-0.15,0) .. controls ++(0,.2) and ++(0,.2) .. (0.15,0)
            .. controls ++(0,-.2) and ++(0,-.2) .. (-0.15,0);
  \end{tikzpicture}}
 };
  (8,8)*{\lcap};
 (8,6)*{\scap};
 (8,-6)*{\scup};
 (0,0)*{\naecross};
 (12,0)*{\sline};
 (20,0)*{\sline};
 (8,-10)*{\lcup};
  (-3.3,6)*{\bullet}+(-4,0)*{\scs a}; (4.7,6)*{\bullet}+(-2.7,0)*{\scs c};
  (22,0)*{\scs i};
 (8,3)*{\lambda};
 \endxy\; \; = - \;\;
 \xy 0;/r.16pc/:
   (8,-3)*{
 \hackcenter{\begin{tikzpicture}
  \path[draw, blue, very thick, fill=white]
    (-0.15,0) .. controls ++(0,.2) and ++(0,.2) .. (0.15,0)
            .. controls ++(0,-.2) and ++(0,-.2) .. (-0.15,0);
  \end{tikzpicture}}
 };
  (8,8)*{\lcap};
 (8,6)*{\scap};
 (8,-6)*{\scup};
 (0,0)*{\naecross};
 (12,0)*{\sline};
 (20,0)*{\sline};
 (8,-10)*{\lcup};
 (-3.3,6)*{\bullet}+(-4,0)*{\scs c}; (4.7,6)*{\bullet}+(-2.7,0)*{\scs a};
  (22,0)*{\scs i}; (8,3)*{\lambda};
 \endxy.
 \end{equation}
 Also, the following equation directly follows from \eqref{eq_nil_dotslide} and \eqref{ac}
 \begin{equation} \label{ac2}
 \xy
 (-6,0)*{};
  (6,0)*{};
   (0,2)*{\lambda};(0,-2)*{ \hackcenter{\begin{tikzpicture}
    \path[draw, blue, very thick, fill=white]
    (-0.15,0) .. controls ++(0,.2) and ++(0,.2) .. (0.15,0)
            .. controls ++(0,-.2) and ++(0,-.2) .. (-0.15,0);
\end{tikzpicture}}};
  (-4,0)*{}="t1";  (4,0)*{}="t2";
 "t2";"t1" **\crv{(4,6) & (-4,6)}; ?(.02)*\dir{<}
   ?(1)*\dir{<} ?(.3)*\dir{}+(2,2)*{\scs i};
  "t2";"t1" **\crv{(4,-6) & (-4,-6)};
  ?(.7)*\dir{}+(0,0)*{\bullet}+(2,-3)*{\scs c};
  (-12,0)*{}="t1";  (12,0)*{}="t2";
  "t2";"t1" **\crv{(12,15) & (-12,15)}; ?(0)*\dir{<} ?(1)*\dir{<} ?(.3)*\dir{}+(2,2)*{\scs i};
  "t2";"t1" **\crv{(12,-15) & (-12,-15)};
  ?(.7)*\dir{}+(0,0)*{\bullet}+(2,-4)*{\scs a};
  \endxy \;\;=\;\;
 \xy 0;/r.16pc/:
   (8,-3)*{
 \hackcenter{\begin{tikzpicture}
  \path[draw, blue, very thick, fill=white]
    (-0.15,0) .. controls ++(0,.2) and ++(0,.2) .. (0.15,0)
            .. controls ++(0,-.2) and ++(0,-.2) .. (-0.15,0);
  \end{tikzpicture}}
 };
  (8,8)*{\lcap};
 (8,6)*{\scap};
 (8,-6)*{\scup};
 (0,0)*{\naecross};
 (12,0)*{\sline};
 (20,0)*{\sline};
 (8,-10)*{\lcup};
 (-3.3,6)*{\bullet}+(-5,0)*{\scs a+1}; (4.7,6)*{\bullet}+(-2.7,0)*{\scs c};
  (22,0)*{\scs i}; (8,3)*{\lambda};
 \endxy\;\; + \;\;
 \xy 0;/r.16pc/:
   (8,-3)*{
 \hackcenter{\begin{tikzpicture}
  \path[draw, blue, very thick, fill=white]
    (-0.15,0) .. controls ++(0,.2) and ++(0,.2) .. (0.15,0)
            .. controls ++(0,-.2) and ++(0,-.2) .. (-0.15,0);
  \end{tikzpicture}}
 };
  (8,8)*{\lcap};
 (8,6)*{\scap};
 (8,-6)*{\scup};
 (0,0)*{\naecross};
 (12,0)*{\sline};
 (20,0)*{\sline};
 (8,-10)*{\lcup};
 (-3.3,6)*{\bullet}+(-5,0)*{\scs c+1}; (4.7,6)*{\bullet}+(-3.5,0)*{\scs a};
  (22,0)*{\scs i}; (8,3)*{\lambda};
 \endxy.
 \end{equation}
We add  the right and left hand sides of the equations \eqref{fin_eq} and \eqref{fin_eq2}, and after eliminating terms using \eqref{ac}, we get
\begin{equation}\label{fin_eq3}
 \xy
 (-6,0)*{};
  (6,0)*{};
(0,-2)*{ \hackcenter{\begin{tikzpicture}
    \path[draw, blue, very thick, fill=white]
    (-0.15,0) .. controls ++(0,.2) and ++(0,.2) .. (0.15,0)
            .. controls ++(0,-.2) and ++(0,-.2) .. (-0.15,0);
\end{tikzpicture}}};
(0,2)*{\lambda};   (-4,0)*{}="t1";  (4,0)*{}="t2";
 "t2";"t1" **\crv{(4,6) & (-4,6)}; ?(.02)*\dir{<}
   ?(1)*\dir{<} ?(.3)*\dir{}+(2,2)*{\scs i};
  "t2";"t1" **\crv{(4,-6) & (-4,-6)};
  ?(.7)*\dir{}+(0,0)*{\bullet}+(2,-3)*{\scs a};
  (-12,0)*{}="t1";  (12,0)*{}="t2";
  "t2";"t1" **\crv{(12,15) & (-12,15)}; ?(0)*\dir{<} ?(1)*\dir{<} ?(.3)*\dir{}+(2,2)*{\scs j};
  "t2";"t1" **\crv{(12,-15) & (-12,-15)};
  ?(.7)*\dir{}+(0,0)*{\bullet}+(2,-4)*{\scs b};
   (-20,0)*{}="t1";  (20,0)*{}="t2";
  "t2";"t1" **\crv{(20,24) & (-20,24)}; ?(0)*\dir{<} ?(1)*\dir{<} ?(.3)*\dir{}+(2,2)*{\scs i};
  "t2";"t1" **\crv{(20,-24) & (-20,-24)};
  ?(.7)*\dir{}+(0,0)*{\bullet}+(2,-4)*{\scs c};
  \endxy
 \;\; +\;\;
 \xy
 (-6,0)*{};
  (6,0)*{};
(0,-2)*{ \hackcenter{\begin{tikzpicture}
    \path[draw, blue, very thick, fill=white]
    (-0.15,0) .. controls ++(0,.2) and ++(0,.2) .. (0.15,0)
            .. controls ++(0,-.2) and ++(0,-.2) .. (-0.15,0);
\end{tikzpicture}}};
(0,2)*{\lambda};   (-4,0)*{}="t1";  (4,0)*{}="t2";
 "t2";"t1" **\crv{(4,6) & (-4,6)}; ?(.02)*\dir{<}
   ?(1)*\dir{<} ?(.3)*\dir{}+(2,2)*{\scs i};
  "t2";"t1" **\crv{(4,-6) & (-4,-6)};
  ?(.7)*\dir{}+(0,0)*{\bullet}+(2,-3)*{\scs c};
  (-12,0)*{}="t1";  (12,0)*{}="t2";
  "t2";"t1" **\crv{(12,15) & (-12,15)}; ?(0)*\dir{<} ?(1)*\dir{<} ?(.3)*\dir{}+(2,2)*{\scs j};
  "t2";"t1" **\crv{(12,-15) & (-12,-15)};
  ?(.7)*\dir{}+(0,0)*{\bullet}+(2,-4)*{\scs b};
   (-20,0)*{}="t1";  (20,0)*{}="t2";
  "t2";"t1" **\crv{(20,24) & (-20,24)}; ?(0)*\dir{<} ?(1)*\dir{<} ?(.3)*\dir{}+(2,2)*{\scs i};
  "t2";"t1" **\crv{(20,-24) & (-20,-24)};
  ?(.7)*\dir{}+(0,0)*{\bullet}+(2,-4)*{\scs a};
  \endxy
 \;\; =
   \end{equation}
   \[ =\; \;
\vcenter{ \xy 0;/r.18pc/:
 (8,4)*{ \hackcenter{\begin{tikzpicture}
    \path[draw, blue, very thick, fill=white]
    (-0.15,0) .. controls ++(0,.2) and ++(0,.2) .. (0.15,0)
            .. controls ++(0,-.2) and ++(0,-.2) .. (-0.15,0);
\end{tikzpicture}}};
(8,12)*{\lambda};
  (8,16)*{\lcap};
  (8,22)*{\xlcap};
 (8,14)*{\scap};
 (8,-6)*{\scupef};(8,-14)*{\xlcupef};
 (0,0)*{\ecross};(-4,8)*{\seline};(4,8)*{\seline};
 (-12,4)*{\meline};
 (28,4)*{\mfline};
 (20,4)*{\mfline};
 (12,4)*{\mfline};
 (8,-10)*{\lcupef};
 (-6,-3)*{\scs i}; (6,-3)*{\scs i};(-15,-3)*{\scs j};
(12,5)*{\bullet}+(3,3)*{\scs c};(-12,5)*{\bullet}+(3,3)*{\scs b}; (-4,5)*{\bullet}+(4,3)*{\scs {a+1}};
 \endxy} - \;\;
\vcenter{ \xy 0;/r.18pc/:
(8,-3)*{ \hackcenter{\begin{tikzpicture}
    \path[draw, blue, very thick, fill=white]
    (-0.15,0) .. controls ++(0,.2) and ++(0,.2) .. (0.15,0)
            .. controls ++(0,-.2) and ++(0,-.2) .. (-0.15,0);
\end{tikzpicture}}};
(8,4)*{\lambda};  (8,8)*{\lcap};
  (8,14)*{\xlcap};
 (8,6)*{\scap};
 (8,-6)*{\scupef};(8,-14)*{\xlcupef};
 (-8,0)*{\ecross};
 (4,0)*{\seline};
 (12,0)*{\sfline};
 (20,0)*{\sfline};
 (28,0)*{\sfline};
 (8,-10)*{\lcupef};
 (-2,-3)*{\scs i}; (15,-3)*{\scs j};(-15,-3)*{\scs i};
   (12,0)*{\bullet}+(3,3)*{\scs b};(20,0)*{\bullet}+(4,3)*{\scs c+1}; (28,0)*{\bullet}+(3,3)*{\scs a};
 \endxy} \nn \;\;+
\]
\[+\;\;
\vcenter{ \xy 0;/r.18pc/:
 (8,4)*{ \hackcenter{\begin{tikzpicture}
    \path[draw, blue, very thick, fill=white]
    (-0.15,0) .. controls ++(0,.2) and ++(0,.2) .. (0.15,0)
            .. controls ++(0,-.2) and ++(0,-.2) .. (-0.15,0);
\end{tikzpicture}}};
(8,12)*{\lambda};
  (8,16)*{\lcap};
  (8,22)*{\xlcap};
 (8,14)*{\scap};
 (8,-6)*{\scupef};(8,-14)*{\xlcupef};
 (0,0)*{\ecross};(-4,8)*{\seline};(4,8)*{\seline};
 (-12,4)*{\meline};
 (28,4)*{\mfline};
 (20,4)*{\mfline};
 (12,4)*{\mfline};
 (8,-10)*{\lcupef};
 (-6,-3)*{\scs i}; (6,-3)*{\scs i};(-15,-3)*{\scs j};
(12,5)*{\bullet}+(3,3)*{\scs a};(-12,5)*{\bullet}+(3,3)*{\scs b}; (-4,5)*{\bullet}+(4,3)*{\scs {c+1}};
 \endxy} \;\; - \;\;
\vcenter{ \xy 0;/r.18pc/:
(8,-3)*{ \hackcenter{\begin{tikzpicture}
    \path[draw, blue, very thick, fill=white]
    (-0.15,0) .. controls ++(0,.2) and ++(0,.2) .. (0.15,0)
            .. controls ++(0,-.2) and ++(0,-.2) .. (-0.15,0);
\end{tikzpicture}}};
(8,4)*{\lambda};  (8,8)*{\lcap};
  (8,14)*{\xlcap};
 (8,6)*{\scap};
 (8,-6)*{\scupef};(8,-14)*{\xlcupef};
 (-8,0)*{\ecross};
 (4,0)*{\seline};
 (12,0)*{\sfline};
 (20,0)*{\sfline};
 (28,0)*{\sfline};
 (8,-10)*{\lcupef};
 (-2,-3)*{\scs i}; (15,-3)*{\scs j};(-15,-3)*{\scs i};
   (12,0)*{\bullet}+(3,3)*{\scs b};(20,0)*{\bullet}+(4,3)*{\scs a+1}; (28,0)*{\bullet}+(3,3)*{\scs c};
 \endxy} \nn.\]
 Combining the respective terms using the relation \eqref{ac2} in the equation \eqref{fin_eq3}, we have
 \begin{equation}
 \xy
 (-6,0)*{};
  (6,0)*{};
(0,-2)*{ \hackcenter{\begin{tikzpicture}
    \path[draw, blue, very thick, fill=white]
    (-0.15,0) .. controls ++(0,.2) and ++(0,.2) .. (0.15,0)
            .. controls ++(0,-.2) and ++(0,-.2) .. (-0.15,0);
\end{tikzpicture}}};
(0,2)*{\lambda};   (-4,0)*{}="t1";  (4,0)*{}="t2";
 "t2";"t1" **\crv{(4,6) & (-4,6)}; ?(.02)*\dir{<}
   ?(1)*\dir{<} ?(.3)*\dir{}+(2,2)*{\scs i};
  "t2";"t1" **\crv{(4,-6) & (-4,-6)};
  ?(.7)*\dir{}+(0,0)*{\bullet}+(2,-3)*{\scs a};
  (-12,0)*{}="t1";  (12,0)*{}="t2";
  "t2";"t1" **\crv{(12,15) & (-12,15)}; ?(0)*\dir{<} ?(1)*\dir{<} ?(.3)*\dir{}+(2,2)*{\scs j};
  "t2";"t1" **\crv{(12,-15) & (-12,-15)};
  ?(.7)*\dir{}+(0,0)*{\bullet}+(2,-4)*{\scs b};
   (-20,0)*{}="t1";  (20,0)*{}="t2";
  "t2";"t1" **\crv{(20,24) & (-20,24)}; ?(0)*\dir{<} ?(1)*\dir{<} ?(.3)*\dir{}+(2,2)*{\scs i};
  "t2";"t1" **\crv{(20,-24) & (-20,-24)};
  ?(.7)*\dir{}+(0,0)*{\bullet}+(2,-4)*{\scs c};
  \endxy
 \;\; +\;\;
 \xy
 (-6,0)*{};
  (6,0)*{};
(0,-2)*{ \hackcenter{\begin{tikzpicture}
    \path[draw, blue, very thick, fill=white]
    (-0.15,0) .. controls ++(0,.2) and ++(0,.2) .. (0.15,0)
            .. controls ++(0,-.2) and ++(0,-.2) .. (-0.15,0);
\end{tikzpicture}}};
(0,2)*{\lambda};   (-4,0)*{}="t1";  (4,0)*{}="t2";
 "t2";"t1" **\crv{(4,6) & (-4,6)}; ?(.02)*\dir{<}
   ?(1)*\dir{<} ?(.3)*\dir{}+(2,2)*{\scs i};
  "t2";"t1" **\crv{(4,-6) & (-4,-6)};
  ?(.7)*\dir{}+(0,0)*{\bullet}+(2,-3)*{\scs c};
  (-12,0)*{}="t1";  (12,0)*{}="t2";
  "t2";"t1" **\crv{(12,15) & (-12,15)}; ?(0)*\dir{<} ?(1)*\dir{<} ?(.3)*\dir{}+(2,2)*{\scs j};
  "t2";"t1" **\crv{(12,-15) & (-12,-15)};
  ?(.7)*\dir{}+(0,0)*{\bullet}+(2,-4)*{\scs b};
   (-20,0)*{}="t1";  (20,0)*{}="t2";
  "t2";"t1" **\crv{(20,24) & (-20,24)}; ?(0)*\dir{<} ?(1)*\dir{<} ?(.3)*\dir{}+(2,2)*{\scs i};
  "t2";"t1" **\crv{(20,-24) & (-20,-24)};
  ?(.7)*\dir{}+(0,0)*{\bullet}+(2,-4)*{\scs a};
  \endxy
 \;\; =
 \end{equation}
 \[=\;\;\xy
 (-6,0)*{};
  (6,0)*{};
(0,-2)*{ \hackcenter{\begin{tikzpicture}
    \path[draw, blue, very thick, fill=white]
    (-0.15,0) .. controls ++(0,.2) and ++(0,.2) .. (0.15,0)
            .. controls ++(0,-.2) and ++(0,-.2) .. (-0.15,0);
\end{tikzpicture}}};
(0,2)*{\lambda};   (-4,0)*{}="t1";  (4,0)*{}="t2";
 "t2";"t1" **\crv{(4,6) & (-4,6)}; ?(.02)*\dir{<}
   ?(1)*\dir{<} ?(.3)*\dir{}+(2,2)*{\scs i};
  "t2";"t1" **\crv{(4,-6) & (-4,-6)};
  ?(.7)*\dir{}+(0,0)*{\bullet}+(2,-3)*{\scs c};
  (-12,0)*{}="t1";  (12,0)*{}="t2";
  "t2";"t1" **\crv{(12,15) & (-12,15)}; ?(0)*\dir{<} ?(1)*\dir{<} ?(.3)*\dir{}+(2,2)*{\scs i};
  "t2";"t1" **\crv{(12,-15) & (-12,-15)};
  ?(.7)*\dir{}+(0,0)*{\bullet}+(2,-4)*{\scs a};
   (-20,0)*{}="t1";  (20,0)*{}="t2";
  "t2";"t1" **\crv{(20,24) & (-20,24)}; ?(0)*\dir{<} ?(1)*\dir{<} ?(.3)*\dir{}+(2,2)*{\scs j};
  "t2";"t1" **\crv{(20,-24) & (-20,-24)};
  ?(.7)*\dir{}+(0,0)*{\bullet}+(2,-4)*{\scs b};
  \endxy
 \;\; +\;\;
 \xy
 (-6,0)*{};
  (6,0)*{};
(0,-2)*{ \hackcenter{\begin{tikzpicture}
    \path[draw, blue, very thick, fill=white]
    (-0.15,0) .. controls ++(0,.2) and ++(0,.2) .. (0.15,0)
            .. controls ++(0,-.2) and ++(0,-.2) .. (-0.15,0);
\end{tikzpicture}}};
(0,2)*{\lambda};   (-4,0)*{}="t1";  (4,0)*{}="t2";
 "t2";"t1" **\crv{(4,6) & (-4,6)}; ?(.02)*\dir{<}
   ?(1)*\dir{<} ?(.3)*\dir{}+(2,2)*{\scs j};
  "t2";"t1" **\crv{(4,-6) & (-4,-6)};
  ?(.7)*\dir{}+(0,0)*{\bullet}+(2,-3)*{\scs b};
  (-12,0)*{}="t1";  (12,0)*{}="t2";
  "t2";"t1" **\crv{(12,15) & (-12,15)}; ?(0)*\dir{<} ?(1)*\dir{<} ?(.3)*\dir{}+(2,2)*{\scs i};
  "t2";"t1" **\crv{(12,-15) & (-12,-15)};
  ?(.7)*\dir{}+(0,0)*{\bullet}+(2,-4)*{\scs c};
   (-20,0)*{}="t1";  (20,0)*{}="t2";
  "t2";"t1" **\crv{(20,24) & (-20,24)}; ?(0)*\dir{<} ?(1)*\dir{<} ?(.3)*\dir{}+(2,2)*{\scs i};
  "t2";"t1" **\crv{(20,-24) & (-20,-24)};
  ?(.7)*\dir{}+(0,0)*{\bullet}+(2,-4)*{\scs a};
  \endxy\;\;=
 \]
 \[= \mathsf{E}_{j,b}\mathsf{E}_{i,a}\mathsf{E}_{i,c} 1_{\lambda}+  \mathsf{E}_{i,a}\mathsf{E}_{i,c}\mathsf{E}_{j,b} 1_{\lambda}.
 \]
 Reversing the orientation of the arrows, we get the identity $$ \mathsf{F}_{i,a}\mathsf{F}_{j,b}\mathsf{F}_{i,c}1_{\lambda} +\mathsf{F}_{i,c}\mathsf{F}_{j,b}\mathsf{F}_{i,a}1_{\lambda} = \mathsf{F}_{i,a}\mathsf{F}_{i,c}\mathsf{F}_{j,b} 1_{\lambda}+\mathsf{F}_{j,b}\mathsf{F}_{i,a}\mathsf{F}_{i,c} 1_{\lambda}.$$
\end{proof}

%
\section{Action on center of objects in $\Udot$}\label{action}
%

This section is devoted to the proof of Theorem \ref{2-reps}. The action of the current algebra on the trace of any 2-representation follows
directly from the functotiality of the trace. The rest of the section establishes the action on the center.

%
\subsubsection{Centers of categories}
%

The center $Z(\C)$ of an additive category $\C$ is the endomorphism ring of the identity functor $\Id_{\C}$ on $\C$.  Note that an endomorphism of $\Id_{\C}$ is a natural transformation $\alpha \maps \Id_{\C} \To \Id_{\C}$.  Such a natural transformation consists of a map $\alpha_x \maps x \to x$ for each object $x$ of $\C$ satisfying the requirement that for any map $f \maps x \to y$ in $\C$, we get a commutative square
\[
 \xymatrix{
 x \ar[r]^{f} \ar[d]_{\alpha_x} & y \ar[d]^{\alpha_y} \\ x \ar[r]_{f} & y
 }
\]
\begin{exercise}
Show that $Z(\C)$ is commutative ring. Hint: use the Eckmann--Hilton argument.
\end{exercise}

An additive category is an object of the linear 2-category \cat{AdCat} of additive categories, additive functors, and natural transformations.  For an arbitrary, linear 2-category $\CC$ we define the center $Z(x)$ of an object $x \in \Ob( \CC)$ as the ring of endomorphisms $\CC(1_{x}, 1_{x})$, see \cite{GK}.  Define the {\em center of objects} of the 2-category $\CC$ as the $Z(\CC) = \bigoplus_{x \in \Ob(\CC)} Z(x)$.

The relationship between Hochschild homology and cohomology described in section~\ref{subsec_HHco} can now be recast in a more general framework.

\begin{exercise}
Let $\CC$ be a linear 2-category $\CC$. Show that for each $x \in \Ob(\CC)$, the abelian group $\Tr(\CC(x,x))$ is a module over the ring $Z(x)$.
\end{exercise}

\begin{example} \label{examp_sl2}
In the 2-category $\Ucat(\mf{sl}_2)$, an element in the center $Z(\lambda)$ for an object $\lambda \in \Ob(\Ucat(\mf{sl}_2))$ is given by any closed diagram in the graphical calculus.  It was shown in Section 8 of \cite{Lau1} that any such closed diagram can be reduced to a product of non-nested dotted bubbles with the same orientation.  As explained in section~\ref{sec_sym-bub}, this provides an isomorphism between $Z(\lambda)$ and the ring $\sym$ of symmetric functions.
\end{example}

Given a 2-endomorphism $f \maps \onelp x \onel \To \onelp x \onel$ in $\Ucat^{\ast}(\mf{sl}_n)$, we can interpret the class $[f]$ as a diagram on the annulus.  If we forget that the diagram is on an annulus, we are left with a diagram representing an endomorphism of $\onelp$.
\[
 \vcenter{ \xy 0;/r.16pc/:
 (8,0)*{
 \hackcenter{\begin{tikzpicture}
  \path[draw,blue, very thick, fill=blue!10]
   (-2,-.6) to (-2,.6) .. controls ++(0,1.85) and ++(0,1.85) .. (2,.6)
   to (2,-.6)  .. controls ++(0,-1.85) and ++(0,-1.85) .. (-2,-.6);
    \path[draw, blue, very thick, fill=white]
    (-0.25,0) .. controls ++(0,.35) and ++(0,.35) .. (0.25,0)
            .. controls ++(0,-.35) and ++(0,-.35) .. (-0.25,0);
\end{tikzpicture}}
 };
(10,8)*{\lambda}; (-15,12)*{\lambda'};
  (8,8)*{\lcap};
  (8,14)*{\xlcap};
 (8,-14)*{\xlcupef};
 (-8,0)*{\ecross};
 (20,0)*{\sfline};
 (28,0)*{\sfline};
 (8,-10)*{\lcupef};
(-2,-3)*{\scs i}; (-15,-3)*{\scs i};
 (-10,1)*{\bullet};
 \endxy} \;\; \in \Tr(\Ucat^{\ast})
 \qquad \rightsquigarrow \qquad
 \vcenter{ \xy 0;/r.18pc/:
(10,8)*{\lambda}; (-15,12)*{\lambda'};
  (8,8)*{\lcap};
  (8,14)*{\xlcap};
 (8,-14)*{\xlcupef};
 (-8,0)*{\ecross};
 (20,0)*{\sfline};
 (28,0)*{\sfline};
 (8,-10)*{\lcupef};
(-2,-3)*{\scs i}; (-15,-3)*{\scs i};
 (-10,1)*{\bullet};
 \endxy} \;\; \in Z(\lambda')
\]
By choosing scalars $t_ij=t_{ji}=1$ for all $i\neq j$, the 2-category $\Ucat(\mf{sl}_n))=\Ucat_Q(\mf{sl}_n))$ is cyclic and the resulting diagram is independent of the choice of representative for the class $[f] \in \Tr(\Ucat(\mf{sl}_n))$.
In this way, $[f]$ gives rise to a map $Z(\lambda) \to Z(\lambda')$ given by placing an element from $Z(\lambda)$ inside the annulus and regarding the result as an element in $Z(\lambda')$.  Hence, we have the following proposition.

\begin{prop} \label{thm_center-action}
The linear category $\Tr(\Ucat^{\ast}(\mf{sl}_n))$ acts by endomorphisms on the commutative ring
$Z\left(\Ucat^{\ast}(\mf{sl}_n) \right)$.  Under this action the class $[f]$ of endomorphisms $f \maps \onelp x \onel \To \onelp x \onel$ in $\Ucat^{\ast}(\mf{sl}_n)$ is sent to the linear operator sending a closed diagram $D$ representing an element of $Z(\lambda)$ to the element in $Z(\lambda')$ obtained by removing the annulus from the diagram $[f.D]$.
\end{prop}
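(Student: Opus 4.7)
The plan is to proceed in three steps: (i) define the action on closed diagrams, (ii) verify well-definedness with respect to both the trace relation and the annular presentation, and (iii) check functoriality.

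First I would make the construction precise. An element of $\Tr(\Ucat^{\ast})(\lambda,\lambda')$ is represented by a $2$-endomorphism $f\maps \onelp x\onel \To \onelp x\onel$ for some $1$-morphism $x$. Given a closed diagram $D$ representing an element of $Z(\lambda)$, I form the composite diagram $f.D$ by stacking $D$ into the interior region of the annular picture of $[f]$ (that is, composing $D$ with the identity along the inner strands that bound the $\onel$-region). Forgetting the annulus then yields an endomorphism of $\onelp$, hence an element $[f].D \in Z(\lambda')$. The map $Z(\lambda) \to Z(\lambda')$ so defined is manifestly $\k$-linear in $D$.

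Next comes well-definedness, which is the main point. There are two independent things to verify. First, the procedure does not depend on where one cuts the annulus: because we have chosen scalars $t_{ij}=t_{ji}=1$ so that $\Ucat(\mf{sl}_n)$ is strictly cyclic (biadjointness is strict and the $Q$-cyclic equations \eqref{eq_cyclic_dot}--\eqref{eq_crossr-gen} become literal isotopy invariance), any two planar representatives of the same diagram on the annulus give the same endomorphism of $\onelp$ once $D$ is inserted. Second, and more importantly, I must check that if $[f]=[f']$ in $\Tr(\Ucat^{\ast})$, then $[f].D = [f'].D$. It suffices to treat the generating relations, so assume $f = gh$ and $f' = hg$ for $g\maps \onelp x \onel \To \onelp y \onel$ and $h\maps \onelp y \onel \To \onelp x\onel$. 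Then $(gh).D$ and $(hg).D$ are represented by diagrams that differ precisely by sliding either $g$ or $h$ around the annulus past the inserted diagram $D$. Since $D$ is a closed diagram living in the $\onel$-region inside the hole, this sliding is an isotopy of planar diagrams representing an endomorphism of $\onelp$, and therefore produces the same element of $Z(\lambda')$. This is exactly the annular trace property, and it is where cyclicity of the graphical calculus is essential.

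Finally I would verify the categorical axioms. The identity $[1_{\onel}] \in \Tr(\Ucat^{\ast})(\lambda,\lambda)$ is represented by the empty annular diagram with $1_{\onel}$ inside, so its action sends $D\mapsto D$ and is the identity of $Z(\lambda)$. For composition, if $[f]\in \Tr(\Ucat^{\ast})(\lambda,\lambda')$ and $[f'] \in \Tr(\Ucat^{\ast})(\lambda',\lambda'')$, then the nested annular diagram representing $[f'][f]$ with $D$ inserted at the center evidently coincides with first inserting $D$ into $[f]$ (producing an element of $Z(\lambda')$) and then inserting the result into $[f']$. This shows $Z(\Ucat^{\ast}(\mf{sl}_n))$ is a module over the linear category $\Tr(\Ucat^{\ast}(\mf{sl}_n))$, which is the statement of the proposition.

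The main obstacle in this plan is the trace-invariance step: one has to articulate carefully why sliding a $2$-morphism around the annulus past an inserted central element is legitimate, and this requires both the cyclic biadjoint structure (fixed by the choice $t_{ij}=t_{ji}=1$) and the fact that $D$ has trivial underlying $1$-morphism so that nothing obstructs the slide. All other steps are direct consequences of the graphical calculus.
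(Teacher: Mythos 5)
Your proposal is correct and takes essentially the same route as the paper: there the action is likewise defined by inserting a closed diagram $D$ into the hole of the annular picture of $[f]$ and then removing the annulus, with well-definedness resting exactly on the cyclicity of the graphical calculus for the choice $t_{ij}=t_{ji}=1$, so that trace-equivalent representatives (and different cut points) differ only by an isotopy of the annular diagram. Your additional checks --- independence of where the annulus is cut, the identity $[1_{\onel}]$ acting trivially, and compatibility of composition via nested annuli --- simply make explicit details the paper leaves implicit.
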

\vspace*{3mm}

\noindent
{\bf Remark for experts}.
Being able to define an action of the trace on the center of objects in a 2-category essentially amounts to having enough ``coherent" duality in the 2-category so that the trace takes
values in the center of objects.  This idea is captured by the notion of a pivotal 2-category~\cite{Muger}.  This can be seen as a many object version of a pivotal monoidal category, see \cite{GMV} where
traces in this context are studied.  M\"{u}ger points out in \cite[page 11]{Muger} a strict pivotal 2-category can be defined from Mackaay's work~\cite{MackSphere} on spherical
2-categories by ignoring the monoidal structure.  It is also equivalent to a 2-category in which  every 1-morphism has a specified cyclic biadjoint (see \cite{CKS,Lau1}).

%
\subsubsection{A current algebra action on centers}
%

In what follows we represent a generic 2-morphism $f \in Z(\lambda)$, $f \maps
\onel \To \onel$, by the diagram
\[
 \xy
  (0,0)*{*};(0,-4)*{};
  (8,6)*{\lambda};
 \endxy
\]
where we use the $*$ to represent any linear combination of closed diagrams
describing $f$.

\begin{thm} \label{thm_main}
The vector space
\begin{equation}
 Z := \bigoplus_{\lambda \in \UcatD} Z(\lambda)
\end{equation}
is a $\bfU(\fsl_n[t])$-module with
\begin{eqnarray}
  1_{\lambda} \maps Z &\lra & Z(\lambda ) \\
 \xy
  (0,0)*{\ast};
  (8,6)*{\lambda'};
 \endxy & \mapsto &
  \delta_{\lambda,\lambda'} \left(\;\;   \xy
  (0,0)*{\ast};
  (8,6)*{\lambda};
 \endxy\;\;\;\right)
   \nn\\
  x^+_{i,r}  \maps Z(\lambda) &\lra & Z(\lambda+\alpha_i) \\
 \xy
  (0,0)*{\ast};
  (8,6)*{\lambda};
 \endxy
& \mapsto & (-1)^{(i+1)r}\;\;
  \xy
(0,0)*{\lcbub{i}};
  (-8,-8)*{\bullet}+(-3,-2)*{r};
  (0,0)*{\ast};
  (4,6)*{\lambda};
  (22,6)*{\lambda+\alpha_i};
 \endxy\;\;
   \nn\\
 x^-_{i,s} \maps Z(\lambda) &\lra & Z(\lambda-\alpha_i) \\
 \xy
  (0,0)*{\ast};
  (8,6)*{\lambda};
 \endxy
& \mapsto & (-1)^{(i+1)s} \;\;
 \xy
(0,0)*{\lccbub{i}};
(-8,-8)*{\bullet}+(-3,-2)*{s};
  (0,0)*{\ast};
  (4,6)*{\lambda};
  (22,6)*{\lambda-\alpha_i};
 \endxy
 \nn \\
   \xi_{i,r} \maps Z(\lambda)& \lra & Z(\lambda)  \\
   \xy
  (0,0)*{\ast};
  (8,6)*{\lambda};
 \endxy & \mapsto & (-1)^{(i+1)r}
     \xy
     (0,-6)*{};
  (-9,0)*{p_{i,r}(\lambda)};
  (0,0)*{\ast};
  (8,6)*{\lambda};
 \endxy
\end{eqnarray}

\end{thm}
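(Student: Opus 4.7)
The plan is to realize the claimed action as the composition of two maps already constructed in the paper: first the homomorphism
\[
h \maps \dot{\bfU}(\fsl_n[t]) \longrightarrow \Tr(\U^{\ast}(\fsl_n))
\]
of Proposition \ref{prop-cur}, and then the action of $\Tr(\U^{\ast}(\fsl_n))$ on $Z = \bigoplus_{\lambda} Z(\lambda)$ provided by Proposition \ref{thm_center-action}. With the current-algebra relations already guaranteed by Proposition \ref{prop-cur}, the task reduces to (i) checking that the second map is truly a ring map into $\End(Z)$ in the categorical sense, so that composition of trace classes corresponds to composition of actions on $Z$, and (ii) verifying that for the distinguished generators of $\dot{\bfU}(\fsl_n[t])$ the composition agrees on the nose with the formulas displayed in the statement.

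For step (i), one unwinds Proposition \ref{thm_center-action}: given $[f] \in \Tr(\U^{\ast}(\onenn{\lambda'} x \onel, \onenn{\lambda'} x \onel))$ and $D \in Z(\lambda)$, we insert $D$ into the central disk of an annulus diagram representing $[f]$ and then forget the annulus to obtain an element of $Z(\lambda')$. That this is well defined on classes modulo $[fg]=[gf]$, and that stacking annuli respects insertion, both follow from the fact that $\U^{\ast}(\fsl_n)$ with our choice of scalars ($t_{ij}=1$) is a cyclic $2$-category in which $1$-morphisms are biadjoint up to shift: isotopy classes of diagrams on the annulus are manipulated freely, and vertical composition of annuli corresponds to the composition in $\U^{\ast}$.

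For step (ii), one compares each generator's image under $h$ with the endomorphism of $Z$ described by the theorem. The class $h(x^+_{i,r} 1_\lambda) = (-1)^{(i+1)r}[\mathsf{E}_{i,r} 1_{\lambda}]$ is represented by an upward $i$-strand with $r$ dots; closing this strand around an inserted diagram $D \in Z(\lambda)$ yields a left-oriented (counterclockwise) $i$-labelled bubble with $r$ dots enclosing $D$ in a region relabelled $\lambda+\alpha_i$, which is precisely the formula for $x^+_{i,r}$. The case of $x^-_{j,s}$ is symmetric, using a downward strand. For $\xi_{i,r}$ the image $h(\xi_{i,r}1_\lambda) = (-1)^{(i+1)r}[p_{i,r}(\lambda)\Id_{\onel}]$ already lives in $\Tr(\U^{\ast}(\onel,\onel))$ and the induced endomorphism of $Z(\lambda)$ is, tautologically, multiplication by $p_{i,r}(\lambda)$ in the commutative ring $Z(\lambda)$, matching the stated formula. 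The unit $1_\lambda$ of $\dot{\bfU}(\fsl_n[t])$ maps to the identity on $Z(\lambda)$ and annihilates every other summand, giving the grading by weights.

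The main (and essentially only) obstacle is the compatibility verification in step (i): one must check that the annulus-insertion procedure intertwines horizontal and vertical composition, so that the map $\Tr(\U^{\ast})(\lambda,\lambda') \to \Hom(Z(\lambda), Z(\lambda'))$ assembles into a linear functor $\Tr(\U^{\ast}) \to \cat{Ab}$ factoring the action through the ring structure on $\bigoplus_{\lambda} \End(Z(\lambda))$. Given this, the current-algebra relations \textbf{C1}--\textbf{C6} hold in $\End(Z)$ because they already hold in $\Tr(\U^{\ast}(\fsl_n))$ by Proposition \ref{prop-cur}; no additional diagrammatic computation is required.
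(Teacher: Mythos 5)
Your proposal is correct and takes essentially the same route as the paper: the published proof simply observes that the theorem ``follows immediately from Proposition~\ref{prop-cur}'', implicitly composing the homomorphism $h$ with the annulus-insertion action of $\Tr(\Ucat^{\ast}(\fsl_n))$ on $Z$ from Proposition~\ref{thm_center-action}, exactly as you do. Your step (i) compatibility check (that insertion respects composition, using cyclicity for $t_{ij}=1$) is left implicit in the paper, so you have in fact supplied more detail than the printed proof rather than diverging from it.
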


\begin{proof}
This theorem follows immediately from Proposition ~\ref{prop-cur}.
In particular, the fact that the current algebra relations hold in the trace $\Tr(\Ucat^\ast(\mf{sl}_n))$ imply that these relations hold under the action described above.
\end{proof}

%
\subsubsection{An action of the current algebra on centers of 2-representations}
%

\begin{defn}
A 2-representation of $\UcatD_Q(\mf{sl}_n)$ is a graded additive $\Bbbk$-linear 2-functor $\Psi \maps \UcatD_Q(\mf{sl}_n)\to \cal{K}$ for some graded, additive 2-category $\cal{K}$.
\end{defn}

When all of the Hom categories $\cal{K}(x,y)$ between objects $x$ and $y$ of $\cal{K}$ are idempotent complete, in other words
$\Kar(\cal{K}) \cong \cal{K}$, then any graded additive $\Bbbk$-linear 2-functor $\Ucat_{Q}(\mf{sl}_n) \to \cal{K}$ extends uniquely to a 2-representation of $\UcatD_Q(\mf{sl}_n)$.

Any 2-representation $\Psi$ induces a map on traces
\begin{align}
\Tr(\UcatD_Q(\mf{sl}_n)) &\to \Tr(\cal{K}) \\ \nn
[f] & \mapsto [\Psi(f)].
\end{align}
When looking at $\U^\ast$ as
\[
\Ucat^{\ast}(x,y) := \bigoplus_{t \in \Z} \Ucat(x, y \la t\ra),
\]
we can easily extend this map to
\begin{align*}
\Tr(\U^\ast(\mf{sl}_n)) &\to \Tr(\cal{K}).
\end{align*}

Hence by the same procedure described in Proposition~\ref{thm_center-action}, the trace $\Tr(\U^\ast(\mf{sl}_n))$ acts on the center $Z(\cal{K})$ of any 2-representation.  Since
Proposition~\ref{prop-cur} shows that the current algebra is contained in the trace, it follows that the current algebra acts on any 2-representation.



%

\end{document}